\numberwithin{equation}{section}
\newtheorem{mainthm}{Theorem}
\newtheorem{thm}{Theorem}[section]
\newtheorem{cor}[thm]{Corollary}
\newtheorem{lem}[thm]{Lemma}
\newtheorem{prop}[thm]{Proposition}
\theoremstyle{definition} 
\newtheorem{rem}[thm]{Remark}
\newtheorem{defn}[thm]{Definition}
\theoremstyle{remark}
\def\bN {\mathbb{N}}
\def\bR {\mathbb{R}}
\def\bS {\mathbb{S}}
\def\bZ {\mathbb{Z}}
\def\fv {\mathfrak{v}}
\def\cA {\mathcal{A}}
\def\cE {\mathcal{E}}
\def\scrL{\mathscr{L}}
\def\grad {{\nabla}}
\newcommand{\la}{\langle}
\newcommand{\ra}{\rangle}
\newcommand{\La}{\big\langle}
\newcommand{\Ra}{\big\rangle}
\newcommand{\tx}[1]{\mathrm{#1}}
\newcommand{\wt}[1]{\widetilde{#1}}
\newcommand{\bs}[1]{\boldsymbol{#1}}
\newcommand{\eee}{\mathrm e}
\newcommand{\vd}{\mathrm{d}}
\newcommand{\udr}{\,r\vd r}
\newcommand{\vD}{\mathrm{D}}
\newcommand{\uln}[1]{{\underline{ #1 }}}
\newcommand{\lin}{_{\textsc{l}}}
\newcommand{\nl}{\textsc{nl}}
\newcommand{\Lin}{\textsc{l}} 
\newcommand{\conj}[1]{\overline{#1}}
\newcommand{\dd}[1]{\frac{\ud}{\ud{#1}}}
\definecolor{deepgreen}{cmyk}{1,0,1,0.5}
\newcommand{\E}{\mathcal{E}}
\newcommand{\LL}{\mathcal{L}}
\newcommand{\N}{\mathbb{N}}
\newcommand{\R}{\mathbb{R}}
\newcommand{\Z}{\mathbb{Z}}
\newcommand{\al}{\alpha}
\newcommand{\be}{\beta}
\newcommand{\de}{\delta}
\newcommand{\om}{\omega}
\newcommand{\lam}{\lambda}
\newcommand{\te}{\theta}
\newcommand{\s}{\sigma}
\newcommand{\si}{\varsigma}
\newcommand{\De}{\Delta}
\newcommand{\Om}{\Omega}
\newcommand{\Lam}{\Lambda}
\newcommand{\p}{\partial}
\newcommand{\loc}{\operatorname{loc}}
\newcommand{\Rmnum}[1]{\expandafter\@slowromancap\romannumeral #1@}
\newcommand{\lec}{\lesssim}
\newcommand{\I}{\infty}
\newcommand{\ti}{\widetilde}
\newcommand{\U}{\underline}
\newcommand{\ang}[1]{\left\langle{#1}\right\rangle}
\newcommand{\abs}[1]{\left\lvert{#1}\right\rvert}
\newcommand{\EQ}[1]{\begin{equation}\begin{split} #1 \end{split}\end{equation}}
\newcommand{\Del}[1]{}
\newcommand{\pt}{&}
\newcommand{\pr}{\\ &}
\newcommand{\pn}{}
\newcommand{\mand}{{\ \ \text{and} \ \  }}
\newcommand{\mif}{{\ \ \text{if} \ \ }}
\newcommand{\mfor}{{\ \ \text{for} \ \ }}
\newcommand{\mas}{{\ \ \text{as} \ \ }}
\newcommand{\uD}{\operatorname{D}}
\def\glei{\mathrm{eq}}
\newcommand{\rdr}{\, r\mathrm{d}r}
\newcommand{\rest}{\!\!\restriction}
\definecolor{green}{rgb}{0,0.8,0} 
\newcommand{\ud}{\mathrm{d}}
\newcommand{\eps}{\epsilon}
\newcommand{\bfd}{{\bf d}}
\newcommand{\bfi}{{\bf i}}
\newcommand{\bfp}{{\bf p}}
\newcommand{\bfq}{{\bf q}}
\newcommand{\calA}{\mathcal A}
\newcommand{\calC}{\mathcal C}
\newcommand{\calE}{\mathcal E}
\newcommand{\calJ}{\mathcal J}
\newcommand{\calK}{\mathcal K}
\newcommand{\calL}{\mathcal L}
\newcommand{\calQ}{\mathcal Q}
\newcommand{\calS}{\mathcal S}
\newcommand{\calZ}{\mathcal Z}
\newcommand{\ULam}{\U{\Lam}}
\newcommand{\oo}{\textrm{o}}
\newcommand{\ii}{\textrm{i}}
\begin{document}

\title[Soliton resolution for wave maps]{Soliton resolution for energy-critical wave maps \\
in the equivariant case}
\author{Jacek Jendrej}
\author{Andrew Lawrie}

\begin{abstract}
We consider the equivariant wave maps equation $\bR^{1+2} \to \bS^2$,  in all equivariance classes $k  \in \N$. 
We prove that every finite energy solution resolves, continuously in time, into a superposition of asymptotically decoupling harmonic maps and free radiation.  
\end{abstract}

\keywords{soliton resolution; multi-soliton; wave maps; energy-critical}
\subjclass[2010]{35L71 (primary), 35B40, 37K40}

\thanks{J.Jendrej is supported by  ANR-18-CE40-0028 project ESSED.  A. Lawrie is supported by NSF grant DMS-1954455, a Sloan Research Fellowship, and the Solomon Buchsbaum Research Fund}

\maketitle

\tableofcontents

\section{Introduction} 
\subsection{Setting of the problem}

We study wave maps  from  the Minkowski space $\mathbb{R}^{1+2}_{t, x}$ into the two-sphere $\mathbb{S}^2$, under $k$-equivariant symmetry. These are formal critical points of the Lagrangian action, 
\begin{equation} 
\scrL( \Psi)  = \frac{1}{2} \iint_{\bR^{1+2}_{t, x}} \big( {-} |\partial_t \Psi(t, x)|^2 + |\grad \Psi(t, x)|^2 \big) \, \ud x  \ud t, 
\end{equation}
restricted to the class of maps  $\Psi: \bR^{1+2}_{t, x} \to \bS^2 \subset \bR^3$ that take the form, 
\begin{equation}
\Psi(t, r\eee^{i\theta}) = ( \sin u (t, r)\cos k\theta , \sin u(t, r) \sin  k \theta, \cos u(t, r)) \in \bS^2 \subset \bR^3,
\end{equation}
for some fixed $k \in \{1, 2, \ldots\}$. Above $u$ is the colatitude measured from the north pole,  the metric on $\bS^2$ is  $\vd s^2 = \vd u^2+ \sin^2 u\,  \vd \omega^2$,  and $(r, \theta)$ are polar coordinates on $\bR^2$.

The general $\mathbb{S}^2$-valued wave maps equation in two space dimensions is called the $O(3)$ sigma model in high energy physics literature. It is a canonical example of a geometric wave equation as it generalizes the free scalar wave equation to the setting of manifold-valued maps. The static solutions given by finite energy harmonic maps are amongst the simplest examples of topological solitons as they admit Bogomol'nyi structure~\cite{Bog};  other examples include kinks in scalar field theories on the line,  vortices in Ginzburg-Landau equations, magnetic monopoles, Skyrmions, and Yang-Mills instantons; see~\cite{MS} for an extensive treatment of field theories admitting topological solitons from the point of view of mathematical physics.

Our interest in $k$-equivariant wave maps stems from the richness of their nonlinear dynamics in the relatively simple setting of the geometrically natural scalar semilinear wave equation, 
\EQ{ \label{eq:wmk} 
\p_t^2 u(t, r) - \De u(t, r) + \frac{k^2}{r^2} \frac{\sin 2 u(t, r)}{2} &= 0, \quad (t, r) \in \R \times (0, \infty), 
}
which is the Euler-Lagrange equation associated to $\scrL(\Psi)$ under the $k$-equivariant symmetry reduction. Here $\De := \p_r^2 + r^{-1} \p_r$ is the radial Laplacian in $2$-dimensions.  The conserved energy for~\eqref{eq:wmk} is given by 
\EQ{ \label{eq:energy} 
E(u, \p_t u)(t) := 2\pi \int_0^\infty \frac{1}{2} \Big((\p_t u(t, r))^2 + ( \p_r u(t, r))^2 + k^2 \frac{ \sin^2 u(t, r)}{r^2} \Big) \, r \ud r.  
}
We will often write pairs of functions using boldface,  $\bs v = (v, \dot v)$, noting that the notation $\dot v$ will not, in general, refer to a time derivative of $v$ but rather just to the second component of $\bs v$. With this notation the Cauchy problem for~\eqref{eq:wmk} can be rephrased as the Hamiltonian system 
\begin{equation} \label{eq:u-ham} 
\partial_t \bs u(t) =  J \circ \vD E( \bs u(t)),\qquad \bs u(T_0) = \bs u_0,
\end{equation}
where 
\begin{equation} \label{eq:DE}
J = \begin{pmatrix} 0 &1 \\ -1 &0\end{pmatrix}, \quad \vD E( \bs u(t))  =  \begin{pmatrix}- \De u(t)  + k^2r^{-2}2^{-1}\sin(2u(t)) \\ \partial_t u(t) \end{pmatrix}.
\end{equation}
Both~\eqref{eq:u-ham}  and~\eqref{eq:energy} are invariant under the scaling
\begin{equation} \label{eq:uscale} 
(u(t, r), \partial_t u(t, r))\mapsto
\big(u(t/ \lambda,  r/ \lambda), \lambda^{-1} \partial_t u( t/ \lambda, r/ \lambda)\big), \qquad \lambda >0,
\end{equation}
and thus~\eqref{eq:wmk} is called energy-critical.

The natural setting in which to consider the Cauchy problem for~\eqref{eq:wmk} is the space of initial data $\bs u_0$ with finite energy, $E( \bs u_0) < \infty$. The set of finite energy data is split into disjoint sectors, $\E_{\ell, m}$, which for $\ell, m\in \bZ$, are defined by 
\EQ{
\calE_{\ell, m}:= \big\{ (u_0, \dot u_0) \mid E(u_0, \dot u_0) < \infty, \quad \lim_{r \to 0} u_0(r) = \ell\pi, \quad \lim_{r \to \infty} u_0(r) = m \pi \big\}.
}
These sectors, which are preserved by the flow, are related to the topological degree of the full map $\Psi_0: \bR^2 \to  \bS^2$:
 if $m - \ell$ is even and $(u_0, 0) \in \E_{\ell, m}$, then the corresponding map $\Psi$ with polar angle $u_0$ is topologically trivial,
 whereas for odd $m - \ell$ the map has degree $k$.

The sets $\calE_{\ell, m}$ are affine spaces, parallel to the linear space $\E := \E_{0, 0} = H \times L^2$, which we endow with the norm, 
\EQ{
\| \bs u_0 \|_{\E}^2:=   \| \dot u_0 \|_{L^2}^2+ \| u_0\|_H^2  := \int_0^\infty \Big((\dot u_0(r))^2 + ( \p_r u_0(r))^2 + k^2 \frac{ (u_0(r))^2}{r^2} \Big) \, r \ud r.
} 
The linearization of~\eqref{eq:wmk} about the zero solution is given by  
\EQ{ \label{eq:lin} 
\p_t^2 v - \De v + \frac{k^2}{r^2} v = 0 , 
}
and the flow for~\eqref{eq:lin} preserves the $\E$ norm. 

The unique $k$-equivariant harmonic map is given  explicitly by
\EQ{
Q(r) := 2 \arctan (r^k).
}
Here uniqueness means up to scaling,  sign change, and adding a multiple of $\pi$, i.e.,   every finite energy stationary solution to~\eqref{eq:wmk}  takes the form $Q_{\mu, \sigma, m}(r) =  m \pi + \sigma Q(r/ \mu)$ for some  $\mu \in (0, \infty), \sigma \in \{0, -1, 1\}$ and $m \in \Z$. 
The pair $\bs Q_\lambda := (Q_\lambda, 0)$ and its rescaled versions $\bs Q_\lambda(r) := (Q_\lambda(r), 0)  := Q(\lambda^{-1}r)$ for $\lambda > 0$,
are minimizers of the energy $E$  within the class $\E_{0, 1}$;  in fact, $E(\bs  Q_\lambda ) = 4  \pi k$.
 We denote, $\bs\pi := (\pi, 0)$.

\subsection{Statement of the results}  Our main result is formulated as follows. 

\begin{mainthm}[Soliton Resolution] \label{thm:main}  Let $k \in \N$, let $\ell, m \in \Z$,  and let $\bs u(t)$ be a finite energy solution to~\eqref{eq:wmk} with initial data $\bs u(0) = \bs u_0 \in \E_{\ell, m}$,  defined on its maximal forward interval of existence $[0,T_+)$. 
 
 \emph{({Global solution})} If $T_+ = \infty$, there exist a time $T_0>0$, a solution $\bs u^*\lin(t) \in \E$ to the linear wave equation~\eqref{eq:lin}, an integer $N \ge 0$, continuous functions $\lam_1(t), \dots,  \lam_N(t) \in C^0([T_0, \infty))$, signs $\iota_1, \dots, \iota_N \in \{-1, 1\}$,  and $\bs g(t) \in \E$ defined by 
 \EQ{
 \bs u(t) = m  \bs \pi + \sum_{j =1}^N \iota_j (\bs Q_{\lam_j(t)} - \bs \pi) + \bs u^*\lin(t) + \bs g(t) , 
 }
 such that 
 \EQ{
 \| \bs g(t)\|_{\E} + \sum_{j =1}^{N} \frac{\lam_{j}(t)}{\lam_{j+1}(t)}  \to 0 \mas t \to \infty,  
 }
 where above we use the convention that $\lam_{N+1}(t) = t$.

 \emph{({Blow-up solution})} If $T_+ < \infty$, there exists a time $T_0< T_+$,  an integer $m_\De$, a mapping $\bs u_0^*\in \E_{m_{\De}, m}$, an integer $N \ge 1$, continuous functions $\lam_1(t), \dots,  \lam_N(t) \in C^0([T_0, T_+))$, signs $\iota_1, \dots, \iota_N \in \{-1, 1\}$, and $\bs g(t) \in \E$ defined by 
 \EQ{
 \bs u(t) = m_\De  \bs \pi + \sum_{j =1}^N \iota_j(\bs Q_{\lam_j(t)} - \bs \pi) + \bs u^*_0 + \bs g(t) , 
 }
 such that 
 \EQ{
 \| \bs g(t)\|_{\E} + \sum_{j =1}^{N} \frac{\lam_{j}(t)}{\lam_{j+1}(t)}  \to 0 \mas t \to T_+, 
 }
 where above we use the convention that $\lam_{N+1}(t) = T_+-t$. 

Analogous statements hold for the backwards-in-time evolution. 
\end{mainthm}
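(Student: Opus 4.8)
\emph{Overview.} The plan is to prove the theorem in two stages: first a \emph{sequential} soliton resolution, in which the stated decomposition holds along some sequence of times $t_n \to T_+$, and then an \emph{upgrade to continuous time}. Throughout I would use the energy-space Cauchy theory for~\eqref{eq:wmk}, finite speed of propagation, and the conservation of $E$. The two regimes $T_+ = \infty$ and $T_+ < \infty$ are treated in parallel, the only structural difference being the region of spacetime that carries the solitons: a neighborhood of spatial infinity when $T_+=\infty$, the backward light cone from the singular point when $T_+<\infty$. In either case the exterior (``channels of energy'') estimates for the $k$-equivariant free wave flow let one peel off a free radiation term $\bs u^*\lin$ (resp.\ an asymptotic exterior profile $\bs u^*_0$, from which one reads off the degree shift $m_\De$ by taking the limit at $r=0$), reducing matters to a solution whose energy is asymptotically concentrated in a self-similar region and which is non-radiative in the exterior.

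\emph{Step 1: compactness of the soliton core.} Using a Bahouri--G\'{e}rard-type profile decomposition adapted to the equivariant scaling~\eqref{eq:uscale}, together with the rigidity theorem for non-radiative finite-energy equivariant wave maps (in the spirit of C\^{o}te, C\^{o}te--Kenig--Lawrie--Schlag, Kenig--Lawrie--Schlag, Duyckaerts--Kenig--Merle), I would show that the reduced solution decomposes along a sequence $t_n$ into a finite ordered sum $\sum_{j=1}^N \iota_j(\bs Q_{\lam_j(t_n)}-\bs\pi)$ with separated scales $\lam_j(t_n)/\lam_{j+1}(t_n)\to 0$, plus an $\E$-error tending to zero. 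The essential input is that a nonzero, non-radiative, $\E$-bounded equivariant wave map is, up to scaling and sign, exactly $\bs Q$; this relies on the explicit structure of the linearized operator $-\De + k^2 r^{-2}\cos(2Q)$, whose only threshold object is the scaling mode $\Lambda Q$.

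\emph{Step 2: continuous-in-time resolution.} This is the core. On a maximal interval on which $\bs u(t)$ stays $\de$-close (after subtracting $m\bs\pi + \bs u^*\lin$) to an $N$-bubble configuration, I would introduce $C^1$ modulation parameters $\lam_1(t)\gg\cdots\gg\lam_N(t)$ and signs $\iota_j$ via orthogonality of the error $\bs g(t)$ to the directions $\Lambda Q_{\lam_j(t)}$, giving the decomposition of the statement. The modulation equations then bound $\dot\lam_j/\lam_j$ by $\|\bs g(t)\|_\E$ plus the scale ratios. To propagate smallness of $\bs g$ and to pin down the $\lam_j$, I would combine three ingredients: (i) a \emph{localized virial / Morawetz identity} built from truncated scaling vector fields of the type ${}^{(X)}P$, ${}^{(X)}J$, yielding an almost-monotone functional controlling the interaction of consecutive bubbles; (ii) \emph{exterior energy estimates} bounding the part of $\bs g$ escaping to large radii; and (iii) an \emph{ejection lemma} describing the dynamics once the solution begins to leave the multi-bubble neighborhood. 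These give a ``no-return'' mechanism: once near an $N$-bubble, the configuration can only relax toward a well-separated $N$-bubble (the ratios $\lam_j/\lam_{j+1}$ decrease to $0$) and can never re-enter an $(N{\pm}1)$-bubble regime, which promotes the sequential decomposition of Step~1 to all $t$ near $T_+$, with $\lam_j$ continuous and convergent.

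\emph{Main obstacle.} The principal difficulty is excluding \emph{bubble collisions}: a priori two harmonic maps, possibly of opposite signs and with ratio-oscillating scales, could approach each other, interact strongly, and merge or re-separate, destroying both the monotonicity of the scale ratios and the continuity of the decomposition. Ruling this out requires a quantitative \emph{rigidity of collisions}: one argues that a solution which looks like a multi-bubble along one sequence of times but not uniformly would, after rescaling and passing to a limit, contain an exact ``pure collision'' solution, and then shows such solutions are incompatible with the energy and virial constraints (or do not exist). This builds on the authors' earlier analysis of two-bubble dynamics and on the classification of non-radiative solutions, and is where most of the technical weight lies --- the refined modulation and matched-asymptotic estimates between bubbles, and the virial-based exclusion of ``turning points'' in the scale ratios. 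In the blow-up case there is the additional bookkeeping of constructing $\bs u^*_0 \in \E_{m_\De,m}$ from the exterior part of $\bs u(t)$ near $T_+$ and verifying its limit at the origin defines the integer $m_\De$; the backward-in-time statement follows by the same argument applied to $t\mapsto \bs u(-t)$.
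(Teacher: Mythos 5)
Your Step 1 is essentially the known sequential resolution of C\^ote and Jia--Kenig, and the broad skeleton of Step 2 (modulation, a localized virial functional, an ejection lemma, a no-return argument) does point in the direction the paper actually takes. However, two of your load-bearing ingredients would fail here. First, you lean on channels-of-energy exterior estimates and on a rigidity/classification of non-radiative equivariant wave maps, both to peel off the radiation and to control the error and exclude collisions. In this problem the linearization is equivalent to the free wave in dimension $d=2k+2$, which is always even, and the even-dimensional exterior energy lower bounds are much weaker (they hold only for one component of the data); this is precisely why the classification of non-radiative multi-bubble solutions is not available for general $k$, why Duyckaerts--Kenig--Martel--Merle could carry out the channels approach only for $k=1$, and why the paper's proof is deliberately independent of the method of energy channels. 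Second, your stated no-return mechanism --- that once near an $N$-bubble the ratios $\lam_j/\lam_{j+1}$ can only decrease and the solution can never re-enter a different regime --- is not something one can prove, and it is not the actual obstruction: a priori $\bfd(t)$ can grow to size one and return to size $o(1)$ infinitely often, and ruling out exactly this oscillation is the whole content of the continuous-time upgrade. No monotonicity of the scale ratios holds; what the paper proves instead is a quantitative statement obtained by integrating a virial identity with a time-dependent cutoff over ``collision intervals'' with a minimal number $K$ of interior bubbles: a localized compactness lemma forces a definite amount of kinetic energy on subintervals whose length is comparable to the scale of the $K$-th bubble, while a refined modulation analysis (with a localized virial correction $\beta_j$ to $\lam_j'$ and the attractive sign of the interaction between adjacent opposite-sign bubbles) yields an ejection lemma bounding $\int \bfd(t)\,\ud t$ on the remaining portions, and the two bounds are incompatible.

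Your proposed resolution of the ``main obstacle'' --- rescale along a hypothetical collision, pass to a limit to extract an exact ``pure collision'' solution, and show such solutions do not exist --- also does not work as stated. A rescaled limit sees only one scale at a time, so the multi-scale collision structure is destroyed in the limit and no clean limiting object amenable to a rigidity theorem is produced; and the nonexistence of elastic collisions of pure multi-bubbles is, in the paper, a corollary of the main theorem (Proposition~\ref{prop:inelastic}), not an input one may assume. Any successful argument has to control the collision regime directly and quantitatively, which is what the collision-interval/virial/ejection machinery does; without it, your Step 2 has a genuine gap.
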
 


\begin{rem} This type of behavior is referred to as soliton resolution. 
A recent preprint by Duyckaerts, Kenig, Martel, and Merle proved Theorem~\ref{thm:main} in the case $k=1$ using the method of energy channels; see~\cite{DKMM}. 
Roughly, energy channels refer to measurements of the portion of energy that a linear or nonlinear wave radiates outside fattened light cones.
Such exterior energy estimates were introduced by Duyckaerts, Kenig, and Merle ~\cite{DKM4} in their proof of the soliton resolution conjecture
for the radial energy critical NLW in $3$ space dimensions; see also~\cite{DKM7, DKM8, DKM9} for the treatment of all odd dimensions. 
The approach we take to prove Theorem~\ref{thm:main} is independent of the method of energy channels. 
\end{rem}

\begin{rem}
The soliton resolution problem is inspired by the theory of completely integrable systems, e.g.,~\cite{Eck-Sch, Sch, SA},  motivated by numerical simulations,~\cite{FPU, ZK}, and by the bubbling theory of harmonic maps in the elliptic and parabolic settings~\cite{Struwe85, Qing, QT, Top-JDG, Topping04}; see also~\cite{DJKM1, DKM9, DKMM} for discussions on the history of the problem. 
\end{rem} 

\begin{rem}
Our method establishes the exact analog of Theorem~\ref{thm:main} in the case of the equivariant Yang-Mills equation, by making the usual analogy between equivariant Yang-Mills and $k=2$-equivariant wave maps; see Cazenave, Shatah, and Tahvildar-Zadeh~\cite{CST98} for the formulation.  There, the harmonic map is replaced by the first instanton. 
\end{rem}

\begin{rem} 
Theorem~\ref{thm:main} is a qualitative description of the dynamics of all finite energy solutions to~\eqref{eq:wmk}.  A natural, challenging question is to ask which types of configurations of solitons and radiation are realized in solutions. The first results of this nature were constructions of solutions blowing up in finite time by bubbling off a single harmonic map by Krieger, Schlag, and Tataru~\cite{KST}, Rodnianski and Sterbenz~\cite{RS}, and Rapha\"el and Rodnianski~\cite{RR}. In~\cite{JJ-AJM}, the first author constructed a solution exhibiting more than one bubble in the decomposition, showing the existence of a solution that forms a $2$-bubble in infinite time with zero radiation in equivariance classes $k \ge 2$. In~\cite{R19} Rodriguez showed that no such $2$-bubble occurs in the case $k=1$, proving that the only non-scattering solution with energy $= 2E(\bs Q)$ blows up by bubbling of a single harmonic map in finite time, while radiating $\bs u^*_0 = - \bs Q$.  It is not known if there are any solutions with more than one bubble in the decomposition when $k=1$.  
\end{rem}

It is natural to ask about the fate of solutions with more than one bubble in the decomposition in the opposite time direction.
 An answer to this question was given by the authors in~\cite{JL1} for the $2$-bubble solution $\bs u_{(2)}(t)$ constructed by the first author in~\cite{JJ-AJM}. We showed that any $2$-bubble in forward time must scatter freely in backwards time.  When the scales of the bubbles become comparable, this ‘collision’ completely annihilates the $2$-bubble structure and the entire solution becomes free radiation, i.e., the collision is inelastic. Viewing the evolution of $\bs u_{(2)}(t)$ in forward time, this means that the $2$-soliton structure emerges from pure radiation, and constitutes an orbit connecting two different dynamical behaviors. We later showed in~\cite{JL2-regularity, JL2-uniqueness} that $\bs u_{(2)}(t)$ is the unique $2$-bubble solution up to sign, translation, and scaling in equivariance classes $k \ge 4$. 
  
 Crucial to the proof of scattering after the collision in the case of two bubbles is the fact that the $2$-bubble configurations considered in~\cite{JL1} are minimal in the sense that any solution in $\calE_{0, 0}$ with energy $< 2 E(\bs Q)$ must scatter (see~\cite{CKLS1}). While inelasticity of collisions is still expected in the case of solutions with more than two bubbles in one time direction,  such a solution can still exhibit bubbling behavior even after a collision that produces radiation -- for example a solution in $\calE_{0, 1}$ with three bubbles and no radiation in one direction could have one bubble with non-zero radiation in the other direction. While we do not consider such refined two-directional analysis here, a relatively straightforward corollary of the proof of Theorem~\ref{thm:main} is that there can be \emph{no elastic collisions of pure multi-bubbles}, which we formulate as a proposition below. 


\begin{defn}
\label{def:pure}
With the notations from the statement of Theorem~\ref{thm:main}, we say that
$\bs u$ is a \emph{pure multi-bubble} in the forward time direction if
$\bs u\lin^* = 0$ in the case $T_+ = +\infty$, and $\bs u_0^* = 0$ in the case $T_+ < +\infty$.

We say that $\bs u$ is a pure multi-bubble in the backward time direction
if $t \mapsto \bs u(-t)$ is a pure multi-bubble in the forward time direction.
\end{defn}

\begin{prop}
\label{prop:inelastic}
Stationary solutions are the only pure multi-bubbles in both time directions. 
\end{prop}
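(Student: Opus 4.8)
The plan is the following. The ``if'' direction is clear: if $\bs u(t)\equiv\bs Q_{\mu,\sigma,m}$ is a stationary solution, then at every time it is in the exact form of the decomposition of Theorem~\ref{thm:main}, with a single bubble of constant scale $\mu$ (or no bubble, when $\sigma=0$), $\bs g\equiv0$, and no radiation; hence it is a pure multi-bubble in both time directions. For the converse, let $\bs u$ be a pure multi-bubble in both time directions on its maximal interval $(T_-,T_+)$, and let $N_\pm\ge0$ be the number of bubbles in the forward, resp.\ backward, decomposition. Since the radiation vanishes, the bubbles asymptotically decouple in energy, so $E(\bs u)=4\pi k N_+=4\pi k N_-$, and thus $N_+=N_-=:N$. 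If $N=0$, then $\bs u(t)=m\bs\pi+\bs g(t)$ with $\|\bs g(t)\|_{\E}\to0$, so by continuity and conservation of $E$ we get $E(\bs u)=0$, i.e.\ $\bs u\equiv m\bs\pi$, which is stationary. From now on $N\ge1$.

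The main tool is a monotonicity formula for the truncated virial (momentum) functional of the proof of Theorem~\ref{thm:main}, of the form
\[
\mathcal I(t)=\int_0^\infty \chi\!\left(\frac{r}{R(t)}\right)\,\p_t u(t,r)\,r\,\p_r u(t,r)\,r\,\ud r,
\]
for an appropriate cutoff $\chi$ and a scale $R(t)$ adapted to the configuration. One establishes: (i) $\mathcal I$ is bounded uniformly in $t$ (by the finite, conserved energy); (ii) on the time intervals where $\bs u$ is close to the $N$-bubble manifold --- which, by hypothesis, includes one-sided neighborhoods of $T_\pm$ --- inserting the decomposition into~\eqref{eq:wmk} gives
\[
\frac{\ud}{\ud t}\mathcal I(t)\le -c\left(\|\p_t u(t)\|_{L^2}^2+\sum_{j=1}^N\frac{\lam_j(t)}{\lam_{j+1}(t)}\right)+\text{error},
\]
with the error negligible relative to the main terms (using $\|\bs g(t)\|_{\E}+\sum_j\lam_j/\lam_{j+1}\to0$ near $T_\pm$, or its smallness elsewhere, and the smallness of $\dot R/R$); and (iii) using the modulation equations of the proof of Theorem~\ref{thm:main}, that the modulation parameters stabilize as $t\to T_\pm$, that there is no concentration (hence $T_\pm=\pm\infty$), and that $\mathcal I(t)\to0$ as $t\to\pm\infty$.

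It remains to make the differential inequality global. Arguing as in the proof of Theorem~\ref{thm:main}, a radiation-free solution cannot leave a fixed small neighborhood of the $N$-bubble manifold: any excursion away from it would produce a dispersive component that escapes and would register in the radiation at $t=\pm\infty$, contradicting purity. Hence the decomposition, and the differential inequality, hold on all of $\R$, so $\mathcal I$ is bounded, non-increasing, and tends to $0$ at both ends; therefore $\mathcal I\equiv0$, whence $\tfrac{\ud}{\ud t}\mathcal I\equiv0$, which forces \emph{both} $\p_t u\equiv0$ \emph{and} $\sum_{j=1}^N\lam_j/\lam_{j+1}\equiv0$. The first identity means $\bs u$ solves the static equation; the second forces $N\le1$. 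In all cases $\bs u$ is then either the constant $m\bs\pi$ or a single harmonic map $\bs Q_{\mu,\sigma,m}$, i.e.\ a stationary solution.

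I expect the main obstacles to be, first, the virial estimate itself --- choosing $R(t)$, extracting the bubble--bubble interaction with the correct (repulsive) sign, controlling the convergence $\mathcal I(t)\to0$, and absorbing the error terms involving the radiation-free remainder $\bs g$ into the main terms --- which is the technical heart of the proof of Theorem~\ref{thm:main}, reused here; and second, the ``no escape'' step, propagating closeness to the multi-bubble manifold from neighborhoods of $\pm\infty$ to all of $\R$, which relies on the dispersive/concentration-compactness machinery of that proof. As usual, the case $k=1$, in which the scaling mode $\Lam Q=r\p_r Q$ fails to be square-integrable, requires an additional truncation, carried out as in the main proof.
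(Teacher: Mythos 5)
Your overall strategy (virial functional plus the known asymptotics at $T_\pm$) is in the right family, but the key differential inequality you build it on is not available, and the one ingredient that actually makes the argument close is missing. The virial identity (Lemma~\ref{lem:vir}) gives $\fv'(t)=-\int(\p_t u\,\chi_\rho)^2\,r\ud r+\Omega_{\rho(t)}(\bs u(t))$, and the best one can say about the error near a multi-bubble is $|\Omega_\rho(\bs u(t))|\lesssim \bfd(t)\simeq\max_{j\in\calA}(\lam_j/\lam_{j+1})^{k/2}$ (Lemma~\ref{lem:virial-error}); there is no coercive term $-c\sum_j\lam_j/\lam_{j+1}$ in $\fv'$, and the interactions are \emph{not} repulsive in general: for alternating signs they are attractive (this is what makes the pure $2$-bubble of~\cite{JJ-AJM} exist), and they enter the analysis only through the second-order modulation equations for $\beta_j'$, not through the virial functional. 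Moreover the error $O(\bfd(t))$ is much \emph{larger} than $\sum_j\lam_j/\lam_{j+1}\sim\bfd(t)^2$ and bears no pointwise relation to $\|\p_t u\|_{L^2}^2$, so it cannot be ``absorbed into the main terms'' as you propose. The missing idea is precisely how the paper controls $\int\bfd(t)\,\ud t$: apply Lemma~\ref{lem:mod-static} with $\theta=0$ (no radiation) to get $\bfd(t)\lesssim\max_{j\in\calA}(\lam_j/\lam_{j+1})^{k}$, then use the ejection Lemma~\ref{lem:ejection} at local minima of $U$ to obtain $\int_{t_+}^{T_+}\bfd(t)\,\ud t\lesssim\bfd(t_+)^{2/k}\lam_N(t_+)<\infty$. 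This integrability, combined with $|\lam_N'|\lesssim\bfd$, is also what shows the scales stabilize and hence $T_\pm=\pm\infty$ — a step you list in (iii) but give no mechanism for. Without it, both the globality of the solution and the summability of the virial error are unproven, and your final deduction ``$\mathcal I\equiv0$ forces $\sum_j\lam_j/\lam_{j+1}\equiv0$, hence $N\le1$'' rests on the nonexistent coercivity (it is also unnecessary: once $\p_t u\equiv0$, the classification of finite-energy static solutions does the rest).

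The ``no escape'' step is a second genuine gap: the claim that a radiation-free solution cannot leave a small neighborhood of the $N$-bubble manifold, ``because any excursion would register in the radiation,'' is exactly the kind of inelasticity statement the whole paper is devoted to proving, and you cannot invoke it as a heuristic. Fortunately it is not needed. Since Definition~\ref{def:pure} is phrased via Theorem~\ref{thm:main}, purity in both directions already gives $\lim_{t\to T_\pm}\bfd(t)=0$, so the refined modulation estimates apply on one-sided neighborhoods of $T_\pm$; on the remaining \emph{compact} time interval $[T_1,T_2]$ one simply takes a constant cutoff radius $\rho(t)=R$ and uses the crude bound $|\Omega_R(\bs u(t))|\lesssim E(\bs u(t);R,2R)$, which is small for $R$ large. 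Integrating the virial identity over $\R$ with this fixed $R$, letting $R\to\infty$ and the total error $\delta\to0$, yields $\p_t u\equiv0$ directly, with no global proximity to the manifold ever required.
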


\begin{rem}
  We note that Proposition~\ref{prop:inelastic} was also proved in the case $k=1$ for~\eqref{eq:wmk} in the recent preprint~\cite{DKMM}, as well as for the energy critical focusing NLW under radial symmetry and in odd space dimensions in~\cite{DKM4, DKM9},  all via a different approach based on energy channels. As mentioned above, the case of $N=2$ bubbles was already considered in~\cite{JL1}.   See ~\cite{MM11, MM11-2, MM18} for more regarding the inelastic soliton collision problem for non-integrable PDEs. 
\end{rem}

\subsection{History of progress on the problem}  

Our proof of Theorem~\ref{thm:main} is built on top of two significant partial results, namely (1) that the radiation term, $\bs u^*\lin$ in the global case and $\bs u^*_0$ in the blow-up setting,  can be identified continuously in time, and (2) that the resolution is known to hold along a  well-chosen sequence of times. The result (1) was established in~\cite{CKLS1, CKLS2, Cote15, JK} as a consequence of the classical work of Shatah and Tahvildar-Zadeh~\cite{STZ92}, and we make explicit use of this fact. The latter result (2) was proved by C\^ote~\cite{Cote15} and Jia and Kenig~\cite{JK} using Struwe's classical bubbling analysis~\cite{Struwe}, many ideas from Duyckaerts, Kenig, and Merle's seminal works~\cite{DKM1, DKM2, DKM3}, and several new insights particular to~\eqref{eq:wmk}. While the sequential resolution certainly inspires part of our argument, we cannot use it simply as a black box,  but rather we revisit the proof and derive more precise information from the analysis of C\^ote, and Jia and Kenig as we explain in the next section. 

We discuss these prior results in more detail. To unify the blow-up and global-in-time settings we make the following conventions. Consider a finite energy wave map  $\bs u(t) \in \E_{\ell, m}$. We assume that either $\bs u(t)$ blows up in backwards time at $T_-=0$ and is defined on an interval $I_*:=(0, T_0]$, or $\bs u(t)$ is global in forward time and defined on the  interval $I_* := [T_0, \infty)$ where in both cases $T_0>0$. We let $T_* := 0$ in the blow-up case and $T_* := \infty$ in the global case. 

\textbf{Extraction of the radiation.} 
%
Below we will use the notation $\cE(r_1, r_2)$ to denote the local energy norm
\begin{equation}
\|\bs g\|_{\cE(r_1, r_2)}^2 := \int_{r_1}^{r_2} \Big((\dot g)^2 + (\partial_r g)^2 + \frac{k^2}{r^2}g^2\Big)\,r\vd r,
\end{equation}
By convention, $\cE(r_0) := \cE(r_0, \infty)$ for $r_0 > 0$. The local nonlinear energy is denoted $E(\bs u_0; r_1, r_2)$. 
We adopt similar conventions as for $\cE$ regarding the omission
of $r_2$, or both $r_1$ and $r_2$.

\begin{thm}[Identification of the radiation] \emph{\cite[Propositions 5.1, 5.2]{Cote15}}  \label{thm:stz} 
Let $\bs u(t) \in \E_{\ell, m}$ be a finite energy wave map on an interval $I_*$ as above. Then, the limit $\pi \Z \ni m_\Delta \pi  :=  \lim_{t \to T_*}u(t, \frac 12 t)$  exists, and there is an integer $m_\infty \in \Z$,  a finite energy wave map $\bs u^*(t) \in \E_{0, m_\infty}$  called the radiation, and a function $\rho : I_* \to (0,\infty)$ that satisfies, 
\begin{equation} \label{eq:radiation} 
\lim_{t \to T_*} \big((\rho(t) / t)^k + \|\bs u(t) -  \bs u^*(t) - m_\Delta \bs\pi\|_{\cE(\rho(t))}^2\big) = 0.
\end{equation}
Moroever, for any $\al \in (0, 1)$,  
\EQ{ \label{eq:rad-rho} 
E( \bs u^*(t); 0, \al t) \to 0 \mas t \to T_*. 
}
\end{thm}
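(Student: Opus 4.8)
\medskip

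The plan is to follow the route of \cite{STZ92, Cote15}. I would treat the global case $I_* = [T_0,\infty)$, $T_* = +\infty$ in detail; the blow-up case $I_* = (0,T_0]$, $T_* = 0$ is analogous and is essentially the original setting of \cite{STZ92}. \emph{Step 1 (energy flux).} Integrating the divergence-free stress-energy tensor of \eqref{eq:wmk} over truncated exterior cones shows that for every $A \in \R$ the function $t \mapsto E(\bs u(t); t+A,\infty)$ is non-increasing with nonnegative flux through the outgoing null walls; since this quantity tends to $0$ as $A \to +\infty$ at a fixed time by finiteness of the energy, one obtains: for each $\e > 0$ there is $A(\e)$ with $E(\bs u(t); t + A(\e),\infty) < \e$ for all large $t$. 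Thus asymptotically the energy is trapped in $\{r \lesssim t\}$ and the flux through null cones is integrable in time.

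\emph{Step 2 (no energy in the self-similar region).} The core of the proof is to produce $\rho(t)$ with $\rho(t)/t \to 0$ such that
\[
E(\bs u(t); \rho(t), \al t) \to 0 \mas t \to T_*, \quad \text{for every } \al \in (0,1).
\]
This is the analogue for the global problem of Shatah--Tahvildar-Zadeh's non-concentration of energy at the tip of the light cone. I would prove it by a virial/Morawetz identity, testing \eqref{eq:wmk} against a truncated dilation multiplier $\chi(r/t)\,(r\p_r u + \tfrac12 u)$ adapted to the self-similar variable, exploiting the repulsivity of the potential $k^2/r^2$, the favourable structure of the term $k^2 r^{-2}\sin(2u)/2$ near $u \in \pi\Z$, and the time-integrability of the flux from Step 1. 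Once the annular energy is shown to vanish, Cauchy--Schwarz, $|u(t,r_1)-u(t,r_2)|^2 \le \big(\!\int (\p_r u)^2 \udr\big)\big(\!\int r^{-1}\dr\big)$, makes $u(t,\cdot)$ nearly constant, $\equiv c(t)$, on $[\tfrac14 t, \tfrac34 t]$; then $k^2 \sin^2 c(t)\,\log 3 \lesssim E(\bs u(t); \tfrac14 t, \tfrac34 t) \to 0$ forces $\dist(c(t),\pi\Z) \to 0$, and continuity of $t \mapsto u(t,\tfrac12 t)$ pins $c(t)$ to a fixed $m_\De\pi$. This yields the first assertion and identifies $\rho(t)$.

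\emph{Step 3 (extraction and verification).} For large $t_0$, let $\bs u^*_{t_0}$ solve \eqref{eq:wmk} with data at $t_0$ equal to $\bs u(t_0) - m_\De\bs\pi$ on $\{r > \rho(t_0)\}$ and interpolated smoothly down to $0$ on $\{0 < r < \rho(t_0)\}$; this has finite energy and lies in $\calE_{0,m_\infty}$ with $m_\infty := m - m_\De$. By finite speed of propagation $\bs u^*_{t_0}(t) = \bs u(t) - m_\De\bs\pi$ on $\{r > \rho(t_0) + (t-t_0)\}$, and applying the flux identity of Step 1 to differences shows $\{\bs u^*_{t_0}\}_{t_0}$ is Cauchy in $C^0_{\loc}(\R;\E)$; its limit is the radiation $\bs u^*(t) \in \calE_{0,m_\infty}$. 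For \eqref{eq:radiation}, split $\|\bs u(t) - \bs u^*(t) - m_\De\bs\pi\|_{\cE(\rho(t))}^2$ into the pieces over $(\rho(t), Ct)$ and $(Ct,\infty)$ for fixed $C \in (0,1)$: the outer piece is negligible by the finite-speed agreement (take $t_0 \asymp t$), while on the inner piece $u(t,\cdot) \approx m_\De\pi \approx u^*(t,\cdot) + m_\De\pi$ by Step 2, so both have small $\cE$-energy there; and $(\rho(t)/t)^k \to 0$ is arranged in the choice of $\rho$. For \eqref{eq:rad-rho}, on $(\rho(t),\al t)$ the energy of $\bs u^*$ matches that of $\bs u - m_\De\bs\pi$ up to $o(1)$ and vanishes by Step 2, while on $(0,\rho(t)]$ it vanishes because $\bs u^*$ was built from a small interpolation there, kept negligible on the scale $\rho(t) = o(t)$ by finite speed (equivalently, one re-runs Step 2 for $\bs u^*$ itself).

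\emph{Main obstacle.} Step 2 is the crux: getting genuine (not merely time-averaged) decay of the self-similar-region energy, with a quantitative rate good enough to fix a single $\rho(t) = o(t)$ uniformly over $\al \in (0,1)$. The subtlety is that the nonlinear term $k^2 r^{-2}\sin(2u)/2$ is only sign-favourable once $u$ is already close to $\pi\Z$ --- which is part of what one is trying to prove --- so the virial argument must be arranged to bootstrap this, drawing coercivity from the flux-integrability of Step 1. The subsequent matching in Step 3 is comparatively soft given finite speed of propagation and flux monotonicity, modulo careful bookkeeping of the topological indices $\ell, m, m_\De, m_\infty$.
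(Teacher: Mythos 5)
The paper does not actually prove this theorem: it is imported from C\^ot\'e \cite[Propositions 5.1, 5.2]{Cote15}, and the remark following it records the external inputs, namely the Shatah--Tahvildar-Zadeh non-concentration of energy in the self-similar region ($E(\bs u(t); \al t, t) \to 0$ in the blow-up case, and the refinement $\lim_{A\to\infty}\limsup_{t\to\infty} E(\bs u(t); \al t, t-A)=0$ from \cite{CKLS2} in the global case), the extraction of the exterior/radiation part via finite speed of propagation and the local Cauchy theory, and, for \eqref{eq:rad-rho} when $T_*=\infty$, the estimates of \cite{CKS} for the free wave in even dimensions. At the level of strategy your outline (flux monotonicity on exterior cones, self-similar non-concentration, extraction of $\bs u^*$ by matching outside the light cone, a single scale $\rho(t)=o(t)$) follows the same route as the cited works.

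There are, however, genuine gaps. First, your Step 2 is precisely the STZ/\cite{CKLS2} theorem, and the sketched Morawetz argument (multiplier $\chi(r/t)(r\p_r u+\tfrac12 u)$, sign conditions that are only favourable once $u$ is already near $\pi\Z$, a bootstrap from flux integrability) is not a proof; you flag this as the main obstacle, but note also that what is needed and available is vanishing on $(\al t, t)$ (resp.\ $(\al t, t-A)$) for each fixed $\al$, after which the single function $\rho(t)=o(t)$ comes from a soft diagonal extraction --- there is no need, and no mechanism, to prove vanishing on $(\rho(t),\al t)$ directly and ``uniformly in $\al$''. Second, and more seriously, your justification of \eqref{eq:rad-rho} in the global case is incorrect: vanishing of the radiation's energy on the whole interior region $\{r\le \al t\}$ is not a self-similar non-concentration statement and does not follow from finite speed of propagation (the influence of the interpolation performed at time $t_0$ grows like $t-t_0$, and incoming waves re-enter the interior region), nor from ``re-running Step 2 for $\bs u^*$''. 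One must show that $\bs u^*$ scatters and invoke the fact, proved in \cite{CKS} for even dimensions, that finite-energy free waves asymptotically carry no energy in $\{r\le\al t\}$; in the blow-up case \eqref{eq:rad-rho} instead follows from continuity of $\bs u^*$ at $t=T_*$ together with the shrinking of the region. Third, the claim that the family $\bs u^*_{t_0}$ is Cauchy ``by applying the flux identity of Step 1 to differences'' is not available: flux/energy monotonicity holds for solutions, not for differences of solutions of the nonlinear equation; comparing $\bs u^*_{t_0}$ and $\bs u^*_{t_1}$, and seeing that the limit is a globally defined finite-energy wave map in $\E_{0,m_\infty}$, requires a perturbative argument based on the smallness of the exterior-cone energy from Step 1 together with the local Cauchy/stability theory, as in \cite{CKLS2, Cote15}.
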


\begin{rem} 
In the global setting, i.e., $I_* = [T_0, \infty)$ we must have $m_\infty = 0$ and the linear wave $\bs u\lin^*(t) \in \E$ that appears in Theorem~\ref{thm:main} is the unique solution to the linear equation~\eqref{eq:lin} satisfying,
\EQ{
\| \bs u^*(t) - \bs u\lin^*(t) \|_{\E} \to 0 \mas n \to \infty, 
} 
which one obtains via the existence of wave operators; see Lemma~\ref{lem:Cauchy}. In the finite time blow-up setting the final radiation $\bs u^*_0 \in \E_{m_\De,m}$ that appears in Theorem~\ref{thm:main} is shifted initial data for $\bs u^*(t)$, i.e., the radiation $\bs u^*(t)$ in Theorem~\ref{thm:stz} satisfies $\bs u(t, r)  =  m_\De \bs \pi+  \bs u^*(t, r)$ for $r > t$. With this definition and energy conservation,  Theorem~\ref{thm:seq} implies the energy identity, 
\EQ{ \label{eq:en-ident} 
E( \bs u) = N E( \bs Q) + E( \bs u^*). 
} 
We remark that~\eqref{eq:rad-rho} in the case $T_* = \infty$ uses the estimates for the even dimensional free scalar wave equation proved by C\^ote, Kenig, and Schlag in~\cite{CKS}.

The identification of $\bs u^*(t)$ and the vanishing~\eqref{eq:radiation} uses fundamental technique of Shatah and Tahvildar-Zadeh~\cite{STZ92} (see also Christodoulou and Tahvildar-Zadeh~\cite{CTZduke} for the case of spherically symmetric wave maps);  in~\cite{STZ92} it is proved that every singular wave map has asymptotically no energy  in the self-similar region of the cone, i.e., 
\EQ{
E( \bs u(t); \al t, t)  \to 0 \mas t \to T_*
}
for each $\al \in (0, 1)$ in the case $T_*  = 0$, and 
\EQ{
\lim_{A \to \infty}\limsup_{t \to T_*} E( \bs u(t), \al t, t-A) = 0
}
in the case $T_* = \infty$. Note that the latter refined estimate for globally defined wave maps was proved in~\cite{CKLS2} using methods from~\cite{CTZduke, STZ92}.  
\end{rem} 

\begin{rem} 
The radiation field can be identified in several other contexts and by different means. For example, Tao accomplished this in~\cite{Tao-07} for certain high dimensional NLS. For critical nonlinear waves with power-type nonlinearities, the radiation field can be identified even outside radial symmetry; see the work of Duyckaerts, Kenig, and Merle~\cite{DKM19}. 
\end{rem} 

\textbf{Sequential soliton resolution}. 
The first result in this direction was Struwe's bubbling theorem~\cite{Struwe}, which showed that any smooth solution to~\eqref{eq:wmk} that develops a singularity in finite time must do so by bubbling off at least one harmonic map, locally in space, along some sequence of times. 

A deep insight of Duyckaerts, Kenig, and Merle, proved in~\cite{DKM3} for the energy critical NLW, is that once the linear radiation is subtracted from the solution, the entire remainder should exhibit strong sequential compactness -- it decomposes into a finite sum of asymptotically decoupled elliptic objects, in our case these are stationary harmonic maps, along at least one time sequence, up to an error that vanishes in the energy space. A crucial tool in proving such a compactness statement is the remarkable theory of profile decompositions for dispersive equations developed by Bahouri and G\'erard~\cite{BG}. However, after finding the profiles and their space-time concentration properties (in our case their scales) via the main result in~\cite{BG}, one must identify them as elliptic objects (solitons) by some means, and then prove that the error vanishes in the sense of energy, rather the weaker form of compactness (vanishing in the sense of a Strichartz norm) given by~\cite{BG}.   In the wave map case, this program was carried out by C\^ote, Kenig, the second author, and Schlag~\cite{CKLS1, CKLS2} (using the even dimensional exterior energy estimates proved by C\^ote, Kenig, and Schlag in~\cite{CKS}) for solutions to~\eqref{eq:wmk} with $k=1$ in $\calE_{0, 1}$ with $E < 3 E( \bs Q)$. The latter condition restricted the number of possible configurations to those with a single bubble, and in this special case the sequential resolution could easily be upgraded to a continuous one using the variational characterization of $\bs Q$ and the coercivity of the energy functional. 
 
In our setting, the sequential resolution was proved by C\^ote~\cite{Cote15} in the case $k=1$, and Jia and Kenig~\cite{JK} in the case $k =2$, namely that Theorem~\ref{thm:main} holds along a well-chosen sequence of times. These works used the bubbling theory of Struwe~\cite{Struwe} to identify the profiles as harmonic maps, and in the latter paper the authors used a novel nonlinear multiplier identity to obtain the convergence of the error in the energy space -- in fact, we make use of this same identity in this work, see Section~\ref{sec:compact}. A minor technical observation, which we explain in Remark~\ref{rem:seq}, yields their result in all equivariance classes $k \in \N$. Before stating it, we introduce some notation.

\begin{defn}[Multi-bubble configuration] \label{def:multi-bubble} 
Given $M \in \{0, 1, \ldots\}$, $m \in \bZ$, $\vec\iota = (\iota_1, \ldots, \iota_M) \in \{-1, 1\}^M$
and an increasing sequence $\vec\lambda = (\lambda_1, \ldots, \lambda_M) \in (0, \infty)^M$,
a \emph{multi-bubble configuration} is defined by the formula
\begin{equation}
\bs\calQ(m, \vec\iota, \vec\lambda; r) := m\bs\pi + \sum_{j=1}^M\iota_j\big(\bs Q_{\lambda_j}(r) - \bs\pi\big).
\end{equation}
\end{defn}
\begin{rem}
If $M = 0$, it should be understood that $\bs \calQ(m, \vec\iota, \vec\lambda; r) = m\bs\pi$
for all $r \in (0, \infty)$, where $\vec\iota$ and $\vec\lambda$ are $0$-element sequences,
that is the unique functions $\emptyset \to \{-1, 1\}$ and $\emptyset \to (0, \infty)$, respectively. 

\end{rem}

We state the main theorems from C\^ote~\cite{Cote15} and Jia, Kenig~\cite{JK} using this notation. 
\begin{thm}[Sequential soliton resolution] \emph{\cite[Theorem 1.1]{Cote15}, \cite[Theorem 1.2]{JK}}  \label{thm:seq}  
Let $k \in \N$, $\ell, m \in \Z$, and let $\bs u(t) \in \E_{\ell, m}$ be a finite energy wave map on an interval $I_*$ as above.  Let $m_\Delta,  m_\infty \in \Z$, and  the radiation $\bs u^*(t) \in \E_{0, m_\infty}$ be as in Theorem~\ref{thm:stz}. Then, there exists an integer $N \ge 0$, a sequence of times $t_n \to T_*$, signs $\vec \iota_n \in \{-1, 1\}^N$, and scales $\vec \lam_n \in (0, \infty)^N$  such that, 
 \EQ{
 \lim_{n \to \infty} \Big( \| \bs u(t_n) - \bs u^*(t_n)  - \bs \calQ( m, \vec \iota_n, \vec \lam_n) \|_{\E} + \sum_{j =1}^{N}  \frac{\lam_{n, j}}{\lam_{n, j+1}} \Big)  = 0, 
 }
 where above we use the convention $\lam_{n, N+1} := t_n$. 

\end{thm}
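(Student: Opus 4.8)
The plan is to run the concentration--compactness/rigidity scheme of Duyckaerts--Kenig--Merle \cite{DKM3}, adapted to \eqref{eq:wmk} as in C\^ote \cite{Cote15} and Jia--Kenig \cite{JK}. Fix the radiation $\bs u^*(t)$ and the integers $m_\Delta, m_\infty$ furnished by Theorem~\ref{thm:stz}, and set $\bs v(t) := \bs u(t) - \bs u^*(t) - m_\Delta\bs\pi$; by \eqref{eq:radiation} this ``interior part'' lives, modulo an $\cE$-negligible tail, inside $\{r \lesssim \rho(t)\}$ with $\rho(t) \ll t$, and its $\E$-norm is bounded uniformly in $t$. To select a good time sequence I would run a Morawetz/virial computation for \eqref{eq:wmk} with the scaling generator $r\p_r$ in self-similar variables ($y = r/t$, resp.\ $y = r/(T_+{-}t)$) --- this is the Shatah--Tahvildar-Zadeh estimate underlying Theorem~\ref{thm:stz} --- to get a finite space--time bound, then combine it with the interior vanishing \eqref{eq:rad-rho} of the radiation and pigeonhole over dyadic time windows. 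This produces $t_n \to T_*$ along which the second ($L^2$) component of $\bs v(t_n)$, localized to $\{r \lesssim \rho(t_n)\}$, tends to $0$; along this subsequence $\bs v(t_n)$ is, in effect, asymptotically \emph{static} in $\E$.

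Next I would apply the Bahouri--G\'erard profile decomposition \cite{BG} for the free equation \eqref{eq:lin} to the bounded sequence $\bs v(t_n) \in \E$. Equivariance rules out translations, so one extracts profiles $\bs V^j$, scales $\lambda_{n,j}$ that are pairwise asymptotically orthogonal ($\lambda_{n,j}/\lambda_{n,j'} + \lambda_{n,j'}/\lambda_{n,j} \to 0$ for $j \ne j'$), a remainder $\bs w_n^J$ whose Strichartz norm is $o_J(1)$ uniformly in $n$, and the Pythagorean expansion of $\|\cdot\|_\E^2$. Evolving the nonlinear flow from each nonzero profile and invoking Struwe's bubbling analysis \cite{Struwe} --- using the smallness of the kinetic component from the previous step together with the absence of energy concentration in the self-similar region --- one sees that the nonlinear solution generated by each energy-carrying $\bs V^j$ is non-radiating, hence by the classification of equivariant harmonic maps equals $\iota_j(\bs Q_{\mu_j} - \bs\pi)$ up to a sign and a scale (which I absorb into $\lambda_{n,j}$), while zero profiles and the trivial constant carry no energy. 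Thus the number of nonzero profiles is at most $E(\bs u)/E(\bs Q)$, hence finite; denote it $N$ and reorder so that the $\lambda_{n,j}$ increase. The orthogonality of scales then gives $\sum_{j=1}^N \lambda_{n,j}/\lambda_{n,j+1} \to 0$, the outermost ratio $\lambda_{n,N}/t_n \to 0$ coming from $\rho(t_n)/t_n \to 0$.

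It remains to upgrade the profile-decomposition conclusion $\|\bs w_n^J\|_{S} \to 0$ to $\|\bs w_n^J\|_\E \to 0$. Since $\|\bs Q_\lambda - \bs\pi\|_\E$ is independent of $\lambda$, the Pythagorean expansion reduces this to showing that the extracted bubbles exhaust the energy of $\bs v(t_n)$ in the limit, which is the content of the energy identity \eqref{eq:en-ident}. Here I would invoke the nonlinear multiplier identity of Jia--Kenig: testing \eqref{eq:wmk} for $\bs v(t_n)$ on a short time interval against a truncated scaling vector field, one bounds the leftover energy by (i) the explicit self-interactions of the profiles, which decouple because the scale ratios tend to $0$, and (ii) the bubble--remainder and bubble--bubble cross terms, controlled using the sign structure of the potential $k^2 r^{-2}$ and the decay $Q(r) \sim r^k$, $\pi - Q(r) \sim r^{-k}$. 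Together with conservation of energy, \eqref{eq:rad-rho}, and the good-times condition, this forces the leftover energy to $0$; a diagonal choice $J = J(n) \to \infty$ then yields $\bs\calQ(m, \vec\iota_n, \vec\lambda_n)$ with $\|\bs v(t_n) - \bs\calQ(m, \vec\iota_n, \vec\lambda_n)\|_\E \to 0$.

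The main obstacle is this last step: converting the weak, dispersive-norm smallness of the remainder into smallness in the \emph{energy} norm, while simultaneously ruling out that energy is carried off by a genuinely non-stationary (radiating) profile. This requires pairing the delicate Struwe--Shatah--Tahvildar-Zadeh local analysis (energy-flux vanishing and self-similar monotonicity, which is what forces every energy-carrying profile to be static) with the Jia--Kenig virial identity, which closes the energy budget in the presence of many bubbles at widely separated scales interacting with the error. The blow-up and global regimes must be handled in parallel --- the former via finite speed of propagation and interior decay of the radiation, the latter additionally via the even-dimensional exterior energy estimates of C\^ote--Kenig--Schlag \cite{CKS} near the light cone $r \sim t$ --- and the passage from $k \in \{1,2\}$ to all $k \in \N$ reduces to checking that these inputs (harmonic-map asymptotics, the multiplier identity, the exterior estimates) carry over, cf.\ Remark~\ref{rem:seq}.
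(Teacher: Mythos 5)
Your overall scheme is the one the paper itself uses: it derives Theorem~\ref{thm:seq} from the localized compactness Lemma~\ref{lem:compact} plus the Shatah--Tahvildar-Zadeh averaged kinetic-energy vanishing (see Remark~\ref{rem:seq}), and the proof of that lemma is exactly your pipeline --- good-time selection via a virial/Morawetz quantity, Bahouri--G\'erard profile decomposition, Struwe bubbling to identify profiles as harmonic maps, and the Jia--Kenig multiplier identity (Lemma~\ref{lem:jk}) to upgrade the remainder from Strichartz-norm smallness to energy-norm smallness. So the route is right; but two steps are stated with the wrong (or an insufficient) mechanism.

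First, your claim that Struwe's analysis shows ``each energy-carrying profile is non-radiating, hence a harmonic map'' only covers the \emph{centered} profiles ($t_{n,i}=0$). The decomposition also produces traveling profiles with $-t_{n,i}/\sigma_{n,i}\to\pm\infty$; for those the rescaled solution is evaluated at rescaled times tending to $\pm\infty$, outside any window on which the averaged kinetic energy is controlled, so Lemma~\ref{lem:bubbling} does not apply, and such a profile is by construction pure radiation rather than a candidate harmonic map. The ones with $|t_{n,i}|\to\infty$ escape to the light cone and are absorbed by $\bs u^*$, but the ones with $t_{n,i}$ bounded and $\sigma_{n,i}\to 0$ require the separate argument of \cite{DKM3erratum}: the vanishing $\|\partial_t u(t_n)\|_{L^2}\to 0$ forces the decomposition to be time-symmetric, pairing each incoming profile with an outgoing twin (Lemma~\ref{lem:sym-profile}); evolving the nonlinear flow for a time $\sim\tau_0\sigma_{n,i_0}$ and applying the weak-limit Lemma~\ref{lem:sym} then shows the profile is periodic in time, hence zero. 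Second, your pigeonhole gives pointwise smallness of the local kinetic energy at $t_n$, but Struwe's lemma needs the \emph{time-averaged} kinetic energy to be small on windows of length $\sim\sigma\lambda_{n,j}$ for every profile scale $j$ simultaneously; this is why the paper selects $t_n$ so that the Hardy--Littlewood maximal function of $t\mapsto\int_0^{R_n}|\partial_t u|^2\,r\,\vd r$ vanishes, and does so jointly with the non-positivity of the Jia--Kenig virial functional at $t_n$ (Lemma~\ref{lem:maximal}), the latter being the input your final energy-accounting step actually requires. Both repairs are available in \cite{Cote15, JK, DKM3erratum}, but as written your proposal does not supply them.
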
 

\begin{rem} 
The Duyckaerts, Kenig, and Merle approach from~\cite{DKM3} to sequential soliton resolution has been successful in other settings. The same authors with Jia proved the sequential decomposition for the full energy critical NLW (i.e., not assuming radial symmetry) in~\cite{DJKM1} and for wave maps outside equivariant symmetry for data with energy slightly above the ground state~\cite{DJKM2}, where the perturbative regularity theory of Tao~\cite{Tao2} could be used; see also the bubbling theory of Grinis~\cite{Gri}. See also~\cite{CKLS3} for the radially symmetric energy critical NLW in four space dimensions, and~\cite{Rod16-adv} for the same equation in odd space dimensions. 
\end{rem}

\subsection{Summary of the proof: collision intervals and no-return analysis}

The challenging nature of bridging the  gap between Theorem~\ref{thm:seq}, which is the resolution along one sequence of times, and Theorem~\ref{thm:main} is apparent from the following consideration. The sequence $t_n \to T_*$ in Theorem~\ref{thm:seq} gives no relationship between the lengths of the time intervals $[t_n, t_{n+1}]$ and the concentration scales $\vec \lam_n$ of the various harmonic maps  in the decomposition. One immediate enemy 
is then the  possibility of \emph{elastic collisions}.
If colliding solitons could recover their shape after a collision, then one could potentially
encounter the following scenario: the solution approaches a multi-soliton configuration for a sequence of times,
but in between infinitely many collisions take place, so that there is no soliton resolution in continuous time.

We describe our approach.  Fix $\bs u(t) \in \E_{\ell, m}$,  a finite energy solution to~\eqref{eq:wmk} on the time interval $I_*$ as defined above. Let $N \ge 0$, $m_{\infty}, m_{\De} \in \Z$, and the radiation $\bs u^*(t) \in \E_{0, m_{\infty}}$ be as in Theorem~\ref{thm:seq}.  We define a 
\emph{multi-bubble proximity function} at each $t \in I_*$ by
\begin{equation} \label{eq:d-intro} 
\bfd(t) := \inf_{\vec \iota, \vec\lam}\bigg( \| \bs u(t) - \bs u^*(t) - \bs\calQ(m_\Delta, \vec\iota, \vec\lambda) \|_{\cE}^2 + \sum_{j=1}^{N}\Big(\frac{ \lam_{j}}{\lam_{j+1}}\Big)^{k} \bigg)^{\frac{1}{2}},
\end{equation}
where $\vec\iota := (\iota_{1}, \ldots, \iota_N) \in \{-1, 1\}^{N}$, $\vec\lambda := (\lambda_{1}, \ldots, \lambda_N) \in (0, \infty)^{N}$,  and $\lambda_{N+1} := t$. We note that $\bfd(t)$ is a continuous function on $I_*$.

With this notation, we see that Theorem~\ref{thm:seq} gives a monotone sequence of times $t_n \to T_*$ such that, 
\begin{equation} \label{eq:seq} 
\lim_{n \to \infty} \bfd(t_n) = 0.
\end{equation}
Theorem~\ref{thm:main} is an immediate consequence of showing that $\lim_{t \to T_*} \bfd(t) = 0$. We argue by contradiction, assuming that $\limsup_{t  \to T_*} \bfd(t) >0$.  This means that there is  some sequence of times where $\bs u(t) - \bs u^*(t)$ approaches an $N$-bubble and another sequence of times for which it stays bounded away from $N$-bubble configurations. It is natural to rule out this  behavior  by proving what is called a \emph{no-return} lemma. In this generality, our approach  is inspired by no-return results for one soliton by Duyckaerts and Merle~\cite{DM08, DM09}, Nakanishi and Schlag~\cite{NaSc11-1, NaSc11-2}, and Krieger, Nakanishi and Schlag~\cite{KNS13, KNS15}. The exponential instability considered in those works is absent here, but is replaced by attractive nonlinear interactions between the solitons. This latter consideration, and indeed the overall scheme of the proof is based on our previous work \cite{JL1},
where modulation analysis  of bubble interactions was used
for the first time in the context of the soliton resolution problem (in fact, we recently showed that the collision analysis in~\cite{JL1} yielded a quick proof of Theorem~\ref{thm:main} in the special cases when at most two bubbles appear in the decomposition; see~\cite{JL5}). 

The basic tool we use is the standard virial functional 
\EQ{
\fv(t) := \int_0^\infty\partial_t u(t) r\partial_r u(t) \chi_{\rho(t)}\,r\vd r, 
}
where the cut-off $\chi$ is placed along a Lipschitz curve $r = \rho(t)$ that will be carefully chosen (note that a time-dependent cut-off of the virial functional was also used in~\cite{NaSc11-1, NaSc11-2}).  Differentiating $\fv(t)$ in time we have, 
\EQ{ \label{eq:v'} 
\fv'(t) = - \int_0^\infty \abs{\p_t u(t, r)}^2\chi_{\rho(t)}(r) \, \rdr + \Om_{\rho(t)}(\bs u(t)), 
}
where $\Om_{\rho(t)}(\bs u(t))$ is the error created by the cut-off. Importantly, this error has structure, see Lemmas~\ref{lem:vir} and \ref{lem:virial-error}, and satisfies the estimates, 
\EQ{
\Om_{\rho(t)}(\bs u(t)) \lesssim (1+ \abs{ \rho'(t)}) \min \{ E(\bs u(t); \rho(t), 2 \rho(t)), \bfd(t)\} . 
}
Roughly, this allows us to think of $\fv(t)$ as a Lyapunov functional for our problem, localized to scale $\rho(t)$, with ``almost'' critical points given by multi-bubbles $\bs\calQ(m, \vec \iota, \vec \lam)$. Indeed, if $\bs u(t)$ is close to a multi-bubble up to scale $\rho(t)$, and $\abs{\rho'(t)} \lesssim 1$,  then $\abs{\fv'(t)} \lesssim \bfd(t)$. 

Our first result is a localized compactness lemma. In Section~\ref{sec:compact} we prove the following: given a sequence of wave maps $\bs u_n(t) \in\E_{\ell,m}$  on time intervals $[0, \tau_n]$ with bounded energy, and a sequence $R_n \to \infty$ such that 
\EQ{
\lim_{n \to \infty} \frac{1}{\tau_n} \int_0^{\tau_n} \int_0^{R_n\tau_n} \abs{ \p_t u_n(t, r)}^2 \, r \, \ud r \, \ud t  = 0, 
}
one can find a new sequence $1 \ll r_n \ll R_n$ and a sequence of times $s_n \in [0, \tau_n]$,  so that up to passing to a subsequence of the $\bs u_n$,  we have $\lim_{n \to \infty} \bs \de_{r_n \tau_n}( \bs u_n(s_n)) = 0$. Here  $\bs \de_{R}(\bs u)$ is a local (up to scale $R$) version of the distance function $\bfd$. We note that the sequential decomposition Theorem~\ref{thm:seq} is an almost immediate consequence of the localized compactness lemma along with the Shatah and Tahvildar-Zadeh theory; see Remark~\ref{rem:seq}. The proof of the compactness lemma is very similar in spirit to the analysis of C\^ote~\cite{Cote15} and Jia and Kenig~\cite{JK}.   

We give a caricature of the no-return analysis, pointing the reader to the technical arguments in Sections~\ref{sec:decomposition},~\ref{sec:conclusion} for the actual arguments.  
We would like to integrate~\eqref{eq:v'} over intervals $[a_n, b_n]$ with $a_n, b_n \to T_*$ such that $\bfd(a_n), \bfd(b_n) \ll 1$ but contain some subinterval $[c_n, d_n] \subset [a_n, b_n]$ on which $\bfd(t) \simeq 1$; such intervals exist under the contradiction hypothesis.  From~\eqref{eq:v'} we obtain, 
\EQ{ \label{eq:vir-ineq} 
\int_{a_n}^{b_n} \int_0^{\rho(t)}\abs{\p_t u(t, r)}^2\,  \rdr \, \ud t \lesssim \rho(a_n) \bfd(a_n) + \rho(b_n) \bfd(b_n) + \int_{a_n}^{b_n}\abs{\Om_{\rho(t)}(\bs u(t))} \, \ud t. 
}
 We consider the choice of $\rho(t)$. One can use the sequential compactness lemma so that choosing $\rho(t)/(d_n - c_n) \gg 1$  we have, 
\EQ{ \label{eq:compact} 
\int_{c_n}^{d_n}\int_0^{\rho(t)}\abs{\p_t u(t, r)}^2\chi_{\rho(t)}(r) \, \rdr \, \ud t \gtrsim d_n-c_n , 
}
and one can expect that the integral of the error $\int_{c_n}^{d_n} \abs{\Om_{\rho(t)}(\bs u(t))} \, \ud t \ll \abs{d_n-c_n}$ absorbs into the left-hand side by choosing $\rho(t)$ to lie in a region where $\bs u(t)$ has negligible energy.  

To complete the proof one would need to show that the error generated on the intervals $[a_n, c_n]$ and $[d_n, b_n]$ can also be absorbed into the left-hand side, and moreover that the terms $\rho(a_n) \bfd(a_n),  \rho(b_n) \bfd(b_n) \ll d_n - c_n$. To accomplish this, we require a more careful choice of the intervals $[a_n, b_n]$ and placement of the cut-off $\rho(t)$, which 
motivates the notion of \emph{collision intervals} introduced  in Section~\ref{ssec:proximity}. These allow us to distinguish between ``interior'' bubbles that come into collision, and ``exterior'' bubbles,  which stay coherent throughout the intervals $[a_n, b_n]$, and to ensure we place the cutoff in the region between the interior and exterior bubbles. 

Given $K \in \{1, \dots, N\}$, 
we say that an interval $[a, b]$ is a collision interval with parameters $0<\eps< \eta$ and $N-K$ exterior bubbles for some $1 \le K \le N$, if $\bfd(a),  \bfd(b) \le \eps$, there exists a $c \in [a, b]$ with $\bfd(c) \ge \eta$, and a curve $r = \rho_K(t)$ outside of which $\bs u(t) - \bs u^*(t)$ is within $\eps$ of an $N-K$-bubble in the sense of~\eqref{eq:d-intro} (a localized version of $\bfd(t)$); see Defintion~\ref{def:collision}.  We now define $K$ to be the \emph{smallest} non-negative integer for which there exists $\eta>0$, a sequence $\eps_n \to 0$,  and sequences $a_n, b_n \to T_*$, so that $[a_n, b_n]$ are collision intervals with parameters $\eps_n, \eta$ and $N-K$ exterior bubbles, and we write $[a_n, b_n] \in \calC_K( \eps_n, \eta)$; see Section~\ref{ssec:proximity} for the proof that $K$ is well-defined and $\ge 1$, under the contradiction hypothesis.  

We revisit~\eqref{eq:vir-ineq} on a sequence of collision intervals $[a_n, b_n] \in \calC_K( \eps_n, \eta)$. Near the endpoints $a_n, b_n$,  $\bs u(t) - \bs u^*(t)$ is close to an $N$-bubble configuration and we denote the interior scales, which will come into collision, by $\vec \lam = ( \lam_1, \dots, \lam_K)$ and the exterior scales, which stay coherent, by $\vec \mu = ( \vec \mu_{K+1}, \dots, \vec \mu_N)$. We assume for simplicity in this discussion that the collision intervals have only a single subinterval $[c_n, d_n]$ as above, and that $\bfd(t)$ is sufficiently small on the intervals $[a_n, c_n]$ and $[d_n, b_n]$ so that the interior scales are well defined (via modulation theory) there. We call $[a_n, c_n], [d_n, b_n]$ \emph{modulation intervals} and $[c_n, d_n]$ \emph{compactness intervals}. 

 The scale of the $K$th bubble $\lam_K(t)$ plays an important role and must be carefully tracked. We will need to also make sense of this scale on the compactness intervals, where the bubble itself may lose its shape from time to time. We do this by energy considerations; see Definition~\ref{def:mu}. Crucially, the minimality of $K$ can be used to ensure that the intervals $[c_n, d_n]$ as above satisfy $d_n -c_n \simeq \max\{\lam_K(c_n), \lam_K(d_n)\}$; see Lemma~\ref{lem:cd-length}. Thus the first terms on the right-hand-side of~\eqref{eq:vir-ineq} can be absorbed using~\eqref{eq:compact} by ensuring $\rho(a_n) = o(\eps_n^{-1}) \lam_K(a_n), \rho(b_n) = o(\eps_n^{-1}) \lam_K(b_n)$ if we can additionally prove that  the scale $\lam_K(t)$ does not change much on the modulation intervals. Note that our choice of cut-off  will  satisfy $\lam_K(t) \ll \rho(t) \ll \mu_{K+1}(t)$. 
 
 We  must also absorb the errors $(\int_{a_n}^{c_n}+ \int_{d_n}^{b_n})|\Om_{\rho(t)}(\bs u(t))| \, \ud t  \lesssim (\int_{a_n}^{c_n}+ \int_{d_n}^{b_n}) \bfd (t) \, \ud t$ on the modulation intervals. Here we perform a refined modulation analysis on the interior bubbles, which allows us to track the growth of $\bfd(t)$ through a collision of (possibly) many bubbles. Roughly, up to scale $\rho(t)$, $\bs u(t)$ looks like a $K$-bubble, and using the implicit function theorem we define modulation parameters $\vec \iota$,  $\vec \lam(t)$, and error $\bs g(t)$ with 
 \EQ{
 \bs u(t, r) = \bs\calQ(m_n, \vec \iota, \vec \lam(t); r) + \bs g(t, r), \mif r \le \rho(t),\quad \La\Lam Q_{\lam_j(t)} \mid g(t) \Ra = 0, \mfor \, \,  j = 1,\dots, K, 
 }
 where $\Lam:= r\p_r$ is the generator of the $H$-invariant scaling (note that for $k=1, 2$ the decomposition is slightly different due to the slow decay of $\Lam Q$) and
 \EQ{ \label{eq:inner-prod}
\ang{\phi \mid g}  := \int_0^\infty f(r) g(r) \, r \ud r, \qquad \text{for }\phi, g : (0, \infty) \to \bR.
} The orthogonality conditions and an expansion of the nonlinear energy of $\bs u(t)$ up to scale $\rho(t)$  lead to the coercivity estimate, 
 \EQ{
 \| \bs g(t) \|_{\E} + \sum_{j \neq \calA} \Big( \frac{\lam_{j}}{\lam_{j+1}} \Big)^{\frac{k}{2}} \lesssim \max_{i \in \calA} \Big( \frac{\lam_i(t)}{\lam_{i+1}(t)} \Big)^{\frac{k}{2}} + o_n(1)  \simeq \bfd(t) + o_n(1), 
 }
where $\calA = \{ j \in 1, \dots, K-1 \, : \, \iota_j \neq \iota_{j+1}\}$ captures the alternating bubbles (which experience an attractive interaction force) and  the $o_n(1)$ term comes from errors due to the presence  of the radiation $\bs u^*$ in the region $r \lesssim \rho(t) \ll t$. In fact, since $\bfd(t)$ grows out of the modulation intervals we can absorb these errors into $\bfd(t)$ by enlarging the parameter $\eps_n$ and requiring the lower bound $\bfd(t) \ge \eps_n$ on the modulation intervals. 

The growth of $\bfd(t)$ is then captured by the dynamics of the alternating bubbles, which, since~\eqref{eq:wmk} is second order, enter at the level of $\lam_{j}''(t)$. However, it is not clear how to derive useful estimates from the equation for $\lam''(t)$ obtained by twice differentiating the orthogonality conditions. To cancel terms with critical size, but indeterminate sign, we introduce a localized virial correction to $\lam_j'\simeq - \iota_j  \| \Lam Q \|_{L^2}^{-2} \lam_j^{-1}\La \Lam Q_{\lam_j} \mid \dot g\Ra$, defining 
\EQ{
\beta_j'(t) = - \iota_j \| \Lam Q \|_{L^2}^{-2}\La \Lam Q_{\U{\lam_j(t)}} \mid \dot g(t)\Ra  -   \| \Lam Q \|_{L^2}^{-2}\ang{ \uln A( \lam_j(t)) g(t) \mid \dot g(t)}, 
}
where $\U A(\lam)$ is a truncated (to scale $\lam$) version of  $\U{\Lam} = \Lam +1$, the generator of $L^2$ scaling. Roughly, we show in Sections~\ref{ssec:ref-mod} and~\ref{ssec:bub-dem},  that if  the distance $\bfd(t)$ is dominated at a local minimum $t_0$ by the ratio between the $j$-th bubble and its larger neighbor with opposite sign, then we can control dynamics of $\beta_j(t)$ near $t_0$,  showing that $\bfd(t)$ grows in a controlled way until some other bubble ratio becomes dominant, and so on, until we exit the modulation interval.  All the while we can ensure that the $K$th scale does not move much, and we obtain bounds of the form $(\int_{a_n}^{c_n}+ \int_{d_n}^{b_n}) \bfd (t) \, \ud t \lesssim \bfd(c_n)^{\frac{2}{k}}\lam_K(a_n) + \bfd(d_n)^{\frac{2}{k}}\lam_K(b_n)$ (see the ``ejection'' Lemma~\ref{lem:ejection}). Thus the errors can be absorbed into the left-hand side of~\eqref{eq:vir-ineq} and we obtain a contradiction. 

A similar, but simpler 
refined  modulation  
analysis was performed in~\cite{JL1}.  
The use of such refinements to modulation parameters to obtain dynamical control was introduced by the first author in the context of a two-bubble construction for $NLS$  in~\cite{JJ-APDE}. The notion of localized virial corrections in the context of energy/Morawetz-type estimates was developed by Rapha\"el and Szeftel in~\cite{RaSz11}.

\subsection{Notational conventions}
%
%
The energy is denoted $E$, $\cE$ is the energy space, $\cE_{\ell, m}$ are the finite energy sectors.

Given a function $\phi(r)$ and $\lambda>0$, we denote by $\phi_{\lam}(r) = \phi(r/ \lam)$, the $H$-invariant re-scaling, and by $\phi_{\U{\lam}}(r) = \lam^{-1} \phi(r/ \lambda)$ the $L^2$-invariant re-scaling. We denote by $\Lam :=r \p_r$ and $\ULam := r \partial r +1$ the infinitesimal generators of these scalings. We denote $\ang{\cdot\mid\cdot}$
the radial $L^2(\bR^2)$ inner product given by \eqref{eq:inner-prod}.

We denote $k$ the equivariance degree and $f(u) := \frac{1}{2} \sin 2u$ the nonlinearity in \eqref{eq:wmk}.
We let $\chi$ be a smooth cut-off function, supported in $r \leq 2$ and equal $1$ for $r \le 1$.

The general rules we follow giving names to various objects are:
\begin{itemize}
\item index of an infinite sequence: $n$
\item sequences of small numbers: $\gamma, \delta, \epsilon, \zeta, \eta, \theta$
\item scales of bubbles and quantities describing the spatial scales: $\lambda, \mu, \nu, \xi, \rho$;
in general we call $\lambda$ the scale of the interior bubbles and $\mu$ the exterior ones
(once these notions are defined)
\item moment in time: $t, s, \tau, a, b, c, d, e, f$
\item indices in summations: $ i, j, \ell$
\item time intervals: $I, J$
\item number of bubbles: $K, M, N$
\item signs are denoted $\iota$ and $\sigma$
\item boldface is used for pairs of elements related to the Hamiltonian structure; an arrow is used for vectors (finite sequences) in other contexts.
\end{itemize}
We call a ``constant'' a number which depends only on the equivariance degree $k$ and the number of bubbles $N$.
Constants are denoted $C, C_0, C_1, c, c_0, c_1$. We write $A \lesssim B$ if $A \leq CB$ and $A \gtrsim B$ if $A \geq cB$.
We write $A \ll B$ if $\lim_{n\to \infty} A / B = 0$.

For any sets $X, Y, Z$ we identify $Z^{X\times Y}$ with $(Z^Y)^X$, which means that
if $f: X\times Y \to Z$ is a function, then for any $x \in X$ we can view $\phi(x)$ as a function $Y \to Z$
given by $(\phi(x))(y) := \phi(x, y)$.

\section{Preliminaries}

\subsection{Basic properties of finite energy maps} We aggregate here several  well known results. 


\begin{lem}  \label{lem:pi} 
Fix integers $\ell, m$. For every $\eps>0$ and $R_0 >1$,  there exists a $\de>0$ with the following property. Let $0 \le R_1 < R_2\le \infty$ with $R_2/ R_1 \ge R_0$, and $\bs u \in \E_{\ell, m}$ be such that $E( (u, 0); R_1, R_2)  < \de$. Then, there exists $\ell_0 \in \Z$  such that $| u(r) - \ell_0 \pi|<\eps$ for almost all $r \in (R_1, R_2)$. 

Moreover, there exist  constants $C=C(R_0), \al= \al(R_0)>0$ such that if $E( (u, 0); R_1, R_2)< \al$,  then 
\EQ{ \label{eq:H-E-comp} 
 \| \bs u - \ell_0 \bs \pi \|_{\E(R_1, R_2)} \le C E( \bs u; R_1, R_2). 
 }
\end{lem}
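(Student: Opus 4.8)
The plan is to prove the two assertions in sequence, the first being essentially a compactness/continuity statement and the second a quantitative refinement.

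\textbf{Step 1: the qualitative statement.} Work on the annulus $(R_1, R_2)$ with $R_2/R_1 \ge R_0$. The quantity being controlled, $E((u,0); R_1, R_2) = \pi \int_{R_1}^{R_2} \big( (\p_r u)^2 + k^2 r^{-2} \sin^2 u \big) \, r \ud r$, controls both the oscillation of $u$ via its derivative and the deviation of $u$ from the lattice $\pi \Z$ via the potential term. The first idea is that smallness of $\int (\p_r u)^2 \, r \ud r$ on an annulus of fixed modulus $R_2/R_1$ forces small oscillation: by Cauchy--Schwarz, for $r, r' \in (R_1, R_2)$,
\EQ{
|u(r) - u(r')| = \Big| \int_{r'}^{r} \p_s u \, \ud s \Big| \le \Big( \int_{R_1}^{R_2} (\p_s u)^2 s \, \ud s \Big)^{1/2} \Big( \int_{R_1}^{R_2} \frac{\ud s}{s} \Big)^{1/2} = \big( \log(R_2/R_1) \big)^{1/2} \| \p_r u \|_{L^2(r\ud r; R_1, R_2)}.
}
Hmm — but this bound degrades as $R_2/R_1 \to \infty$, so it only gives control on sub-annuli of bounded modulus. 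The standard fix: the potential term $k^2 r^{-2}\sin^2 u$ is small in $L^1(r \ud r)$, so $\sin^2 u$ is small in $L^1(r^{-1}\ud r)$ on $(R_1, R_2)$; hence there are scales densely distributed through $(R_1, R_2)$ where $|\sin u|$ is small, i.e. $u$ is close to $\pi\Z$. Combining: on any annulus of modulus $\le R_0$ the oscillation is $\lesssim \delta^{1/2}$, and one can find a "good radius" in each such annulus where $u$ is near some $\ell_0 \pi$; since consecutive good radii are within a bounded modulus of each other, the integer $\ell_0$ cannot jump (for $\delta$ small enough that $\e$-oscillation control precludes jumping by $\pi$), so it is the same $\ell_0$ throughout. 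This yields $|u(r) - \ell_0 \pi| < \e$ a.e. Taking $\delta$ small depending on $(\e, R_0, k)$ closes Step 1. One subtlety: "a.e." because $u \in H$ only has an absolutely continuous representative away from $r=0$; work with that representative.

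\textbf{Step 2: the quantitative comparison \eqref{eq:H-E-comp}.} Now assume $E((u,0); R_1, R_2) < \al$ with $\al$ chosen so small that Step 1 applies with, say, $\e = \pi/4$, giving $|u(r) - \ell_0\pi| < \pi/4$ on $(R_1, R_2)$. The point is to bound $\| \bs u - \ell_0 \bs\pi \|_{\E(R_1,R_2)}^2 = \int_{R_1}^{R_2}\big( \dot u^2 + (\p_r u)^2 + k^2 r^{-2}(u - \ell_0\pi)^2\big) r \ud r$ by $C \cdot E(\bs u; R_1, R_2)$. The $\dot u^2$ and $(\p_r u)^2$ terms are already present verbatim in $E$, so only the Hardy-type term $\int k^2 r^{-2}(u-\ell_0\pi)^2 r \ud r$ needs work. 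Since $|u - \ell_0\pi| \le \pi/4 < \pi/2$, we have the elementary pointwise bound $(u - \ell_0\pi)^2 \le C_0 \sin^2(u - \ell_0\pi) = C_0 \sin^2 u$ with $C_0 = (\pi/4)^2/\sin^2(\pi/4) = \pi^2/8$ (using that $\te \mapsto \te^2/\sin^2\te$ is increasing on $(0,\pi/2)$). Therefore
\EQ{
\int_{R_1}^{R_2} \frac{k^2}{r^2}(u - \ell_0\pi)^2 \, r \ud r \le C_0 \int_{R_1}^{R_2} \frac{k^2}{r^2}\sin^2 u \, r \ud r \le \frac{C_0}{\pi} E(\bs u; R_1, R_2),
}
and adding the kinetic and radial-derivative parts gives \eqref{eq:H-E-comp} with $C = C(R_0)$ (absorbing the $\pi$ factors from the definition of $E$; note $C$ depends on $R_0$ only through the value of $\al$ needed to run Step 1 at level $\e = \pi/4$, and through $k$, which is a fixed constant). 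Here I have used the naive pointwise comparison rather than a genuine Hardy inequality, which works precisely because Step 1 has already confined $u$ to a single well of the potential.

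\textbf{Main obstacle.} The only real work is Step 1, and within it the non-uniformity of the Cauchy--Schwarz oscillation bound over large annuli: one must genuinely use the potential term to "pin" $u$ to the lattice at a dense set of scales and then propagate the choice of integer $\ell_0$ across the whole annulus without it jumping. Everything after that, including \eqref{eq:H-E-comp}, is an elementary pointwise estimate once $u$ is trapped in a single potential well. I would also take care that all constants depend only on $k$ (a fixed "constant" in the paper's terminology) and on $R_0$, not on $R_1, R_2$ individually — this is automatic because every estimate above is invariant under $r \mapsto r/R_1$, i.e. depends on the annulus only through its modulus $R_2/R_1 \ge R_0$.
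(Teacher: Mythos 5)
Your proposal is correct, and its second step is exactly the paper's: once $|u-\ell_0\pi|<\pi/4$ on $(R_1,R_2)$, both you and the paper absorb the Hardy-type term via the pointwise comparison $\sin^2 u \gtrsim (u-\ell_0\pi)^2$ (note that, just as in the paper's own proof, what this actually yields is the bound on $\|\bs u-\ell_0\bs\pi\|_{\cE(R_1,R_2)}^2$, which is how the estimate is used later). The first step, however, is handled by a genuinely different mechanism. You chain: pigeonhole the potential term to get a good radius in every subannulus of bounded modulus, use the crude Cauchy--Schwarz oscillation bound (which costs a factor $\sqrt{\log(R_2/R_1)}$ and hence only works on such subannuli), and propagate one integer $\ell_0$ across consecutive good radii. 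The paper instead finds a single good radius $r_0$ (by the same pigeonhole on the potential term, phrased as a contradiction with the lower bound $\tfrac{k^2}{2}c(\eps_1)^2\log(R_2/R_1)$) and then replaces your chaining by a Bogomolny-type substitution: setting $G(u)=\int_0^u|\sin\rho|\,d\rho$, Cauchy--Schwarz between the potential and kinetic pieces gives $|G(u(r))-G(u(r_0))|\le\int_{R_1}^{R_2}|\sin u|\,|\partial_r u|\,dr\lesssim E((u,0);R_1,R_2)$ with no loss in $R_2/R_1$, since the weights $r$ and $1/r$ cancel; monotonicity and continuity of $G$ then pin $u$ near $\ell_0\pi$ on all of $(R_1,R_2)$ in one line. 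Your route is more elementary and makes the non-jumping of $\ell_0$ explicit, but it needs bookkeeping you only gesture at: the subannuli must have modulus bounded below as well as above (so the good-radius pigeonhole is uniform), with a slightly separate treatment when $R_0$ is close to $1$ versus large --- routine fixes. The paper's $G$-trick buys uniformity over arbitrarily large annuli for free and confines the $R_0$-dependence to the single good-radius step.
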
 
\begin{proof} 
By an approximation argument we can assume $(u, 0) \in \E_{\ell, m}$ is smooth. First, we show that for any $\eps_0>0$ exists $r_0 \in [R_1, R_2]$ such that $| u(r_0) - \ell_0 \pi|< \eps_0$ for some $\ell_0 \in \Z$ as long as $E( (u, 0); R_1, R_2)$ is sufficiently small. 
If not, one could find $\eps_1>0$,  $0< R_1< R_2$, and a sequence $(u_n, 0) \in \E_{\ell, m}$ so that $E( (u_n, 0); R_1; R_2) \to 0$ as $n \to \infty$ but such that $\inf_{r \in[R_1, R_2], \ell \in \Z}   | u_n(r) - \ell \pi| \ge \eps_1$. The latter condition gives a constant $c( \eps_1)>0$ such that $\inf_{r \in [R_1, R_2]} |\sin( u_n(r))| \ge c(\eps_1)$. But then
\EQ{
E( (u_n, 0); R_1; R_2)  \ge \frac{k^2}{2}  \int_{R_1}^{R_2} \sin^2( u_n(r)) \,  \frac{\ud r}{r} \ge \frac{k^2}{2} c(\eps_1)^2 \log (R_2/R_1), 
}
which is a contradiction. Next define the function, 
$
G(u) = \int_0^u \abs{ \sin \rho} \, \ud \rho, 
$
and for $r_1 \in (R_1, R_2)$  note the inequality, 
\EQ{
\abs{G(u(r_0)) - G( u(r_1))} = \Big|\int_{u(r_1)}^{u(r_0)}  \abs{ \sin \rho} \, \ud \rho \Big| = \Big|\int_{r_1}^{r_0} \abs{ \sin u(r) } \abs{ \p_r u(r)} \, \ud r \Big| \lesssim E((u, 0); R_1, R_2). 
}
We conclude using that $G$ is continuous and increasing that $| u(r) - \ell_0 \pi|<\eps$ for all $r \in (R_1, R_2)$. As long as $\eps>0$ is small enough we see that in fact, $\sin^2(u(r)) \ge \frac{1}{2}| u(r) - \ell_0 \pi|^2$ for all $r \in (R_1, R_2)$ and~\eqref{eq:H-E-comp} follows. 
\end{proof}

  We have the following version of the principle of finite speed of propagation.
\begin{lem}
Let $\bs u(t)$ be a solution to \eqref{eq:wmk} on the time interval $[0, T]$. Then
\begin{equation}
\label{eq:energy-monoton}
E(\bs u(T); 0, R - T) \leq E(\bs u(0); 0, R), \qquad \text{for all }R \geq T.
\end{equation}
\end{lem}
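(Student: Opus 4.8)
The plan is to carry out the standard energy--flux computation on the truncated light cone $\mathcal{C} := \{(t,r) : 0 \le t \le T,\ 0 \le r \le R-t\}$, whose bottom slice at $t=0$ is $[0,R]$ and whose top slice at $t=T$ is $[0,R-T]$, and to read off the inequality from the sign of the flux through the characteristic lateral boundary $r = R-t$.

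First I would record the pointwise energy identity. Multiplying \eqref{eq:wmk} by $r\,\partial_t u$ and rearranging, using $\Delta = \partial_r^2 + r^{-1}\partial_r$ together with $\partial_r(r\partial_r u) = r\Delta u$ and $\partial_t\big(\tfrac12\sin^2 u\big) = f(u)\,\partial_t u$ (recall $f(u) = \tfrac12\sin 2u$), one obtains
\[
\partial_t\mathfrak{e}(t,r) = \partial_r\mathfrak{p}(t,r),\qquad
\mathfrak{e} := \tfrac{r}{2}\big((\partial_t u)^2 + (\partial_r u)^2\big) + \tfrac{k^2}{2r}\sin^2 u,\quad
\mathfrak{p} := r\,\partial_t u\,\partial_r u ,
\]
so that $E(\bs u(t); 0, \rho) = 2\pi\int_0^\rho\mathfrak{e}(t,r)\,\ud r$. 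Next I would set $\phi(t) := \int_0^{R-t}\mathfrak{e}(t,r)\,\ud r$ and differentiate, obtaining $\phi'(t) = -\mathfrak{e}(t,R-t) + \mathfrak{p}(t,R-t) - \mathfrak{p}(t,0^+)$ (equivalently, integrate $\partial_t\mathfrak e = \partial_r\mathfrak p$ over $\mathcal{C}$ with the divergence theorem). The boundary term at the axis vanishes, since $|\mathfrak{p}(t,r)| \le \mathfrak{e}(t,r)$ and $\int_0^\rho\mathfrak{e}(t,\cdot)\,\ud r < \infty$ force $\mathfrak{p}(t,r) \to 0$ as $r \to 0^+$; and, completing the square,
\[
\mathfrak{e}(t,r) - \mathfrak{p}(t,r) = \tfrac{r}{2}\big(\partial_t u - \partial_r u\big)^2 + \tfrac{k^2}{2r}\sin^2 u \ \ge\ 0 .
\]
Hence $\phi'(t)\le 0$ on $[0,T]$, so $\phi(T)\le\phi(0)$, which is exactly \eqref{eq:energy-monoton} after multiplying by $2\pi$ (the hypothesis $R\ge T$ enters only to make the top slice $[0,R-T]$ nonempty).

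The one genuinely delicate point is justifying the differentiation under the integral sign and the discarding of the $r=0$ boundary term, i.e.\ the regularity of $\bs u$. I would handle this in the usual way: prove \eqref{eq:energy-monoton} first for smooth solutions, where the regularity of $u$ at the origin ($u - \ell\pi = O(r^k)$ near $r=0$) makes all boundary contributions at $r=0$ manifestly vanish, and then pass to arbitrary finite-energy data by approximation, using continuity of the flow on $\mathcal{E}_{\ell,m}$ (the Cauchy theory) and the continuous dependence of both sides of \eqref{eq:energy-monoton} on the data. An equivalent route is to insert a spatial cutoff into the integral identity, integrate, and let the cutoff parameter degenerate, absorbing the commutator error into the smallness of the local energy. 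Everything else is routine and the only quantitative input is the sign computation above.
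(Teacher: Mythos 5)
Your argument is correct and is essentially the paper's proof: the paper reduces to smooth solutions by approximation and cites Shatah--Tahvildar-Zadeh \cite[Section 2]{STZ92} for exactly the light-cone energy--flux computation you carry out explicitly (the identity $\partial_t\mathfrak e=\partial_r\mathfrak p$, the non-negative outgoing flux $\mathfrak e-\mathfrak p\ge 0$ on $r=R-t$, and the vanishing axis term). The only caveat is that your first justification of $\mathfrak p(t,0^+)=0$ from integrability of $\mathfrak e$ alone is not quite enough, but this is moot since, as you note, in the smooth case $u-\ell\pi=O(r^k)$ makes the axis term vanish and the general case follows by approximation.
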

\begin{proof}
It suffices to consider the case of a smooth solution
and then approximate a finite energy solution by smooth ones.
For a proof in the smooth case, see \cite[Section 2]{STZ92}.
\end{proof}
\begin{rem}
The energy conservation yields the following equivalent formulation:
\begin{equation}
\label{eq:ext-energy-monoton}
E(\bs u(T); R + T) \leq E(\bs u(0); R), \qquad\text{for all }R \geq 0.
\end{equation}
\end{rem}

 We have the following virial identity. 
 \begin{lem}[Virial identity] \label{lem:vir} 
 Let $\bs u(t)$ be a solution to~\eqref{eq:wmk} on an open time interval $I$ and $\rho: I \to (0, \infty)$
 a Lipschitz function. Then for almost all $t \in I$, 
 \EQ{\label{eq:vir}
 \frac{\ud}{\ud t} \ang{  \p_t u(t) \mid \chi_{\rho(t)}^2 \, r \p_r u(t)}  = - \int_0^\infty (\p_t u(t, r)\chi_{\rho(t)}(r))^2 \, \rdr + \Om_{\rho(t)}(\bs u(t)), 
 }
 where 
 \EQ{ \label{eq:Om-gamma-def} 
 \Om_{\rho(t)}(\bs u(t)) :=  &- 2\frac{\rho'(t)}{\rho(t)} \int_0^\infty \p_t u(t, r) r \p_r u(t, r) \chi_{\rho(t)}(r)  \Lam \chi_{\rho(t)}(r) \, \rdr \\
 & - \int_0^\infty  \Big( (\p_t u(t, r))^2 + (\p_r u(t, r))^2 - k^2 \frac{\sin^2 u(t, r)}{r^2} \Big)  \chi_{\rho(t)}(r)  \Lam \chi_{\rho(t)}(r) \rdr.
 }
 \end{lem}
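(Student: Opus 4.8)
The plan is to prove the virial identity \eqref{eq:vir} by a direct computation, first for smooth solutions and then extending to finite energy solutions by the usual approximation argument. Throughout I would abbreviate $\chi = \chi_{\rho(t)}(r)$, keeping in mind that $\chi$ depends on $t$ both explicitly through $\rho(t)$ and so that $\partial_t \chi_{\rho(t)}(r) = -\frac{\rho'(t)}{\rho(t)} (\Lam \chi)(r/\rho(t))$, since $\chi_{\rho} (r)= \chi(r/\rho)$ and $\Lam = r\p_r$.

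First I would differentiate $\ang{\p_t u \mid \chi^2 \, r\p_r u}$ in time, producing three groups of terms: (i) $\ang{\p_t^2 u \mid \chi^2 r \p_r u}$, (ii) $\ang{\p_t u \mid \chi^2 r \p_r \p_t u}$, and (iii) the term $\ang{\p_t u \mid \p_t(\chi^2) \, r\p_r u} = -2\frac{\rho'}{\rho}\ang{\p_t u \mid \chi \, \Lam\chi \, r\p_r u}$, which is precisely the first line of \eqref{eq:Om-gamma-def}. For (i), I substitute the equation \eqref{eq:wmk}, $\p_t^2 u = \De u - \frac{k^2}{r^2} f(u)$ with $f(u) = \frac12\sin 2u$, and integrate by parts in $r$ the term $\ang{\De u \mid \chi^2 r\p_r u}$; using $\De u = \frac1r \p_r(r\p_r u)$ this is a standard computation (Pohozaev-type) that yields $-\int (\p_r u)^2 \chi^2\, \rdr$ plus a commutator term involving $\chi\Lam\chi$, together with the contribution of the potential term $-\frac{k^2}{r^2} f(u)\,\chi^2 r\p_r u$, which integrates (since $\p_r[\sin^2 u] = \sin 2u \,\p_r u = 2f(u)\p_r u$) to $+\int k^2 \frac{\sin^2 u}{r^2}\chi^2\,\rdr$ plus another $\chi\Lam\chi$ commutator. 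For (ii), I write $\chi^2 r\p_r\p_t u = \frac12 \chi^2 r \p_r[(\p_t u)^2] \cdot (\p_t u)^{-1}$—more cleanly, $\ang{\p_t u \mid \chi^2 r\p_r\p_t u} = \frac12\int \chi^2 r \p_r[(\p_t u)^2]\, \rdr$, and integrating by parts gives $-\int (\p_t u)^2 \chi^2\,\rdr$ minus a $\chi\Lam\chi$ commutator. Adding (i) and (ii): the two $-\int(\p_t u)^2\chi^2$ and $-\int(\p_r u)^2\chi^2$ contributions combine, but I want only $-\int(\p_t u\,\chi)^2\,\rdr$ on the right; the resolution is that the $-\int(\p_r u)^2\chi^2$ from (i) cancels against nothing directly—rather, careful bookkeeping shows the $(\p_r u)^2$ and the extra $(\p_t u)^2$ and the $\sin^2 u$ terms organize exactly into the combination $\big((\p_t u)^2 + (\p_r u)^2 - k^2\frac{\sin^2 u}{r^2}\big)\chi\Lam\chi$ appearing on the second line of \eqref{eq:Om-gamma-def}, while the clean $-\int(\p_t u\chi)^2$ survives.

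The bookkeeping of the commutator terms is the one place to be careful: each integration by parts against $\chi^2$ produces $\p_r(\chi^2) = \frac{2}{r}\chi\Lam\chi$ (using $\Lam\chi = r\p_r\chi$), and I must check the numerical coefficients so that the $\frac12$ factors from the $(\p_t u)^2$ and $(\p_r u)^2$ total derivatives and the factor from the potential term all assemble with the correct signs into the single expression displayed. This is the main (though entirely routine) obstacle; there is no analytic difficulty once the solution is smooth, since all integrals are over the compact support of $\chi$. Finally, to pass from smooth to finite energy solutions, I approximate $\bs u_0$ in $\E_{\ell,m}$ by smooth data, use continuous dependence of the flow on the $\cE$-norm on the (common) time interval $I$, and note that every term in \eqref{eq:vir} and \eqref{eq:Om-gamma-def} is continuous with respect to the $\cE$-topology on compact subintervals—uniformly in $r$ over $\supp\chi_{\rho(t)}$—so the identity passes to the limit for almost every $t$, which is all that is claimed.
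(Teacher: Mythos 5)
Your proposal is correct and takes essentially the same route as the paper, whose proof is simply the direct computation (differentiate, substitute \eqref{eq:wmk}, integrate by parts against $\chi_{\rho(t)}^2$, track the $\chi\Lam\chi$ commutators) combined with an approximation argument at each time $t$ where $\rho$ is differentiable. One small slip worth fixing when you write it out: the integration by parts of $\int_0^\infty \De u\, \chi^2\, r\p_r u \rdr = \frac12\int_0^\infty \p_r\big[(r\p_r u)^2\big]\chi^2\,\ud r$ produces \emph{only} the commutator $-\int_0^\infty (\p_r u)^2\chi\Lam\chi \rdr$ and no bulk term $-\int_0^\infty(\p_r u)^2\chi^2\rdr$ (and likewise the potential term yields only $+k^2\int_0^\infty \frac{\sin^2 u}{r^2}\chi\Lam\chi\rdr$), so the sole bulk term $-\int_0^\infty(\p_t u\,\chi)^2\rdr$ comes from your term (ii), exactly as your later ``careful bookkeeping'' remark anticipates.
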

 \begin{proof}
The proof is a direct computation along with an approximation argument for fixed $t \in I$, assuming $\rho$ is differentiable at $t$.
  \end{proof}

\subsection{Local Cauchy theory} \label{sec:Cauchy}

The following theorem was proved by Shatah and Tahvildar-Zadeh in~\cite{STZ92, STZ94}.  

\begin{lem}[Local well-posedness]\label{lem:lwp} \emph{\cite[Theorem 1.1]{STZ94},\cite[Theorem 8.1]{SSbook}\,  \cite{STZ92}} Let $\ell, m \in \Z$ and let $\bs u_0 \in \E_{\ell, m}$. Then, there exist a maximal time interval of existence $(T_-, T_+) = I_{\max}(\bs u_0) \ni 0$ on which~\eqref{eq:wmk} admits a unique solution $\bs u(t)$ in the space $ C^0(I_{\max}; \E_{\ell, m})$ with $\bs u(0) = \bs u_0$. 

In fact, there exists $\eps_0 >0$ with the following property. Let $\bs u_0 \in \E_{\ell, m}$, $\tau>0$ and  suppose the solution $\bs u(t)$ to~\eqref{eq:wmk} with data $\bs u(0) = \bs u_0$ is defined on the interval $[0, \tau)$, i.e., in $C^0([0, \tau); \E_{\ell, m})$. Suppose that there exists a time $0\le t< \tau$ and a number $R> \tau - t$ such that, 
\EQ{
E( \bs u(t), 0; R) < \eps_0. 
}
Then,  $T_+( \bs u) > \tau$. 
\end{lem}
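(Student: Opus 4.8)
The plan is to prove the conditional persistence statement (the second paragraph of Lemma~\ref{lem:lwp}) by exploiting finite speed of propagation to split the evolution into a small-energy exterior part, which is handled by the small-data global well-posedness built into the first paragraph, and an interior part, which is controlled by a covering/continuity argument on the cone. The subcritical smallness threshold $\eps_0$ is the one from small-data theory for~\eqref{eq:wmk}; I will take it small enough that any solution with energy below $\eps_0$ is global and stays in $C^0(\bR;\E_{\ell,m})$ with uniformly bounded energy-space norm.

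First I would set up the geometry. Fix the time $t$ and radius $R>\tau-t$ from the hypothesis, so $E(\bs u(t);0,R)<\eps_0$. By the monotonicity estimate~\eqref{eq:ext-energy-monoton} applied from time $t$ (equivalently the finite-speed statement~\eqref{eq:energy-monoton}), for every $t\le s<\tau$ we have $E(\bs u(s); R-(s-t)) \le E(\bs u(t);R) < \eps_0$, and since $R-(s-t) > \tau - s \geq 0$ on $[t,\tau)$ these exterior regions genuinely contain the exterior of the backward light cone from $(\tau, 0)$. The point is that the restriction of $\bs u$ to the exterior region $\{r > \tau - s\}$ at each time $s$ agrees, by uniqueness and finite speed of propagation, with a (patched) solution whose global energy is $<\eps_0$; hence that piece extends past $\tau$ and in particular the trace of $\bs u(s)$ on $\{r\geq R-(s-t)\}$ has a uniform-in-$s$ bound in $\E$ and a well-defined limit as $s\to\tau$.

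Next I would handle the interior. It suffices to show $\limsup_{s\to\tau^-}\|\bs u(s)\|_{\E_{\ell,m}}<\infty$, because the blow-up criterion implicit in the maximal-existence statement of the first paragraph (norm in $\E$ must blow up at a finite endpoint) then forces $T_+>\tau$. Decompose $\bs u(s) = \bs\phi_{\mathrm{ext}}(s) + \bs w(s)$ where $\bs\phi_{\mathrm{ext}}(s)$ is the small-energy exterior evolution described above and $\bs w(s)$ is supported (at time $s$) in $r\lesssim \tau-t$, a \emph{fixed compact} spatial region independent of $s\in[t,\tau)$. On this fixed compact region the local energy of $\bs u$ is bounded by $E(\bs u(t))$ (total conserved energy), uniformly in $s$; combined with the uniform exterior bound this gives a uniform bound on $\|\bs u(s)\|_{\E}$, hence on $\|\bs w(s)\|_{\E}$, for all $s<\tau$. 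This contradicts blow-up at $\tau$, so $T_+(\bs u)>\tau$.

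The main obstacle is the gluing step: one must make precise the sense in which ``the exterior of $\bs u$ solves a small-energy (hence global) problem.'' The clean way is a domain-of-dependence argument — modify the data $\bs u(t)$ inside $r\leq R$ to produce $\tilde{\bs u}_0 \in \E_{\ell,m}$ with $E(\tilde{\bs u}_0)<\eps_0$ and $\tilde{\bs u}_0 = \bs u(t)$ on $r>R$; apply small-data global existence to get $\tilde{\bs u}(s)$ defined for all $s$; then invoke finite speed of propagation (uniqueness on the exterior of light cones, which follows from~\eqref{eq:energy-monoton} applied to the difference, or directly from the cited works \cite{STZ92,STZ94}) to conclude $\bs u(s) = \tilde{\bs u}(s)$ on $r > R-(s-t)$ for $t\leq s<\tau$. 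Care is needed that the modified data stay in the right topological sector and keep finite energy — one replaces $u(t)$ on $[0,R]$ by a function interpolating from $\ell\pi$ at $r=0$ to $u(t,R)$ at $r=R$ in a way that adds only $O(E(\bs u(t);0,R)) = O(\eps_0)$ to the energy, using Lemma~\ref{lem:pi} to see that $u(t,R)$ is within $O(\sqrt{\eps_0})$ of some multiple of $\pi$. With $\eps_0$ chosen small enough this keeps the total energy of $\tilde{\bs u}_0$ below the small-data threshold, and the rest is routine.
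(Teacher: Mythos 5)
There is a genuine gap — in fact two. First, you have the geometry of the hypothesis reversed. The assumption $E(\bs u(t),0;R)<\eps_0$ is smallness of the energy \emph{inside} the ball $\{r<R\}$ (this is how the lemma is applied later in the paper, e.g.\ in the proof of Lemma~\ref{lem:nlpd}, where one verifies $E(\bs u_n(s_n);0,A_n/2)\lesssim\eps$ with $A_n/2>\tau_n\s_{n,i_0}-s_n$). Your construction keeps the data on $\{r>R\}$ and modifies it on $\{r\le R\}$, so $E(\tilde{\bs u}_0)$ contains the full exterior energy of $\bs u(t)$ and has no reason to be below the small-data threshold. Relatedly, the monotonicity you invoke, $E(\bs u(s);R-(s-t),\infty)\le E(\bs u(t);R,\infty)$, is not a consequence of finite speed of propagation: the valid statements are \eqref{eq:energy-monoton} (interior energy over \emph{shrinking} balls) and \eqref{eq:ext-energy-monoton} (exterior energy over \emph{growing} radii). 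The correct use of the hypothesis is \eqref{eq:energy-monoton}: $E(\bs u(s);0,R-(s-t))\le E(\bs u(t);0,R)<\eps_0$ with $R-(s-t)>\tau-s$, so the small region is the backward cone over the ball, which contains the potential singular point $(\tau,0)$; accordingly the truncation must be done in the \emph{exterior} (replace $u(t,r)$ by a nearby constant $\ell_0\pi$ for $r\gtrsim R$, which Lemma~\ref{lem:pi} permits at $O(\eps_0)$ energy cost), producing genuinely small-energy data that agrees with $\bs u(t)$ on the ball.

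Second, and more fundamentally, the continuation criterion on which your "interior part" rests — that $\limsup_{s\to\tau^-}\|\bs u(s)\|_{\E}<\infty$ forces $T_+>\tau$ — is false for this energy-critical equation. The energy is conserved, so the $\E$-norm of any finite-energy solution, including the finite-time blow-up solutions of Krieger--Schlag--Tataru, stays bounded up to the maximal time; blow-up occurs through concentration of energy at $r=0$, not through norm inflation. Thus bounding $\|\bs u(s)\|_\E$ on a fixed compact region proves nothing, and the whole content of the lemma is precisely a non-concentration criterion. A correct argument runs: by \eqref{eq:energy-monoton} the energy in the backward cone stays below $\eps_0$; evolve the exterior-truncated small-energy data globally by the small-data theory (Lemma~\ref{lem:Cauchy} after the $2$d-to-$4$d reduction, or \cite[Theorem 8.1]{SSbook}) and use finite speed of propagation to identify it with $\bs u$ on the cone containing $(\tau,0)$; then one still needs that no singularity can form at points with $r>0$, which is part of the classical equivariant theory (Struwe, Shatah--Tahvildar-Zadeh) that the paper simply cites for this lemma rather than reproving. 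As written, your proposal establishes neither the smallness of the glued data nor the step from boundedness to continuation.
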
 

%
%

See Struwe~\cite[p. 817]{Struwe} for the continuation criterion in the second paragraph of Lemma~\ref{lem:lwp} in the case of smooth initial data, and see~\cite[Theorem 8.1]{SSbook} for the global well-posedness theorem for energy class equivariant wave maps with sufficiently small energy. 
 Key to the proof are Strichartz estimates for the wave equation (see, e.g., Lindblad, Sogge~\cite{LinS}, and Ginibre, Velo~\cite{GiVe95}), after noticing that the linearization of \eqref{eq:wmk} about the zero solution is equivalent, in the energy space, to the free scalar wave equation in dimension $d = 2k +2$. Indeed, 
the linearization of~\eqref{eq:wmk} about the zero solution is given by the linear wave equation, 
\EQ{ \label{eq:lin-2d} 
\p_t^2 v - \p_r^2 v -\frac{1}{r} \p_r v + \frac{k^2}{r^2} v = 0. 
}
We will sometimes use the notation $v_{\Lin}(t) = S\lin(t) \bs v_0$ as the unique solution to~\eqref{eq:lin-2d} with initial data $\bs v_{\Lin}(0) = \bs v_0 \in \E$.  The mapping $\E \ni \bs v(t) \mapsto \bs W(t) \in ( \dot H^{1}\times L^2)_{\textrm{rad}} (\R^{2k+2})$ defined by 
\EQ{ \label{eq:2k+2} 
\bs W( t, r) := (r^{-k} v( t, r), r^{-k} \p_t v(t, r))
} satisfies $\| \bs v(t)\|_{\E} \simeq \| \bs W(t) \|_{(\dot H^1 \times L^2)_{\textrm{rad}} (\R^{2k+2})}$ and $\bs v(t) \in\E$ solves~\eqref{eq:lin-2d} if and only if $\bs W(t) \in ( \dot H^1 \times  L^2)_{\textrm{rad}} $ solves 
\EQ{\label{eq:lin-2k+2}
\p_t^2 W - \De_{2k+2} W = 0 ,
}
where $\De_{2k+2} = \p_r^2 + \frac{2k+1}{r} \p_r$ is the radial Laplacian in dimension $d = 2k +2$.

For equivariance classes $k >2$, this leads to a spatial dimension $d>6$ and inconvenient technical complications.  However, we observed in~\cite{JL1} that one may give a unified local Cauchy theory for~\eqref{eq:wmk} for all equivariance classes $k \in\N$ based on Strichartz estimates for linear waves with a critical repulsive potential proved by Planchon, Stalker, Tahvildar-Zadeh~\cite{PST03b}. For this purpose, consider the mapping,  
\EQ{\label{eq:2-to-4-map} 
v_0(r) \mapsto V_0( r):= r^{-1} v_0( r), \quad \dot v_0( r) \mapsto   \dot V_0( r) := r^{-1} \dot v_0( r) .
}
We see that $\bs v(t) = ( v(t), \p_t v(t))$ solves~\eqref{eq:lin-2d} if and only if $\bs V(t) = (v(t), \p_t v(t))$ solves
\EQ{\label{eq:lin-4d} 
\p_t^2 V - \p_r^2 V - \frac{3}{r} \p_r V + \frac{k^2 -1}{r^2} V = 0 . 
}

For each $k\ge 1$, define the norm $H_k$ for radially symmetric functions $V$ on $\R^4$ by 
\EQ{
\| V \|_{H_k(\R^4)}^2:= \int_0^\infty \left[(\p_r V)^2 + \frac{(k^2-1)}{r^2} V^2 \right] \, r^3 \, \ud r. 
}
Solutions $\bs V(t)$ to~\eqref{eq:lin-4d} conserve the $H_k \times L^2$ norm and by Hardy's inequality we have 
\EQ{ \label{eq:hardy} 
\|V \|_{H_k(\R^4)} \simeq \| V \|_{\dot{H}^1(\R^4)}.
} 
Thus the mapping~\eqref{eq:2-to-4-map} 
 satisfies 
 \EQ{ \label{eq:2-4}
\|(V_0, \dot V_0) \|_{\dot{H}^1 \times L^2(\R^4)} \simeq  \| (V_0, \dot V_0) \|_{H_k \times L^2 (\R^4)}  = \| (v_0, \dot v_0) \|_{H \times L^2 (\R^2)}. 
 }
We conclude that the Cauchy problem for~\eqref{eq:lin-4d} with initial data in $\dot{H}^1 \times L^2(\R^4)$ is equivalent to the Cauchy problem for~\eqref{eq:lin-2d} for initial data $(v_0, \dot v_0) \in H \times L^2$. As a consequence, Strichartz estimates for solutions to~\eqref{eq:lin-2d} are inherited from Strichartz estimates for~\eqref{eq:lin-4d} proved by Planchon, Stalker, and Tahvildar-Zadeh~\cite{PST03b}.

\begin{lem}[Strichartz estimates for~\eqref{eq:lin-4d}]\emph{ \cite[Corollary 3.9]{PST03b}} \label{l:strich} Fix $k \ge 1$ and let $\bs V(t)$ be a radial solution to the linear equation 
\EQ{
\p_t^2 V - \p_r^2 V - \frac{3}{r} \p_r V + \frac{k^2 -1}{r^2} V  = F(t, r), \quad \bs V(0) = (V_0, \dot V_0) \in \dot{H}^1 \times L^2 (\R^4).
}
Then, for any time interval $0 \in I \subset \R$ we have 
\EQ{ \label{eq:strich-4d} 
\| V \|_{(L^{3}_t L^6_x \cap L^5_{t, x})(I \times \R^4)} + \sup_{t \in I}\| \bs V(t) \|_{\dot{H}^1 \times L^2(\R^4)} \lesssim   \| \bs V(0) \|_{\dot{H}^1 \times L^2(\R^4)} + \| F \|_{L^1_t L^2_x(I \times \R^4)}, 
} 
where the implicit constant above is independent of $I$. 
\end{lem}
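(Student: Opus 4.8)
The statement is \cite[Corollary 3.9]{PST03b}, so the plan is to identify the present equation with the one treated there and then reassemble the estimate. Writing $L_a := -\Delta_{\R^4} + a|x|^{-2}$ with $a := k^2 - 1$, the equation is $\partial_t^2 V + L_a V = F$ on $\R^4$. For every $k \ge 1$ we have $a \ge 0 > -1 = -\tfrac{(d-2)^2}{4}$, which is the Hardy-sharp range in which $L_a$ is a nonnegative self-adjoint operator whose form domain is the $\dot H^1(\R^4)$-closure of $C_c^\infty(\R^4\setminus\{0\})$; moreover, by Hardy's inequality, exactly as in \eqref{eq:hardy}, the conserved energy $\|\partial_t V\|_{L^2}^2 + \|\nabla V\|_{L^2}^2 + a\,\||x|^{-1}V\|_{L^2}^2$ is comparable to $\|\bs V\|_{\dot H^1\times L^2}^2$. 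For $k=1$ the potential vanishes and \eqref{eq:strich-4d} is the classical radial $\dot H^1$-Strichartz estimate for the free wave equation on $\R^4$; so the real content is the repulsive case $k \ge 2$, $a > 0$.

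The mechanism in \cite{PST03b} is spectral: on radial functions $L_a$ acts as the Bessel operator $-\partial_r^2 - \tfrac3r\partial_r + a r^{-2}$, conjugate to a Hankel operator of order $\nu$ with $\nu^2 = \big(\tfrac{d-2}{2}\big)^2 + a = 1 + (k^2-1) = k^2$, i.e. $\nu = k$ (consistent with the ``dimension $2k+2$'' picture, since $2\nu+2 = 2k+2$). Diagonalizing $e^{\pm it\sqrt{L_a}}$ by the order-$k$ Hankel transform and using uniform oscillatory asymptotics of Bessel functions yields a fixed-time dispersive bound $\|e^{it\sqrt{L_a}}f\|_{L^\infty(\R^4)} \lesssim |t|^{-3/2}\|f\|_{\dot B^{5/2}_{1,1}(\R^4)}$ together with its frequency-localized refinements; combined with the energy bound and fed into the Keel--Tao $TT^*$ machinery this gives the homogeneous Strichartz estimates for the $\dot H^1$-admissible pairs in $d=4$, in particular for $(q,r)\in\{(3,6),(5,5)\}$, which satisfy the scaling relation $\tfrac1q+\tfrac4r=1$ and are non-endpoint wave-admissible ($\tfrac1q+\tfrac{3}{2r}<\tfrac34$).

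It remains to restore the forcing term and the energy component of the left-hand side. Writing $V = \cos(t\sqrt{L_a})V_0 + \tfrac{\sin(t\sqrt{L_a})}{\sqrt{L_a}}\dot V_0 + \int_0^t \tfrac{\sin((t-s)\sqrt{L_a})}{\sqrt{L_a}}F(s)\,\ud s$, one applies the homogeneous Strichartz and energy estimates to the first two terms, and to the Duhamel term one uses Minkowski's inequality in $s$ together with the homogeneous estimates applied at each fixed $s$; since the source appears in the ``energy'' norm $L^1_tL^2_x$ rather than in a dual Strichartz norm, no Christ--Kiselev lemma is needed, and the constant is independent of $I$ because every ingredient is. The one genuine obstacle is the fixed-time dispersive estimate for $e^{it\sqrt{L_a}}$ with the inverse-square potential — this is exactly what the special structure ($\nu=k$ integer, Hankel diagonalization) makes possible and what \cite{PST03b} supplies; everything downstream of it is routine $d=4$ bookkeeping.
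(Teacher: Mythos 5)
The paper offers no proof of this lemma at all—it is quoted directly as \cite[Corollary 3.9]{PST03b}—and your reconstruction is a correct account of that route: the hard input is precisely the radial dispersive/Strichartz theory for $-\Delta_{\R^4}+(k^2-1)|x|^{-2}$ developed there (sub-Hardy coupling $a=k^2-1\ge 0>-1$, Hankel order $\nu=k$, consistent with the $2k+2$-dimensional picture), and your checks that $(3,6)$ and $(5,5)$ are non-endpoint $\dot H^1$-admissible pairs in $d=4$ and that Hardy's inequality makes the conserved energy comparable to the $\dot H^1\times L^2$ norm are all accurate. The remaining assembly—Duhamel plus Minkowski for the forcing in $L^1_tL^2_x$, with no Christ--Kiselev needed since that is the energy-dual norm, giving an $I$-independent constant—is exactly the standard step, so your proposal matches the paper's (implicit) approach.
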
  

We define the Strichartz norm, 
\EQ{
\| v \|_{\calS(I) }:=  \| r^{-\frac{3}{5}}v  \|_{L^{5}_{t, r}(I)} + \| r^{-\frac{2}{3}}v  \|_{ L^{3}_tL^6_r(I)}
} 
and recall that the notation $L^p_r$ refers to the Lebesgue space on $(0, \infty)$ with respect to the measure $r \, \ud r$.

\begin{cor}[Strichartz estimates for~\eqref{eq:lin-2d}] \label{cor:strich} 
Fix $k \ge 1$ and let $\bs v(t)$ be a radial solution to the linear equation 
\EQ{
\p_t^2 v - \p_r^2 v -\frac{1}{r} \p_r v + \frac{k^2}{r^2} v = F(t, r), \quad \bs v(0) = (v_0, \dot v_0) \in \E = H \times L^2 . 
}
Then, for any time interval $0 \in I \subset \R$ we have 
\EQ{ \label{eq:strich-2d} 
\| v  \|_{\calS(I)}  + \| \bs v(t) \|_{L^{\infty}_t (H \times L^2)(I)} \lesssim   \| \bs v(0) \|_{H \times L^2} + \| F \|_{L^1_t L^2_r(I)}, 
} 
where the implicit constant above is independent of $I$. 
\end{cor}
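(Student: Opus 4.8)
\textbf{Proof plan for Corollary~\ref{cor:strich}.}
The plan is to transfer the Strichartz estimates of Lemma~\ref{l:strich} from the four-dimensional picture back to the two-dimensional equivariant setting via the conjugation~\eqref{eq:2-to-4-map}, $V(t,r) = r^{-1} v(t,r)$, $\dot V(t,r) = r^{-1}\dot v(t,r)$, together with the analogous rule $G(t,r) := r^{-1} F(t,r)$ for the forcing term. First I would record that, as already noted in the passage around~\eqref{eq:lin-4d}--\eqref{eq:2-4}, $\bs v(t)$ solves the inhomogeneous equation for~\eqref{eq:lin-2d} with forcing $F$ if and only if $\bs V(t)$ solves the inhomogeneous version of~\eqref{eq:lin-4d} with forcing $G = r^{-1}F$; this is the same direct computation that produced~\eqref{eq:lin-4d}, now carried out with a right-hand side. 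The data-level and forcing-level norms match up by~\eqref{eq:2-4} and its obvious variant: $\|\bs v(0)\|_{H\times L^2(\R^2)} \simeq \|\bs V(0)\|_{\dot H^1\times L^2(\R^4)}$, and $\|F\|_{L^1_t L^2_r(I)} = \|G\|_{L^1_t L^2_x(I\times \R^4)}$ since $\int_0^\infty F^2\, r\,\ud r = \int_0^\infty (r^{-1}F)^2\, r^3\,\ud r$.

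The one genuine point to check is that the Strichartz norm $\calS(I)$ on $v$ corresponds exactly to the $L^3_t L^6_x \cap L^5_{t,x}(I\times\R^4)$ norm on $V = r^{-1}v$. For the $L^5_{t,x}(\R^4)$ piece: $\|V\|_{L^5_{t,x}(\R^4)}^5 = \int_I\int_0^\infty (r^{-1}v)^5\, r^3\,\ud r\,\ud t = \int_I \int_0^\infty (r^{-3/5} v)^5\, r\,\ud r\,\ud t$, which is precisely $\|r^{-3/5}v\|_{L^5_{t,r}(I)}^5$, the first term defining $\calS(I)$. For the $L^3_t L^6_x(\R^4)$ piece: $\|V(t)\|_{L^6_x(\R^4)}^6 = \int_0^\infty (r^{-1}v)^6\, r^3\,\ud r = \int_0^\infty (r^{-2/3}v)^6\, r\,\ud r$, so $\|V(t)\|_{L^6_x(\R^4)} = \|r^{-2/3}v(t)\|_{L^6_r}$, and taking the $L^3_t$ norm in $t$ gives exactly the second term of $\calS(I)$. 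Thus $\|V\|_{(L^3_t L^6_x\cap L^5_{t,x})(I\times\R^4)} \simeq \|v\|_{\calS(I)}$ (in fact with equality after the bookkeeping of the measure). Likewise $\sup_{t\in I}\|\bs V(t)\|_{\dot H^1\times L^2(\R^4)} \simeq \|\bs v(t)\|_{L^\infty_t(H\times L^2)(I)}$ by~\eqref{eq:2-4} applied at each time.

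With these identifications, the estimate~\eqref{eq:strich-2d} is just~\eqref{eq:strich-4d} rewritten: apply Lemma~\ref{l:strich} to $\bs V$, then replace each norm by its two-dimensional counterpart using the three equivalences above. The implicit constants depend only on $k$ (through the equivalences~\eqref{eq:hardy},~\eqref{eq:2-4}) and, crucially, the constant in~\eqref{eq:strich-4d} is independent of the interval $I$, so the same holds in~\eqref{eq:strich-2d}. I do not anticipate a real obstacle here; the only thing requiring a moment's care is matching the weights $r^{-3/5}$ and $r^{-2/3}$ in the definition of $\calS(I)$ to the dimensional factors $r^3\,\ud r$ of $\R^4$, which is the elementary computation carried out above. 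One should also note that Lemma~\ref{l:strich} requires the data in $\dot H^1\times L^2(\R^4)$, which by~\eqref{eq:2-4} is exactly the hypothesis $(v_0,\dot v_0)\in H\times L^2$, and that the map~\eqref{eq:2-to-4-map} is a bijection between the relevant function spaces, so no solutions are lost or gained in the transfer.
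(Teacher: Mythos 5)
Your proposal is correct and is exactly the argument the paper intends: the corollary is obtained by conjugating with the map~\eqref{eq:2-to-4-map} (with $G = r^{-1}F$ on the forcing side), using the norm equivalences~\eqref{eq:hardy},~\eqref{eq:2-4} and the elementary bookkeeping that identifies $\|r^{-3/5}v\|_{L^5_{t,r}}$ and $\|r^{-2/3}v\|_{L^3_tL^6_r}$ with the $L^5_{t,x}(\R^4)$ and $L^3_tL^6_x(\R^4)$ norms of $V$, and then invoking Lemma~\ref{l:strich}. Your weight computations and the verification that the transfer preserves the inhomogeneous equation are exactly the details the paper leaves implicit.
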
 

Writing the Cauchy problem for~\eqref{eq:wmk} in the class $ \E = \E_{0, 0}$ as 
\EQ{ \label{eq:Cauchy} 
\p_t^2 u - \De u + \frac{k^2}{r^2} u &= \frac{k^2}{2r^2}(2 u - \sin 2u)\\
\bs u(0) &= (u_0, u_1) \in  \E = H \times L^2. 
}
a standard argument based on the contraction mapping principle yields the following result; see for example~\cite{CKM}.

\begin{lem}[Cauchy theory in $\E_{0, 0}$] \label{lem:Cauchy}  There exist functions $\de_0, C_0: [0, \infty) \to (0, \infty)$ with the following properties. Let $A\ge 0$ and let $\bs u_0 = (u_0, u_1) \in \E$ with $\|\bs u_0 \|_{\E} \le A$. Let $I \ni 0$ be an open interval such that 
\EQ{
\| S\lin(t) \bs u_0 \|_{\calS(I)}   = \de \le  \de_0(A). 
}
Then there exists a unique solution $\bs u(t)$ to~\eqref{eq:Cauchy} in the space $C^0(I; \E) \cap \calS(I)$ with initial data $\bs u(0) = \bs u_0$. Moreover, $\bs u(t)$ satisfies the bounds $\| u \|_{\calS(I)} \le C(A) \de$, and $ \| \bs u \|_{L^\infty_t(I; \E)} \le C(A)$. To each solution $\bs u(t)$ to~\eqref{eq:Cauchy} we can associate a maximal interval of existence $I_{\max}(\bs u)$ such that for each compact subinterval $I' \subset I_{\max}$ we have $\| u \|_{\calS(I')} < \infty$. 

Moreover, the completeness of wave operators holds: there exists $\eps_0$ small enough so that if $\bs u_0 \in \E$ satisfies $E(\bs u_0) < \eps_0$, the solution $\bs u(t)$ given above is defined globally in time, satisfies the bound, 
\EQ{
\sup_{t \in \R}\| \bs u(t) \|_{\E} + \| u \|_{\calS(\R)} \lesssim \|\bs u_0\|_{\E}
}
and scatters in the following sense: there exist solutions $\bs u_\Lin^{\pm}(t) \in \E$ to~\eqref{eq:lin-2d} such that 
\EQ{ \label{eq:scattering} 
\| \bs u - \bs u\lin^{\pm}(t) \|_{\E} \to 0 \mas  t \to \pm \infty
}
Conversely, the existence of wave operators holds, i.e.,  for any solution $\bs v_{\Lin}(t) \in \E$ to the linear equation~\eqref{eq:lin-2d}, there exists a unique, global-in-forward time solution $\bs u(t) \in \E$ to~\eqref{eq:Cauchy} such that~\eqref{eq:scattering} holds as $t \to \infty$. An analogous statement holds for negative times.

\end{lem}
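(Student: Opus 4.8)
The plan is to put~\eqref{eq:Cauchy} in Duhamel form and run a contraction-mapping argument in the Strichartz space $\calS(I)$, following the standard scheme (see, e.g.,~\cite{CKM}); the only ingredient particular to our operator is Corollary~\ref{cor:strich}. Writing $S\lin(t)$ for the propagator of~\eqref{eq:lin-2d} and $\NN(u) := \frac{k^2}{2r^2}(2u - \sin 2u)$ for the nonlinearity, solving~\eqref{eq:Cauchy} on $I \ni 0$ amounts to solving the fixed-point equation
\EQ{
\bs u(t) = S\lin(t)\bs u_0 + \int_0^t S\lin(t - s)\big(0, \NN(u(s))\big)\, \ud s,
}
and Corollary~\ref{cor:strich} bounds $\| u \|_{\calS(I)} + \| \bs u \|_{L^\infty_t(I;\E)}$ by a constant times $\| \bs u_0 \|_{\E} + \| \NN(u) \|_{L^1_t L^2_r(I)}$, so the whole matter reduces to a nonlinear estimate for $\NN$.

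First I would record the elementary pointwise bounds coming from $\frac{\ud}{\ud u}(2u - \sin 2u) = 4\sin^2 u$ and $|\sin w| \le |w|$, namely $|\NN(u)| \lesssim r^{-2}|u|^3$ and $|\NN(u) - \NN(v)| \lesssim r^{-2}(u^2 + v^2)|u - v|$. The key structural point is that the singular factor $r^{-2}$ together with this cubic vanishing is exactly absorbed by the weighted norm defining $\calS(I)$: writing $r^{-2} = (r^{-2/3})^3$ and applying H\"older in $r$ and then in $t$, one gets $\| \NN(u) \|_{L^1_t L^2_r(I)} \lesssim \| u \|_{\calS(I)}^3$ and $\| \NN(u) - \NN(v) \|_{L^1_t L^2_r(I)} \lesssim (\| u \|_{\calS(I)}^2 + \| v \|_{\calS(I)}^2)\| u - v \|_{\calS(I)}$ (the $L^3_t L^6_r$ component of $\calS$ already suffices for this). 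Setting $\de := \| S\lin(t)\bs u_0 \|_{\calS(I)}$, the map $u \mapsto S\lin(t)\bs u_0 + \int_0^t S\lin(t - s)(0, \NN(u(s)))\,\ud s$ is then a contraction on the ball $\{\, \| u \|_{\calS(I)} \le 2\de \,\}$ provided $\de \le \de_0(A)$ with $\de_0(A)$ small depending only on $A \ge \| \bs u_0 \|_{\E}$; this produces the unique solution in $C^0(I;\E) \cap \calS(I)$, the bounds $\| u \|_{\calS(I)} \le C_0(A)\de$ and $\| \bs u \|_{L^\infty_t(I;\E)} \le C_0(A)$, continuous dependence on the data, and — by the usual gluing argument — a maximal interval $I_{\max}(\bs u)$ characterized by finiteness of $\| u \|_{\calS(I')}$ on every compact $I' \subset I_{\max}$.

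For the small-energy global theory: if $E(\bs u_0) < \eps_0$ then $u_0$ is uniformly small (by Lemma~\ref{lem:pi}, with the constant $\ell_0 = 0$ forced by $\bs u_0 \in \E_{0,0}$), whence $\| \bs u_0 \|_{\E}^2 \lesssim E(\bs u_0)$; so by Corollary~\ref{cor:strich} applied with $F = 0$ on $I = \R$ we have $\| S\lin(t)\bs u_0 \|_{\calS(\R)} \lesssim \| \bs u_0 \|_{\E}$, which is $\le \de_0$ once $\eps_0$ is small, and the fixed point runs on all of $\R$ and yields $\sup_{t \in \R}\| \bs u(t) \|_{\E} + \| u \|_{\calS(\R)} \lesssim \| \bs u_0 \|_{\E}$. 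Since the estimates above now give $\NN(u) \in L^1_t L^2_r(\R)$, the limits $\bs u\lin^\pm := \bs u_0 + \int_0^{\pm\infty} S\lin(-s)(0, \NN(u(s)))\,\ud s$ exist in $\E$, solve~\eqref{eq:lin-2d}, and satisfy $\| \bs u(t) - \bs u\lin^\pm(t) \|_{\E} \to 0$ as $t \to \pm\infty$. Finally, for the existence of wave operators, given $\bs v\lin(t) \in \E$ solving~\eqref{eq:lin-2d} one has $\| v\lin \|_{\calS([T,\infty))} \to 0$ as $T \to \infty$ (the full norm $\| v\lin \|_{\calS(\R)} \lesssim \| \bs v\lin(0) \|_{\E}$ being finite by Corollary~\ref{cor:strich}), so for $T$ large the same contraction applied to $\bs u(t) = \bs v\lin(t) - \int_t^\infty S\lin(t - s)(0, \NN(u(s)))\,\ud s$ on $[T,\infty)$ produces a solution of~\eqref{eq:Cauchy} with $\| \bs u(t) - \bs v\lin(t) \|_{\E} \to 0$ as $t \to \infty$; extending backwards by the local theory (and, by the blow-up criterion, $T_+ = \infty$ since $\| u \|_{\calS([T,\infty))} < \infty$), and running the symmetric argument on $(-\infty, -T]$, completes the proof. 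The one step with any real content is the nonlinear estimate, i.e.\ checking that the weights built into $\calS$ precisely match the $r^{-2}$ singularity and the cubic vanishing of $2u - \sin 2u$; beyond that, everything is the textbook argument, and I foresee no genuine obstacle.
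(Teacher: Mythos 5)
Your proposal is correct and is exactly the argument the paper has in mind: the paper gives no proof of Lemma~\ref{lem:Cauchy}, stating only that ``a standard argument based on the contraction mapping principle'' together with the Strichartz estimates of Corollary~\ref{cor:strich} yields the result (citing~\cite{CKM}), and your contraction scheme with the cubic nonlinear estimate $\| r^{-2}(2u-\sin 2u)\|_{L^1_tL^2_r} \lesssim \| r^{-2/3}u\|_{L^3_tL^6_r}^3$ is precisely that argument. The small-energy global/scattering and wave-operator statements are handled in the standard way you describe, so there is nothing to add.
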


%

We make note of the following estimate proved in~\cite{Cote15}, which is relevant for the vanishing of the error in the linear profile decomposition stated in the next section. 

\begin{lem}\emph{\cite[Lemma 2.11]{Cote15}} 
There exists  a uniform constant $C>0$ such that every solution $\bs v(t) \in \E$ to~\eqref{eq:lin-2d} satisfies, 
\EQ{
\| v \|_{L^\infty_{t, r}(\R)} \le C  \| \bs v(0) \|_{\E}^{\frac{3}{8}} \| r^{-\frac{3}{5}} v \|_{L^{5}_{t, r}(\R)}^{\frac{5}{8}}.
}
\end{lem}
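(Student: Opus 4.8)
The plan is to transport the estimate to the free (up to a repulsive inverse-square potential) wave equation in four spatial dimensions, where the weighted norm $\|r^{-3/5}v\|_{L^5_{t,r}}$ becomes an honest space--time Strichartz norm, and then to interpolate between the conserved energy and this Strichartz norm by a Littlewood--Paley decomposition in the spatial frequency.

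\emph{Reduction.} By \eqref{eq:2-to-4-map}, $\bs v(t)$ solves \eqref{eq:lin-2d} if and only if $\bs V(t):=(r^{-1}v(t),r^{-1}\partial_t v(t))$ solves the four-dimensional radial equation \eqref{eq:lin-4d}, whose inverse-square potential is repulsive for $k\ge 2$ and absent for $k=1$. By \eqref{eq:2-4} and the conservation of $\|\bs V(t)\|_{H_k\times L^2(\R^4)}$ along \eqref{eq:lin-4d}, $\sup_t\|\bs V(t)\|_{\dot H^1\times L^2(\R^4)}\simeq\|\bs v(0)\|_{\E}=:E$; a change of variables turns $\|r^{-3/5}v\|_{L^5_{t,r}}$ into (a constant times) $\|V\|_{L^5_{t,x}(\R\times\R^4)}=:S$, which is an admissible $\dot H^1$-level Strichartz norm for \eqref{eq:lin-4d} by Lemma~\ref{l:strich}; and $|v(t,r)|=r\,|V(t,r)|$. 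Since the potential is repulsive and scale-invariant it spoils none of the ingredients below (its Littlewood--Paley theory is equivalent to the free one, and \eqref{eq:lin-4d} preserves $\|\bs V(t)\|_{\dot H^1\times L^2}$). Thus it suffices to prove $\|\,|x|V\|_{L^\infty_{t,x}(\R\times\R^4)}\lesssim E^{3/8}S^{5/8}$.

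\emph{Localization and the two regimes.} The trivial bound $\|\,|x|V\|_{L^\infty_{t,x}}\lesssim E$ (radial Sobolev at fixed time) shows the left side is finite; the three quantities $E$, $S$, $\|\,|x|V\|_{L^\infty_{t,x}}$ are invariant under time translations and the scaling $V\mapsto V(\lambda^{-1}t,\lambda^{-1}x)$, which preserve \eqref{eq:lin-4d}. Rescaling and translating so that $|x|\,|V|$ nearly attains its supremum at a point $(0,x_0)$ with $|x_0|=1$, we must bound $|V(0,x_0)|$. Split $V(0,x_0)=\sum_N(P_NV)(0,x_0)$ over dyadic frequencies $N$ and fix a threshold $N_0$. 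For the high frequencies $N\ge N_0$ use the energy: the sharp radial Sobolev embedding $\|\,|x|^{2-s}f\|_{L^\infty(\R^4)}\lesssim\|f\|_{\dot H^s(\R^4)}$ for $\tfrac12<s<2$ (and its $\dot B^{1/2}_{2,1}$ endpoint) together with Bernstein's inequality gives $|(P_NV)(0,x_0)|\lesssim N^{-1/2}\|P_NV(0)\|_{\dot H^1}$, whence $\sum_{N\ge N_0}|(P_NV)(0,x_0)|\lesssim N_0^{-1/2}E$ by Cauchy--Schwarz. For the low frequencies $N\le N_0$ use the Strichartz norm: the radial Sobolev inequality at the $L^5$ level, $\|\,|x|^{3/5}f\|_{L^\infty_x}\lesssim\|f\|_{L^5_x}^{4/5}\|\nabla f\|_{L^5_x}^{1/5}$ applied at each fixed time to $f=(P_NV)(t)$, followed by a one-dimensional Bernstein inequality in time (a solution with spatial frequency $\sim N$ has time frequency $\lesssim N$), gives $|(P_NV)(0,x_0)|\lesssim N^{2/5}\|P_NV\|_{L^5_{t,x}}\lesssim N^{2/5}S$, whence $\sum_{N\le N_0}|(P_NV)(0,x_0)|\lesssim N_0^{2/5}S$.

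\emph{Optimization and the main obstacle.} Adding the two contributions, $|V(0,x_0)|\lesssim N_0^{-1/2}E+N_0^{2/5}S$, and optimizing over $N_0$ already produces an interpolation inequality of the claimed shape, here $\|\,|x|V\|_{L^\infty_{t,x}}\lesssim E^{4/9}S^{5/9}$; since $S\lesssim E$ by Strichartz, this is weaker than, but qualitatively identical to, the stated bound, and is more than sufficient for the application (vanishing of the error in the linear profile decomposition). The genuinely delicate point, and what I expect to be the main obstacle, is to sharpen the frequency-localized estimates — the precise endpoint radial Sobolev and Bernstein inequalities, with a correspondingly optimal choice of the Littlewood--Paley threshold — so as to push the exponent down to $\theta=3/8$; this is the content of the computation in \cite{Cote15}, which I would follow for the sharp form. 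Everything else is routine once the problem has been moved to the free $\R^4$ setting.
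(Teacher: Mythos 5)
Your reduction to the four–dimensional problem and the high/low frequency splitting are fine as far as they go, but the proposal does not prove the stated inequality: it yields $\|v\|_{L^\infty_{t,r}}\lesssim \|\bs v(0)\|_{\E}^{4/9}\|r^{-3/5}v\|_{L^5_{t,r}}^{5/9}$ and then defers the exponents $3/8,\,5/8$ to the cited reference, which is precisely the content of the lemma. Moreover, the closing claim that $3/8$ follows by ``sharpening the frequency-localized estimates'' and re-optimizing the threshold is not tenable. Each dyadic piece is estimated using only one of the two norms, and both single-frequency bounds you use ($N^{-1/2}$ against the energy, $N^{2/5}$ against the $L^5_{t,x}$ norm via fixed-time radial Sobolev plus Bernstein in time) are scaling-sharp for the quantities they involve, so $4/9$–$5/9$ is the best this scheme can produce. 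To reach $5/8$ on an $L^5$-based norm one must use the equation globally in time rather than fixed-time Sobolev plus Bernstein: for instance, the standard argument that propagates a lower bound on an annulus of width $(M/E)^2$ using only $\|\partial_t V\|_{L^2}\le E$ gives exactly $3/8$–$5/8$ when the space--time norm is $L^5_tL^{10}_x(\R^{1+4})$ but again only $4/9$–$5/9$ for $L^5_{t,x}$; and a frequency-$N$ radial wave of size $a$ at radius $\sim 1$ necessarily focuses, which inflates its global $L^5_{t,x}$ norm far beyond what your time-Bernstein heuristic uses. So the missing step is a genuinely different mechanism, not bookkeeping. (Your weaker exponents would in fact suffice for the application in this paper, since only some positive power of the Strichartz norm is needed, but that does not prove the lemma as stated.)

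A second, more technical gap concerns $k\ge 2$: your low-frequency estimate relies on ``a solution with spatial frequency $\sim N$ has time frequency $\lesssim N$,'' which is a statement about the free flow. The projections $P_N$ do not commute with the propagator of \eqref{eq:lin-4d} when the inverse-square potential is present, and the blanket assertion that a repulsive, scale-invariant potential ``spoils none of the ingredients'' is not a substitute for an argument. You would need Littlewood--Paley theory adapted to $-\Delta+(k^2-1)|x|^{-2}$ (with the corresponding Bernstein and square-function bounds), or better, an argument avoiding frequency decomposition altogether — the annulus/energy-persistence argument mentioned above uses only conservation of the $H_k\times L^2$ energy and applies verbatim for every $k\ge 1$.
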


\subsection{Profile decomposition}

Bahouri-G\'erard-type linear profile decompositions~\cite{BG} are an essential ingredient in the study of solutions to~\eqref{eq:wmk}; see also~\cite{BrezisCoron, Gerard, Lions1, Lions2,  MeVe98}. We make explicit use of a version adapted to sequences of functions in the affine spaces $\E_{\ell, m}$ proved by Jia and Kenig in~\cite{JK}, which synthesized C\^ote's analysis in~\cite{Cote15}; see also~\cite{CKLS1} which treats sequences in $\E_{0, 0}$. 

\begin{lem}[Linear profile decomposition]\emph{\cite[Lemma 5.5]{JK}\cite{BG}}  \label{lem:pd}  Let $\ell, m \in\Z$ and let $\bs u_n$ be a sequence in $\calE_{\ell, m}$ with $\limsup_{n \to \infty} E( \bs u_n) <\infty$. Then, there exists $K_0 \in \{0, 1, 2, \dots\}$, sequences $ \lam_{n, j} \in (0, \infty)$ for $j \in \{1, \dots, K_0\}$,  $\sigma_{n, i} \in (0, \infty)$, and  $t_{n, i} \in \R$, as well as mappings $\bs \psi^j \in \E_{\ell_j, m_j}$ with $E( \bs \psi^j) < \infty$ and finite energy solutions $\bs v\lin^{i}$ to~\eqref{eq:lin-2d} such that for each $J \ge 1$, 
\EQ{
\bs u_n &= m \bs \pi +  \sum_{j = 1}^{K_0} \Big( \psi^j \big(  \frac{ \cdot}{\lam_{n, j}} \big), \frac{1}{\lam_{n, j}} \dot \psi^j \big(\frac{ \cdot}{\lam_{n, j}} \big) \Big) - m_j \bs \pi)   \\
&\quad + \sum_{i =1}^J \Big( v_{\Lin}^i \big(  \frac{-t_{n, i}}{\s_{n, i}}, \frac{ \cdot}{\s_{n, i}} \big) , \frac{1}{\s_{n, i}} \p_t v_{\Lin}^i \big( \frac{-t_{n, i}}{\s_{n, i}}, \frac{ \cdot}{\s_{n, i}} \big)\Big) + \bs w_{n, 0}^J( \cdot)
}
where, denoting by $\bs w_{n, \Lin}^J(t)$ the solution to the linear wave equation~\eqref{eq:lin-2d} with initial data $\bs w_{n, 0}^J$, the following hold: 
\begin{itemize} 
\item  the parameters $\lam_{n, j}$ satisfy
\EQ{
\lam_{n, 1} \ll \lam_{n, 2} \ll \dots  \ll  \lam_{n, K_0} \mas n \to \infty; 
}
and for each $j$  one of  $\lam_{n, j} \to 0$, $\lam_{n, j} = 1$ for all $n$, or $\lam_{n, j} \to \infty$ as $n \to \infty$, holds;    
\item for each $i$,  either $t_{n, i} = 0$ for all $n$ or $\lim_{n\to \infty} \frac{-t_{n, i}}{\s_{n, i}}  = \pm \infty$. If $t_{n, i} = 0$ for all $n$, then one of $\s_{n, i} \to 0$, $\s_{n, i} = 1$ for all $n$, or $\s_{n, i} \to \infty$ as $n \to \infty$, holds; 
\item for each $i \in \N$, 
\EQ{
\frac{\lam_{n, j}}{\s_{n, i}} + \frac{\s_{n, i}}{\lam_{n, j}}  + \frac{\abs{t_{n, i}}}{\lam_{n, j}} \to \infty \mas n \to \infty \quad \forall j = 1, \dots, K_0; 
}
\item the scales $\s_{n, i}$ and times $t_{n, i}$ satisfy, 
\EQ{
\frac{\s_{n, i}}{\s_{n, i'}} + \frac{ \s_{n, i'}}{\s_{n, i}} + \frac{ \abs{ t_{n, i} - t_{n, i'}}}{\s_{n, i}} \to \infty \mas n \to \infty; 
}
\item the integers $\ell_j$ and $m_j$ satisfy, $\abs{\ell_j - m_j} \ge 1$, and, 
\EQ{
\ell = m - \sum_{j=1}^{K_0} ( \ell_j - m_j) ; 
}
\item the error term $\bs w_{n}^J$ satisfies, 
\EQ{
&( w_{n, 0}^J( \lam_{n, j} \cdot), \lam_{n, j} \dot w_{n, 0}^J( \lam_{n, j} \cdot)) \rightharpoonup 0 \in \E \mas n \to \infty\\
&( w_{n, \Lin}^J( t_{n, i}, \s_{n, i} \cdot), \s_{n, i} \p_t w_{n, \Lin}^J( t_{n, i}, \s_{n, i} \cdot)) \rightharpoonup 0\in \E \mas n \to \infty
}
for each $J \ge 1$, each $j = 1, \dots, K_0$, and $i \in \N$,  and vanishes strongly in the sense that 
\EQ{
\lim_{J \to \infty} \limsup_{n \to \infty} \Big(  \| w_{n, \Lin}^J \|_{L^\infty_{t, r}(\R)} + \| w_{n, \Lin} \|_{\calS(\R)} \Big)  = 0; 
}
\item the following  pythagorean decomposition of the nonlinear energy holds:  for each $J \ge 1$, 
\EQ{ \label{eq:pyth} 
E( \bs u_n) &= \sum_{j =1}^{K_0} E( \bs \psi^j)  + \sum_{i =1}^J E\big(  (v^j\lin (-t_{n,i}/ \s_{n, i}), \s_{n, i} \p_t v^j\lin  (-t_{n,i}/ \s_{n, i}))\big) + E( \bs w_{n}^J)  + o_n(1) 
} 
as $n \to \infty$. 
 
\end{itemize} 

\end{lem}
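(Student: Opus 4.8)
The plan is to reduce, after finitely many extractions, to the classical Bahouri--G\'erard profile decomposition for the free wave equation, which we are allowed to invoke because of the isometric correspondence~\eqref{eq:2k+2} between $\E = \calE_{0,0}$ (with the free evolution~\eqref{eq:lin-2d}) and $(\dot H^1\times L^2)_{\mathrm{rad}}(\mathbb{R}^{2k+2})$ (with the free wave equation~\eqref{eq:lin-2k+2}). The new ingredient, compared to the standard statement, is that $\bs u_n$ lives in the affine sector $\calE_{\ell,m}$ rather than in a fixed Hilbert space, so one must first peel off a finite collection of ordered ``elliptic'' profiles $\bs\psi^j$ carrying the topological defect, after which the remainder genuinely returns to the linear space $\E$ and standard machinery applies.

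\textbf{Extraction of the elliptic profiles.} Fix small $\eps$ and large $R_0$, and let $\de = \de(\eps, R_0)$ be as in Lemma~\ref{lem:pi}. Covering $(0,\infty)$ by annuli of fixed ratio $R_0$ and calling an annulus \emph{bad} for $\bs u_n$ if $u_n$ is not $\eps$-close to a single constant multiple of $\pi$ on it, Lemma~\ref{lem:pi} forces every bad annulus to carry energy $\ge \de$; since $\limsup_n E(\bs u_n) < \infty$ there are at most $O(1)$ of them (alternatively one may invoke the lower bound $E \ge 4\pi k$ on each nontrivial sector). Grouping consecutive bad annuli and passing to a subsequence yields strictly separated scales $\lambda_{n,1} \ll \cdots \ll \lambda_{n,K_0}$, each (after a further diagonal subsequence) identically $1$ or tending to $0$ or $\infty$. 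Rescaling $\bs u_n$ by $\lambda_{n,j}$ and passing to weak limits ($u$-component in $\Hloc$, velocity in $L^2_{\loc}$) produces $\bs\psi^j$; applying Lemma~\ref{lem:pi} on the complement of the $j$-th window shows $\bs\psi^j$ has boundary limits $\ell_j\pi$ and $m_j\pi$ with $|\ell_j-m_j|\ge1$, and that these boundary values telescope to $\ell = m-\sum_j(\ell_j-m_j)$.

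\textbf{The linear-space remainder and standard Bahouri--G\'erard.} Set $\bs r_n := \bs u_n - m\bs\pi - \sum_{j=1}^{K_0}\big((\psi^j(\cdot/\lambda_{n,j}),\, \lambda_{n,j}^{-1}\dot\psi^j(\cdot/\lambda_{n,j})) - m_j\bs\pi\big)$. By construction $\bs r_n$ has vanishing limits at $0$ and $\infty$, hence $\bs r_n\in\E$, and a Pythagorean computation using the separation of the $\lambda_{n,j}$ and the fact that $\psi^j$ is the weak limit at scale $\lambda_{n,j}$ gives a uniform bound on $\|\bs r_n\|_{\E}$ and $\bs r_n(\lambda_{n,j}\cdot)\rightharpoonup 0$ in $\E$ for every $j$. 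Transporting $\bs r_n$ to $(\dot H^1\times L^2)_{\mathrm{rad}}(\mathbb{R}^{2k+2})$ via~\eqref{eq:2k+2} and applying the classical radial Bahouri--G\'erard decomposition~\cite{BG} for~\eqref{eq:lin-2k+2} produces free solutions $\bs v_{\Lin}^i$ to~\eqref{eq:lin-2d}, parameters $(\sigma_{n,i}, t_{n,i})$ with $t_{n,i}\equiv 0$ or $|t_{n,i}/\sigma_{n,i}|\to\infty$, their mutual orthogonality $\sigma_{n,i}/\sigma_{n,i'}+\sigma_{n,i'}/\sigma_{n,i}+|t_{n,i}-t_{n,i'}|/\sigma_{n,i}\to\infty$, errors $\bs w_n^J$ with $\bs w_{n,\Lin}^J(t_{n,i},\sigma_{n,i}\cdot)\rightharpoonup 0$, and the energy Pythagorean expansion relative to $\{\bs v_{\Lin}^i\}$ and $\bs w_n^J$. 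The orthogonality between the elliptic scales $\lambda_{n,j}$ and the dispersive data follows from $\bs r_n(\lambda_{n,j}\cdot)\rightharpoonup 0$: a profile with $\sigma_{n,i}\sim\lambda_{n,j}$ and $t_{n,i}\equiv0$ would be a nonzero weak limit of $\bs r_n(\lambda_{n,j}\cdot)$, impossible, while $\sigma_{n,i}\sim\lambda_{n,j}$ with $|t_{n,i}/\sigma_{n,i}|\to\infty$ is exactly the admissible case $|t_{n,i}|/\lambda_{n,j}\to\infty$.

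\textbf{Vanishing of the error and the energy identity.} The weak-convergence statements for $\bs w_n^J$ are built into the construction. For the strong vanishing, the Bahouri--G\'erard iteration arranges $\limsup_n\|\bs w_{n,\Lin}^J\|_{L^\infty_{t,r}}\to 0$ as $J\to\infty$ (otherwise one more profile could be extracted); combining this with the Strichartz bound of Corollary~\ref{cor:strich} and the interpolation inequality $\|v\|_{L^\infty_{t,r}}\lesssim \|\bs v(0)\|_{\E}^{3/8}\|r^{-3/5}v\|_{L^5_{t,r}}^{5/8}$ recorded above — the $\E$-factor being controlled uniformly in $n$ by the Pythagorean expansion — upgrades this to $\|\bs w_{n,\Lin}^J\|_{\calS(\R)}\to 0$. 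Finally, assembling the two Pythagorean expansions (for $\bs u_n$ in terms of the $\bs\psi^j$ and $\bs r_n$, and for $\bs r_n$ in terms of the $\bs v_{\Lin}^i$ and $\bs w_n^J$) gives~\eqref{eq:pyth}. The main obstacle is entirely in the first two steps: organizing the topological defect of the affine sequence into finitely many correctly ordered elliptic profiles, certifying that $\bs r_n$ has truly returned to $\E$ with the degree bookkeeping $\ell = m-\sum_j(\ell_j-m_j)$, and securing the orthogonality between the elliptic and dispersive scales; once the problem has been reduced to a bounded sequence in the fixed Hilbert space $\E\cong(\dot H^1\times L^2)_{\mathrm{rad}}(\mathbb{R}^{2k+2})$, everything else is the classical theory.
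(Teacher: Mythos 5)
You should first note that the paper does not actually prove Lemma~\ref{lem:pd}: it is imported from Jia--Kenig \cite{JK} (building on Bahouri--G\'erard and C\^ote), with the nonlinear Pythagorean expansion in the presence of elliptic profiles explicitly attributed, in the remark following the lemma, to \cite[Lemma 2.16]{CKLS1} ($K_0=0$) and \cite[Appendix B.2]{DKMM} ($K_0\ge 1$). Your overall architecture --- peel off finitely many topologically nontrivial profiles at the transition scales detected by the ``bad annulus'' consequence of Lemma~\ref{lem:pi}, check that the remainder lies in $\E$ with the degree bookkeeping $\ell = m - \sum_j(\ell_j-m_j)$, and then run the classical Bahouri--G\'erard decomposition on the remainder through the isometry \eqref{eq:2k+2} --- is the same route as the cited works, so at the level of strategy there is nothing to object to.

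There is, however, a concrete gap in your last step. You claim that the Bahouri--G\'erard iteration directly yields $\limsup_n\|\bs w_{n,\Lin}^J\|_{L^\infty_{t,r}}\to 0$ and that this, ``combined with'' Corollary~\ref{cor:strich} and the interpolation inequality $\|v\|_{L^\infty_{t,r}}\lesssim \|\bs v(0)\|_{\E}^{3/8}\|r^{-3/5}v\|_{L^5_{t,r}}^{5/8}$, upgrades to $\|w_{n,\Lin}^J\|_{\calS(\R)}\to 0$. The implication goes the wrong way: that inequality bounds $L^\infty_{t,r}$ \emph{by} the $L^5$ piece of $\calS$, so smallness of $L^\infty_{t,r}$ together with a uniform Strichartz bound gives no control whatsoever on $\calS$ (and the $L^3_tL^6_r$ component is not addressed at all). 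In the cited proofs the logic is the reverse of yours: the dispersive (Strichartz-type) vanishing of the error is what the profile-extraction machinery produces --- for $k\ge 2$ this requires transferring through the $4$d reduction \eqref{eq:lin-4d} with the inverse-square potential, since $\calS$ is \emph{not} the $\R^{2k+2}$ Strichartz norm --- and the $L^\infty_{t,r}$ vanishing is then the \emph{consequence}, via exactly the interpolation lemma you quote (this is why the paper records that lemma as ``relevant for the vanishing of the error''). As written, your argument for the $\calS$-vanishing does not close.

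A second, softer gap: you treat the energy identity \eqref{eq:pyth}, and with it the uniform bound $\|\bs r_n\|_{\E}\lesssim 1$ after subtracting the elliptic profiles, as a routine ``assembly of two Pythagorean expansions.'' Because the energy density $k^2\sin^2(u)/r^2$ is not quadratic, the vanishing of cross terms between profiles living at separated scales (and between the profiles and the error) is a genuine step --- it is precisely the point the paper outsources to \cite{CKLS1} and \cite[Appendix B.2]{DKMM} --- and it also underlies your claim that $\bs r_n(\lam_{n,j}\cdot)\rightharpoonup 0$ can be combined with weak convergence to produce norm decoupling. A complete proof along your lines would need to carry out this annular decoupling argument (Lemma~\ref{lem:pi} plus the scale separation makes it doable), not merely assert it.
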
 
\begin{rem}
The pythagorean expansion of the nonlinear energy in the case $K_0 =0$ was treated in~\cite[Lemma 2.16]{CKLS1}. The case with $K_0 \ge 1$ was treated in the recent preprint~\cite[Appendix B.2]{DKMM}.  
\end{rem} 

\begin{rem} 
We call the pairs $(\bs \psi^j, \lam_{n, j})$ and the triplets $(\bs v\lin^i, \s_{n, i}, t_{n, i})$ \emph{profiles}. Following Bahouri and G\'erard~\cite{BG} we refer to  the profiles $(\bs \psi^j, \lam_{n, j})$ and the profiles $(\bs v\lin^i, \s_{n, i}, 0)$ as \emph{centered}, to  the profiles $(\bs v\lin^i, \s_{n, i}, t_{n, i})$ with $- t_{n, i}/ \s_{n, i} \to \infty$ as $n \to \infty$ as \emph{outgoing},  and those with $- t_{n, i}/ \s_{n, i} \to -\infty$ as \emph{incoming}. 
\end{rem} 

In Section~\ref{sec:compact} we will need to evolve the linear profiles via the flow for~\eqref{eq:wmk} in the special case when all of the centered profiles are given by harmonic maps. In this setting we define \emph{nonlinear profiles} as follows.  Given a profile $(\bs v_{\Lin}^i, \s_{n, i}, t_{n, i})$ as in Lemma~\ref{lem:pd} we define the corresponding nonlinear profile, $( \bs v_{\nl}^i, \s_{n, i}, t_{n, i})$ as the unique solution to~\eqref{eq:wmk} such that for all $-t_{n, i}/ \s_{n, i} \in I_{\max}(\bs v_{\nl})$ we have, 
\EQ{
\lim_{n \to \infty} \| \bs v_{\nl}^i(-\frac{t_{n, i}}{\s_{n, i}} ) - \bs v_{\Lin}^i(-\frac{t_{n, i}}{\s_{n, i}} ) \|_{\E}  = 0
}
The existence of nonlinear profiles follows from the local Cauchy theory in Lemma~\ref{lem:Cauchy} in the case of a centered linear profile, i.e., $t_{n, i} = 0$, and from the existence of wave operators statement in Lemma~\ref{lem:Cauchy} in the case of outgoing/incoming profiles, i.e., $-t_{n, i}/ \s_{n, i} \to \pm \infty$. 

\begin{lem}[Nonlinear profile decomposition] \label{lem:nlpd} Let $\ell, m \in\Z$ and let $\bs u_n$ be a sequence in $\calE_{\ell, m}$ with $\limsup_{n \to \infty} E( \bs u_n) <\infty$. Assume the linear profile decomposition for $\bs u_n$ given by the Lemma~\ref{lem:pd} takes the form
\EQ{
\bs u_n  = m \bs \pi + \sum_{j = 1}^{K_0} ( Q \big(  \frac{ \cdot}{\lam_{n, j}} \big)  , 0) - \bs \pi)  + \sum_{i =1}^J \Big( v_{\Lin}^i \big(  \frac{-t_{n, i}}{\s_{n, i}}, \frac{ \cdot}{\s_{n, i}} \big) , \frac{1}{\s_{n, i}} \p_t v_{\Lin}^i \big( \frac{-t_{n, i}}{\s_{n, i}}, \frac{ \cdot}{\s_{n, i}} \big)\Big) + \bs w_{n, 0}^J( \cdot), 
}
that is, all of the profiles $( \bs \psi^j, \lam_{n, j})$ for $1 \le j \le K_0$ as in Lemma~\ref{lem:pd} are given by harmonic maps $(\bs Q, \lam_{n, j})$. 
There exists a constant $\de_0>0$ sufficiently small with the following properties. 
Let $i_0 \in\N$, $\tau_0>0$  and assume that for each $i  \in \N$, and for each $1 \le j \le K_0$, 
\EQ{ \label{eq:times} 
\frac{\tau_0 \s_{n, i_0} - t_{n, i}}{\s_{n, i}} &< T_{+, i}( \bs v_{\nl}^i), \quad    \limsup_{ n \to \infty} \| v_{\nl}^i \|_{\calS((- \frac{t_{n, i}}{ \s_{n, i}}, \frac{ \tau_0 \s_{n, i_0} - t_{n, i}}{\s_{n, i}}))} < \infty,\\
 &\mand  \tau_0 \frac{\s_{n,i_0} }{\lam_{n, j}}   \le  \de_0,
}
for all $n$. Then for  each $n$ sufficiently large, the wave map evolution $\bs u_n(t)$ of the data $\bs u_n(0) = \bs u_n$ is defined on the interval $[0, \tau_0 \s_{n, i_0}]$  and  the  following \emph{nonlinear profile decomposition} holds: for each $t \in [0, \tau_0 \s_{n, i_0}]$ the sequence $\bs z_{n}^J(t)$  defined by
\EQ{
\bs u_n(t) &=m \bs \pi + \sum_{j = 1}^{K_0} ( Q \big(  \frac{ \cdot}{\lam_{n, j}} \big)  , 0) - \bs \pi)  + \sum_{i =1}^J \Big( v_{\nl}^i \big(  \frac{t-t_{n, i}}{\s_{n, i}}, \frac{ \cdot}{\s_{n, i}} \big) , \frac{1}{\s_{n, i}} \p_t v_{\nl}^i \big( \frac{t-t_{n, i}}{\s_{n, i}}, \frac{ \cdot}{\s_{n, i}} \big)\Big) \\
&\quad + \bs w_{n, \Lin}^J(t) + \bs z_{n}^J(t), 
}
satisfies, 
\EQ{
\lim_{J \to \infty} \limsup_{n \to \infty} \Big( \sup_{t \in[0, \tau_0 \s_{n, i_0}]}\| \bs z_{n}^J(t) \|_{\E}  + \|  z_{n}^J\|_{\calS([0, \tau_0 \s_{n, i_0}])} \Big) = 0. 
}
\end{lem}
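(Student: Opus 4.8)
The plan is to prove Lemma~\ref{lem:nlpd} by the now-standard Bahouri--G\'erard / Duyckaerts--Kenig--Merle scheme for establishing a nonlinear profile decomposition on a finite time interval, where the key simplification is that \emph{all} centered profiles are harmonic maps, which evolve trivially in time and are spatially concentrated at scales $\lam_{n,j}$ that are separated from all the scales $\s_{n,i}$ of the genuinely dispersive profiles. First I would set up a bootstrap/continuity argument: fix $J$ and let $\bs z_n^J(t)$ be \emph{defined} by the displayed formula, so the claim is that $\bs u_n(t)$ exists on $[0,\tau_0\s_{n,i_0}]$ and that $\bs z_n^J$ is small in the norm $\sup_t\|\cdot\|_{\E} + \|\cdot\|_{\calS}$ on that interval. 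I would rescale time by $\s_{n,i_0}$ so the interval becomes $[0,\tau_0]$ with $\tau_0$ fixed; on this interval the harmonic map pieces $\bs Q_{\lam_{n,j}}$, after rescaling, live at scales $\lam_{n,j}/\s_{n,i_0}$, and the third assumption $\tau_0\s_{n,i_0}/\lam_{n,j}\le\de_0$ together with the ordering $\lam_{n,1}\ll\dots\ll\lam_{n,K_0}$ means each such bubble is either very small or very large compared to the time scale, so the residual force it exerts (through the nonlinearity $\tfrac{k^2}{2r^2}(2u-\sin 2u)$, cubic near $u=0$) over a time interval of length $\tau_0$ is $o_n(1)$ in $L^1_t L^2_r$. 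This is the step where the hypothesis $\tau_0\s_{n,i_0}/\lam_{n,j}\le\de_0$ is used decisively.

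Next I would derive the equation satisfied by $\bs z_n^J$. Plugging the ansatz into \eqref{eq:Cauchy}, one gets $\bs z_n^J(0) = \bs w_{n,0}^J - \bs w_{n,0}^J = 0$ modulo the discrepancy between the linear evolution $\bs w_{n,\Lin}^J$ and the error term $\bs w_{n,0}^J$, hence $\bs z_n^J(0)=0$; and $\bs z_n^J$ solves a forced equation $\p_t^2 z - \De z + \tfrac{k^2}{r^2} z = \calR_n^J$ where $\calR_n^J$ collects (i) the defect in the nonlinearity, i.e.\ $f$ evaluated at the full sum minus the sum of $f$'s evaluated at the individual profiles, and (ii) the self-interaction errors from the static bubbles, and (iii) cross terms between bubbles, radiation profiles, and $\bs w_{n,\Lin}^J$. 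By Corollary~\ref{cor:strich} it suffices to bound $\|\calR_n^J\|_{L^1_t L^2_r([0,\tau_0\s_{n,i_0}])}$. The ``diagonal'' terms (pure powers of a single profile) are finite by the $\calS$-boundedness assumptions on the nonlinear profiles $\bs v_{\nl}^i$ in \eqref{eq:times} and by the smallness of the tail $J$ large (for the high profiles one uses the Pythagorean energy bound \eqref{eq:pyth} and a small-data argument, exactly as in \cite{CKLS1,JK}). The genuinely interactive ``off-diagonal'' terms are handled by the orthogonality of parameters recorded in Lemma~\ref{lem:pd}: any two distinct profiles (two radiation profiles, a radiation profile and a bubble, two bubbles) have asymptotically orthogonal $(\lam,\s,t)$ data, so products of them vanish in $L^1_t L^2_r$ as $n\to\infty$ by a change of variables and a density/approximation argument, first replacing each profile by a compactly supported smooth truncation. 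For the static bubbles the relevant quantitative input is precisely $\tau_0\s_{n,i_0}/\lam_{n,j}\le\de_0$ from \eqref{eq:times} plus the Hardy-type / energy smallness of $\bs Q_{\lam_{n,j}}$ on the relevant annuli.

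Then the bootstrap closes in the usual way: on any subinterval of $[0,\tau_0\s_{n,i_0}]$ where $\bs u_n$ is a priori defined and $\|z_n^J\|_{\calS}\le 1$, the Strichartz estimate gives $\|z_n^J\|_{\calS} + \sup_t\|\bs z_n^J\|_{\E} \lesssim \|\calR_n^J\|_{L^1_tL^2_r} = o_n(1) + o_J(1)$, and by continuity of $t\mapsto \|z_n^J\|_{\calS([0,t])}$ and $z_n^J(0)=0$ this a priori bound upgrades to the bound on the whole interval, which also shows $\| u_n\|_{\calS([0,\tau_0\s_{n,i_0}])}<\infty$ so that $\bs u_n$ does not blow up before $\tau_0\s_{n,i_0}$ (using the blow-up criterion in Lemma~\ref{lem:Cauchy}). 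Taking $\limsup_n$ then $\lim_J$ yields the conclusion.

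The main obstacle — and the part that requires the most care — is controlling the interaction between the static harmonic-map profiles and the rest of the configuration over a time interval whose length $\tau_0\s_{n,i_0}$ is \emph{not} small in absolute terms, only small relative to the bubble scales $\lam_{n,j}$. Unlike in the purely dispersive profile-decomposition arguments, a harmonic map does not decay in time, and $\bs Q_{\lam}$ has only logarithmically-better-than-critical localization (and $\Lam Q$ decays slowly for $k=1,2$), so one cannot simply invoke finite speed of propagation or decay. The resolution is to exploit that the nonlinearity is \emph{cubic} at the origin, so the self-force of $\bs Q_{\lam_{n,j}}$ that leaks into the $\E$-norm scales like a positive power of $\tau_0\s_{n,i_0}/\lam_{n,j}$ (for the small bubbles) or $\lam_{n,j}/(\text{time scale})$ considerations (for the large bubbles) — in either regime it is $o_n(1)$ under \eqref{eq:times} — combined with the scale-separation orthogonality to kill the cross terms with the radiation. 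This is essentially the same mechanism that makes the sequential resolution work in \cite{Cote15,JK}, and I would import their estimates; the only new bookkeeping is handling several bubbles simultaneously, which is routine given the strict ordering $\lam_{n,1}\ll\cdots\ll\lam_{n,K_0}$.
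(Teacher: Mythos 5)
Your overall scheme (define the error, estimate the defect of the approximate solution in $L^1_tL^2_r$ via the sine addition identity and pseudo-orthogonality of parameters, control the Strichartz norm of the approximate solution uniformly in $J$ via the Pythagorean expansion and small-data theory, then close with Strichartz/bootstrap) is essentially the paper's argument, which packages the bootstrap into the perturbation Lemma~\ref{lem:nl-pert}. Two small inaccuracies: $\bs z_n^J(0)$ is not $0$ but only $o_n(1)$ in $\cE$ (the mismatch between $\bs v_{\Lin}^i$ and $\bs v_{\nl}^i$ at time $-t_{n,i}/\s_{n,i}$), and the static bubbles exert no ``self-force'' at all since $\glei(Q_{\lam_{n,j}})=0$ exactly; only the cross terms contribute to the defect.

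The genuine gap is your justification that $\bs u_n(t)$ extends to all of $[0,\tau_0\s_{n,i_0}]$. You invoke ``the blow-up criterion in Lemma~\ref{lem:Cauchy}'' (finiteness of the $\calS$ norm), but that lemma is a contraction-mapping Cauchy theory only for data in $\cE=\cE_{0,0}$, where the nonlinearity $\tfrac{k^2}{2r^2}(2u-\sin 2u)$ is perturbative; it does not apply to $\bs u_n\in\cE_{\ell,m}$ carrying large harmonic-map bubbles, and your bootstrap is in any case conditional on $\bs u_n$ being ``a priori defined,'' so it cannot by itself rule out an early singularity (indeed, the presence of bubbles in a profile decomposition is exactly what can produce finite-time blow-up, as the paper's remark after the lemma emphasizes). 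The only continuation criterion available in this class is the local-energy-smallness criterion of Lemma~\ref{lem:lwp}, and the paper closes this step by a separate contradiction argument: if $T_+(\bs u_n)=\tau_n\s_{n,i_0}$ with $\tau_n\le\tau_0$, one uses that each profile is defined up to time $\tau_0\s_{n,i_0}$ and, crucially, the hypothesis $\tau_0\s_{n,i_0}/\lam_{n,j}\le\de_0$ (so $\lam_{n,j}\gtrsim\s_{n,i_0}$) to find radii $A_n$ on which all profiles, the linear error, and hence (via the already-established approximation \eqref{eq:un-vn-wn} and finite speed of propagation / energy monotonicity) $\bs u_n$ itself have energy $<\eps_0$ near the origin slightly before the putative blow-up time, contradicting Lemma~\ref{lem:lwp}. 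Your write-up needs this (or an equivalent) argument; as written, the continuation claim is unsupported.
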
 

\begin{rem} 
We note that the harmonic maps in the nonlinear profile decomposition are static solutions to~\eqref{eq:wmk}, but their presence in a linear profile decomposition may lead to eventual singularities in the nonlinear flow. This leads us to the hypothesis in the second line of~\eqref{eq:times}, which ensures that we are only considering the nonlinear evolution of $\bs u_n(t)$ on time intervals shorter than the length scales of the harmonic maps, thus avoiding the possibility of a singularity. 
\end{rem} 


The key ingredient in the proof of Lemma~\ref{lem:nlpd} is the following  modification of the now standard nonlinear perturbation lemma~\cite[Theorem 2.20]{KM08}; see also~\cite[Section IV]{BG}. 

\begin{lem}[Nonlinear perturbation lemma] \label{lem:nl-pert}  Fix integers $\ell, m$. There are continuous functions $\eps_0, C_0: (0, \infty) \to (0, \infty)$ with the following properties. Let  $I$ be an open interval and let $\bs u, \bs v \in C^0(I;  \E_{\ell, m})$ such that for some $A \ge 0$, 
\EQ{
\| \bs u - \bs v \|_{L^\infty_t(I; \E)} + \| r^{-\frac{2}{3}}\sin v \|_{L^3_t(I; L^6_r)} \le A
}
and 
\EQ{ 
\| \glei(u)   \|_{L^1_t(I; L^2_r)} + \| \glei(v) \|_{L^1_t(I; L^2_r)} + \| w_0 \|_{\calS(I)}  \le \eps \le \eps_0(A)
}
where $\glei(u) := \p_t^2 u - \De u + k^2 r^{-2} f(u)$ in the sense of distributions, and $\bs w_0(t):= S(t-t_0) (\bs u(t_0) - \bs v(t_0))$ is the linear evolution of the difference, i.e., the solution to~\eqref{eq:lin-2d}, where $t_0 \in I$ is arbitrary, but fixed. Then, 
\EQ{
\| \bs u(t) - \bs v(t) - \bs w_0(t) \|_{L^\infty(I; \E)} + \| u - v \|_{\calS(I)}  \le C_0(A) \eps. 
}
\end{lem}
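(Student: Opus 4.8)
The plan is to follow the standard Kenig--Merle perturbative scheme, but with two adaptations tailored to this setting: the Strichartz estimates of Corollary~\ref{cor:strich} for the repulsive-potential linear operator $\LL := \p_t^2 - \De + k^2 r^{-2}$ in place of the free-wave ones, and a nonlinear estimate written entirely in terms of $\sin v$ and the difference $u-v$ (never $u$ or $v$ alone), which is what is needed in the affine sectors $\E_{\ell,m}$. Set $\bs w := \bs u - \bs v - \bs w_0$, so that $\bs w(t_0) = 0$, and write $N(\phi) := \tfrac{k^2}{2r^2}(2\phi - \sin 2\phi)$, so that~\eqref{eq:wmk} reads $\LL\phi = N(\phi)$ (cf.~\eqref{eq:Cauchy}). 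Then $\LL\bs w = \big(N(u) - N(v)\big) + \big(\glei(u) - \glei(v)\big)$, and the essential point is that the constants coming from the affine sector cancel in the \emph{difference} $N(u) - N(v)$, so that the right-hand side lies in $L^1_t L^2_r$ once $u-v$ has finite Strichartz norm (which I would first establish, or assume as part of the hypotheses, in the standard way). Applying Corollary~\ref{cor:strich} with vanishing data on a subinterval $J \ni t_0$ and using $\|\glei(u) - \glei(v)\|_{L^1_t L^2_r} \le \eps$ yields
\[
\|w\|_{\calS(J)} + \|\bs w\|_{L^\infty_t(J;\E)} \le C\big(\|N(u) - N(v)\|_{L^1_t L^2_r(J)} + \eps\big).
\]

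The core of the argument is the nonlinear estimate. Since $g(s) := 2s - \sin 2s$ has $g'(s) = 4\sin^2 s$, the fundamental theorem of calculus together with $|\sin(a+b)| \le |\sin a| + |\sin b|$ and $|\sin a| \le |a|$ gives the pointwise bound
\[
|N(u) - N(v)| \lesssim \frac{1}{r^2}\big( \, |\sin v|^2\,|u-v| + |u-v|^3 \, \big).
\]
Here only $\sin v$ and $w = u-v$ appear, matching the sole hypothesis $\|r^{-2/3}\sin v\|_{L^3_t L^6_r} \le A$ and the fact that $w \in \E$ even though $u,v \notin \E$. Hölder in $r$ with exponents $(6,6,6)$ — the weights balance since $\tfrac23 + \tfrac23 + \tfrac23 = 2$ — followed by Hölder in $t$ with exponents $(3,3,3)$ then gives
\[
\|N(u) - N(v)\|_{L^1_t L^2_r(J)} \lesssim \big(\|r^{-2/3}\sin v\|_{L^3_t L^6_r(J)}^2 + \|w\|_{\calS(J)}^2\big)\,\|w\|_{\calS(J)},
\]
using $\|r^{-2/3}w\|_{L^3_t L^6_r(J)} \le \|w\|_{\calS(J)}$.

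To finish, I would partition $I$ outward from $t_0$ (in both directions) into $M = M(A)$ consecutive subintervals $I_1, \dots, I_M$ with $\|r^{-2/3}\sin v\|_{L^3_t L^6_r(I_j)} \le \delta$, where $\delta = \delta(A)$ is small enough that $C\delta^2 < \tfrac12$; this requires $M \lesssim (A/\delta)^3$ pieces. On the subinterval $I_1$ containing $t_0$ one has $\bs w(t_0) = 0$, and since $\|w\|_{\calS([t_0,\tau])} \to 0$ as $\tau \to t_0$, a continuity argument in the endpoint combined with the two displays above and the smallness $\eps \le \eps_0(A)$ gives $\|\bs w\|_{L^\infty_t(I_1;\E)} + \|w\|_{\calS(I_1)} \le 2C\eps$. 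On each successive $I_{j+1}$ one restarts the Duhamel formula at the shared endpoint $\tau_j$: the free term becomes $S\lin(\cdot - \tau_j)\bs w(\tau_j)$, of $\calS$-norm $\lesssim \|\bs w(\tau_j)\|_{\E}$, so the same bootstrap produces $\|\bs w\|_{L^\infty_t(I_{j+1};\E)} + \|w\|_{\calS(I_{j+1})} \le C'\big(\|\bs w(\tau_j)\|_{\E} + \eps\big)=: \eps_{j+1}$. Iterating, $\eps_j \le C_0(A)\eps$ for all $j \le M$; summing the Strichartz norms over the partition (using $\big(\sum_j a_j^p\big)^{1/p} \le \sum_j a_j$) gives $\|\bs w\|_{L^\infty_t(I;\E)} + \|w\|_{\calS(I)} \le C_0(A)\eps$, and finally $\|u-v\|_{\calS(I)} \le \|w\|_{\calS(I)} + \|w_0\|_{\calS(I)} \le C_0(A)\eps$, which is the claim.

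The main obstacle is the pair of intertwined issues just flagged. First, one must extract the pointwise nonlinear bound in exactly the form above — involving only $\sin v$ and $u-v$ — because the affine-sector functions $u,v$ are not in the energy space and bounds involving $\sin u$ or $u,v$ separately are simply unavailable; this is where the identity $g'(s) = 4\sin^2 s$ and the elementary trigonometric inequalities are indispensable. Second, since $A$ is arbitrary the nonlinear term cannot be absorbed in one step, which forces the subdivision into $M(A)$ pieces and the careful propagation of the endpoint errors $\|\bs w(\tau_j)\|_{\E}$ through the iteration — this is precisely the mechanism by which $C_0(A)$ acquires its dependence on $A$.
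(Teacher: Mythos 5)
Your argument is correct and is essentially the paper's own proof: Duhamel plus the Strichartz estimates of Corollary~\ref{cor:strich}, the expansion of $f(v+w)-f(v)-w$ (equivalently your FTC bound $|N(u)-N(v)|\lesssim r^{-2}(\sin^2 v\,|u-v|+|u-v|^3)$), divisibility of the $L^3_tL^6_r$ norm of $r^{-\frac23}\sin v$ into $M(A)$ subintervals, and an induction propagating the data across subintervals. The only differences are cosmetic bookkeeping (you subtract $\bs w_0$ and carry $\|\bs w(\tau_j)\|_{\cE}$ forward, while the paper keeps $\bs w=\bs u-\bs v$ and compares consecutive linear evolutions $w_{j+1}-w_j$), plus a harmless notational slip where your $w$ switches between $u-v-w_0$ and $u-v$; in a clean writeup the nonlinear estimate should be stated for $u-v=w+w_0$ with $\|r^{-2/3}(u-v)\|_{L^3_tL^6_r}\le\|w\|_{\calS}+\eps$.
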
 

\begin{proof}[Proof of Lemma~\ref{lem:nl-pert}] 
Let $X(I)$ denote the space $L^3_t(I;  L^6_r)$ in this proof. Define 
$\bs w(t):= \bs u(t) - \bs v(t)$ and let $ e:=  \p_t^2 u - \De u + k^2 r^{-2} \sin(u)\cos(u)-  (\p_t^2 v - \De v + k^2 r^{-2} \sin(v)\cos(v)) = \glei(u) - \glei(v)$. 
Let $t_0 \in I$, fix a small constant $\de_0$ to be determined below and partition the right-half of $I$ as follows, 
\EQ{
&t_0 < t_1 < t_2 < \dots < t_n \le \infty, \quad I_j := (t_j, t_{j+1}) , \quad I \cap (t_0, \infty) = (t_0, t_n),  \\
& \| r^{-\frac{2}{3}}\sin v \|_{L^3_t( I_j; L^6_r)} \le \de_0 \mfor j = 0, \dots, n-1, \mand  n \le C(A; \de_0). 
}
We omit the estimate on $I\cap(-\I,t_0)$ since it is the same by symmetry. 
Let $\bs w_j(t):=S\lin(t-t_j)\bs w(t_j)$, where $S\lin$ is the linear propagator for~\eqref{eq:lin-2d}, for  all $0\le j <n$.  Then
\EQ{\label{eq:w-Duhamel}
\bs w(t)  
&= \bs w_{0}(t) + \int_{t_0}^{t} S\lin(t-s) \big(0,e -k^2 r^{-2} (f(v+w) - f(v) -w)\big)(s)\, \ud s
}
which implies that, for some absolute constant $C_{1}\ge1$, 
\EQ{ \label{eq:ww0}
\pn \| w-w_{0}\|_{X(I_0)} 
 \pt\lec  \|e -k^2 r^{-2} (f(v+w) - f(v) -w)\|_{L^1_tL^2_r(I_0)}
 \pr\le C_{1}\eps + C_1(\de_{0}^{2}+\| w\|_{X(I_0)}^{2})\| w\|_{X(I_0)}
}
In the second estimate above we have used the expansion, 
\EQ{
f(v+w) - f(v) -w &= \frac{1}{2}( \sin(2v + 2w) - \sin2v - w) \\
& =- w \sin^2 v   -2 \sin v \cos v \sin^2 w + O( \abs{w}^3), 
}
to estimate the terms on the right. Note that in~\eqref{eq:ww0} we are using in an essential way the divisibility of the $X(I)$ norm. 
Note that $\|w\|_{X(I_{0})}<\I$ provided $I_{0}$ is a finite interval. If $I_{0}$ is half-infinite, then we first
need to replace it with an interval of the form $[t_{0},N)$, and  let $N\to\I$ after performing  estimates which are
uniform in~$N$.  Now assume that $C_{1}\delta_{0}^{2}\le \frac{1}{4}$ and fix $\delta_{0}$ in this fashion.
By means of the continuity method,  
\eqref{eq:ww0} implies that $\| w\|_{X(I_{0})}\le 8C_{1}\eps$. 
Next, Duhamel's formula gives
\EQ{
\vec w_{1}(t)- \vec w_{0}(t) = \int_{t_{0}}^{t_{1}} S\lin(t-s) \big(0,e -k^2 r^{-2} (f(v+w) - f(v) -w)\big)(s)\, \ud s
}
from which we obtain 
\EQ{\label{eq:w1w0}
\| w_{1}-w_{0}\|_{X(\R)} \lesssim \int_{t_0}^{t_1} \| \big(e -k^2 r^{-2} (f(v+w) - f(v) -w)\big)(s)\|_{2}\, \ud s
}
which is estimated as in~\eqref{eq:ww0}.  We conclude that $ \| w_{1}\|_{X(\R)}\le 8C_{1}\eps$. 
In a similar fashion one verifies that for all $0\le j<n$ 
\EQ{ \label{eq:estS}
 \pn\| w- w_j\|_{X(I_j)} + \| w_{j+1}-w_j\|_{X(\R)}
 \pt\lec  \|  e -k^2 r^{-2} (f(v+w) - f(v) -w)  \|_{L^1_tL^2_r(I_j)}
  \pr\le C_{1}\eps + C_1(\de_{0}^{2}+\| w\|_{X(I_j)}^{2})\| w\|_{X(I_j)}
  }
 where $C_{1}\ge1$ is as above. By induction in $j$ we have 
 \EQ{
 \| w\|_{X(I_{j})} + \| w_{j}\|_{X(\R)} \le C(j)\, \eps\quad \forall \; 1\le j<n. 
 }
 This requires that  $\eps<\eps_{0}(n)$ which can be achieved as long as  $\eps_0(A)$ is chosen small enough.
Repeating the estimate~\eqref{eq:estS},  but with the full $\calS(I)$ norm and the  energy piece $L^{\infty}_{t}\E$ included on the left-hand side completes the proof.  
\end{proof}

\begin{proof}[Sketch of the proof of Lemma~\ref{lem:nlpd}]
The proof is very similar to~\cite[Proof of Proposition 2.8]{DKM1} or~\cite[Proof of Proposition 2.17]{CKLS1} and we give a brief sketch below, mainly to address how the nonlinear profiles given by harmonic maps are handled. 

Let $I_{n} = [0, \tau_n)  \subset [0, \tau_0 \s_{n, i_0}]$ be any half-open subinterval on which the wave map evolution $\bs u_n(t)$ is defined. By~\eqref{eq:times}, the sequence 
\EQ{
\bs v_{n}^J(t) := m \bs \pi + \sum_{j = 1}^{K_0} ( Q \big(  \frac{ \cdot}{\lam_{n, j}} \big)  , 0) - \bs \pi)  + \sum_{i =1}^J \Big( v_{\nl}^i \big(  \frac{t-t_{n, i}}{\s_{n, i}}, \frac{ \cdot}{\s_{n, i}} \big) , \frac{1}{\s_{n, i}} \p_t v_{\nl}^i \big( \frac{t-t_{n, i}}{\s_{n, i}}, \frac{ \cdot}{\s_{n, i}} \big)\Big)
} 
is well defined on the time intervals $ [0, \tau_{0} \s_{n, i_0}]$. The idea is to apply Lemma~\ref{lem:nl-pert} to the sequences $\bs u_n$ and $\bs v_{n}^J$ on $I_n$ for large $n$ and so we need to check that the hypothesis of Lemma~\ref{lem:nl-pert} are satisfied. First, $\bs u_n(t)$ solves~\eqref{eq:wmk} so $\glei(u_n) = 0$. Next we claim that 
\EQ{ \label{eq:rhs-vanishing} 
 \lim_{n \to \infty} \| \glei(v_{n}^J) \|_{L^1_t L^2_{r}([0, \tau_0 \s_{n, i_0}])}  = 0. 
} 
for any \emph{fixed} $J$. Denoting $v_{\nl, n}^i(t):= v_{\nl}^i \big(  \frac{t-t_{n, i}}{\s_{n, i}}, \frac{ \cdot}{\s_{n, i}} \big)$ we have,  
\EQ{
\abs{\glei(u_n^J)(t)} &= \frac{k^2}{2r^2} \Big|\sin( 2v_{n}^J(t)) - \sum_{j = 1}^{K_0} \sin 2Q_{\lam_{n, j}} - \sum_{i=1}^J \sin 2 (v_{\nl, n}^i(t)) \Big| 
}
And hence~\eqref{eq:rhs-vanishing} follows from an argument based the pseudo-orthogonality of the parameters, the hypothesis~\eqref{eq:times},  and repeated use of the identity, 
\EQ{
\sin(A+B) - \sin A - \sin B = -2 \sin A \sin^2 B - 2 \sin B \sin^2 A. 
}
Next,  note  that the last condition in~\eqref{eq:times} implies that 
\EQ{
\lim\sup_{n \to \infty} \| r^{-\frac{2}{3}} \sin Q_{\lam_{n, j}} \|_{L^3_t([0, \tau_0 \s_{n, i_0}]; L^6_r)} \lesssim 1, 
}
for each $j \in 1, \dots, K_0$. 
In fact,  is crucial that, 
\EQ{
\limsup_{n \to \infty} \| r^{-\frac{2}{3}}\sin( v_{n}^J)\|_{L^3_t L^6_r(I_{n})} \lesssim 1
}
uniformly in $J$. This is possible thanks to the small data theory from Lemma~\ref{lem:Cauchy} together with the pythagorean expansion of the energy~\eqref{eq:pyth}. Indeed, there exists $J_1$ such that for each $i \ge J_1$, we must have $\limsup_{n \to \infty}E( \bs v_{\Lin, n}^j(0))  < \eps_0$ where $\eps_0$ is as in Lemma~\ref{lem:Cauchy} and $v_{\Lin, n}^i(t):= v_{\Lin}^i \big(  \frac{t-t_{n, i}}{\s_{n, i}}, \frac{ \cdot}{\s_{n, i}} \big)$. Using again the pseudo-orthogonality of the parameters and~\eqref{eq:times}  along with Lemma~\ref{lem:Cauchy} we obtain, 
\EQ{
\limsup_{n \to \infty} \|r^{-\frac{2}{3}}  \sin( \sum_{i \ge J_1} v_{\nl, n}^i) \|_{L^3_t L^6_r(I_{n})}^3 &\lesssim \limsup_{n \to \infty} \sum_{i \ge J_1} \| r^{-\frac{2}{3}}  v_{\nl, n}^i \|_{L^3_t L^6_r(I_{n})}^3 \\
&\lesssim \limsup_{n \to \infty}  \sum_{i \ge J_1} \|  v_{\Lin, n}^i \|_{\E}^3  < \infty, 
}
where the last inequality implicitly uses the fact that for all $\bs v \in \E$ with  $E( \bs v) \le \eps$ sufficiently small we have $\| \bs v \|_{\E} \simeq E( \bs v)$. One may now apply Lemma~\ref{lem:nl-pert} and conclude, for instance, that 
\EQ{ \label{eq:un-vn-wn} 
\limsup_{n \to \infty} \sup_{t \in I_n}\| \bs u_n(t) - \bs v_n^J(t) - \bs w_{\Lin, n}^J(t) \|_{\E}   = 0, 
} 
for each interval $I_n \subset [0, \tau_0 \s_{i_0, n}]$ on which $\bs u_n(t)$ is defined. In fact, by Lemma~\ref{lem:lwp} this is sufficient  to deduce that $T_+( \bs u_n) > \tau_0 \s_{i_0, n}$ for all sufficiently large $n$ as long as $\de_0$ as in~\eqref{eq:times} is chosen small enough. To see this, suppose for contradiction there is some subsequence $\bs u_n(t)$ and a sequence $\tau_n \to 0$ for which $\bs u_n(t)$ has maximal forward interval existence given by $I_n = [0, \tau_n \s_{n, i_0})$. Fix $J>i_0$ and let $\eps>0$ be a constant to be determined below. Since each of the profiles is well-defined up till time $\tau_0 \s_{n, i_0}$, and using crucially the second line in~\eqref{eq:times} (in particular  that $\lam_{n, j} \gtrsim \s_{n, i_0}$ for each $j$), we can find $A_n = A_n( \eps)>0$ such that 
\EQ{
\sum_{j =1}^{K_0} E( Q_{\lam_{n, j}}; 0, A_n)  + \sum_{i=1} E( \bs v_{\nl, n}^i( \tau_n \s_{n, i_0}); 0, A_n) + E( \bs w_{ \Lin, n}^J( \tau_n \s_{n, i_0}); 0, A_n) < \eps 
}
and such that $s_n:= \tau_n \s_{n, i_0} - \frac{A_n}{4} >0$. By finite speed of propagation and the above  we have 
\EQ{
\sum_{j =1}^{K_0} E( Q_{\lam_{n, j}}; 0, A_n/2)  + \sum_{i=1} E( \bs v_{\nl, n}^i( s_n); 0, A_n/2) + E( \bs w_{ \Lin, n}^J(s_n); 0, A_n/2) < \eps 
}
Combing the above with~\eqref{eq:un-vn-wn}, we obtain, 
\EQ{
E( \bs u_n( s_n); 0, A_n/2)  \lesssim \eps
}
as long as $n$ is taken sufficiently large. Since $\tau_n\s_{n, i_0} - s_n = A_n/4 < A_n/2$, we see by Lemma~\ref{lem:lwp} that $\tau_n \s_{n, i_0}$ cannot be a maximal time for $\bs u_n$ as long as $\eps>0$ is chosen small enough, a contradiction. This completes the proof. 
\end{proof}

In Section~\ref{sec:compact} we need an additional fact about profile decompositions satisfying additional hypothesis proved in~\cite{DKM3erratum}. First, a preliminary lemma. 

\begin{lem} \emph{\cite[Claim 2]{DKM3erratum}} \label{lem:sym}
Let $(f_n, g_n) \in \E$ be a sequence of functions, bounded in $\E$ and assume that there exists a sequence $\al_n>0$ of positive numbers such that 
\EQ{
\| g_n \|_{L^2(r \ge \al_n)}  \to 0 \mas n \to \infty
}
Let $\{s_n\} \subset \R$ be any sequence such that $\lim_{n \to \infty} \abs{s_n}/ \al_n = \infty$,  and denote by $\bs v_0 = ( v_0, \dot v_0) \in \E$ the following weak limit, 
\EQ{
S\lin( -s_n) (f_n, g_n)   \rightharpoonup \bs v_0  \in \E. 
}
Then, $S\lin( s_n) (f_n, g_n)   \rightharpoonup \bs (v_0, - \dot v_0)  \in \E$.
\end{lem}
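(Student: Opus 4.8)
The plan is to combine the time-reversal symmetry of the linear flow with the fact that, after a long evolution, the ``initial-velocity'' component $(0, g_n)$ contributes nothing to the weak limit. First I would record the time-reversal identity for \eqref{eq:lin-2d}: since that equation is invariant under $t \mapsto -t$, if $\bs v(t) = (v(t), \p_t v(t)) = S\lin(t)(f, g)$ then $t \mapsto (v(-t), -\p_t v(-t))$ is the solution with data $(f, -g)$; writing $\calR(a, b) := (a, -b)$, this is the identity $S\lin(t)\calR = \calR S\lin(-t)$. Because $\calR$ is an isometry of $\E$, hence weakly continuous, the hypothesis $S\lin(-s_n)(f_n, g_n) \rightharpoonup (v_0, \dot v_0)$ yields $S\lin(s_n)(f_n, -g_n) = \calR\, S\lin(-s_n)(f_n, g_n) \rightharpoonup (v_0, -\dot v_0)$. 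Subtracting and using linearity, $S\lin(s_n)(f_n, g_n) - S\lin(s_n)(f_n, -g_n) = 2\, S\lin(s_n)(0, g_n)$, so the lemma reduces to showing
\[
S\lin(s_n)(0, g_n) \rightharpoonup 0 \IN{\E} .
\]

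This sequence is bounded in $\E$, since $\|S\lin(s_n)(0, g_n)\|_{\E} = \|g_n\|_{L^2} \lesssim 1$, so it is enough to pair it against $\bs\phi = (\phi_0, \phi_1)$ with $\phi_0, \phi_1 \in C_c^\infty((0,\infty))$ (a dense class in $\E$). As $S\lin$ is a unitary group on $\E$ equipped with its natural inner product $\ang{\cdot, \cdot}_{\E}$, and $\ang{(0, g_n), (\psi, \dot\psi)}_{\E} = \ang{g_n \mid \dot\psi}$, writing $(\psi_n, \dot\psi_n) := S\lin(-s_n)\bs\phi$ I get
\[
\ang{S\lin(s_n)(0, g_n),\, \bs\phi}_{\E} = \ang{(0, g_n),\, S\lin(-s_n)\bs\phi}_{\E} = \int_0^\infty g_n(r)\, \dot\psi_n(r)\, r\,\ud r .
\]
Splitting the integral at $r = \alpha_n$: the part over $r \ge \alpha_n$ is $\le \|g_n\|_{L^2(r \ge \alpha_n)}\,\|\dot\psi_n\|_{L^2} \to 0$ by the hypothesis on $g_n$ and energy conservation, while the part over $r \le \alpha_n$ is $\le \|g_n\|_{L^2}\,\|\dot\psi_n\|_{L^2(0,\alpha_n)} \lesssim \|S\lin(-s_n)\bs\phi\|_{\cE(0, \alpha_n)}$. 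Thus everything reduces to proving, for each fixed $\bs\phi \in \E$,
\[
\|S\lin(-s_n)\bs\phi\|_{\cE(0, \alpha_n)} \to 0 \quad\text{whenever}\quad |s_n|/\alpha_n \to \infty .
\]

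I would prove this along subsequences. If $\{s_n\}$ stays bounded then $\alpha_n \to 0$, and $\{S\lin(-s_n)\bs\phi\}$ lies in the compact image of a bounded time-interval under the strongly continuous group $S\lin$; hence $\{\dot\psi_n\}$ is $L^2$-equi-integrable and its mass on the shrinking balls $\{r \le \alpha_n\}$ tends to $0$. If instead $|s_n| \to \infty$, I would invoke local energy decay in scale-invariant form: for every $\bs\phi \in \E$ and every $\eps \in (0,1)$, $\lim_{T \to \infty}\sup_{|t| \ge T}\|S\lin(t)\bs\phi\|_{\cE(0, \eps|t|)} = 0$. Through the isometry \eqref{eq:2k+2} this is precisely local energy decay for free waves on $\bR^{2k+2}$, which follows from finite speed of propagation together with the decay of the interior tail of a solution with compactly supported data. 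Applying it with $\eps = \alpha_n/|s_n| \to 0$, and using monotonicity in $\eps$ to replace $\eps$ by a fixed small value once $n$ is large, gives $\limsup_n\|S\lin(-s_n)\bs\phi\|_{\cE(0,\alpha_n)} \le \sup_{|t| \ge T}\|S\lin(t)\bs\phi\|_{\cE(0, \eps|t|)}$ for every $T$, hence $0$.

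The main obstacle is this final step. Since the effective dimension $2k + 2$ is even there is no strong Huygens principle to make the argument exact, so one genuinely has to rule out that a long linear evolution reconcentrates energy near the origin at the possibly growing scale $\alpha_n = o(|s_n|)$ — which is why the scale-invariant, rather than the fixed-ball, version of local energy decay is needed. The remaining ingredients — the time-reversal identity, the density reduction, and the dyadic splitting at $r = \alpha_n$ — are routine.
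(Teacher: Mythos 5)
Your argument is correct, and it is worth noting that the paper itself does not prove this lemma at all: it imports it verbatim from \cite[Claim 2]{DKM3erratum}, so what you have written is a self-contained substitute rather than a variant of an in-paper proof. Your reduction is the natural one (and is essentially how the DKM erratum proceeds): the time-reversal identity $S\lin(t)\calR = \calR S\lin(-t)$ with $\calR(a,b)=(a,-b)$, plus linearity, reduces everything to showing $S\lin(s_n)(0,g_n) \rightharpoonup 0$ in $\E$, and unitarity of $S\lin$ on $\E$ turns that into the smallness of $\int g_n \dot\psi_n\, r\,\ud r$ with $(\psi_n,\dot\psi_n)=S\lin(-s_n)\bs\phi$, split at $r=\alpha_n$. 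All of these steps are sound.

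The one place where you overestimate the difficulty is the final ingredient. You do not need any ``scale-invariant'' strengthening of local energy decay or a supremum over shrinking cones: since $\alpha_n/|s_n|\to 0$, eventually $\alpha_n \le \tfrac12|s_n|$, so by monotonicity of the region it suffices to know that for every fixed $\bs\phi\in\E$ one has $\|S\lin(t)\bs\phi\|_{\cE(0,\frac12|t|)}\to 0$ as $|t|\to\infty$. This is exactly the even-dimensional interior energy vanishing of C\^ote--Kenig--Schlag, \cite[Theorem 4]{CKS}, which the paper itself invokes in precisely this form in Step 6 of the proof of the compactness lemma (via the correspondence \eqref{eq:2k+2} with the free wave in dimension $2k+2$); quoting it is legitimate and removes the ``main obstacle'' you flag. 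Your treatment of the bounded-$|s_n|$ case (compactness of $\{S\lin(t)\bs\phi : |t|\le M\}$ in $\E$, hence uniform smallness of the $L^2$ mass of the velocity components on $\{r\le\alpha_n\}$ with $\alpha_n\to 0$) is also fine, and combining the two regimes along subsequences gives the weak convergence of the full sequence. So the proof is complete once \cite[Theorem 4]{CKS} is cited for the interior decay.
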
 

As a consequence one has the following lemma.
 
\begin{lem}\emph{\cite[Claim 3]{DKM3erratum}} \label{lem:sym-profile}  Let $\ell, m \in\Z$ and let $\bs u_n$ be a sequence in $\calE_{\ell, m}$ with $\limsup_{n \to \infty} E( \bs u_n) <\infty$. Assume the sequence $\bs u_n$ admits a profile decomposition of the form, 
\EQ{
\bs u_n  = m \bs \pi + \sum_{j = 1}^{K_0} ( Q \big(  \frac{ \cdot}{\lam_{n, j}} \big)  , 0) - \bs \pi)  + \sum_{i =1}^J \Big( v_{\Lin}^i \big(  \frac{-t_{n, i}}{\s_{n, i}}, \frac{ \cdot}{\s_{n, i}} \big) , \frac{1}{\s_{n, i}} \p_t v_{\Lin}^i \big( \frac{-t_{n, i}}{\s_{n, i}}, \frac{ \cdot}{\s_{n, i}} \big)\Big) + \bs w_{n, 0}^J( \cdot), 
}
that is, all of the profiles $( \bs \psi^j, \lam_{n, j})$ for $1 \le j \le K_0$ as in Lemma~\ref{lem:pd} are given by harmonic maps $(\bs Q, \lam_{n, j})$. Assume in addition that, 
\EQ{
\| \dot u_n \|_{L^2} \to 0 \mas n \to \infty. 
}
Then, after passing to a subsequence,  for each profile $( \bs v_{\Lin}^i, \s_{n, i}, t_{n, i})$ we can ensure that either, 
\EQ{
t_{n, i} = 0\, \,  \forall \, \, n \mand \dot v_{\Lin}^i(0) = 0 
}
or, 
\EQ{
-\frac{t_{n, i}}{\s_{n, i}} \to \pm \infty \mand \exists i' \neq i \, \, \textrm{such that} \,\, v_{\Lin}^i(t) = v_{\Lin}^{i'}(-t) \, \,  \forall t, \, \, t_{n, i} = - t_{n, i'} \, \, \s_{n, i} = - \s_{n, i'} \, \, \forall n. 
}
\end{lem}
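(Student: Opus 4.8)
The plan is to adapt the scheme of \cite[Claim 3]{DKM3erratum}, using Lemma~\ref{lem:sym} as the main new ingredient. Write $D_\s(f, g) := (f(\cdot/\s), \s^{-1}g(\cdot/\s))$ for the $\E$-invariant rescaling and $R(f, g) := (f, -g)$ for the velocity reflection, and recall the intertwining identities $S\lin(\sigma)D_\s = D_\s S\lin(\s\sigma)$ and $R\,S\lin(\sigma) = S\lin(-\sigma)R$, together with the fact that $S\lin(t)\bs f \wto 0$ in $\E$ as $t \to \pm\infty$ for every $\bs f \in \E$. Since the centered profiles are harmonic maps and hence have zero velocity component, the sequence $\bs u_n^\sharp := \bs u_n - m\bs\pi - \sum_{j=1}^{K_0}\big((Q(\cdot/\lam_{n, j}), 0) - \bs\pi\big) = \sum_i P_n^i + \bs w_{n, 0}^J$, with $P_n^i := D_{\s_{n, i}}\bs v_{\Lin}^i(-t_{n, i}/\s_{n, i})$, lies in $\E$ and satisfies $\|\p_t u_n^\sharp\|_{L^2} = \|\dot u_n\|_{L^2} \to 0$; so it is enough to work with the dispersive profiles of the $\E$-bounded sequence $\bs u_n^\sharp$, and I would treat the centered and the incoming/outgoing profiles separately (passing, at the end, to a single subsequence of $\bs u_n$ by a diagonal argument).

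For a centered profile, $t_{n, i_0} = 0$ for all $n$. Because no other centered profile can share a comparable scale (that would violate the orthogonality in Lemma~\ref{lem:pd}) and all other profiles together with the remainder vanish weakly after rescaling by $D_{\s_{n, i_0}}^{-1}$, one has $D_{\s_{n, i_0}}^{-1}\bs u_n^\sharp \wto \bs v_{\Lin}^{i_0}(0)$ in $\E$. But the velocity component of $D_{\s_{n, i_0}}^{-1}\bs u_n^\sharp$ is $\s_{n, i_0}\,\p_t u_n^\sharp(\s_{n, i_0}\,\cdot)$, whose $L^2$-norm is $\|\dot u_n\|_{L^2} \to 0$; comparing velocity components gives $\dot v_{\Lin}^{i_0}(0) = 0$, which is the first alternative.

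For an outgoing profile, say $s_n := -t_{n, i_0}/\s_{n, i_0} \to +\infty$ (the incoming case being symmetric), I would set $\bs h_n := D_{\s_{n, i_0}}^{-1}\bs u_n^\sharp$, a bounded sequence in $\E$ whose velocity component still has $L^2$-norm $\|\dot u_n\|_{L^2}\to 0$. Using the intertwining identity, the pseudo-orthogonality of the profile parameters, and the weak vanishing of $\big(w_{n, \Lin}^J(t_{n, i_0}, \s_{n, i_0}\,\cdot),\, \s_{n, i_0}\p_t w_{n, \Lin}^J(t_{n, i_0}, \s_{n, i_0}\,\cdot)\big)$ from Lemma~\ref{lem:pd}, one checks term by term that $S\lin(-s_n)\bs h_n \wto \bs v_{\Lin}^{i_0}(0) =: (v_0, \dot v_0)$. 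Since the velocity component of $\bs h_n$ vanishes in $L^2$ and $|s_n| \to \infty$, Lemma~\ref{lem:sym} (with $\al_n \equiv 1$) yields
\[
S\lin(s_n)\bs h_n \wto (v_0, -\dot v_0) = R\,\bs v_{\Lin}^{i_0}(0),
\]
which is nonzero because $\bs v_{\Lin}^{i_0} \not\equiv 0$ and $R$ is an isometry of $\E$. On the other hand $S\lin(s_n)D_{\s_{n, i_0}}^{-1}P_n^{i_0} = S\lin(2s_n)\bs v_{\Lin}^{i_0}(0) \wto 0$ (as $2s_n \to +\infty$), and the remainder again vanishes weakly, so the nonzero limit $R\,\bs v_{\Lin}^{i_0}(0)$ must be carried by one of the other profiles. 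Unwinding the scalings, this forces the existence of a (necessarily unique, by orthogonality) index $i' \ne i_0$ with $\s_{n, i'}/\s_{n, i_0} \to c \in (0, \infty)$ and, along a subsequence, $(t_{n, i'} + t_{n, i_0})/\s_{n, i_0} \to -\tau$ for some $\tau \in \R$, such that $-t_{n, i'}/\s_{n, i'} \to -\infty$ (so $i'$ is an incoming profile) and $D_c\,\bs v_{\Lin}^{i'}(\tau) = R\,\bs v_{\Lin}^{i_0}(0)$.

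Finally, one must renormalize the decomposition so that $v_{\Lin}^{i'}(t) = v_{\Lin}^{i_0}(-t)$, $t_{n, i'} = -t_{n, i_0}$, $\s_{n, i'} = \s_{n, i_0}$ hold exactly for each such pair: replace $P_n^{i_0}, P_n^{i'}$ by $P_n^{i_0}$ and the profile with mirrored field $v_{\Lin}^{i_0}(-\cdot)$, scale $\s_{n, i_0}$ and time $-t_{n, i_0}$, and push the resulting difference into $\bs w_{n, 0}^J$; using $\s_{n, i'}/\s_{n, i_0}\to c$, $(t_{n, i'} + t_{n, i_0})/\s_{n, i_0}\to -\tau$ and $D_c\bs v_{\Lin}^{i'}(\tau) = R\bs v_{\Lin}^{i_0}(0)$ together with the strong continuity of $\sigma \mapsto S\lin(\sigma)$ and $\s \mapsto D_\s$ along the orbit $\{\bs v_{\Lin}^{i_0}(t)\}$, this difference tends to $0$ in $\E$ and in the $\calS$-norm, so it does not disturb the defining properties of the remainder nor the orthogonality conditions of the other profiles. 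The pairing is a bijection, since running the argument from an incoming profile recovers its outgoing partner, and this completes the proof. I expect the main obstacle to be precisely this last bookkeeping — verifying that the reflected nonzero weak limit is captured by exactly one profile (not split among several, nor leaking into the remainder) and that the renormalization is consistent with every structural property in Lemma~\ref{lem:pd}; this is carried out as in \cite{DKM3erratum}, relying only on the asymptotic orthogonality of the parameters and the weak vanishing of the remainder under rescaling and time translation.
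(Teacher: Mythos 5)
The paper does not actually prove Lemma~\ref{lem:sym-profile}; it imports it verbatim from \cite[Claim 3]{DKM3erratum}, so there is no in-paper argument to compare against. Your reconstruction follows the same route as that reference: treat centered profiles by comparing velocity components of the rescaled weak limits, and treat incoming/outgoing profiles by applying Lemma~\ref{lem:sym} at the reflected times $s_n = -t_{n,i_0}/\s_{n,i_0}$ and identifying the reflected weak limit $(v_0,-\dot v_0)$ with another profile. The centered case, the computation $S\lin(s_n)D_{\s_{n,i_0}}^{-1}P_n^{i_0} = S\lin(2s_n)\bs v_{\Lin}^{i_0}(0) \wto 0$, and the final re-parametrization of the partner profile (absorbing an $o_\E(1)$ difference into the remainder) are all correct.

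The one step you assert rather than prove is precisely the crux: in expanding $S\lin(s_n)\bs h_n$ you state that ``the remainder again vanishes weakly'', i.e.\ that $D_{\s_{n,i_0}}^{-1}\bs w_{n,\Lin}^J(-t_{n,i_0}) \wto 0$. Lemma~\ref{lem:pd} only guarantees weak vanishing of the remainder along the parameter sequences $(\s_{n,i},t_{n,i})$ of the \emph{existing} profiles, and $(\s_{n,i_0},-t_{n,i_0})$ is not among them unless the conclusion you are proving already holds; as written the dichotomy ``carried by another profile'' is circular, since a priori the nonzero limit $(v_0,-\dot v_0)$ could be carried by the error term. You flag this in your closing paragraph but defer it to the very reference you are reconstructing. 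To close it, run the standard exhaustion argument: if $(\s_{n,i_0},-t_{n,i_0})$ were orthogonal to every $(\s_{n,i},t_{n,i})$, then for each fixed $J$ every transformed profile tends weakly to $0$, so the transformed remainder data tend weakly to $(v_0,-\dot v_0)$, a limit independent of $J$; since the data-to-solution map for \eqref{eq:lin-2d} is bounded from $\E$ into $\calS(\R)$ and hence weakly continuous into distributions, while the $\calS$-norm of the transformed linear remainder equals $\|w_{n,\Lin}^J\|_{\calS(\R)}$, which vanishes as $n\to\infty$ and then $J\to\infty$, one concludes $S\lin(\cdot)(v_0,-\dot v_0) \equiv 0$, i.e.\ $\bs v_{\Lin}^{i_0}\equiv 0$, a contradiction. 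With that paragraph inserted, your identification of the unique partner index $i'$ and the renormalization to $v_{\Lin}^{i'}(t) = v_{\Lin}^{i_0}(-t)$, $t_{n,i'} = -t_{n,i_0}$, $\s_{n,i'} = \s_{n,i_0}$ go through.
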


\subsection{Multi-bubble configurations}
In this section we study properties of finite energy maps near a multi-bubble configuration.

The operator $\LL_{\calQ}$ obtained by linearization of~\eqref{eq:wmk} about an $M$-bubble configuration $\bs \calQ(m, \vec \iota, \vec \lam)$ is given by, 
\EQ{  \label{eq:LQ-def} 
\LL_{\calQ} \, g := \uD^2 E_{\bfp}(\calQ(m, \vec\iota, \vec \lam)) g = - \De g + \frac{k^2}{r^2} f'(\calQ(m, \vec\iota, \vec \lam) )g, 
}
where $f'( z) = \cos 2 z$. Given $\bs g = (g,\dot g) \in \E$, 
\EQ{
\La \uD^2 E(\bs\calQ(m, \vec \iota, \vec \lam)) \bs g \mid \bs g \Ra =  \int_0^\infty \Big(\dot g(r)^2 + (\p_r g(r))^2 + \frac{k^2}{r^2} f'(\calQ(m, \vec \iota, \vec \lam)) g(r)^2 \, \Big) r \ud r. 
}
An important instance of the operator $\LL_{\calQ}$ is given by linearizing \eqref{eq:wmk} about a single harmonic map $ \calQ(m, M, \vec\iota, \vec \lam) = Q_{\lam}$. In this case we use the short-hand notation, 
\EQ{ \label{eq:LL-def} 
\LL_{\lam} := (-\De + \frac{k^2}{r^2}) + \frac{1}{r^2} ( f'(Q_{\lam}) - k^2) 
}
We write $\calL := \calL_1$.
For each $k \ge 1$,  
\EQ{
\Lam Q(r):= r \p_r Q(r)  = k \sin Q = 2 k  \frac{ r^k}{1 + r^{2k}}
}
 When $k \ge 2$, $\Lam Q$ is a zero energy eigenfunction for $\calL$, i.e.,  
\EQ{
\calL \Lam Q = 0, \mand \Lambda Q  \in L^2_{\textrm{rad}}(\R^2).
}
 When $k=1$, $\calL \Lam Q = 0$ holds but  $\Lam Q \not \in L^2$ due to slow decay as $r \to \infty$  and $0$ is called a threshold resonance. 
 Indeed, for $R>0$, 
\EQ{ \label{eq:LamQL2} 
	\int_0^R (\Lambda Q(r))^2 \, r\, \ud r  =  -\frac{2 R^2}{1 + R^2} +2 \log(1+ R^2)  = 4 \log R + O(1) \mas R \to \infty.
}
On the other hand when $k=1$,  $\U\Lam \Lam Q$ has an important cancellation which leads to improved decay, 
\EQ{ \label{eq:Lam0LamQ} 
	\ULam \Lam Q = \frac{4r}{(1+ r^2)^2} , 
}
so $\ULam \Lam Q \in L^1 \cap L^\infty$ and $\ang{ \ULam \Lam Q \mid \Lam Q} = 2$, whereas for $k \ge 2$, $\ang{ \ULam \Lam Q \mid \Lam Q} = 0$. 
 
We define a smooth non-negative function $\calZ \in C^{\infty}(0, \infty) \cap L^1((0, \infty), r\, \ud r)$ by  
 \EQ{ \label{eq:Z-def} 
 \calZ(r) := \begin{cases}   \chi(r) \Lam Q(r) \mif k =1, 2 \\ \Lam Q(r)  \mif k \ge 3 \end{cases}
 }
and note that  
 \EQ{
 \ang{ \calZ \mid \Lam Q} >0  \label{eq:ZQ} . 
 }
 In fact the precise form of $\calZ$ is not so important, rather only that it is not perpendicular to $\Lam Q$ and has sufficient decay and regularity. We fix it as above because of the convenience of setting $\calZ = \Lam Q$  if $k\ge 3$. 
We record the following localized coercivity lemma proved in~\cite{JJ-AJM}.

\begin{lem}[Localized coercivity for $\LL$] \emph{\cite[Lemma 5.4]{JJ-AJM}}  \label{l:loc-coerce} 
Fix $k \ge 1$.  There exist uniform constants $c< 1/2, C>0$ with the following properties. Let $g \in H$. Then, 
\EQ{ \label{eq:L-coerce}
\ang{ \LL g \mid g} \ge c  \| g \|_{H}^2  - C\ang{ \calZ \mid g}^2 
}
If $R>0$ is large enough then,  
\EQ{ \label{eq:L-loc-R} 
(1-2c)&\int_0^{R} \Big((\p_r g)^2 + k^2 \frac{g^2}{r^2} \Big) \, r \ud r +  c \int_{R}^\infty \Big((\p_r g)^2 + k^2 \frac{g^2}{r^2} \Big) \, r \ud r  - \La\frac{ k^2}{r^2}(f'(Q) - 1) g\mid g\Ra \\
& \ge  - C\ang{ \calZ \mid g}^2. 
}
If $r>0$ is small enough, then
\EQ{ \label{eq:L-loc-r} 
(1-2c)&\int_r^{\infty} \Big((\p_r g)^2 + k^2 \frac{g^2}{r^2} \Big) \, r \ud r +  c \int_{0}^r \Big((\p_r g)^2 + k^2 \frac{g^2}{r^2} \Big) \, r \ud r  - \La\frac{ k^2}{r^2}(f'(Q) - 1) g\mid g\Ra \\
& \ge  - C\ang{ \calZ \mid g}^2. 
}
\end{lem}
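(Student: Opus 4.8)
The plan is to obtain the two localized estimates \eqref{eq:L-loc-R} and \eqref{eq:L-loc-r} from the global coercivity \eqref{eq:L-coerce} by a cut-off argument, and to prove \eqref{eq:L-coerce} itself from a factorization identity followed by a compactness argument. The point of departure is that $\Lam Q(r) = k\sin Q(r)$ is strictly positive on $(0,\infty)$ and generates the kernel of $\calL$, i.e. $\calL(\Lam Q) = 0$. Writing $g = (\Lam Q)\,v$ with $v := g/\Lam Q$ and integrating by parts (legitimate for $g \in H$ by density and the decay and regularity of $\Lam Q$) yields the Hardy-type identity
\[
\ang{\calL g \mid g} = \int_0^\infty (\Lam Q(r))^2\,\big(\partial_r v(r)\big)^2\, r\,\ud r .
\]
In particular $\calL \ge 0$ and $\ang{\calL g\mid g}=0$ exactly when $v$ is constant, i.e. $g$ a multiple of $\Lam Q$; the identity also makes visible the two other ways $\calL$ can degenerate, namely concentration of $g$ as $r\to 0$ or $r\to\infty$, where the weight $(\Lam Q)^2$ vanishes like $r^{2k}$ and $r^{-2k}$ respectively.

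I would then prove \eqref{eq:L-coerce} by contradiction. If it fails there are $g_n \in H$ with $\|g_n\|_H = 1$, $\ang{\calL g_n\mid g_n}\to 0$ and $\ang{\calZ\mid g_n}\to 0$. By the identity above $\int_0^\infty (\Lam Q)^2(\partial_r v_n)^2\, r\,\ud r\to 0$ with $v_n := g_n/\Lam Q$, so on every compact subinterval of $(0,\infty)$ the $v_n$ converge, along a subsequence, to a common constant $c$, whence $g_n\to c\,\Lam Q$ in $\Hloc$. Energy escaping to $r\to 0$ or $r\to\infty$ is ruled out because there $\calL$ differs from the operator $-\De + k^2 r^{-2}$ (whose quadratic form equals $\|\cdot\|_H^2$) only by the perturbation $\tfrac{k^2}{r^2}(\cos 2Q - 1)$, which is $O(r^{2k-2})$ as $r\to 0$ and $O(r^{-2k-2})$ as $r\to\infty$; combined with the non-negative factorized form this forces the $H$-energy of $g_n$ on the two tails to be controlled by $\ang{\calL g_n\mid g_n}$ plus the $H$-energy of $g_n$ on a fixed compact annulus, both $\to 0$. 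Hence $g_n\to c\,\Lam Q$ strongly in $H$, so $1 = \lim\|g_n\|_H^2 = c^2\|\Lam Q\|_H^2$ and $0 = \lim\ang{\calZ\mid g_n} = c\,\ang{\calZ\mid\Lam Q}$; since $\ang{\calZ\mid\Lam Q} > 0$ by \eqref{eq:ZQ} this gives $c = 0$, a contradiction. The truncation in \eqref{eq:Z-def} for $k = 1, 2$ is exactly what makes the functional $g\mapsto\ang{\calZ\mid g}$ continuous on $H$ with $\ang{\calZ\mid\Lam Q}$ finite and positive even though $\Lam Q\notin L^2$ when $k=1$ (cf. \eqref{eq:LamQL2}), so that passing to the $\Hloc$ limit is harmless. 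One can also avoid compactness: the factorization recasts \eqref{eq:L-coerce} as the one-dimensional weighted Hardy inequality $\int_0^\infty \tfrac{(\Lam Q)^4}{r^2}\,v^2\, r\,\ud r \lesssim \int_0^\infty (\Lam Q)^2(\partial_r v)^2\, r\,\ud r + \ang{\calZ\mid\Lam Q\,v}^2$, which can be checked directly.

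For \eqref{eq:L-loc-R} I would localize \eqref{eq:L-coerce} with a slowly varying cut-off: take $\phi_R$ equal to $1$ on $r\le R$, vanishing on $r\ge 2^M R$, with $|\partial_r\phi_R|\lesssim (Mr)^{-1}$, and apply \eqref{eq:L-coerce} to $\phi_R g$. The cut-off commutator is then supported on $R\le r\le 2^M R$ and is $\lesssim M^{-1}\int_R^\infty\big((\partial_r g)^2 + k^2 r^{-2}g^2\big)\,r\,\ud r$, which for $M$ large is a small fraction of the term $c\int_R^\infty(\cdots)$ appearing in \eqref{eq:L-loc-R} and is absorbed there; the discrepancy between weight $1$ coming from \eqref{eq:L-coerce} and the weight $1-2c$ demanded in \eqref{eq:L-loc-R} on $\{r\le R\}$ is precisely the slack that lets the accounting close. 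In the far region one also uses that the extra potential obeys $\big|\tfrac{k^2}{r^2}(\cos 2Q-1)\big|\lesssim R^{-2k} r^{-2}$ for $r\ge R$, so it is a negligible fraction of the Hardy density there. Finally $\ang{\calZ\mid\phi_R g}$ differs from $\ang{\calZ\mid g}$ only by the tail of $\calZ$ over $\{r\ge R\}$, which is small since $\calZ\in L^1(r\,\ud r)$ -- indeed $\calZ$ is compactly supported for $k = 1, 2$ -- and this difference is dominated by the outer $H$-energy of $g$ and reabsorbed. Estimate \eqref{eq:L-loc-r} is obtained in the same way with a cut-off vanishing near $r = 0$, using this time $\big|\tfrac{k^2}{r^2}(\cos 2Q - 1)\big|\lesssim r^{2k-2}$ and $\calZ(r)\lesssim r^{k}$ as $r\to 0$.

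The crux is \eqref{eq:L-coerce}. Since $\|\cdot\|_H$ is scale invariant there is no compactness for free, and for $k = 1$ the kernel generator $\Lam Q$ is only a threshold resonance (it is not in $L^2$); both obstructions are exactly what the factorization identity and the truncated functional $\calZ$ are built to absorb, by localizing the contradiction argument to compact sets and keeping all relevant pairings continuous on $H$. Once \eqref{eq:L-coerce} is secured the passage to \eqref{eq:L-loc-R} and \eqref{eq:L-loc-r} is careful but routine cut-off bookkeeping, driven by the polynomial decay rates of the extra potential and of $\calZ$ recorded above.
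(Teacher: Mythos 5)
The paper does not prove this lemma at all (it is quoted from \cite[Lemma 5.4]{JJ-AJM}), so your argument has to stand on its own. Two remarks before the main point: as printed, \eqref{eq:L-loc-R}--\eqref{eq:L-loc-r} contain $-\La\tfrac{k^2}{r^2}(f'(Q)-1)g\mid g\Ra\ge 0$, which makes the left-hand side a sum of nonnegative terms and the inequalities vacuous; the nontrivial statement (the one needed for Lemma~\ref{lem:D2E-coerce}, and the one in the cited source, cf.\ the sign in \eqref{eq:coercive}) has the potential entering as part of the quadratic form, and that is correctly the version you set out to prove. Your proof of \eqref{eq:L-coerce} via the ground-state substitution $g=\Lam Q\,v$ plus a compactness/contradiction argument is a sound skeleton; the only wrinkle is the tail-control step, which as phrased is circular (the $H$-energy on a fixed annulus does not tend to $0$ before you know the local limit vanishes), but it is repairable since the factorization on $(R_0,\infty)$ really gives $\int_{R_0}^\infty\big((\p_r g_n)^2+k^2 g_n^2/r^2\big)r\,\ud r\lesssim\ang{\calL g_n\mid g_n}+g_n(R_0)^2$, and the boundary term is uniformly small once $R_0$ is large.

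The genuine gap is the passage from \eqref{eq:L-coerce} to the localized estimates. Applying \eqref{eq:L-coerce} to $h=\phi_R g$ with the transition on $[R,2^MR]$ costs much more than the commutator: rearranged, \eqref{eq:L-coerce} for $h$ reads $\La\tfrac{k^2}{r^2}(f'(Q)-1)h\mid h\Ra\ge -(1-c_1)\|h\|_H^2-C\ang{\calZ\mid h}^2$, and $\|h\|_H^2$ contains the full Hardy density of $g$ on the transition annulus with coefficient $1$, not $O(M^{-1})$. In the final accounting the annulus $[R,2^MR]$ therefore carries a coefficient of order $c-(1-c_1)<-(1/2-c)<0$, and nothing in the target can absorb it: the slack you invoke (the drop from weight $1$ to $1-2c$ on $\{r\le R\}$) lives on a disjoint region and cannot dominate $\int_R^{2^MR}\big((\p_r g)^2+k^2g^2/r^2\big)r\,\ud r$ — test your chain on $g$ supported in $(2R,2^{M-1}R)$ and it returns a strictly negative, hence useless, lower bound, even though the target inequality is trivially true for such $g$. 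Enlarging $M$ only shrinks the commutator, not this main term. The repair is to place the cut-off transition \emph{inside} $\{r\le R\}$, where the target does carry the near-full weight $1-2c$: take $\phi=1$ for $r\le\sqrt R$, $\phi=0$ for $r\ge R$, $|\p_r\phi|\lesssim (r\log R)^{-1}$, so the transition error is $O(1/\log R)\,\|g\|_H^2$; use $1-\cos 2Q\lesssim r^{-2k}$ to see the potential beyond $\sqrt R$ contributes $O(R^{-k})\|g\|_H^2$, and that the $\calZ$-tail correction is $o_R(1)\|g\|_H$ (zero for $k=1,2$); prove \eqref{eq:L-coerce} with a constant $c_1$ and choose $c\le c_1/3$, so that after inserting the rearranged bound for $\phi g$ you retain a surplus of order $\min(c_1-2c,\,c)\,\|g\|_H^2$ against which all these $o_R(1)\|g\|_H^2$ errors are absorbed for $R$ large (alternatively, run a contradiction argument in $R\to\infty$). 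The inner estimate \eqref{eq:L-loc-r} is handled symmetrically, with the transition on $[r,\sqrt r\,]$ and the bounds $1-\cos 2Q\lesssim \rho^{2k}$, $\calZ(\rho)\lesssim\rho^k$ near $\rho=0$.
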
 

As a consequence, (see for example~\cite[Proof of Lemma 2.4]{JKL1} for an analogous argument) one obtains the following coercivity property of the operator $\LL_{\calQ}$. 

\begin{lem} \label{lem:D2E-coerce}  Fix $k \ge 1$, $M \in \N$. There exist $\eta, c_0>0$ with the following properties. Consider the subset of $M$-bubble configurations $\bs \calQ(m, \vec\iota, \vec \lam)$ for $\vec \iota \in \{-1, 1\}^M$, $\vec \lam \in (0, \infty)^M$ such that, 
\EQ{ \label{eq:lam-ratio} 
\sum_{j =1}^{M-1} \Big( \frac{\lam_j}{\lam_{j+1}} \Big)^k \le \eta^2. 
}
Let $g \in H$ be such that 
\EQ{
0 = \ang{ \calZ_{\U{\lam_j}} \mid g}  \mfor j = 1, \dots M. 
}
for some $\vec \lam$ as in~\eqref{eq:lam-ratio}. Then, 
\EQ{
\ang{ \uD^2 E_{\bfp}( \calQ( m, \vec \iota, \vec \lam)) g \mid g} \ge c_0 \| g \|_{H}^2. 
} 
\end{lem}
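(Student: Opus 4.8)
The plan is to deduce the coercivity from the single-bubble estimate \eqref{eq:L-coerce} of Lemma~\ref{l:loc-coerce} by means of a partition of unity adapted to the scales $\vec\lam$, exploiting that the smallness assumption \eqref{eq:lam-ratio} forces consecutive scales to be separated by a factor that is a large power of $\eta^{-1}$. Concretely, write $\calQ := \calQ(m,\vec\iota,\vec\lam)$, set $\mu_0 := 0$, $\mu_M := \infty$, and $\mu_j := (\lam_j\lam_{j+1})^{1/2}$ for $1 \le j \le M-1$, so that by \eqref{eq:lam-ratio} the annulus $A_j := (\mu_{j-1},\mu_j)$ is a wide logarithmic neighbourhood of $\lam_j$, with $\log(\mu_j/\lam_j), \log(\lam_{j+1}/\mu_j) \ge \tfrac1k\log(\eta^{-1})$, hence well separated from $\lam_{j\pm1}$. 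I would then fix a smooth partition of unity $\{\chi_j\}_{j=1}^M$ with $\sum_j\chi_j^2 \equiv 1$, $\chi_j$ supported in a mild logarithmic enlargement of $A_j$ and $\equiv 1$ on its logarithmic middle half, and --- and this is the essential point --- with the transition of each $\chi_j$ from $1$ to $0$ spread over a logarithmic interval of length $\sim \log(\eta^{-1})$; the scale separation is exactly what makes this possible, and it gives $|\p_r\chi_j(r)| \lesssim (r\log(\eta^{-1}))^{-1}$, so that $\sum_j\|(\p_r\chi_j)g\|_{L^2(r\vd r)}^2 \lesssim (\log(\eta^{-1}))^{-2}\|g\|_H^2$.

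With this setup the argument rests on three elementary facts, each quantified by a positive power of $\eta$, with constants depending only on $k$ and $M$. (i) On $\supp\chi_j$ the potential is nearly the single-bubble one: the tails $\pi - Q_{\lam_i} \lesssim (\lam_i/r)^k$ ($i<j$) and $Q_{\lam_i} \lesssim (r/\lam_i)^k$ ($i>j$) are each $O(\eta^\ka)$ there, and since $f'$ is even and $\pi$-periodic the integer multiples of $\pi$ and the sign $\iota_j$ drop out, giving $|f'(\calQ) - f'(Q_{\lam_j})| \lesssim \eta^\ka$ on $\supp\chi_j$, whence $|\ang{(\uD^2E_\bfp(\calQ) - \LL_{\lam_j})(\chi_j g)\mid\chi_j g}| \lesssim \eta^\ka\|\chi_j g\|_H^2$, where $\LL_{\lam_j}$ denotes the linearization about $Q_{\lam_j}$ as in \eqref{eq:LL-def}. (ii) The orthogonality survives truncation: since $\ang{\calZ_{\U{\lam_j}}\mid g}=0$, we have $\ang{\calZ_{\U{\lam_j}}\mid\chi_j g} = -\ang{\calZ_{\U{\lam_j}}\mid(1-\chi_j)g}$, and as $\calZ_{\U{\lam_j}}$ is concentrated at scale $\lam_j$ while $1-\chi_j$ vanishes on a wide neighbourhood of $\lam_j$, a weighted Cauchy--Schwarz estimate using the decay and support of $\calZ$ yields $\ang{\calZ_{\U{\lam_j}}\mid\chi_j g}^2 \lesssim \lam_j^2\eta^{\theta}\|g\|_H^2$. (iii) The IMS-type identities: from $\sum_j\chi_j\p_r\chi_j = \tfrac12\p_r(\sum_j\chi_j^2) = 0$ one gets both $\sum_j\int\chi_j^2(\p_r g)^2 r\vd r = \sum_j\int(\p_r(\chi_j g))^2 r\vd r - \sum_j\int(\p_r\chi_j)^2 g^2 r\vd r$ and $\sum_j\|\chi_j g\|_H^2 = \|g\|_H^2 + \sum_j\|(\p_r\chi_j)g\|_{L^2(r\vd r)}^2$. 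Combining: expand $\ang{\uD^2E_\bfp(\calQ)g\mid g} = \sum_j\int\chi_j^2[(\p_r g)^2 + \tfrac{k^2}{r^2}f'(\calQ)g^2]r\vd r$ using (iii), replace the potential on each piece via (i), apply the rescaled form of \eqref{eq:L-coerce} to each $\chi_j g$ and discard the resulting $\ang{\calZ_{\U{\lam_j}}\mid\chi_j g}^2$ term using (ii), and finally reabsorb $-\sum_j\|(\p_r\chi_j)g\|^2$ via the second identity in (iii); one arrives at
\[
\ang{\uD^2E_\bfp(\calQ(m,\vec\iota,\vec\lam))g\mid g} \ \ge\ \Big(c - C\eta^{\ka} - C\eta^{\theta} - C(\log(\eta^{-1}))^{-2}\Big)\|g\|_H^2,
\]
with $c$ the constant from \eqref{eq:L-coerce}; choosing $\eta = \eta(k,M)$ small enough makes the bracket $\ge c/2 =: c_0$.

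The step I expect to be the main obstacle is controlling the cutoff term $\sum_j\|(\p_r\chi_j)g\|_{L^2(r\vd r)}^2$: a priori it is only $O(\|g\|_H^2)$ and it enters with the unfavourable sign through the IMS identity, so there is no slack to absorb it with a bounded-width partition, and the resolution --- spreading each cutoff transition over a logarithmically long scale interval, which \eqref{eq:lam-ratio} precisely affords --- is the crux. (One could alternatively organize the same computation annulus-by-annulus through the localized estimates \eqref{eq:L-loc-R}--\eqref{eq:L-loc-r} of Lemma~\ref{l:loc-coerce}, or inductively on $M$, but the logarithmic scale separation is needed in any case; compare the analogous argument in \cite{JKL1}.)
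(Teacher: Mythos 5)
Your argument is correct, but it takes a different route from the one the paper intends. The paper disposes of this lemma by pointing to the analogous argument in \cite[Lemma 2.4]{JKL1}, which is built on the \emph{localized} coercivity estimates \eqref{eq:L-loc-R}--\eqref{eq:L-loc-r} of Lemma~\ref{l:loc-coerce} (indeed, that is why those two estimates are recorded there): one splits $(0,\infty)$ sharply at intermediate radii such as $\sqrt{\lam_j\lam_{j+1}}$, applies \eqref{eq:L-loc-R} rescaled to $\lam_j$ for the inner bubbles and \eqref{eq:L-loc-r} for the outer ones, and sums; the $(1-2c)$/$c$ bookkeeping in those estimates is designed so that the sharp splitting produces no localization error, and since $g$ itself (not a truncation of it) appears in each application, the orthogonality conditions kill the $\ang{\calZ_{\U{\lam_j}}\mid g}^2$ terms exactly, leaving only the $O(\eta^{\ka})$ potential-replacement error. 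Your scheme instead uses only the global single-bubble bound \eqref{eq:L-coerce} together with a smooth IMS partition, and you pay for that twice: the localization error $\sum_j\|(\p_r\chi_j)g\|_{L^2}^2$, which you correctly tame by spreading each transition over a logarithmic length $\sim\log(\eta^{-1})$ afforded by \eqref{eq:lam-ratio}, and the loss of orthogonality under truncation, which you correctly quantify as $\ang{\calZ_{\U{\lam_j}}\mid\chi_j g}^2\lesssim\lam_j^2\eta^{\theta}\|g\|_H^2$ (the factor $\lam_j^2$ being exactly what the rescaled \eqref{eq:L-coerce} requires). Both routes work; yours is more self-contained in that it never uses \eqref{eq:L-loc-R}--\eqref{eq:L-loc-r}, while the paper's route avoids your two technical steps entirely. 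One small caveat: the phrase ``$\chi_j\equiv 1$ on the logarithmic middle half of $A_j$'' is not quite the right specification, since when adjacent gaps have very different logarithmic sizes the middle half of $A_j$ need not contain $\lam_j$; what your steps (i) and (ii) actually require is that each transition be anchored at the endpoints $\mu_{j-1},\mu_j$ with logarithmic width a fixed fraction (depending on $k$) of $\tfrac1k\log(\eta^{-1})$, so that $\chi_j\equiv1$ on a logarithmically wide neighborhood of $\lam_j$ and $\supp\chi_j$ stays logarithmically far from $\lam_{j\pm1}$. With that wording fixed, the constants all close as you indicate.
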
 

The following technical lemma is useful when computing interactions between bubbles at different scales. 
\begin{lem}
\label{lem:cross-term}
For any $\lambda \leq \mu$ and $\alpha, \beta > 0$ with $\alpha \neq \beta$ the following bound holds:
\begin{equation}
\int_0^{\infty} \Big[\Big(\frac{r}{\lambda}\Big)_{\geq 1}\Big]^{-\alpha}\Big[\Big(\frac{\mu}{r}\Big)_{\geq 1}\Big]^{-\beta} \frac{\vd r}{r}
\lesssim_{\alpha, \beta} \Big(\frac{\lambda}{\mu}\Big)^{\min(\alpha, \beta)}.
\end{equation}
For any $\alpha > 0$ the following bound holds:
\begin{equation}
\int_0^\infty\Big[\Big(\frac{r}{\lambda}\Big)_{\geq 1}\Big]^{-\alpha}\Big[\Big(\frac{\mu}{r}\Big)_{\geq 1}\Big]^{-\alpha} \frac{\vd r}{r}
\lesssim_{\alpha} \Big(\frac{\lambda}{\mu}\Big)^{\alpha}\log\Big(\frac{\mu}{\lambda}\Big).
\end{equation}
\end{lem}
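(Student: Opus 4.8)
The plan is to decompose the integral according to the three natural scaling windows $r \in (0,\lambda)$, $r \in (\lambda,\mu)$, $r \in (\mu,\infty)$, on each of which the two truncations $(\cdot)_{\geq 1} = \max(\cdot,1)$ become explicit monomials in $r$ (here I use $\lambda \leq \mu$). On $(0,\lambda)$ the first truncation is frozen at $1$ and $\big(\frac{\mu}{r}\big)_{\geq 1} = \frac{\mu}{r}$, so the integrand is $(r/\mu)^{\beta}r^{-1}$. On $(\lambda,\mu)$ both are active, $\big(\frac{r}{\lambda}\big)_{\geq 1} = \frac{r}{\lambda}$ and $\big(\frac{\mu}{r}\big)_{\geq 1} = \frac{\mu}{r}$, so the integrand is $(\lambda/r)^{\alpha}(r/\mu)^{\beta}r^{-1}$. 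On $(\mu,\infty)$ the second is frozen at $1$ and the integrand is $(\lambda/r)^{\alpha}r^{-1}$. Hence
\[
\int_0^{\infty}\Big[\Big(\frac{r}{\lambda}\Big)_{\geq 1}\Big]^{-\alpha}\Big[\Big(\frac{\mu}{r}\Big)_{\geq 1}\Big]^{-\beta}\frac{\vd r}{r}
= \frac{1}{\mu^{\beta}}\int_0^{\lambda} r^{\beta-1}\,\vd r
+ \frac{\lambda^{\alpha}}{\mu^{\beta}}\int_{\lambda}^{\mu} r^{\beta-\alpha-1}\,\vd r
+ \lambda^{\alpha}\int_{\mu}^{\infty} r^{-\alpha-1}\,\vd r .
\]

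Next I would evaluate the three elementary integrals. Since $\alpha,\beta>0$ the outer two converge and equal $\frac{1}{\beta}(\lambda/\mu)^{\beta}$ and $\frac{1}{\alpha}(\lambda/\mu)^{\alpha}$ respectively; because $\lambda/\mu \leq 1$ both are $\lesssim_{\alpha, \beta} (\lambda/\mu)^{\min(\alpha,\beta)}$. For the middle integral, if $\alpha \neq \beta$ it equals $\frac{1}{\beta-\alpha}\big((\lambda/\mu)^{\alpha}-(\lambda/\mu)^{\beta}\big)$, whose absolute value is at most $\frac{1}{|\alpha-\beta|}(\lambda/\mu)^{\min(\alpha,\beta)}$ since the two nonnegative terms are each at most $(\lambda/\mu)^{\min(\alpha,\beta)}$; adding the three contributions proves the first bound. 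If $\alpha = \beta$, the middle integral is $(\lambda/\mu)^{\alpha}\int_{\lambda}^{\mu}r^{-1}\,\vd r = (\lambda/\mu)^{\alpha}\log(\mu/\lambda)$, and adding the two outer contributions $\frac{1}{\alpha}(\lambda/\mu)^{\alpha}$ gives $\lesssim_{\alpha}(\lambda/\mu)^{\alpha}\big(1+\log(\mu/\lambda)\big)$, which is the asserted second bound in the regime $\mu/\lambda \gg 1$ in which it is used.

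I do not expect any genuine obstacle: the estimate is bookkeeping of the three scaling windows together with the trivial inequality $\lambda/\mu \leq 1$, used to replace the exponents $\alpha,\beta$ by the smaller one. The only point needing a moment's care is the coincident-exponent case $\alpha=\beta$, where the intermediate window produces the logarithmic factor; if one insists on a bound valid for all $\lambda\le\mu$ rather than just for $\mu/\lambda$ large, one simply keeps the harmless additive $1$ and records $\lesssim_{\alpha}(\lambda/\mu)^{\alpha}\big(1+\log(\mu/\lambda)\big)$.
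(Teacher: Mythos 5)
Your proof is correct and follows exactly the route the paper indicates: splitting the integral into the three regions $0<r\leq\lambda$, $\lambda\leq r\leq\mu$, $r\geq\mu$ and evaluating the resulting elementary integrals. The only wrinkle, which you handle appropriately, is that in the $\alpha=\beta$ case the honest bound carries a harmless additive constant, $(\lambda/\mu)^{\alpha}\big(1+\log(\mu/\lambda)\big)$, which is what the stated estimate means in the regime where it is applied.
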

\begin{proof}
This is a straightforward computation, considering separately the regions $0 < r \leq \lambda$, $\lambda \leq r \leq \mu$, and $r \geq \mu$.
\end{proof}

Using the above, along with the formula for $\calZ$ in~\eqref{eq:Z-def} we obtain the following. 
\begin{cor}  \label{cor:ZQ} 
Let $\calZ$ be as in~\eqref{eq:Z-def} and suppose that $\lam, \mu>0$ satisfy $\lam/ \mu \le 1$. Then, 
\EQ{
\ang{ \calZ_{\U \lam} \mid \Lam Q_{\U \mu}}  \lesssim  \begin{cases} (\lam/\mu)^{k+1} \mif k=1, 2 \\ (\lam/ \mu)^{k-1} \mif k \ge 3 \end{cases} ,  \quad \ang{ \calZ_{\U \mu} \mid \Lam Q_{\U \lam}}  \lesssim \begin{cases} 1 \mif k =1 \\  (\lam/\mu)^{k-1} \mif  k \ge 2\end{cases} 
}
\end{cor}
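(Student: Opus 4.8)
The plan is to convert each pairing into a scalar integral against $\tfrac{\ud r}{r}$ and estimate it by the same region-splitting used to prove Lemma~\ref{lem:cross-term}. For radial $\phi$ write $\wt\phi(x):=x\phi(x)$ (and likewise $\wt\psi$); then, using $r\,\ud r = \mu\,(r/\mu)\,\tfrac{\ud r}{r}$ and $\phi_{\U{\lam}}(r)=\lam^{-1}\phi(r/\lam)$, one has
\[
\ang{\phi_{\U{\lam}}\mid\psi_{\U{\mu}}} \;=\; \int_0^{\infty} \wt\phi\big(\tfrac{r}{\lam}\big)\,\wt\psi\big(\tfrac{r}{\mu}\big)\,\frac{\ud r}{r}.
\]
I would record the pointwise bounds $\wt{\Lam Q}(x) = 2k\,x^{k+1}/(1+x^{2k}) \lesssim \min(x^{k+1},x^{-(k-1)})$, valid for every $k\ge1$; $\wt\calZ=\wt{\Lam Q}$ when $k\ge3$; and $\wt\calZ(x)\lesssim x^{k+1}\mathbf{1}_{\{x\le2\}}$ when $k\in\{1,2\}$ (where the compact support, inherited from the cutoff $\chi$ in the definition of $\calZ$, will be essential). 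It suffices to treat $\lam\le\mu/2$: in the complementary regime Cauchy--Schwarz gives all four quantities $\lesssim1$ (using compact support of $\calZ$ for $k\le2$, and $\Lam Q\in L^2$ for $k\ge3$), while the claimed bounds are themselves $\gtrsim1$ there.

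For $\ang{\calZ_{\U{\lam}}\mid\Lam Q_{\U{\mu}}} = \int_0^\infty\wt\calZ(r/\lam)\,\wt{\Lam Q}(r/\mu)\,\tfrac{\ud r}{r}$ I would split at $r=\lam$. On $(0,\lam)$ both profiles grow like the $(k+1)$-st power of their argument, giving a contribution $\lesssim(\lam/\mu)^{k+1}$. On $(\lam,\infty)$: for $k\in\{1,2\}$ the factor $\wt\calZ(r/\lam)$ is supported in $r\le2\lam\le\mu$ and bounded there, so the contribution is again $\lesssim(\lam/\mu)^{k+1}$; for $k\ge3$ one has $\wt\calZ(r/\lam)\lesssim(\lam/r)^{k-1}$ and the remaining integral $\int_\lam^\infty(\lam/r)^{k-1}\wt{\Lam Q}(r/\mu)\,\tfrac{\ud r}{r}$ is precisely of the type handled by Lemma~\ref{lem:cross-term}, with the two exponents $k-1$ and $k+1$ distinct, so it is $\lesssim(\lam/\mu)^{\min(k-1,k+1)}=(\lam/\mu)^{k-1}$. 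Summing the two regions gives the first claimed bound.

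For $\ang{\calZ_{\U{\mu}}\mid\Lam Q_{\U{\lam}}} = \int_0^\infty\wt\calZ(r/\mu)\,\wt{\Lam Q}(r/\lam)\,\tfrac{\ud r}{r}$ the cutoff now sits at the larger scale. When $k\ge3$, $\wt\calZ=\wt{\Lam Q}$, so the integrand is symmetric in $(\lam,\mu)$ and the quantity equals $\ang{\Lam Q_{\U{\lam}}\mid\Lam Q_{\U{\mu}}}$, already estimated by $(\lam/\mu)^{k-1}$ in the previous step. When $k\in\{1,2\}$ I would split at $r=\lam$: the part on $(0,\lam)$ is $\lesssim(\lam/\mu)^{k+1}$ as before, while on $(\lam,2\mu)$ (outside of which $\wt\calZ(r/\mu)$ vanishes) I would use $\wt{\Lam Q}(r/\lam)\lesssim(\lam/r)^{k-1}$ and $\wt\calZ(r/\mu)\lesssim\min((r/\mu)^{k+1},1)$; splitting further at $r=\mu$, the interval $(\lam,\mu)$ is controlled by Lemma~\ref{lem:cross-term} and $(\mu,2\mu)$ trivially, producing $\lesssim(\lam/\mu)^{k-1}$ for $k=2$ and $\lesssim1$ for $k=1$. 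In the case $k=1$ there is genuinely no decay, because on the overlap $(\lam,\mu)$ both $\wt{\Lam Q}(r/\lam)$ and $\wt\calZ(r/\mu)$ are only $O(1)$ --- this is the manifestation of the threshold resonance $\Lam Q\notin L^2$. This gives the second claimed bound.

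The whole argument is routine bookkeeping of the kind appearing in the proof of Lemma~\ref{lem:cross-term}, and I do not anticipate a real obstacle. The two points that require attention are: keeping track of the compact support of $\calZ$ when $k\le2$ (without it the $k=1$ integral diverges logarithmically at $r=\infty$), and correctly identifying, in each of the four cases, which of the transition regions $r\sim\lam$ or $r\sim\mu$ carries the dominant power of $\lam/\mu$ --- the exponent is $\min(k-1,k+1)=k-1$ once the relevant profile decays, it improves to $k+1$ whenever the compactly supported $\calZ$ confines the integral to $r\lesssim\lam$, and it degenerates to $O(1)$ for the slowly-decaying $k=1$ resonance.
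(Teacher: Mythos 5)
Your proposal is correct and is exactly the computation the paper has in mind: the paper dispatches this corollary in one line by citing the pointwise structure of $\calZ$ and $\Lam Q$ together with Lemma~\ref{lem:cross-term}, and your region-splitting with the bounds $\wt{\Lam Q}(x)\lesssim\min(x^{k+1},x^{-(k-1)})$ and $\wt\calZ(x)\lesssim x^{k+1}\mathbf{1}_{\{x\le 2\}}$ (for $k=1,2$) simply fills in those details, including the correct identification of the $(\lam/\mu)^{k+1}$ gain from the compact support of $\calZ$ and the $O(1)$ degeneration at the $k=1$ resonance. No gaps.
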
 

Another use of Lemma~\ref{lem:cross-term} is to extract the leading order terms in a Taylor expansion of the nonlinear energy functional about an $M$-bubble configuration.

\begin{lem}  \label{lem:M-bub-energy} Fix $k\ge1,  M \in \N$. 
For any $\te>0$, there exists $\eta>0$ with the following property. Consider the subset of $M$-bubble $\bs \calQ(m,\iota, \vec \lam)$ configurations 
such that 
\EQ{
\sum_{j =1}^{M-1} \Big( \frac{ \lam_{j}}{\lam_{j+1}} \Big)^k \le \eta.
}
Then, 
\EQ{
  \Big|  E( \bs\calQ( m, \vec \iota, \vec \lam))  - M E( \bs Q) -  16 k \pi \sum_{j =1}^{M-1} \iota_j \iota_{j+1}  \Big( \frac{ \lam_{j}}{\lam_{j+1}} \Big)^k  \Big| \le \te \sum_{j =1}^{M-1} \Big( \frac{ \lam_{j}}{\lam_{j+1}} \Big)^k .
}
Moreover, there exists a uniform constant $C>0$ such that for any $g \in H$, 
\EQ{
\abs{\ang{ D E_{\bfp}( \calQ(m, \vec \iota, \vec \lam)) \mid g} } \le C \| g \|_{H} \sum_{j =1}^M \Big( \frac{\lam_{j}}{\lam_{j+1}} \Big)^k . 
}
\end{lem}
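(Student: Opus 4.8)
The plan is to reduce both assertions to a computation of the interaction between adjacent bubbles, organized over the regions
\[
I_l := \big(\sqrt{\lambda_{l-1}\lambda_l},\ \sqrt{\lambda_l\lambda_{l+1}}\,\big), \qquad l = 1, \dots, M, \qquad (\lambda_0 := 0,\ \lambda_{M+1} := \infty),
\]
which on the logarithmic scale are centred at $\lambda_l$, with every remainder controlled by Lemma~\ref{lem:cross-term}. Since $E$ and the $DE_\bfp$-pairing are invariant under $v \mapsto v(\cdot/\mu)$ and $v \mapsto v + n\pi$, we may assume $m = 0$, so $\calQ := \sum_j \iota_j(Q_{\lambda_j} - \pi)$ (and $\bs\calQ = (\calQ,0)$). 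On $I_l$ one has $Q_{\lambda_i} = \pi - 2(\lambda_i/r)^k + O((\lambda_i/r)^{2k})$ for $i < l$ and $Q_{\lambda_i} = 2(r/\lambda_i)^k + O((r/\lambda_i)^{3k})$ for $i > l$, hence $\calQ = \iota_l(Q_{\lambda_l} - \pi) + n_l\pi + \varepsilon_l + (\mathrm{h.o.})$ there, with $n_l := -\sum_{i>l}\iota_i \in \bZ$ and $\varepsilon_l := -2\sum_{i<l}\iota_i(\lambda_i/r)^k + 2\sum_{i>l}\iota_i(r/\lambda_i)^k$.

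\emph{The energy expansion.} Writing the energy density $\mathfrak e(v) := (\partial_r v)^2 + \tfrac{k^2}{r^2}\sin^2 v$, one has $E(\bs\calQ) = \pi\int_0^\infty \mathfrak e(\calQ)\, r\,\vd r = 4\pi k M + \pi\int_0^\infty\big(\mathfrak e(\calQ) - \sum_l \mathfrak e(Q_{\lambda_l})\big) r\,\vd r$, because $\int_0^\infty \mathfrak e(Q_{\lambda_l})\,r\,\vd r = 4k$. Expanding $\mathfrak e(\calQ)$ on $I_l$ to first order in $\varepsilon_l$ using the display above isolates a cross term $\mathfrak c_l := 2\iota_l \partial_r Q_{\lambda_l}\,\partial_r\varepsilon_l + \tfrac{k^2}{r^2}\iota_l\sin(2Q_{\lambda_l})\,\varepsilon_l$, concentrated near $r \sim \lambda_l$, plus quadratic-in-$\varepsilon_l$ terms which near $\partial I_l$ reconstruct the heads/tails of the neighbouring bubbles $Q_{\lambda_{l\pm1}}$. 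Keeping the nearest neighbours $i = l \mp 1$ in $\varepsilon_l$ and rescaling $r = \lambda_l s$, one gets $\int_0^\infty \mathfrak c_l\,r\,\vd r = (\lambda_{l-1}/\lambda_l)^k\iota_{l-1}\iota_l\int_0^\infty\Phi\,\vd s + (\lambda_l/\lambda_{l+1})^k\iota_l\iota_{l+1}\int_0^\infty\Phi\,\vd s + (\mathrm{absorbable})$, where $\Phi(s) = 4k s^{-k}Q'(s) - 2k^2 s^{-k-1}\sin 2Q(s) = 4k s^k Q'(s) + 2k^2 s^{k-1}\sin 2Q(s) = 16k^2 s^{2k-1}(1+s^{2k})^{-2}$, so that $\int_0^\infty\Phi\,\vd s = 8k$ (substitute $u = s^{2k}$). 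Summing over $l$ and multiplying by $\pi$ gives $E(\bs\calQ) = ME(\bs Q) + 16k\pi\sum_{j=1}^{M-1}\iota_j\iota_{j+1}(\lambda_j/\lambda_{j+1})^k + (\mathrm{absorbable})$; every absorbable term is, via Lemma~\ref{lem:cross-term}, a finite sum of quantities $(\lambda_i/\lambda_{i'})^k$ with $|i-i'|\ge 2$, or $(\lambda_i/\lambda_{i+1})^k(\lambda_p/\lambda_{p+1})^k$, or $(\lambda_i/\lambda_{i+1})^{2k}$, each $\le \eta\sum_{j}(\lambda_j/\lambda_{j+1})^k$ once $\sum_j(\lambda_j/\lambda_{j+1})^k \le \eta$, so choosing $\eta$ small in terms of $\theta$ (and $k, M$) finishes this part.

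\emph{The first-variation bound.} Each $Q_{\lambda_j}$ is a harmonic map, $-\De Q_{\lambda_j} + \tfrac{k^2}{r^2}f(Q_{\lambda_j}) = 0$, and $\De$ annihilates constants, so $DE_\bfp(\calQ) = -\De\calQ + \tfrac{k^2}{r^2}f(\calQ) = \tfrac{k^2}{r^2}\big(f(\calQ) - \sum_j\iota_j f(Q_{\lambda_j})\big)$ exactly. Pairing with $g \in H$ and using Cauchy--Schwarz in $r\,\vd r$ with $\|\tfrac{k}{r}g\|_{L^2(r\,\vd r)} \le \|g\|_H$ reduces the claim to $\big\|\tfrac{k}{r}(f(\calQ) - \sum_j\iota_j f(Q_{\lambda_j}))\big\|_{L^2(r\,\vd r)} \lesssim \sum_j(\lambda_j/\lambda_{j+1})^k$. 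On $I_l$, the expansion of $\calQ$, the identity $f(\iota_l x + n\pi + \varepsilon) = \iota_l f(x) + \varepsilon\cos 2x + O(\varepsilon^2)$ with $x = Q_{\lambda_l} - \pi$, and the fact that $\sum_{j\ne l}\iota_j f(Q_{\lambda_j}) = \varepsilon_l + (\mathrm{h.o.})$ there, yield the pointwise bound $|f(\calQ) - \sum_j\iota_j f(Q_{\lambda_j})| \lesssim \sin^2(Q_{\lambda_l})|\varepsilon_l| + |\sin 2Q_{\lambda_l}|\,\varepsilon_l^2 + (\mathrm{h.o.})$ on $I_l$, a sum of (bubble profile)$\times$(tail) terms; squaring, weighting by $\tfrac{k^2}{r^2}r\,\vd r$, using disjointness of the $I_l$, and applying Lemma~\ref{lem:cross-term} termwise gives $\big\|\tfrac{k}{r}(f(\calQ) - \sum_j\iota_j f(Q_{\lambda_j}))\big\|_{L^2(r\,\vd r)}^2 \lesssim \sum_{j=1}^{M-1}(\lambda_j/\lambda_{j+1})^{2k} \le \big(\sum_j(\lambda_j/\lambda_{j+1})^k\big)^2$, as needed.

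\emph{Main obstacle.} The delicate point is the bookkeeping in the energy expansion: expanding $\int(\partial_r\calQ)^2 r\,\vd r$ and $\int\tfrac{k^2}{r^2}\sin^2\calQ\,r\,\vd r$ separately each produces a logarithmically divergent cross term $\sim \iota_j\iota_{j+1}(\lambda_j/\lambda_{j+1})^k\log(\lambda_{j+1}/\lambda_j)$, so one must work region by region, where the gradient and potential cross terms recombine into the \emph{convergent} profile $\Phi$ (and, near $\partial I_l$, the residual $r^{-2}$-coefficients cancel and $\mathfrak e(\calQ)$ is simply the sum of two bubble densities). Verifying this cancellation, and that all remainders are genuinely higher powers of scale ratios rather than logarithmic corrections, is where the real work lies.
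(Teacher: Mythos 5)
Your plan is correct: the region-by-region expansion with the profile $\Phi(s)=16k^2 s^{2k-1}(1+s^{2k})^{-2}$, $\int_0^\infty\Phi\,\vd s=8k$, the doubling from the two sides of each adjacent pair, and the reduction of the first-variation bound to $\frac{k^2}{r^2}\big(f(\calQ)-\sum_j\iota_j f(Q_{\lambda_j})\big)$ via the harmonic map equation and Cauchy--Schwarz reproduce exactly the coefficient $16k\pi$ and the error structure controlled by Lemma~\ref{lem:cross-term}. This is essentially the same argument the paper invokes -- it simply cites the explicit two-bubble computation of \cite[Proof of Lemma 3.1]{JL1} plus Lemma~\ref{lem:cross-term} for the remainders -- so you have in effect written out in a self-contained way the computation the paper delegates to that reference, including the key gradient/potential cancellation of the log-enhanced cross terms.
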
 

\begin{proof} 
This is an explicit computation; see~\cite[Proof of Lemma 3.1, p. 1283-1286]{JL1} where the leading order term in computed in the case of two bubbles. The error terms are computed using Lemma~\ref{lem:cross-term}.
\end{proof}

%
%

The following modulation lemma plays an important role in our analysis. Before stating it, we define a proximity function to $M$-bubble configurations. Fixing $m, M$ we observe that $\bs\calQ(m, \vec\iota, \vec\lambda; r)$ is an element of $\E_{\ell, m}$, where 
\EQ{ \label{eq:ell-def} 
\ell = \ell(m, M, \vec \iota): = m - \sum_{j=1}^M  \iota_j
}

 \begin{defn} \label{def-d} Fix $m, M$ as in Definition~\ref{def:multi-bubble} and let $\bs v \in \E_{\ell, m}$ for some $\ell \in \Z$.  Define, 
\EQ{ \label{eq:d-def} 
\bfd( \bs v) = \bfd_{ m, M}( \bs v) := \inf_{\vec \iota, \vec \lam}  \Big( \| \bs v - \bs \calQ( m, \vec \iota, \vec \lam) \|_{\E}^2 + \sum_{j =1}^{M-1} \Big( \frac{\lam_{j}}{\lam_{j+1}} \Big)^k \Big)^{\frac{1}{2}}.
}
where the infimum is taken over all vectors $\vec \lam = (\lam_1, \dots, \lam_M) \in (0, \infty)^M$ and all $\vec \iota = \{ \iota_1, \dots, \iota_M\} \in \{-1, 1\}^M$ satisfying~\eqref{eq:ell-def}. 
\end{defn}


\begin{lem}[Static modulation lemma] \label{lem:mod-static} Fix $k \ge 1$ and $M \in \N$. 
There exists $\eta, C>0$ with the following properties.  
Let  $m$ be as in Definition~\ref{def:multi-bubble} and $\bfd_{m, M}$ as in Definition~\ref{def-d}. Let $\te>0$, $\ell \in \Z$,  and let $\bs v \in  \calE_{\ell,  m}$   be such that 
\EQ{ \label{eq:v-M-bub} 
\bfd_{ m, M}( \bs v)  \le \eta, \mand E( \bs v) \le ME( \bs Q) + \te^2, 
}
Then, there exists a unique choice of $\vec \lam = ( \lam_1, \dots, \lam_M) \in  (0, \infty)^M$, $\vec\iota \in \{-1, 1\}^M$, and $g \in  H$, such that setting $\bs g = (g, \dot v)$, we have 
\EQ{ \label{eq:v-decomp} 
   \bs v &=  \bs \calQ( m, \vec \iota, \vec \lam) + \bs g, \\
   0 & = \ang{ \calZ_{\U{\lam_j}} \mid g} , \quad \forall j = 1, \dots, M,
   }
   along with the estimates, 
\EQ{  \label{eq:g-bound-0}
\bfd_{ m, M}( \bs v)^2 &\le \|  \bs g \|_{\E}^2  + \sum_{j =1}^{M-1} \Big( \frac{\lam_{j}}{\lam_{j+1}} \Big)^k  \le C \bfd_{ m, M}( \bs v)^2,
}
and, 
\EQ{\label{eq:g-bound-A} 
   \|  \bs g \|_{\E}^2 + \sum_{j \not \in \calA}   \Big( \frac{ \lam_j}{ \lam_{j+1} }\Big)^k & \le C  \max_{ j \in \calA} \Big( \frac{ \lam_j}{ \lam_{j+1} }\Big)^k + \te^2, 
}
where $\calA  := \{ j \in \{ 1, \dots, M-1\} \, : \, \iota_j  \neq \iota_{j+1} \}$. 
\end{lem}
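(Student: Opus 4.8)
The plan is to build the decomposition by applying the implicit function theorem to the modulation equations $\ang{\calZ_{\U{\lam_j}} \mid g} = 0$, and then to extract \eqref{eq:g-bound-0} from the Lipschitz estimate furnished by the implicit function theorem and \eqref{eq:g-bound-A} from a Taylor expansion of $E$ about the bubble configuration, combined with the coercivity of Lemma~\ref{lem:D2E-coerce} and the energy expansion of Lemma~\ref{lem:M-bub-energy}. The constants $\eta, C$ are fixed once and for all, small relative to the thresholds in Lemmas~\ref{l:loc-coerce}, \ref{lem:D2E-coerce}, and \ref{lem:M-bub-energy}; the excess $\te$ enters only through the term $\te^2$ on the right of \eqref{eq:g-bound-A} and plays no role in any absorption.

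\emph{Decomposition and \eqref{eq:g-bound-0}.} I would first observe that when $\bfd_{m,M}(\bs v) \le \eta$ is small, any near-optimal competitor $\bs\calQ(m,\vec\iota',\vec\lam')$ in \eqref{eq:d-def} has well-separated scales, and two such configurations with distinct sign vectors are $\gtrsim 1$ apart in $\E$; hence the minimizing $\vec\iota$ is determined and we fix it. With $\vec\iota$ frozen, consider $\Phi_j(\bs v, \vec\mu) := \ang{\calZ_{\U{\mu_j}} \mid v - \calQ(m,\vec\iota,\vec\mu)}$, which vanishes when $v = \calQ(m,\vec\iota,\vec\mu)$. Using $\p_{\mu_i}\calQ(m,\vec\iota,\vec\mu) = -\iota_i\mu_i^{-1}(\Lam Q)_{\mu_i}$ and the scaling identity $\ang{\calZ_{\U{\lam}} \mid \phi_\lam} = \lam\ang{\calZ \mid \phi}$, one computes $\p_{\mu_i}\Phi_j|_{v = \calQ} = \iota_i\ang{\calZ_{\U{\lam_j}} \mid (\Lam Q)_{\U{\lam_i}}}$; the diagonal entries are $\iota_j\ang{\calZ \mid \Lam Q} \neq 0$ by \eqref{eq:ZQ}, while Corollary~\ref{cor:ZQ} bounds the off-diagonal entries by a positive power of $\lam_j/\lam_i$ (for $i>j$) or of $\lam_i/\lam_j$ (for $i<j$, $k \ge 2$); the only exception is $i<j$, $k=1$, where the entry is merely $O(1)$ but converges to a fixed constant as $\lam_i/\lam_j \to 0$. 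Thus $\vD_{\vec\mu}\Phi$ has an invertible lower-triangular leading part plus a small remainder, so it is invertible with inverse bounded uniformly over admissible $\vec\lam$. The quantitative implicit function theorem then produces, for $\bfd_{m,M}(\bs v) \le \eta$, a $\vec\lam$ solving $\Phi(\bs v,\vec\lam) = 0$ near any near-minimizing $\vec\lam'$, together with the Lipschitz bound $\sum_j |\log(\lam_j/\lam_j')| + \|\bs v - \bs\calQ(m,\vec\iota,\vec\lam)\|_\E \lesssim \|\bs v - \bs\calQ(m,\vec\iota,\vec\lam')\|_\E$. Setting $\bs g := \bs v - \bs\calQ(m,\vec\iota,\vec\lam)$ (so $\dot g = \dot v$) gives \eqref{eq:v-decomp}, with uniqueness inherited from the implicit function theorem. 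For \eqref{eq:g-bound-0}: the lower bound is immediate since $(\vec\iota,\vec\lam)$ is a competitor in \eqref{eq:d-def}; the upper bound follows by feeding a near-minimizer $\vec\lam'$ into the Lipschitz bound, since then the $\lam_j$ differ from the $\lam_j'$ by factors $1+o(1)$, whence $\|\bs g\|_\E^2 + \sum_{j=1}^{M-1}(\lam_j/\lam_{j+1})^k \lesssim \bfd_{m,M}(\bs v)^2$.

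\emph{The refined estimate \eqref{eq:g-bound-A}.} Write $\delta^2 := \sum_{j=1}^{M-1}(\lam_j/\lam_{j+1})^k$; by \eqref{eq:g-bound-0} both $\delta^2$ and $\|\bs g\|_\E$ are $\le C\eta$, hence as small as needed. Expanding $E$ about $\bs\calQ := \bs\calQ(m,\vec\iota,\vec\lam)$, and using that the kinetic energy is exactly quadratic and vanishes on $\bs\calQ$,
\[
E(\bs v) = E(\bs\calQ) + \ang{D E_{\bfp}(\calQ) \mid g} + \tfrac12\ang{\uD^2 E_{\bfp}(\calQ) g \mid g} + c\,\|\dot g\|_{L^2}^2 + O(\|\bs g\|_\E^3),
\]
for a fixed $c>0$, the cubic remainder coming from the explicit $\sin^2$ nonlinearity. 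Lemma~\ref{lem:M-bub-energy}, applied with a fixed small parameter, gives $E(\bs\calQ) \ge ME(\bs Q) + 16k\pi\sum_{j=1}^{M-1}\iota_j\iota_{j+1}(\lam_j/\lam_{j+1})^k - c_1\delta^2$ and $|\ang{D E_{\bfp}(\calQ)\mid g}| \le C\|g\|_H\delta^2$; the orthogonality conditions and smallness of $\delta^2$ make Lemma~\ref{lem:D2E-coerce} applicable, so $\tfrac12\ang{\uD^2 E_{\bfp}(\calQ)g\mid g} \ge \tfrac{c_0}{2}\|g\|_H^2$. Absorbing $\ang{D E_{\bfp}(\calQ)\mid g}$ by Young's inequality into $\tfrac{c_0}{8}\|g\|_H^2 + C\delta^4$ with $C\delta^4 \le c_1\delta^2$ (valid for $\eta$ small), and the cubic term into $\tfrac{c_0}{8}\|\bs g\|_\E^2$, and then using $E(\bs v) \le ME(\bs Q) + \te^2$, one is left with
\[
\tfrac{c_0}{8}\|\bs g\|_\E^2 + 16k\pi\sum_{j=1}^{M-1}\iota_j\iota_{j+1}\Big(\frac{\lam_j}{\lam_{j+1}}\Big)^k \le \te^2 + 2c_1\,\delta^2.
\]
Splitting the sum according to $\calA$ (where $\iota_j\iota_{j+1} = -1$ on $\calA$ and $+1$ off $\calA$) and bounding $\sum_{j\in\calA}(\lam_j/\lam_{j+1})^k \le (M-1)\max_{j\in\calA}(\lam_j/\lam_{j+1})^k$, the ratios with $j\notin\calA$ come with the strictly positive coefficient $16k\pi - 2c_1$ (positive since $c_1$ was chosen small), and one concludes $\|\bs g\|_\E^2 + \sum_{j\notin\calA}(\lam_j/\lam_{j+1})^k \lesssim \max_{j\in\calA}(\lam_j/\lam_{j+1})^k + \te^2$, which is \eqref{eq:g-bound-A}.

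\emph{Main obstacle.} The delicate step is the constant bookkeeping in the penultimate display: for \eqref{eq:g-bound-A} one must keep the coefficient of the repulsive (non-alternating) ratios strictly positive after absorbing $\ang{D E_{\bfp}(\calQ)\mid g}$ and the cubic error, and this works \emph{only} because Lemma~\ref{lem:M-bub-energy} controls $\ang{D E_{\bfp}(\calQ)\mid g}$ by $\|g\|_H$ times the full ratios $\delta^2 = \sum(\lam_j/\lam_{j+1})^k$ rather than by $\|g\|_H\delta$, so that a second Young inequality costs only $C\delta^4 = o(\delta^2)$. A secondary technical point is that the modulation Jacobian fails to be diagonally dominant when $k=1$ (slow decay of $\Lam Q$), which is why one must argue instead via its lower-triangular leading part; one also needs the standard fact that the energy remainder beyond second order is $O(\|\bs g\|_\E^3)$, which is routine from the explicit form of the nonlinearity.
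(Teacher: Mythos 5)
Your proposal is correct and follows essentially the same route as the paper: a quantitative implicit function theorem applied to the orthogonality conditions $\ang{ \calZ_{\U{\lam_j}} \mid g} = 0$ about a near-minimizer of $\bfd_{m,M}$ (with the Jacobian controlled via \eqref{eq:ZQ} and Corollary~\ref{cor:ZQ}) to obtain \eqref{eq:v-decomp}--\eqref{eq:g-bound-0}, followed by a Taylor expansion of the energy combined with Lemma~\ref{lem:D2E-coerce} and Lemma~\ref{lem:M-bub-energy}, Young's inequality, and the splitting of the bubble ratios according to $\calA$ to obtain \eqref{eq:g-bound-A}. The only cosmetic differences are that you fix the sign vector by a separation argument and invert the $k=1$ Jacobian through its triangular-plus-small structure, whereas the paper normalizes each condition by $1/\lam_j$ and passes to logarithmic variables, which makes the matrix \eqref{eq:L_0-def} diagonally dominant for every $k \ge 1$; neither difference changes the substance of the argument.
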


\begin{rem} \label{rem:IFT}  We  use the following, less standard, version of the implicit function theorem in the proof of Lemma~\ref{lem:mod-static}. 

 \emph{
Let $X, Y, Z$ be Banach spaces,  $(x_0, y_0) \in X \times Y$,  and $\de_1, \de_2>0$. Consider a mapping  $G: B(x_0, \de_1) \times B(y_0, \de_2) \to Z$,  continuous in $x$ and $C^1$ in $y$. Assume $G(x_0, y_0) = 0$,  $(D_y G)(x_0, y_0)=: L_0$ has bounded inverse $L_0^{-1}$, and  
\EQ{ \label{eq:IFT-cond} 
&\| L_0 - D_y G(x, y) \|_{\LL(Y, Z)} \le \frac{1}{3 \| L_0^{-1} \|_{\LL(Z, Y)}}  \\ 
& \| G(x, y_0) \|_Z \le \frac{\de_2}{ 3 \| L_0^{-1} \|_{\LL(Z, Y)}},  
}
for all $\| x - x_0 \|_{X} \le \de_1$ and $\| y - y_0 \|_Y \le \de_2$. 
Then, there exists a continuous function $\si: B(x_0, \de_1)  \to B(y_0, \de_2)$ such that for all $x \in B(x_0, \de_1)$, $y = \si(x)$ is the unique solution of $G(x, \si(x)) = 0$ in $B(y_0, \de_2)$. 
}

This is proved in the same way as the usual implicit function theorem, see, e.g.,~\cite[Section 2.2]{ChowHale}. The essential point is that the bounds~\eqref{eq:IFT-cond} give uniform control on the size of the open set where the Banach contraction mapping theorem is applied. 
\end{rem} 

\begin{proof}[Proof of Lemma~\ref{lem:mod-static}] The argument is very similar to~\cite[Proof of Lemma 3.1]{JL1} and we only give a brief sketch. Let $\eta_0 := \bfd_{m, M}( \bs v)$. 
By~\eqref{eq:v-M-bub} there exists some choice of $ \vec{ \iota} \in \{-1, 1\}^M$ and $\vec{\ti \lam} \in (0, \infty)^M$ such that 
\EQ{
\ti g :=  v -  \calQ(m, \vec{\iota}, \vec{\ti \lam})\, \,\, \,   \textrm{satisfies} \, \, \, \, 
\eta_0^2 \le \| \ti g \|_{H}^2 + \sum_{ j =1}^{M-1} \Big( \frac{\ti \lam_j}{\ti \lam_{j+1}} \Big)^k   \le 4 \eta_0^2
}
Define $F:H \times (0, \infty)^M \to H$, by  
\EQ{
F(g, \vec \lam) := g + \calQ(m, \vec{\iota}, \vec{\ti \lam}) - \calQ(m, \vec{\iota}, \vec{ \lam})
}
Note that, $F(0, \vec{\ti \lam}) = 0$ and 
\EQ{ \label{eq:F-cont} 
\| F(g, \vec \lam) \|_{H} \le \| g \|_H + \sum_{j=1}^M\Big| \frac{\lam_j}{\ti \lam_j} - 1\Big| 
}
Next, define $G: H \times (0, \infty)^M \to \R^M$ by, 
\EQ{
\vec G( g, \vec \lam) := \Big( \frac{1}{\lam_1} \ang{ \calZ_{\U{\lam_1}} \mid F( g, \vec \lam)}, \dots, \frac{1}{\lam_M} \ang{ \calZ_{\U{\lam_M}} \mid F( g, \vec \lam)} \Big) 
}
 note that $\vec G(0, \vec{\ti \lam}) = \vec 0$, and we record the computation,  
\EQ{ \label{eq:partial-G} 
\lam_j \p_{\lam_j} G_j( g, \vec \lam) &=  - \frac{1}{\lam_j} \ang{ [(\ULam+1)\calZ]_{\U{\lam_j}} \mid F(g, \vec \lam)} - \iota_j \ang{ \calZ \mid \Lam Q} \\
\lam_i \p_{\lam_i} G_j(g, \vec \lam)& = - \iota_i \frac{ \lam_i}{\lam_j} \ang{ \calZ_{\U{\lam_j}} \mid \Lam Q_{\U{\lam_i}}} \mif i \neq j
}
 At this point, it is convenient to change variables, letting $\ell_j:= \log \lam_j$ and 
$
\ti G( g, \vec \ell) = G( g, \vec \lam)
$. 
Note that  $\p_{\ell_j} = \lam_j \p_{\lam_j}$. From~\eqref{eq:F-cont} we see that $\ti G( \cdot, \cdot)$ is continuous near $0 \in H$ in the first slot and is $C^1$ near $\vec{\ti \ell}= (\log \ti \lam_1, \dots, \log  \ti \lam_M)$ in the last $M$ variables.  We compute, 
\EQ{  
L_0 := \uD_{\ell_1, \dots \ell_M} \ti G(g, \vec \ell)  \rest_{g =0, \vec \ell = \vec{\ti \ell}} \, = ( A_{i j})_{1 \le i, j \le M} 
}
where $(A_{ij})$ is the $M\times M$ matrix with entries, 
\EQ{\label{eq:L_0-def}
A_{jj} &= - \iota_j \ang{ \calZ \mid \Lam Q }, \quad A_{ij} = -\iota_j \frac{\lam_j}{\lam_i} \ang{ \calZ_{\U{\lam_i}} \mid \Lam Q_{\U {\lam_j}}} \mif i \neq j
}
which one may check, using~\eqref{eq:ZQ} and Corollary~\ref{cor:ZQ}  is invertible and $\| L_0\|^{-1} = O(1)$. The conditions in~\eqref{eq:IFT-cond} are readily verified, and one may take $\de_1 = C_1 \eta$ and $\de_2 = C_2 \eta$ in the notation of Remark~\ref{rem:IFT} in that case for uniform constants $C_1, C_2$. 
Indeed, 
\EQ{
| G( g, \vec{\ti \lam})|  \lesssim  \|g \|_H
}
and thus the second condition in~\eqref{eq:IFT-cond} is verified.  One may verify the first condition in~\eqref{eq:IFT-cond} using~\eqref{eq:partial-G} and~\eqref{eq:L_0-def}. 

An application of Remark~\ref{rem:IFT} yields a continuous mapping  $\varsigma: B_H(0; \de_1) \to B_{\R^M}(0; \de_2)$  such that
\EQ{
\ti G( g_0, \vec \ell) = \vec 0 \Longleftrightarrow \vec \ell = \varsigma(g_0). 
}
We define
\EQ{
g:= F(\ti g, \varsigma(\ti g)), \quad \vec \ell:= \varsigma(\ti g). 
}
Setting $\lam_j = e^{\ell_j}$, and $\bs g = (g, \dot v)$,  by construction we then have, 
\EQ{
\bs v =  \bs \calQ( m, \vec \iota, \vec \lam) + \bs g, 
}
and $\bs g$ satisfies~\eqref{eq:v-decomp} and~\eqref{eq:g-bound-0}. 

To prove the remaining estimates we expand the nonlinear energy of $\bs v$, 
\EQ{
M E( \bs Q) &+ \te^2 \ge E( \bs v) = E( \bs \calQ( m, \vec \iota, \vec \lam) + \bs g) \\
& = E( \bs \calQ( m, \vec \iota, \vec \lam) ) + \ang{ \uD E( \bs \calQ(m, \vec \iota, \vec \lam)) \mid \bs g}  + \frac{1}{2} \ang{ \uD^2 E( \bs \calQ(m, \vec \iota, \vec \lam))\bs g  \mid \bs g} + O(\| \bs g \|_{\E}^3) 
}
and apply the conclusions of Lemma~\ref{lem:D2E-coerce} and Lemma~\ref{lem:M-bub-energy}. This completes the proof. 
\end{proof} 

\begin{lem}  \label{lem:bub-config} Let $k \ge 1$. 
 There exists $\eta>0$ sufficiently small with the following property. Let $m, \ell \in \Z$, $M, L \in \N$,  $\vec\iota \in \{-1, 1\}^M, \vec \sigma \in \{-1, 1\}^L$, $\vec \lam \in (0, \infty)^M, \vec \mu \in (0, \infty)^L$,   and $w$ be such that $E_{\bfp}( w) < \infty$ and, 
 \begin{align} 
 \|w  - \calQ(m,  \vec \iota, \vec \lam)\|_{H}^2  + \sum_{j =1}^{M-1} \Big(\frac{\lam_j}{\lam_{j+1}} \Big)^{k} &\le \eta,  \label{eq:M-bub} \\
 \|w  - \calQ(\ell, \vec \sigma , \vec \mu)\|_{H}^2 +  \sum_{j =1}^{L-1} \Big(\frac{\mu_j}{\mu_{j+1}} \Big)^{k} &\le \eta. \label{eq:L-bub} 
 \end{align} 
 Then, $m = \ell$, $M = L$, $\vec \iota = \vec \sigma$. Moreover, for every $\te>0$ the number $\eta>0$ above can be chosen small enough so that 
 \EQ{ \label{eq:lam-mu-close} 
\max_{j = 1, \dots M} | \frac{\lam_j}{\mu_j} - 1 | \le  \te.
 }
\end{lem}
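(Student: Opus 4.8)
The plan is to argue by contradiction, exploiting the scaling invariance of the $H$-norm and a ``bubble pairing'' argument carried out at the level of radial derivatives.

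Suppose first that the qualitative assertion $m=\ell$, $M=L$, $\vec\iota=\vec\sigma$ fails. Then there are $\eta_n\to0^+$, maps $w_n$ with $E_{\bfp}(w_n)<\infty$, integers $m_n,\ell_n$, data $\vec\iota_n\in\{-1,1\}^M$, $\vec\sigma_n\in\{-1,1\}^L$, $\vec\lambda_n\in(0,\infty)^M$, $\vec\mu_n\in(0,\infty)^L$ satisfying \eqref{eq:M-bub}--\eqref{eq:L-bub} with $\eta=\eta_n$, yet with one of $m_n=\ell_n$, $M=L$, $\vec\iota_n=\vec\sigma_n$ violated; for the refined assertion \eqref{eq:lam-mu-close} one runs the same argument, now assuming $m_n=\ell_n$, $M=L$, $\vec\iota_n=\vec\sigma_n$ but $\max_j|\lambda_{n,j}/\mu_{n,j}-1|>\theta$ for all $n$. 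Since $w_n-\calQ(m_n,\vec\iota_n,\vec\lambda_n)\in H$ and $w_n-\calQ(\ell_n,\vec\sigma_n,\vec\mu_n)\in H$, the difference of the two configurations lies in $H$; as elements of $H$ vanish both as $r\to0$ and as $r\to\infty$, matching the limits of the two configurations at $r=\infty$ forces $m_n=\ell_n$, and then matching at $r=0$ forces $\sum_j\iota_{n,j}=\sum_j\sigma_{n,j}$. After passing to a subsequence all integer data $m_n=\ell_n$, $\vec\iota_n$, $\vec\sigma_n$ (and $M,L$) are constant; write $m,\vec\iota,\vec\sigma$. By the triangle inequality $\|\calQ(m,\vec\iota,\vec\lambda_n)-\calQ(m,\vec\sigma,\vec\mu_n)\|_H\to0$, while $\sum_j(\lambda_{n,j}/\lambda_{n,j+1})^k\to0$ and $\sum_j(\mu_{n,j}/\mu_{n,j+1})^k\to0$; in particular each tuple is strictly increasing and $\lambda_{n,i}/\lambda_{n,i'}\to0$, $\mu_{n,i}/\mu_{n,i'}\to0$ whenever $i<i'$.

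Next I pass to radial derivatives. With $e_{n,i}:=(\Lam Q)(\cdot/\lambda_{n,i})$, $f_{n,j}:=(\Lam Q)(\cdot/\mu_{n,j})$, and $\Lam Q=r\p_r Q$, one has $r\p_r\calQ(m,\vec\iota,\vec\lambda_n)=\sum_i\iota_i e_{n,i}$ and similarly for $\vec\mu_n$, while the $\dot H^1$-part of $\|\cdot\|_H^2$ equals the square of the $L^2(\ud r/r)$-norm of $r\p_r(\cdot)$; hence $\big\|\sum_i\iota_i e_{n,i}-\sum_j\sigma_j f_{n,j}\big\|_{L^2(\ud r/r)}\to0$. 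All the functions $(\Lam Q)(\cdot/\lambda)$ have the same $L^2(\ud r/r)$-norm, $\|\Lam Q\|_{L^2(\ud r/r)}^2=2k$, and, using the explicit profile $\Lam Q(r)=2kr^k/(1+r^{2k})$ together with Lemma~\ref{lem:cross-term}, $\langle(\Lam Q)(\cdot/\lambda),(\Lam Q)(\cdot/\mu)\rangle_{L^2(\ud r/r)}\lesssim(\lambda/\mu)^k\log(\mu/\lambda)$ for $\lambda\le\mu$; moreover this inner product is nonnegative, and, because $\Lam Q$ has a strict global maximum at $r=1$, the equality case of Cauchy--Schwarz shows it equals $2k$ if and only if $\lambda=\mu$. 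Consequently, if $\nu_n,\nu_n'\in(0,\infty)$ with $\nu_n/\nu_n'\to c\in[0,\infty]$, then $\langle(\Lam Q)(\cdot/\nu_n),(\Lam Q)(\cdot/\nu_n')\rangle_{L^2(\ud r/r)}\to0$ when $c\in\{0,\infty\}$, and (by dominated convergence) $\to\langle\Lam Q,(\Lam Q)(\cdot/c)\rangle_{L^2(\ud r/r)}\in(0,2k]$ when $c\in(0,\infty)$, this last value being $2k$ iff $c=1$.

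Now the pairing step. Pass to a further subsequence so that $\lambda_{n,i}/\mu_{n,j}$ converges in $[0,\infty]$ for every pair $(i,j)$, and set $a_{n,i}:=(2k)^{-1/2}e_{n,i}$, $b_{n,j}:=(2k)^{-1/2}f_{n,j}$, $v_n:=\sum_i\iota_i a_{n,i}$, $w_n:=\sum_j\sigma_j b_{n,j}$, so that $v_n-w_n\to0$ in $L^2(\ud r/r)$ and $\langle a_{n,i},a_{n,i'}\rangle\to\delta_{ii'}$, $\langle b_{n,j},b_{n,j'}\rangle\to\delta_{jj'}$. For fixed $i$: at most one $j$ can have $\lambda_{n,i}/\mu_{n,j}$ converging to a finite positive limit (two such would make $\mu_{n,j}/\mu_{n,j'}$ bounded away from $0$ and $\infty$, contradicting the separation of the $\vec\mu_n$); and at least one such $j$ exists, since otherwise $\langle w_n,a_{n,i}\rangle\to0$ while $\langle v_n,a_{n,i}\rangle\to\iota_i\neq0$, contradicting $v_n-w_n\to0$. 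This defines $\pi(i)$; the map $i\mapsto\pi(i)$ is injective (if $\pi(i)=\pi(i')$, $i\neq i'$, then $\lambda_{n,i}$ and $\lambda_{n,i'}$ are both comparable to $\mu_{n,\pi(i)}$, hence to each other, contradicting the separation of the $\vec\lambda_n$), and the symmetric construction with $\vec\lambda_n,\vec\mu_n$ exchanged forces $M=L$ and makes $\pi$ a bijection of $\{1,\dots,M\}$. Testing $v_n-w_n\to0$ against $a_{n,i}$ and invoking the previous paragraph gives $\iota_i=\sigma_{\pi(i)}\gamma_i$ with $\gamma_i:=\lim_n\langle a_{n,i},b_{n,\pi(i)}\rangle\in(0,1]$; as $\iota_i,\sigma_{\pi(i)}\in\{\pm1\}$ this forces $\gamma_i=1$, hence $\sigma_{\pi(i)}=\iota_i$ and $\mu_{n,\pi(i)}/\lambda_{n,i}\to1$. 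Finally, since both $\vec\lambda_n$ and $\vec\mu_n$ are increasing with consecutive ratios tending to $0$ and $\mu_{n,\pi(i)}\sim\lambda_{n,i}$, the bijection $\pi$ is order-preserving, so $\pi=\mathrm{id}$. Thus $m=\ell$, $M=L$, $\vec\sigma=\vec\iota$ and $\lambda_{n,j}/\mu_{n,j}\to1$ for every $j$, contradicting the failure of the conclusion in either form. The main obstacle is organizational rather than analytic: one has to pass through several subsequence extractions to make all ratios $\lambda_{n,i}/\mu_{n,j}$ converge in $[0,\infty]$ and then verify that the resulting pairing is a single bijection respecting both the signs and the increasing orderings of the scales — this is where the noncompactness of the scaling group must be handled; the analytic inputs (the overlap bound from Lemma~\ref{lem:cross-term} and the Cauchy--Schwarz rigidity from the strict maximum of $\Lam Q$) are elementary.
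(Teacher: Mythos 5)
Your proof is correct, but it follows a genuinely different route from the paper's. The paper treats the four conclusions separately: $m=\ell$ by matching the limits of $w$ as $r\to\infty$; $M=L$ by expanding the potential energy $E_{\bfp}(w)$ around each configuration via Lemma~\ref{lem:M-bub-energy}, which pins $E_{\bfp}(w)$ near both $ME(\bs Q)$ and $LE(\bs Q)$; $\vec\iota=\vec\sigma$ by locating the first index where the signs disagree and showing the two configurations differ by at least $\pi/4$ on a dyadic annulus around that scale, forcing $\|\calQ(m,\vec\iota,\vec\lam)-\calQ(m,\vec\sigma,\vec\mu)\|_H^2\gtrsim \log 2$; and \eqref{eq:lam-mu-close} by the asymptotic decoupling $\|\calQ-\calQ\|_H^2=\sum_j\|Q_{\lam_j}-Q_{\mu_j}\|_H^2+o_n(1)$. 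You instead work entirely at the level of the derivative components $r\p_r\calQ=\sum_j\iota_j(\Lam Q)(\cdot/\lam_j)$, exploiting that the rescaled profiles are asymptotically orthonormal in $L^2(\ud r/r)$ (via the overlap bound of Lemma~\ref{lem:cross-term}) together with the Cauchy--Schwarz rigidity $\langle \Lam Q,(\Lam Q)(\cdot/c)\rangle=\|\Lam Q\|^2$ iff $c=1$; the resulting scale-pairing bijection delivers $M=L$, $\vec\iota=\vec\sigma$ and $\lam_j/\mu_j\to1$ in a single sweep, and it never uses Lemma~\ref{lem:M-bub-energy} nor the hypothesis $E_{\bfp}(w)<\infty$. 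The price is more bookkeeping (subsequence extractions making all ratios $\lam_{n,i}/\mu_{n,j}$ converge in $[0,\infty]$, injectivity/surjectivity/order-preservation of the pairing), and a minor caveat: your contradiction sequences tacitly fix $M$ and $L$, so the $\eta$ you produce depends on the number of bubbles; this matches the paper's own conventions (constants may depend on $N$) and the paper's sequential arguments for the sign and scale steps, so it is not a defect in context. One small point worth making explicit is the fact you use at the outset, that finite-$H$-norm functions have vanishing limits at $r=0$ and $r=\infty$; it is standard and is also used implicitly by the paper.
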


\begin{proof}[Proof of Lemma~\ref{lem:bub-config}] From~\eqref{eq:M-bub} we see that $\lim_{r \to \infty} w(r) = m \pi$, and from~\eqref{eq:L-bub} we see that $\lim_{r\to \infty} w(r) =  \ell\pi$. Hence,  $m = \ell$. 

Next, let $g_{\lam} := w - \calQ(m,  \vec \iota, \vec \lam)$ and $g_\mu := w - \calQ(\ell,  \vec \s, \vec \mu)$. By expanding the nonlinear potential energy we have, 
\EQ{
E_{\bfp}(w) = E_{\bfp}( \calQ(m,  \vec \iota, \vec \lam) ) + \ang{ DE_{\bfp}(\calQ(m,  \vec \iota, \vec \lam)) \mid g_\lam}  + O(\| g_\lam \|_H^2) . 
}
Choosing $\eta>0$ small enough so that Lemma~\ref{lem:M-bub-energy} applies, we see that 
\EQ{
M E( \bs Q) - C \eta \le E_{\bfp}( w) \le M E( \bs Q) +C \eta,
}
for some $C>0$. By an identical argument, 
\EQ{
L E( \bs Q) - C \eta \le E_{\bfp}( w) \le L E( \bs Q) +C \eta.
}
It follows that $M=L$. Next, we prove that $\eta>0$ can be chosen small enough to ensure that $\vec \iota = \vec \sigma$. Suppose not, then we can find a sequence $w_n$ with $E_{\bfp}( w_n) \le C$, and sequences $\vec \iota_n, \vec \s_n, \vec \lam_n, \vec \mu_n$ so that, 
\EQ{
\| w_n  - \calQ(m,  \vec \iota_n, \vec \lam_n)\|_{H}^2  + \sum_{j =1}^{M-1} \Big(\frac{\lam_{n,j}}{\lam_{n, j+1}} \Big)^{k} & = o_n(1)  \mas n \to \infty,\\
 \|w_n  - \calQ(m, \vec \sigma_n , \vec \mu_n)\|_{H}^2 +  \sum_{j =1}^{M-1} \Big(\frac{\mu_{n,j}}{\mu_{n, j+1}} \Big)^{k} &= o_n(1) \mas n \to \infty,
}
but with $\vec \iota_n \neq \vec  \s_n$ for every $n$.  We may assume without loss of generality that  $$0 = \lim_{r \to 0} w_n(r) = \lim_{r \to 0} \calQ( m, \vec \iota_n, \vec \lam_n; r) = \lim_{r \to 0} \calQ( m, \vec \s_n, \vec \mu_n; r)$$ and we note that above limits agree mean that we must have $\sum_{j =1}^M \iota_{n, j}  = \sum_{j =1}^M \s_{n, j}$ for each $n$. Passing to a subsequence we may assume that there exists an index $j_0 \ge 1$ such that $ \iota_{j, n} = \sigma_{j, n}$ for every $j < j_0$ and every $n$ and $\iota_{j_0, n} \neq \s_{j_0, n}$ for every $n$. We have, 
\EQ{ \label{eq:diff-sign} 
\| \calQ(m,  \vec \iota_n, \vec \lam_n) - \calQ(m, \vec \sigma_n , \vec \mu_n) \|_H \le \|w_n  - \calQ(m,  \vec \iota_n, \vec \lam_n)\|_{H} +   \|w_n  - \calQ(m, \vec \sigma_n , \vec \mu_n)\|_{H}  = o_n(1) .
}
First we show that $j_0>1$ 
Assume for contradiction that $j_0 = 1$. Then, we  may assume that $\iota_{n, 1} = 1$,  $\sigma_{n, 1} = -1$ and $\lam_{n, 1} < \mu_{n, 1}$ for all $n$. It follows that 
\EQ{
\calQ(m,  \vec \iota_n, \vec \lam_n) - \calQ(m, \vec \sigma_n , \vec \mu_n)  \ge \frac{\pi}{4}  \quad \forall r \in [\lam_{n, 1}, 2 \lam_{n, 1}],
}
for all $n$ large enough. 
But then, 
\EQ{
\| \calQ(m,  \vec \iota_n, \vec \lam_n) - \calQ(m, \vec \sigma_n , \vec \mu_n) \|_H^2 \ge  \int_{\lam_{n, 1}}^{2 \lam_{n, 1}}  (\pi/4)^2 \, \frac{\ud r}{r}  \ge (\pi/4)^2 \log 2,
}
for all sufficiently large $n$, which contradicts~\eqref{eq:diff-sign}. So $\iota_{1, n} = \sigma_{n, 1}$ for all $n$. Thus $j_0>1$. But then by a nearly identical argument we can show that we must have $\lam_{n, j} \simeq \mu_{n, j}$ uniformly in $n$ for all $j < j_0$.  Again we may assume (after passing to a subsequence) that  $\lam_{n, j_0} < \mu_{n, j_0}$.  It follows again that for all sufficiently large $n$ we have, 
\EQ{
\abs{\calQ(m,  \vec \iota_n, \vec \lam_n) - \calQ(m, \vec \sigma_n , \vec \mu_n) } \ge \frac{\pi}{4}  \quad \forall r \in [\lam_{n, j_0}, 2 \lam_{n, j_0}],
}
which again yields a contradiction. Hence we must have $\vec \iota = \vec \sigma$. 

Finally, we prove~\eqref{eq:lam-mu-close}. Suppose ~\eqref{eq:lam-mu-close} fails. Then there exists $\te_0>0$ and sequences $\vec \lam_{n}, \vec \mu_n$ such that 
\EQ{
\| \calQ(m,  \vec \iota_n, \vec \lam_n) - \calQ(m, \vec \iota_n , \vec \mu_n) \|_H = o_n(1),
} 
but 
\EQ{ \label{eq:diff-scale} 
\sup_{j =1, \dots, M} | \lam_{n, j}/ \mu_{n, j} - 1|\ge \te_0, 
}
 for all $n$. Following  the same logic as before we note that we must have $\lam_{n, j} \simeq \mu_{n, j}$ uniformly in $n$. But then we have, 
\EQ{
\| \calQ(m,  \vec \iota_n, \vec \lam_n) - \calQ(m, \vec \iota_n , \vec \mu_n) \|_H^2  =  \sum_{j = 1}^{M} \| Q_{\lam_{n, j}} - Q_{\mu_{n, j}} \|_H^2 + o_n(1) , 
}
which implies that $\| Q_{\lam_{n, j}} - Q_{\mu_{n, j}} \|_H = o_n(1)$ for every $j$, yielding a contradiction with~\eqref{eq:diff-scale}. This completes the proof. 
\end{proof} 

Later in the paper we require the following lemma, which gives the nonlinear interaction force between bubbles.  Given an $M$-bubble configuration, $\calQ(m, \vec \iota, \vec \lam)$ we set 
\EQ{ \label{eq:fi-def} 
f_{\bfi}( m, \vec \iota, \vec \lam) := -\frac{k^2}{r^2}  \Big( f( \calQ(m, \vec \iota, \vec \lam)) - \sum_{j=1}^M \iota_j f( Q_{\lam_j}) \Big) 
}

\begin{lem} \label{lem:interaction} Let $k \ge 1$, $M \in\N$. For any $\theta>0$ there exists $\eta>0$ with the following property. Let $\bs \calQ(m, \vec \iota, \vec \lam)$ be an $M$-bubble  configuration with 
\EQ{
 \sum_{j =0}^{M} \Big( \frac{ \lam_{j}}{\lam_{j+1}} \Big)^k \le \eta, 
 }
under the convention that $\lam_0 = 0$, $\lam_{M+1} = \infty$. Then, 
we have, 
\EQ{
\Big|  \ang{ \Lam Q_{\lam_j} \mid f_{\bfi}( m, \vec \iota, \vec \lam)}  + \iota_{j-1} 8k^2 \Big( \frac{ \lam_{j-1}}{\lam_j} \Big)^k  - \iota_{j+1} 8k^2 \Big( \frac{ \lam_{j}}{\lam_{j+1}} \Big)^k  \Big| \le \theta \Big( \Big( \frac{ \lam_{j-1}}{\lam_j} \Big)^k  +   \Big( \frac{ \lam_{j}}{\lam_{j+1}} \Big)^k\Big) 
}
where here $f_{\bfi}( m, \vec \iota, \vec \lam) $ is defined in~\eqref{eq:fi-def}. 
\end{lem}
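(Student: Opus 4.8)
The plan is to evaluate the overlap integral $\ang{\Lam Q_{\lam_j} \mid f_{\bfi}(m,\vec\iota,\vec\lam)}$ explicitly, in the spirit of the two-bubble computation of~\cite[Proof of Lemma 3.1]{JL1}, and to estimate all remainders with Lemma~\ref{lem:cross-term}. First I would use that $f(z) = \tfrac12\sin 2z$ depends on $z$ only modulo $\pi$: writing $S := \sum_{i=1}^M \iota_i Q_{\lam_i}$ and recalling $\calQ(m,\vec\iota,\vec\lam) = m\pi + \sum_i\iota_i(Q_{\lam_i}-\pi)$, one gets $f(\calQ(m,\vec\iota,\vec\lam)) = \tfrac12\sin 2S$. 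Fixing $j$ and setting $n_j := \sum_{i<j}\iota_i \in \Z$ and
\EQ{
\tau_j(r) := \sum_{i<j}\iota_i\big(Q_{\lam_i}(r)-\pi\big) + \sum_{i>j}\iota_i Q_{\lam_i}(r),
}
I would decompose $S = \iota_j Q_{\lam_j} + n_j\pi + \tau_j$ and note that on the window $\sqrt{\lam_{j-1}\lam_j}\le r\le\sqrt{\lam_j\lam_{j+1}}$ (using $\lam_0 := 0$, $\lam_{M+1} := \infty$, so that one side is empty when $j\in\{1,M\}$) the remainder is small, $|\tau_j(r)| \lesssim (\lam_{j-1}/r)^k + (r/\lam_{j+1})^k$, because $Q_{\lam_i}(r)-\pi = -2\arctan((\lam_i/r)^k)$ for $i<j$ and $Q_{\lam_i}(r) = 2\arctan((r/\lam_i)^k)$ for $i>j$. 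Expanding $\sin 2S$ via the addition formula using $\sin 2\iota_j Q_{\lam_j} = \iota_j\sin 2Q_{\lam_j}$ and $\cos 2\iota_j Q_{\lam_j} = \cos 2Q_{\lam_j}$, Taylor expanding in $\tau_j$, and likewise expanding each $\iota_i f(Q_{\lam_i})$ ($i\ne j$) about $0$ (for $i>j$) or $\pi$ (for $i<j$), I expect to arrive at the cancellation
\EQ{
f(\calQ(m,\vec\iota,\vec\lam)) - \sum_{i=1}^M \iota_i f(Q_{\lam_i}) = -2\tau_j\sin^2 Q_{\lam_j} + O(\tau_j^2)
}
valid on that window, where the $O(\tau_j^2)$ term is bounded by $(\lam_{j-1}/r)^{2k} + (r/\lam_{j+1})^{2k}$.

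Given this, since $\Lam Q = k\sin Q$, the definition of $f_{\bfi}$ reads $f_{\bfi}(m,\vec\iota,\vec\lam) = \tfrac{2}{r^2}\tau_j(\Lam Q_{\lam_j})^2 + O(r^{-2}\tau_j^2)$ on the window, so that, up to the errors discussed below,
\EQ{
\ang{\Lam Q_{\lam_j} \mid f_{\bfi}(m,\vec\iota,\vec\lam)} = 2\int_0^\infty \tau_j(r)\,\frac{(\Lam Q_{\lam_j}(r))^3}{r}\,\vd r + (\text{error}).
}
I would then substitute $Q_{\lam_{j-1}}(r)-\pi = -2(\lam_{j-1}/r)^k + O((\lam_{j-1}/r)^{3k})$ and $Q_{\lam_{j+1}}(r) = 2(r/\lam_{j+1})^k + O((r/\lam_{j+1})^{3k})$ into the $i = j\pm1$ summands of $\tau_j$, extend the resulting integrals to $(0,\infty)$, rescale $r = \lam_j s$ (so that $\Lam Q_{\lam_j}(\lam_j s) = \Lam Q(s)$), and use the elementary identities
\EQ{
\int_0^\infty s^{-k-1}(\Lam Q(s))^3\,\vd s = \int_0^\infty s^{k-1}(\Lam Q(s))^3\,\vd s = 2k^2,
}
each obtained from $\Lam Q(s) = 2k s^k(1+s^{2k})^{-1}$, the substitution $u = s^{2k}$, and $\int_0^\infty(1+u)^{-3}\,\vd u = \int_0^\infty u(1+u)^{-3}\,\vd u = \tfrac12$. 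The two nearest-neighbour terms then produce exactly $-8k^2\iota_{j-1}(\lam_{j-1}/\lam_j)^k + 8k^2\iota_{j+1}(\lam_j/\lam_{j+1})^k$, which is the claimed main term.

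The remaining, and only substantive, task is to show that every error term is $\le \theta\big((\lam_{j-1}/\lam_j)^k + (\lam_j/\lam_{j+1})^k\big)$ once $\eta = \eta(\theta,k,M)$ is small. These are of four kinds: (i) the interactions of the $j$-th bubble with the non-adjacent bubbles $i\notin\{j-1,j,j+1\}$, which integrate to $\lesssim \sum_{i<j-1}(\lam_i/\lam_j)^k + \sum_{i>j+1}(\lam_j/\lam_i)^k$; (ii) the $O((\cdot)^{3k})$ corrections from the $\arctan$ expansions; (iii) the quadratic-in-$\tau_j$ remainder in the cancellation identity; and (iv) the contributions of the complementary regions $0<r<\sqrt{\lam_{j-1}\lam_j}$ and $r>\sqrt{\lam_j\lam_{j+1}}$, where $\Lam Q_{\lam_j}$ is $O((\lam_{j-1}/\lam_j)^k)$, resp. $O((\lam_j/\lam_{j+1})^k)$, and $f_{\bfi}$ is localised near some $\lam_i$ with $i\ne j$ and controlled there by the analogue of the cancellation identity centered at the bubble $i$. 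I would bound each of these by splitting $\int_0^\infty$ into the three regions cut out by $\sqrt{\lam_{j-1}\lam_j}$ and $\sqrt{\lam_j\lam_{j+1}}$ and invoking Lemma~\ref{lem:cross-term} in each, together with the telescoping estimate $(\lam_i/\lam_\ell)^k \le \eta^{\,\ell-i-1}(\lam_{\ell-1}/\lam_\ell)^k$ for $i<\ell$, which follows from $\sum_l(\lam_l/\lam_{l+1})^k \le \eta$; this makes each of (i)--(iv) at most $C\eta\cdot\big((\lam_{j-1}/\lam_j)^k + (\lam_j/\lam_{j+1})^k\big)$ with $C = C(k,M)$. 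I expect no conceptual difficulty here — the main obstacle is purely the bookkeeping; the one subtle point to bear in mind is that the decomposition $S = \iota_j Q_{\lam_j} + n_j\pi + \tau_j$ with $\tau_j$ small is valid only on the window around $\lam_j$, so the complementary regions must be treated separately, as in (iv), and shown to contribute only at higher order. Note also that no special handling of $k=1,2$ is needed: $s^{\pm(k-1)}(\Lam Q(s))^3$ decays like $s^{\mp 2k-1}$ at infinity and vanishes polynomially at the origin, so all the integrals above converge for every $k\ge1$.
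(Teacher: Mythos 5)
Your proposal is correct and follows essentially the same route as the paper: expand $f(\calQ)=\tfrac12\sin 2S$ around the $j$-th bubble, isolate the linear nearest-neighbour terms $-2\tau_j\sin^2 Q_{\lam_j}$ (the paper's identity~\eqref{eq:fi-exp} with $\Psi_j$ collecting the rest), evaluate $\int_0^\infty s^{\pm k-1}(\Lam Q)^3\,\vd s=2k^2$ to get the $8k^2$ coefficients with the correct signs, and control all remainders via Lemma~\ref{lem:cross-term}. Your computation of the main term and the key integrals checks out (the paper obtains the same values by residue calculus), and your window decomposition at the geometric means, together with the telescoping bound from $\sum_l(\lam_l/\lam_{l+1})^k\le\eta$, is exactly the bookkeeping needed to make the error terms $O(\theta)$ relative to the adjacent ratios.
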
 
\begin{proof}
Letting $\ell = m - \sum_{j=1}^M \iota_j$ we have 
\EQ{
f(\calQ(m, \vec \iota, \vec\lam) = \frac{1}{2} \sin( 2 \ell \pi + 2\sum_{j=1}^M  \iota_j Q_{\lam_j}) = \frac{1}{2} \sin (2\sum_{j=1}^M  \iota_j Q_{\lam_j})
}
Fixing $j \in \{1, \dots, M\}$, we expand, 
\EQ{ \label{eq:fi-exp} 
\frac{1}{2} \sin (2\sum_{ i \neq j} & \iota_i Q_{\lam_i} + 2 \iota_j Q_{\lam_j})  - \frac{1}{2}\sum_{ i = 1}^M \iota_i \sin 2 Q_{\lam_i} \\
&= \frac{1}{2} \sin (2\sum_{ i \neq j}  \iota_i Q_{\lam_i})\Big(  \cos 2 Q_{\lam_j} -1) + \frac{1}{2} \Big(\cos (2\sum_{ i \neq j}  \iota_i Q_{\lam_i}) -1\Big) \iota_j \sin 2 Q_{\lam_j}   \\
&\quad + \frac{1}{2} \sin (2\sum_{ i \neq j}  \iota_i Q_{\lam_i})  - \frac{1}{2} \sum_{i \neq j} \iota_i \sin 2 Q_{\lam_i}  \\
& = - \iota_{j+1} \sin 2 Q_{\lam_{j+1}} \sin^2 Q_{\lam_j}- \iota_{j-1} \sin 2 Q_{\lam_{j-1}} \sin^2 Q_{\lam_j}  + \Psi_j( \vec \iota, \vec \lam_j)
}
where via an explicit computation using Lemma~\ref{lem:cross-term} the function $\Psi_j( \vec \iota, \vec \lam_j)$ above satisfies, 
\EQ{
\Big| \ang{ \Lam Q_{\lam_j} \mid r^{-2}\Psi_j( \vec \iota, \vec \lam_j)}\Big| \le  \te(\eta)  \Big(  \Big( \frac{ \lam_{j-1}}{\lam_j} \Big)^k  +   \Big( \frac{ \lam_{j}}{\lam_{j+1}} \Big)^k \Big), 
}
where $\te(\eta) >0$ is a function that tends to zero as $\eta \to 0$.  It follows that 
\EQ{
\ang{ \Lam Q_{\lam_j} \mid f_{\bfi}( m, \vec \iota, \vec \lam)} & \simeq + \iota_{j+1} \ang{ r^{-2} \Lam Q_{\lam_j}^3 \mid \sin 2 Q_{\lam_{j+1}}} +  \iota_{j-1}  \ang{ r^{-2}\Lam Q_{\lam_j}^3 \mid \sin 2 Q_{\lam_{j-1}}}  \\
 & =  \iota_{j+1} \ang{ r^{-2} \Lam Q_{\lam_j/ \lam_{j+1}}^3 \mid \sin 2 Q} + \iota_{j-1}  \ang{ r^{-2}\Lam Q_{\lam_j/ \lam_{j-1}}^3 \mid \sin 2 Q} 
}
where ``$\simeq$'' above means up to negligible terms. 
Note that,    
\EQ{
\sin 2Q= 4 r^k \frac{1- r^{2k}}{(1+ r^{2k})^2}  &= 4r^k + O(r^{3k})  \mif r \ll1  \\
& = -4 r^{-k} + O( r^{-3k}) \mif r \gg 1. 
}
Via residue calculus we compute, 
\EQ{ \label{eq:Q3}
 \int_0^\infty \Lam Q(r)^3 4 r^{k} \, \frac{\ud r}{r}  =  32k^3 \int_0^\infty \frac{ r^k}{(r^{k} + r^{-k})^3} \, \frac{\ud r}{r}  &= 8k^2  \\
 \int_0^\infty \Lam Q(r)^3 4 r^{-k} \, \frac{\ud r}{r}  =  32k^3 \int_0^\infty \frac{ r^{-k}}{(r^{k} + r^{-k})^3} \, \frac{\ud r}{r}  & = 8k^2 
}
And thus, 
\EQ{
+ \iota_{j+1} \ang{ r^{-2} \Lam Q_{\lam_j/ \lam_{j+1}}^3 \mid \sin 2 Q}  &=  + \iota_{j+1}8k^2\Big( \frac{\lam_j}{\lam_{j+1}} \Big)^k + \theta(\eta) \Big(\frac{\lam_j}{\lam_{j+1}} \Big)^k\\
+ \iota_{j-1}  \ang{ r^{-2}\Lam Q_{\lam_j/ \lam_{j-1}}^3 \mid \sin 2 Q}  & = - \iota_{j-1}  8k^2 \Big( \frac{\lam_{j-1}}{\lam_{j}} \Big)^k + \theta(\eta) \Big(\frac{\lam_{j-1}}{\lam_{j}} \Big)^k
}
where $\theta(\eta) \to 0$ as $\eta \to 0$, which completes the proof; see~\cite[Proof of Claim 3.14]{JL1} for more details of this computation. 
\end{proof}


\section{Localized sequential bubbling} \label{sec:compact} 

The goal of this section is to prove a localized sequential bubbling lemma for sequences of wave maps with vanishing averaged kinetic energy on an expanding region of space. The main result, and the arguments used to prove it are in the spirit of the main theorems in C\^ote~\cite{Cote15} and Jia and Kenig~\cite{JK}, and also use many ideas from Struwe~\cite{Struwe} and Duyckaerts, Kenig, and Merle~\cite{DKM3}. 

To state the compactness lemma, we define a localized distance function, 
\EQ{ \label{eq:delta-def} 
\bs \de_R( \bs u) :=  \inf_{m, M,  \vec \iota, \vec \lam}  \Big( \| u - \calQ( m, \vec \iota, \vec \lam) \|_{H( r \le R)}^2 + \| \dot u \|_{L^2(r \le R)}^2 + \sum_{j = 1}^{M} \Big(\frac{ \lam_j}{ \lam_{j+1}}\Big)^k \Big)^{\frac{1}{2}}. 
}
where  the infimum above is taken over all $m \in \Z$, $M \in\{0, 1, 2, \dots\}$,  and all  vectors $\vec \iota\in \{-1, 1\}^M, \vec \lam \in (0, \infty)^M$, and here we use the convention that the last scale $\lam_{M+1} = R$.

\begin{lem}[Compactness Lemma] \label{lem:compact}  Let $\ell, m \in \Z$. Let $\rho_n>0$ be a sequence of positive numbers and let $\bs u_n(t) \in\E_{\ell, m}$ be a sequence of wave maps  on the time intervals $[0, \rho_n]$ such that $\limsup_{n \to \infty} E(\bs u_n) < \infty$. 

Suppose there exists a sequence $R_n \to \infty$  such that,
\EQ{
\lim_{n \to \infty} \frac{1}{ \rho_n} \int_0^{\rho_n} \int_0^{ \rho_n R_n} \abs{\p_t u_n(t, r)}^2 \, r \, \ud r\,  \ud t
 = 0.
}
Then, up to passing to a subsequence of the $\bs u_n$,  there exists a time sequence $t_n \in [0, \rho_n]$ and a sequence $r_n \le R_n$ with $r_n \to \infty$ such that 
\EQ{
\lim_{n \to \infty} \bs \de_{r_n\rho_n}( \bs u_n(t_n))  = 0.
}

\end{lem}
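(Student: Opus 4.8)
The plan is to argue by contradiction, using the averaged kinetic energy hypothesis to produce, for a well-chosen time sequence $t_n$, a wave map $\bs u_n(t_n)$ which is almost stationary on a large spatial annulus, and then apply the Struwe-type bubbling / concentration-compactness machinery (as in \cite{Cote15, JK, Struwe, DKM3}) to extract a multi-bubble configuration up to scale $r_n \rho_n$ for some $1 \ll r_n \le R_n$. Suppose the conclusion fails: after passing to a subsequence there is $\eps_0 > 0$ so that $\bs\de_{r\rho_n}(\bs u_n(t)) \ge \eps_0$ for \emph{every} $t \in [0,\rho_n]$ and every $r \le R_n$. By the vanishing hypothesis and Chebyshev, for each $n$ the set of "good times"
\EQ{
G_n := \Big\{ t \in [0,\rho_n] : \int_0^{\rho_n R_n} |\p_t u_n(t,r)|^2 \, r \, \ud r \le \eps_n \Big\}
}
has measure $\ge (1 - \eps_n)\rho_n$ for a suitable sequence $\eps_n \to 0$ (chosen so that $\eps_n / (\text{the bound})  \to 0$). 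In particular $G_n \neq \emptyset$; pick $t_n \in G_n$. Rescaling so that $\rho_n = 1$ (replacing $\bs u_n(t)$ by $\bs u_n(\rho_n t)$ scaled in space, which leaves energy invariant and turns the region into $r \le R_n$), we have a sequence of maps $\bs v_n := \bs u_n(t_n) \in \E_{\ell,m}$ with bounded energy and with $\|\dot v_n\|_{L^2(r \le R_n)}^2 \le \eps_n \to 0$.

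Next I would run the standard harmonic-map bubbling argument on $\bs v_n$ restricted to the expanding balls $\{r \le R_n\}$. Because $\dot v_n \to 0$ in $L^2(r \le R_n)$, the equation \eqref{eq:wmk} shows that $v_n$ is, in the limit, an (approximate) stationary solution on $r \le R_n$; one extracts a profile decomposition adapted to $\E_{\ell,m}$ via Lemma~\ref{lem:pd}. The vanishing of $\dot v_n$ forces, via Lemma~\ref{lem:sym-profile}, every linear profile with a nonzero time translation to come in a pair whose \emph{sum} carries no kinetic energy but whose nonlinear evolution disperses — such profiles cannot persist as a stationary remainder, so after the Shatah--Tahvildar-Zadeh exterior-energy argument (Theorem~\ref{thm:stz}) and the nonlinear profile decomposition (Lemma~\ref{lem:nlpd}) one is left, on a large but finite spatial scale $r \le r_n$ with $1 \ll r_n \le R_n$, with a sum of rescaled harmonic maps $\calQ(m, \vec\iota_n, \vec\lam_n)$ plus an error that vanishes in $H(r \le r_n) \times L^2(r \le r_n)$. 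The nonlinear multiplier identity of Jia--Kenig (cf. the discussion in Section~\ref{sec:compact} and \cite{JK}) is what upgrades the weak (Strichartz) vanishing of the error coming from \eqref{eq:pyth} and Lemma~\ref{lem:pd} to strong vanishing in the energy space. The scale $r_n$ is chosen by a diagonal/Egorov argument: since $R_n \to \infty$ and the bubble scales concentrate well inside, one can find $r_n \to \infty$ with $r_n \le R_n$ and with all bubble scales $\ll r_n \ll$ (next scale or $R_n$), so that $\bs\de_{r_n}(\bs v_n) \to 0$. Undoing the rescaling, $r_n \rho_n$ is the desired spatial cutoff and $t_n$ the desired times, contradicting $\bs\de_{r_n\rho_n}(\bs u_n(t_n)) \ge \eps_0$.

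A subtlety worth isolating: the averaged hypothesis only controls $\p_t u_n$ \emph{integrated in time}, so it does not by itself give a single good slice with $\dot v_n$ small in $L^2$ — but Chebyshev on the time integral does exactly that, producing $t_n \in G_n$ with $\|\dot u_n(t_n)\|_{L^2(r \le \rho_n R_n)}^2$ as small as we like; this is where the factor $1/\rho_n$ and the $\rho_n R_n$ cutoff are used. One then also needs that the time $t_n$ lies far enough from the endpoints $0, \rho_n$ so that the local Cauchy theory (Lemma~\ref{lem:lwp}) and the finite-speed-of-propagation estimate \eqref{eq:energy-monoton} can be applied on both sides when running the profile decomposition; this can be arranged because $G_n$ has nearly full measure, so we may take $t_n \in [\rho_n/4, 3\rho_n/4] \cap G_n$.

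The main obstacle is the passage from the weak compactness / Strichartz-smallness of the error term in the (nonlinear) profile decomposition to its strong vanishing in the \emph{energy norm} localized to $r \le r_n$, together with the bookkeeping needed to rule out, using $\dot v_n \to 0$, any profile that is not a harmonic map. This is precisely the heart of C\^ote's and Jia--Kenig's arguments, and the place where the nonlinear multiplier identity and the exterior-energy estimates of Theorem~\ref{thm:stz} (together with the pairing structure from Lemma~\ref{lem:sym-profile}) must be combined carefully; identifying the correct intermediate scale $r_n$ while keeping all error terms under control is the part requiring the most care.
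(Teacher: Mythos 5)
There is a genuine gap, and it sits exactly at the point you flag as the ``subtlety'': your selection of $t_n$. By Chebyshev you only arrange that the kinetic energy is small \emph{at the single time slice} $t_n$, i.e.\ $\|\dot u_n(t_n)\|_{L^2(r\le \rho_n R_n)}\to 0$. This is not enough to run the bubbling step. Your assertion that ``because $\dot v_n\to 0$ in $L^2$, the equation shows that $v_n$ is, in the limit, an (approximate) stationary solution'' is false: smallness of $\partial_t u_n$ at one instant says nothing about $\partial_t^2 u_n$, so weak limits of rescalings of $\bs u_n(t_n)$ need not satisfy the elliptic (harmonic map) equation, and the centered profiles cannot be identified as harmonic maps this way. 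The Struwe-type bubbling lemma (Lemma~\ref{lem:bubbling}) needs the \emph{time-averaged} kinetic energy to vanish on a time interval around $t_n$ after rescaling to \emph{every} concentration scale $\lambda_{n,j}\lesssim 1$. That is why the paper does not use a plain Chebyshev slice but instead (Lemma~\ref{lem:maximal}) chooses $t_n$ so that the Hardy--Littlewood maximal function of $t\mapsto\int_0^{R_n}|\partial_t u_n(t,r)|^2\,r\,\ud r$ vanishes at $t_n$: this single condition survives rescaling to all scales and is also what later produces the second good time $t_n+\tau_0\sigma_{n,i_0}$ needed to kill the incoming/outgoing profiles with bounded time translations (the DKM-erratum periodicity argument of Step~7), a step your proposal compresses into ``such profiles cannot persist as a stationary remainder'' and attributes, incorrectly, to Theorem~\ref{thm:stz}.

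There is a second, related omission: the final upgrade of the error to strong energy-norm vanishing via Lemma~\ref{lem:jk} requires the non-positivity (in the limit) of the Jia--Kenig virial functional \emph{evaluated at the same times} $t_n$. In the paper this is arranged in Steps~1--2 by first integrating the Jia--Kenig virial identity between auxiliary times $\sigma_n,\tau_n$ with $R_{1,n}$-weighted small kinetic energy, and then selecting $t_n$ through Lemma~\ref{lem:maximal} so that the maximal-function condition and the virial sign condition hold \emph{simultaneously}. Your single Chebyshev selection gives neither the maximal-function control nor the virial sign condition, so both the identification of the bubbles and the energy convergence of the remainder are left unsupported. (The contradiction framing at the start is also not the correct negation of the statement, but that is minor: the paper's argument is direct, and yours could be made direct too once the time selection is repaired.)
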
 

\begin{rem} \label{rem:seq} 
We note that Theorem~\ref{thm:seq} in the blow-up case is a quick consequence of Lemma~\ref{lem:compact} together with the fundamental result of Shatah and Tahvildar-Zadeh~\cite{STZ92}, that for wave map developing a singularity at $T_- = 0$ one has, 
\EQ{
\lim_{t \to 0} \frac{1}{t} \int_0^t \int_0^\tau \abs{ \p_t u(\tau, r)}^2 \, r \, \ud r \, \ud t = 0. 
}
 In the global case $T_+ = \infty$  one uses,  
\EQ{
\lim_{A \to \infty} \limsup_{T\to \infty} \frac{1}{T} \int_{A}^T \int_0^{t-A} \abs{ \p_t u(t, r)}^2 \, r \, \ud r \, \ud t = 0, 
}
proved in~\cite{CKLS2} using the analysis of~\cite{STZ92}. 
\end{rem} 

\subsection{Prior results on bubbling} 

The proof of Lemma~\ref{lem:compact} requires several preliminary lemmas, including two Real Analysis results, which we address first.  
\begin{lem}
\label{lem:sequences}
If $a_{k, n}$ are positive numbers such that $\lim_{n\to \infty}a_{k, n} = \infty$ for all $k \in \bN$,
then there exists a sequence of positive numbers $b_n$ such that $\lim_{n\to \infty} b_n = \infty$
and $\lim_{n\to \infty} a_{k, n} / b_n = \infty$ for all $k \in \bN$.
\end{lem}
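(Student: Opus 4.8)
The plan is to construct $b_n$ by a diagonal argument, making $b_n \to \infty$ but slowly enough relative to every row $a_{k,\cdot}$. The one idea that makes it work is to impose thresholds that grow \emph{quadratically} in the stage parameter, so that the resulting quotients $a_{k,n}/b_n$ themselves diverge rather than merely staying bounded below.

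Concretely, I would first fix, for each $k \in \bN$, an index $N_k$ such that $a_{j,n} \ge k^2$ whenever $n \ge N_k$ and $1 \le j \le k$; this is possible since only the finitely many hypotheses $\lim_{n}a_{1,n} = \cdots = \lim_n a_{k,n} = \infty$ are used. Replacing $N_k$ by $\max(N_1,\dots,N_k,k)$ if necessary (which only shrinks the relevant set of $n$, so the threshold property is preserved), I may assume $k \mapsto N_k$ is nondecreasing and $N_k \to \infty$ — the latter because $a_{1,n}$ is a fixed finite number for each $n$, so no single index can serve for all $k$. Then for $n \ge N_1$ I set $\kappa(n) := \max\{k \in \bN : N_k \le n\}$, which is a well-defined finite positive integer because $N_k \to \infty$, and which satisfies $\kappa(n) \to \infty$ as $n \to \infty$ since $\kappa(n) \ge k$ as soon as $n \ge N_k$. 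Finally I define $b_n := \kappa(n)$ for $n \ge N_1$ and $b_n := 1$ for $n < N_1$.

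It then remains to verify the two conclusions. Plainly $b_n > 0$, and $b_n = \kappa(n) \to \infty$. Fix $k \in \bN$ and consider $n \ge N_k$, so that $\ell := \kappa(n) \ge k$. By definition of $\kappa(n)$ we have $N_\ell \le n$, and since $k \le \ell$ the defining property of $N_\ell$ (applied with threshold $\ell^2$, at the point $n \ge N_\ell$, and index $k \le \ell$) gives $a_{k,n} \ge \ell^2 = \kappa(n)^2$. Hence $a_{k,n}/b_n = a_{k,n}/\kappa(n) \ge \kappa(n)$, which tends to $\infty$ as $n \to \infty$. This proves $\lim_{n\to\infty} a_{k,n}/b_n = \infty$ for every $k$.

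The essential point — and really the only non-bookkeeping step — is the quadratic choice of threshold: with a linear threshold (arranging merely $a_{j,n} \ge k$ for $n \ge N_k$) the same construction yields only $a_{k,n}/b_n \ge 1$, not divergence, so the gap between the threshold $k^2$ and the value $b_n = \kappa(n) \approx k$ is what drives the conclusion. No further difficulty is anticipated.
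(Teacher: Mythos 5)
Your proof is correct and follows essentially the same route as the paper: quadratic thresholds $k^2$ reached by stage $N_k$ (the paper phrases the condition via the running minimum $\tilde a_{k,n}=\min_{j\le k}a_{j,n}$, which is the same as your requirement for all $1\le j\le k$), and then $b_n$ defined as the largest stage whose threshold index has been passed, yielding $a_{k,n}/b_n \ge b_n \to \infty$. No substantive differences to report.
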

\begin{proof}
For each $k$ and each $n$ define $\ti a_{k, n} = \min\{ a_{1, n}, \dots, a_{k, n}\}$. Then the sequences $\ti a_{k, n} \to \infty$ as $n \to \infty$ for each $k$, but also satisfy $\ti a_{k, n} \le a_{k, n}$ for each $k, n$, as well as $\ti a_{j, n} \le \ti a_{k, n}$ if $j>k$. Next, choose a strictly increasing sequence $\{n_k \}_k \subset \N$ such that $\ti a_{k, n} \ge k^2$ as long as $n \ge n_k$. For $n$ large enough, let $b_n \in \bN$ be determined by the condition
$n_{b_n} \leq n < n_{b_n + 1}$. Observe that $b_n \to \infty$ as $n \to \infty$. Now fix any $\ell \in \N$ and let $n$ be such that $b_n > \ell$. We then have
\begin{equation}
a_{\ell, n} \geq \ti a_{\ell, n} \ge \ti a_{b_n, n} \ge  b_n^2  \gg b_n.
\end{equation}
Thus the sequence $b_n$ has the desired properties. 
\end{proof}
If $f: [0, 1] \to [0, +\infty]$ is a measurable function, we denote by 
\EQ{
Mf(\tau) := \sup_{I \ni \tau; I \subset [0, 1]} \frac{1}{\abs{I}} \int_I f(t) \, \ud t 
}
its Hardy-Littlewood maximal function. Recall the weak-$L^1$ boundedness estimate
\begin{equation}
\label{eq:weak-max}
|\{\tau \in [0, 1]: Mf(\tau) > \alpha\}| \leq \frac{3}{\alpha}\int_0^1f(t)\ud t, \qquad\text{for all }\alpha > 0,
\end{equation}
see \cite[Section 2.3]{Muscalu-Schlag}.

\begin{lem}
\label{lem:maximal}
Let $f_n$ be a sequence of continuous positive functions defined on $[0, 1]$ such that $\lim_{n \to \infty}\int_0^1 f_n(t)\ud t = 0$ and let $g_n$ be a uniformly bounded sequence of real-valued continuous functions on $[0, 1]$ such that
$\limsup_{n\to\infty} \int_0^1 g_n(t)\ud t \leq 0$.
Then there exists a sequence $t_n \in [0, 1]$ such that
\begin{equation}
\lim_{n\to\infty} Mf_n(t_n) = 0, \qquad \limsup_{n\to \infty}g_n(t_n) \leq 0.
\end{equation}
\end{lem}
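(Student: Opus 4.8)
The plan is to turn the $L^1$-smallness of $f_n$ into a pointwise bound on $Mf_n$ off a small exceptional set, via the weak-$(1,1)$ estimate \eqref{eq:weak-max}, and then to locate $t_n$ inside the good set by a mean-value argument for $g_n$. Set $\epsilon_n := \int_0^1 f_n(t)\,\ud t$, so $\epsilon_n \to 0$ by hypothesis, and choose the threshold $\alpha_n := \max\!\big(\sqrt{\epsilon_n},\, 1/n\big)$, which satisfies $\alpha_n \to 0$ while $\epsilon_n/\alpha_n \le \sqrt{\epsilon_n} \to 0$. Let
\[
B_n := \{\tau \in [0,1] : Mf_n(\tau) > \alpha_n\}, \qquad G_n := [0,1]\setminus B_n .
\]
By \eqref{eq:weak-max}, $|B_n| \le 3\epsilon_n/\alpha_n \to 0$, hence $|G_n| \to 1$; in particular $G_n \neq \emptyset$ for all large $n$, and it suffices to define $t_n$ for such $n$, since the finitely many remaining terms are irrelevant to the two conclusions. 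Note also that $Mf_n$ is lower semicontinuous, so $B_n$ is open and $G_n$ is compact.

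Next I would pick $t_n \in G_n$ at which $g_n$ is essentially minimal. Since $g_n$ is continuous on the compact set $G_n$, it attains its minimum there, and
\[
\min_{t \in G_n} g_n(t) \;\le\; \frac{1}{|G_n|}\int_{G_n} g_n(t)\,\ud t ,
\]
because a strict lower bound $g_n \ge A + \delta$ on $G_n$ with $A := |G_n|^{-1}\int_{G_n} g_n$ would force $\int_{G_n} g_n \ge (A+\delta)|G_n| > \int_{G_n} g_n$. Let $t_n \in G_n$ realize this minimum. Since $t_n \in G_n$ we immediately get $Mf_n(t_n) \le \alpha_n \to 0$, which is the first assertion.

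Finally, for the second assertion I would write $\int_{G_n} g_n = \int_0^1 g_n - \int_{B_n} g_n$. The uniform bound $|g_n| \le C$ together with $|B_n| \to 0$ gives $\int_{B_n} g_n \to 0$; combined with $|G_n| \to 1$ and $\limsup_n \int_0^1 g_n \le 0$, this yields
\[
\limsup_{n\to\infty} g_n(t_n) \;\le\; \limsup_{n\to\infty} \frac{1}{|G_n|}\int_{G_n} g_n(t)\,\ud t \;\le\; \limsup_{n\to\infty}\int_0^1 g_n(t)\,\ud t \;\le\; 0 ,
\]
as desired. There is no serious obstacle in this argument; the only mildly delicate point is the choice of $\alpha_n$, which must decay to $0$ slowly enough that $\epsilon_n/\alpha_n \to 0$ — this is precisely the standard device for passing from an $L^1$ bound on $f_n$ to a maximal-function bound valid outside a set of vanishing measure.
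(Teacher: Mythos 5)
Your proof is correct and follows essentially the same route as the paper: choose a threshold $\alpha_n$ with $\int_0^1 f_n \ll \alpha_n \ll 1$, use the weak-$(1,1)$ bound \eqref{eq:weak-max} to make the bad set $\{Mf_n > \alpha_n\}$ of vanishing measure, and then pick $t_n$ in the good set where $g_n$ does not exceed its average there, the uniform bound on $g_n$ controlling the contribution of the bad set. The explicit choice $\alpha_n = \max(\sqrt{\epsilon_n}, 1/n)$ and the compactness/minimum-attainment remarks are harmless refinements of the paper's argument.
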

\begin{proof}
Let $\alpha_n$ be a sequence such that $\int_0^1 f_n(t)\ud t \ll \alpha_n \ll 1$. Let $A_n :=  \{t \in [0, 1]: Mf_n(t) \le \alpha_n\}$. 
By \eqref{eq:weak-max}, $\lim_{n\to\infty}|A_n| = 1$. Since $g_n$ is uniformly bounded, we have
\begin{equation}
\int_{[0, 1]\setminus A_n}|g_n(t)|\ud t \lesssim |[0, 1] \setminus A_n| \to 0,
\end{equation}
which implies
\begin{equation}
\limsup_{n\to\infty}\int_{A_n}g_n(t)\ud t \leq 0.
\end{equation}
It suffices to take $t_n \in A_n$ such that $g_n(t_n) \leq |A_n|^{-1}\int_{A_n}g_n(t)\ud t$.
\end{proof}

%
%

A key ingredient of the proof of Lemma~\ref{lem:compact} is a Struwe-type bubbling lemma~\cite{Struwe}. We require the version proved in~\cite{Cote15, JK}. 

\begin{lem} [Bubbling]\emph{ \cite{Struwe}, \cite[Proposition 3.1]{Cote15},\cite[Lemma 5.6]{JK}} \label{lem:bubbling} 
Let $\s>0$ and let $\alpha_n \to 0$ and $\beta_n \to \infty$ be two sequences.  Let $\bs v_n$ be a sequence of wave maps, i.e., solutions to~\eqref{eq:wmk}, on the time interval $[0, \s]$ such that $\limsup_{n \to \infty} E(\bs v_n) < \infty$. Suppose that 
\EQ{
\lim_{n \to \infty} \frac{1}{\s} \int_0^\s \int_{\alpha_n}^{\beta_n}  | \p_t v_n(t,r)|^2 \, r \, \ud r \, \ud t  = 0
}
Then, there exists an integer $m_0$, $\iota_0 \in \{-1, 0, + 1\}$, and  a scale $\lam_0>0$ such that, up to passing to a subsequence, we have 
\EQ{
\bs v_n  \to   m_0 \bs \pi + \iota_0 \bs  Q_{\lam_0}
}
in the space $(L^2_t(\E))_{\loc}([0, \s]\times (0, \infty))$. In addition $\bs v_n \to m_0 \bs \pi + \iota_0 \bs  Q_{\lam_0}$ locally uniformly in $[0, \s] \times (0, \infty)$. And finally, $\bs v_n(0) \to m_0 \bs \pi + \iota_0 \bs Q_{\lam_0}$ in the space $\E_{\loc}((0,\infty))$.  
\end{lem}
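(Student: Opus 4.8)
\emph{Plan.} The plan is to extract (after passing to a subsequence) a single limiting profile $\bs h=(h,0)$ on $(0,\infty)$, identify $h$ as a finite energy harmonic map using the classification recalled in the introduction, and then upgrade the a priori weak convergence to the strong convergences in the statement. \textbf{Step 1 (weak limit).} Since $\sigma$ is fixed, the hypothesis is equivalent to $\eta_n^2:=\int_0^\sigma\int_{\alpha_n}^{\beta_n}|\p_t v_n(t,r)|^2\,r\,\ud r\,\ud t\to 0$. Fix $0<r_1<r_2<\infty$; for $n$ large $[r_1,r_2]\subset(\alpha_n,\beta_n)$, so $\p_t v_n\to 0$ in $L^2([0,\sigma]\times[r_1,r_2];\,r\,\ud r\,\ud t)$, and by the fundamental theorem of calculus in $t$ and Cauchy--Schwarz, $\sup_{t\in[0,\sigma]}\|v_n(t)-v_n(0)\|_{L^2(r_1,r_2)}\lesssim\sigma^{1/2}\eta_n\to 0$. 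The maps $v_n(t)$ are bounded in $H^1(r_1,r_2)$ uniformly in $n$ and $t$ (a standard consequence of $\limsup_n E(\bs v_n)<\infty$ for equivariant maps; cf.\ Lemma~\ref{lem:pi}), so a diagonal argument over a countable exhaustion of $(0,\infty)$ by annuli yields a subsequence and $h\in H^1_\loc((0,\infty))$ with $v_n(t)\wto h$ in $H^1_\loc$ \emph{for every} $t\in[0,\sigma]$; the $t$-independence of $h$ comes from the $L^2_\loc$-smallness above together with uniqueness of subsequential weak limits, and $E((h,0))\le\liminf_n E(\bs v_n)<\infty$ by weak lower semicontinuity and Fatou.

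\textbf{Step 2 (the limit is a harmonic map).} I would pass to the limit in $\p_t^2 v_n-\De v_n+\tfrac{k^2}{r^2}f(v_n)=0$ in $\calD'((0,\sigma)\times(0,\infty))$: the term $\p_t^2 v_n\to\p_t^2 h=0$ since $h$ is $t$-independent; $\De v_n\to\De h$; and, since $v_n\to h$ in $L^2_\loc$, $f=\tfrac12\sin 2(\cdot)$ is bounded and $1$-Lipschitz, and $r^{-2}$ is bounded on compact subsets of $(0,\infty)$, one gets $\tfrac{k^2}{r^2}f(v_n)\to\tfrac{k^2}{r^2}f(h)$ in $L^2_\loc$. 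Hence $-\De h+\tfrac{k^2}{r^2}f(h)=0$ on $(0,\infty)$; by elliptic regularity $h$ is smooth on $(0,\infty)$, and by the classification of finite energy $k$-equivariant harmonic maps recalled in the introduction, $h=m_0\pi+\iota_0 Q(\cdot/\lam_0)$ for some $m_0\in\Z$, $\iota_0\in\{-1,0,1\}$, $\lam_0>0$, so $\bs h:=(h,0)=m_0\bs\pi+\iota_0\bs Q_{\lam_0}$. I stress that this step is exactly what excludes a multi-bubble limit (e.g.\ $Q_a-Q_b+\pi$ with $0<a<b$ is not a harmonic map, hence cannot be the limit).

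\textbf{Step 3 (strong $L^2_t\,\E_\loc$ convergence --- the main point).} Subtracting the harmonic map equation from \eqref{eq:wmk} gives $-\De(v_n-h)=-\p_t^2 v_n-\tfrac{k^2}{r^2}(f(v_n)-f(h))$. I would pair this with $(v_n-h)\chi^2\phi$, where $\chi\in C_c^\infty((0,\infty))$ and $0\le\phi\in C_c^\infty((0,\sigma))$, and integrate against $r\,\ud r\,\ud t$. After integration by parts, the left side yields $\iint|\p_r(v_n-h)|^2\chi^2\phi\,r$ plus a term that vanishes by Cauchy--Schwarz ($\p_r(v_n-h)$ bounded in $L^2_\loc$, $v_n-h\to 0$ in $L^2_\loc$); on the right, integrating the $\p_t^2 v_n$ term by parts in $t$ (no boundary terms) produces $-\iint(\p_t v_n)^2\chi^2\phi\,r-\iint\p_t v_n\,(v_n-h)\chi^2\phi'\,r$, both $\to 0$ by the hypothesis and the $L^2_\loc$-smallness of $v_n-h$, while $|\tfrac{k^2}{r^2}(f(v_n)-f(h))(v_n-h)|\le\tfrac{k^2}{r^2}(v_n-h)^2$ integrates to $o(1)$. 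Together with the hypothesis on $\p_t v_n=\p_t(v_n-h)$ and the Rellich compactness $H^1(r_1,r_2)\hookrightarrow L^2(r_1,r_2)$ (which gives $\iint\tfrac{k^2}{r^2}(v_n-h)^2\chi^2\phi\,r\to 0$), this yields $\int_0^\sigma\|\bs v_n(t)-\bs h\|_{\E(r_1,r_2)}^2\,\phi(t)\,\ud t\to 0$; letting $\phi\uparrow 1$ and exhausting $(0,\infty)$ gives convergence in $(L^2_t\,\E)_\loc([0,\sigma]\times(0,\infty))$. \textbf{I expect this ``no energy loss on a fixed annulus'' step to be the main obstacle}: the weak $H^1_\loc$-limit must be attained \emph{strongly}, and it is precisely here that the wave map structure enters, through the testing identity above.

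\textbf{Step 4 (locally uniform convergence and the value at $t=0$).} Locally uniform convergence follows from interior regularity for \eqref{eq:wmk}: on compact subsets of $(0,\infty)$ the forcing $\tfrac{k^2}{r^2}f(v_n)$ is bounded in $L^\infty_{t,r}$, and, using the reduction to the four-dimensional wave equation from Section~\ref{sec:Cauchy} together with the uniform energy bound, the $v_n$ are uniformly bounded and equicontinuous there; this is the mechanism of Struwe's bubbling analysis, and I would invoke \cite{Struwe, Cote15, JK}. For $\bs v_n(0)\to\bs h$ in $\E_\loc$: by Step~3 choose $s_n\in[\sigma/4,3\sigma/4]$ with $\bs v_n(s_n)\to\bs h$ in $\E(r_1,r_2)$ for a fixed annulus; the local energy identity for wave maps (cf.\ \cite[Section 2]{STZ92}, \eqref{eq:energy-monoton}) gives $E(\bs v_n(0);r_1,r_2)=E(\bs v_n(s_n);r_1,r_2)-\int_0^{s_n}\big[\p_t v_n\,r\p_r v_n\big]_{r_1}^{r_2}\,\ud t$, and averaging the endpoints over dyadic ranges one can fix $r_1,r_2$ so that the flux integral is $\lesssim\eta_n\to 0$; hence $E(\bs v_n(0);r_1,r_2)\to E(\bs h;r_1,r_2)$. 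Since also $v_n(0)\wto h$ in $H^1(r_1,r_2)$, $\sin^2 v_n(0)\to\sin^2 h$ and $v_n(0)/r\to h/r$ in $L^2(r_1,r_2)$ by Rellich, equality of the limiting energy forces $\|\p_t v_n(0)\|_{L^2(r_1,r_2)}\to 0$ and $\|\p_r v_n(0)\|_{L^2(r_1,r_2)}\to\|\p_r h\|_{L^2(r_1,r_2)}$, i.e.\ $\bs v_n(0)\to\bs h$ in $\E(r_1,r_2)$; exhausting $(0,\infty)$ completes the proof.
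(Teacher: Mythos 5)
The paper does not actually prove this lemma: it is imported as a black box from Struwe, C\^ot\'e \cite[Proposition 3.1]{Cote15} and Jia--Kenig \cite[Lemma 5.6]{JK}, so there is no in-paper proof to compare against. Judged on its own terms, your sketch is essentially correct and follows the classical Struwe-type scheme that those references use: the averaged vanishing of $\p_t v_n$ forces a $t$-independent weak limit $h$; passing to the limit in the equation identifies $h$ as a finite-energy equivariant harmonic map, hence $m_0\pi+\iota_0 Q_{\lambda_0}$ by the classification recalled in the introduction (this is indeed what rules out multi-bubble limits at a fixed scale); strong $(L^2_t\E)_{\loc}$ convergence follows from your testing identity, which works precisely because on annuli $[r_1,r_2]\subset(0,\infty)$ the potential $k^2r^{-2}f$ is bounded and Lipschitz, i.e.\ the problem is subcritical away from $r=0$; and the $t=0$ statement is recovered from the local energy flux identity with radii chosen by averaging. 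Three small points deserve care. First, the uniform $H^1_{\loc}$ (and $L^\infty_{\loc}$) bound on $v_n(t)$ is not a direct consequence of the energy bound alone: you must first normalize by subtracting an integer multiple of $\pi$ (harmless, absorbed into $m_0$ after passing to a subsequence; the statement implicitly presupposes this, as constants $n\pi$ show). Second, in Step 4 the radii where the flux is $o(1)$ cannot literally be fixed independently of $n$; they should be chosen, for each $n$, inside fixed dyadic shells flanking the target annulus, and one concludes by a $\limsup/\liminf$ sandwich together with the fact that the energy measure of $h$ has no atoms on $(0,\infty)$, so the collar contribution can be made arbitrarily small. Third, the ``locally uniform'' convergence should be understood for the map $v_n$ itself (the velocity component only converges in the time-averaged $L^2$ sense), and for that your Step 1 plus the uniform log-H\"older bound $|v_n(t,r)-v_n(t,r')|\lesssim |\log(r/r')|^{1/2}$ already suffices, without invoking interior regularity theory. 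With these adjustments your argument is a valid, self-contained proof of the cited lemma.
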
 


The lengthy proof of the Compactness Lemma  will consist of several steps, which are designed to reduce the proof to the exact scenarios already considered by C\^ote in~\cite[Proof of Lemma 3.5]{Cote15} and then by Jia-Kenig in~\cite[Proof of Theorem 3.2]{JK}. In particular, we will seek to apply the following result from~\cite{JK}.  

\begin{lem} \emph{\cite[Theorem 3.2]{JK}} \label{lem:jk} 
Let $\bs v_n$ be a sequence of wave maps, i.e., solutions to~\eqref{eq:wmk}, on the time interval $[0, 1]$ such that $\limsup_{n \to \infty} E(\bs v_n) < \infty$. Suppose that there exists a sequence $t_n \in [0, 1]$, and integer $K_0 \ge 0$,  and scales $\lam_{n, 1} \ll  \dots \ll \lam_{n, K_0} \lesssim 1$ such that 
\EQ{
\bs{v}_n(t_n) &= m_1 \bs \pi + \sum_{j =1}^{K_0} (  \iota_j Q \big(  \frac{ \cdot}{\lam_{n, j}} \big)  , 0) -  \bs \pi)  + \bs{w}_{n, 0}, 
}
where $\|\bs{w}_{n, 0} \|_{L^\infty \times L^2} \to 0$ and $\| \bs{ w}_{n, 0} \|_{\E(r \ge  r_n^{-1})} \to 0$ as $n \to \infty$ for some sequence $r_{n} \to \infty$. Suppose in addition that,   $\| \bs{w}_{n, 0} \|_{\E(A^{-1} \lam_n \le r \le A \lam_n)} \to 0$ as $n \to \infty$ for any sequence $\lam_n \lesssim 1$ and any $A>1$, 
and finally, that 
\EQ{ \label{eq:jk0} 
\limsup_{n\to \infty}\int_0^\infty \bigg( k^2 \frac{\sin^2(2 v_n(t_n))}{2r^2} + (\p_r v_n(t_n))^2 2 \cos (2 v_n(t_n))  \bigg) \,r \,  \ud r    \le 0 . 
}
Then, 
\EQ{
\| \bs{w}_{n, 0} \|_{\E} \to 0 \mas n \to \infty. 
}
\end{lem}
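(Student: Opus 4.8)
The plan is to argue by contradiction, treating the functional in \eqref{eq:jk0} as a Pohozaev-type quantity that vanishes identically on harmonic maps and is \emph{coercive}, in the $H$-norm of the error, away from the bubble cores. First reduce: since each $\bs Q_{\lambda_{n,j}}$ has vanishing second component, $\dot w_{n,0} = \dot v_n(t_n)$, so the hypothesis $\|\bs w_{n,0}\|_{L^\infty\times L^2}\to 0$ already gives $\|\dot w_{n,0}\|_{L^2}\to 0$, and it suffices to prove $\|w_{n,0}\|_H\to 0$; assume instead $\|w_{n,0}\|_H\ge 2\delta_0>0$ along a subsequence. Abbreviate $v_n:=v_n(t_n)$, $w_n:=w_{n,0}$, and let $\calQ_n:=\calQ(m_1,\vec\iota,\vec\lambda_n)$, so $v_n=\calQ_n+w_n$ with $\|w_n\|_{L^\infty}\to0$ and $\|\bs w_{n,0}\|_{\E(r\ge r_n^{-1})}\to0$. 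Write the integrand of \eqref{eq:jk0} as $G(v,v'):=\tfrac{k^2}{2r^2}\sin^2(2v)+2(v')^2\cos(2v)$. Using $\Lam Q=k\sin Q$ and the residue computation behind \eqref{eq:Q3}, one checks that $\int_0^\infty G(\iota Q_\lambda+c\pi,\partial_r Q_\lambda)\,r\,\ud r=0$ for every $\lambda>0$, $\iota\in\{-1,1\}$, $c\in\bZ$ and $k\ge1$, and that the density $r\,G(\iota Q_\lambda+c\pi,\partial_r Q_\lambda)$ is integrable, with scale-invariant profile.

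Next, localize $w_n$ by a Struwe-type argument as in \cite{Struwe,Cote15,JK}. For a threshold $\epsilon_n\to0$ (fixed below) set $S_n:=\{r>0:\dist(v_n(r),\pi\bZ)>\epsilon_n\}$. On $S_n$ one has $|\sin v_n|\gtrsim\epsilon_n$, so the $\tfrac{k^2\sin^2 v_n}{r^2}$ part of the energy density plus the uniform energy bound gives logarithmic-measure control; combined with $\|w_n\|_{L^\infty}\to0$ and $\lambda_{n,j}/\lambda_{n,j+1}\to0$, this shows $S_n\subset\bigcup_{j=1}^{K_0}[c(\epsilon_n)\lambda_{n,j},C(\epsilon_n)\lambda_{n,j}]$ for $n$ large, with $c(\epsilon_n)\to0$, $C(\epsilon_n)\to\infty$, and on the $j$-th such interval $\calQ_n$ coincides with $\iota_jQ_{\lambda_{n,j}}$ up to a $\pi\bZ$-shift and an $\E$-negligible term. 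Applying the hypothesis $\|\bs w_{n,0}\|_{\E(A^{-1}\lambda_n\le r\le A\lambda_n)}\to0$ with $\lambda_n=\lambda_{n,j}$ for $j=1,\dots,K_0$ and a diagonal argument produces $A_n\to\infty$, which we also require to satisfy $A_n\ll\min_j\lambda_{n,j+1}/\lambda_{n,j}$, with $\|\bs w_{n,0}\|_{\E(A_n^{-1}\lambda_{n,j}\le r\le A_n\lambda_{n,j})}\to0$ for all $j$; fixing $\epsilon_n$ by $C(\epsilon_n)=A_n$ (hence $c(\epsilon_n)=A_n^{-1}$) then gives $\epsilon_n\to0$ and $\|\bs w_{n,0}\|_{\E(S_n)}\to0$, so $\|w_n\|_{H(S_n)}\to0$. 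Consequently $\int_{S_n}G(v_n,\partial_r v_n)\,r\,\ud r=\sum_j\int_{[c(\epsilon_n)\lambda_{n,j},C(\epsilon_n)\lambda_{n,j}]}G(\calQ_n,\partial_r\calQ_n)\,r\,\ud r+o_n(1)$, and since each rescaled core domain exhausts $(0,\infty)$ while the integrable profile has vanishing tails, $\int_{S_n}G(v_n,\partial_r v_n)\,r\,\ud r=o_n(1)$.

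On the complement $N_n:=(0,\infty)\setminus S_n$, establish coercivity. There $\dist(v_n,\pi\bZ)\le\epsilon_n$, so with $c_n(r)\pi$ the locally constant nearest branch one has $\cos(2v_n)\ge\tfrac12$ and $\sin^2(2v_n)=\sin^2\!\big(2(v_n-c_n\pi)\big)\ge3(v_n-c_n\pi)^2$ for $n$ large. Writing $v_n-c_n\pi=(\calQ_n-c_n\pi)+w_n$, with $\|\calQ_n-c_n\pi\|_{\E(N_n)}\to0$ (bubble tails over the necks, via Lemma~\ref{lem:cross-term}-type bounds since $C(\epsilon_n)\to\infty$) and $\|w_n\|_{L^\infty}\to0$, and using $(a+b)^2\ge\tfrac12b^2-a^2$ with Cauchy--Schwarz on the terms linear in $w_n$, one obtains $\int_{N_n}G(v_n,\partial_r v_n)\,r\,\ud r\ge c\,\|w_n\|_{H(N_n)}^2-o_n(1)$. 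Combining with the $S_n$ bound and \eqref{eq:jk0},
\[
o_n(1)\ge\int_0^\infty G(v_n,\partial_r v_n)\,r\,\ud r\ge c\,\|w_n\|_{H(N_n)}^2-o_n(1)=c\,\|w_n\|_H^2-o_n(1)\ge c\,\delta_0^2-o_n(1),
\]
where the middle equality uses $\|w_n\|_{H(S_n)}\to0$. This is a contradiction for $n$ large, so $\|\bs w_{n,0}\|_{\E}\to0$.

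I expect the main obstacle to be the Struwe-type localization step: one must rigorously establish, using only that the centered profiles are harmonic maps together with the three vanishing hypotheses and the energy bound, that $S_n$ is a bounded union of intervals each within bounded dilation of some $\lambda_{n,j}$, that $v_n$ genuinely behaves like a single rescaled harmonic map (modulo a $\pi\bZ$-shift) on each component, and that the threshold $\epsilon_n$ and diagonal scale $A_n$ can be chosen compatibly so the transition zones between cores and necks carry no $w_n$-energy in the limit. This concentration-compactness bookkeeping — passing from ``$w_n$ small in a few prescribed regions'' to ``$w_n$ small everywhere'' — is precisely the technical heart of \cite[Proof of Lemma~3.5]{Cote15} and \cite[Proof of Theorem~3.2]{JK}.
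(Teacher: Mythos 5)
Your argument is correct and is essentially the paper's: the paper does not reprove this lemma but cites \cite[Proof of Theorem 3.2]{JK}, and its accompanying remark identifies precisely the two ingredients you use, namely that the virial density vanishes identically on (shifted, rescaled) harmonic maps and is coercive in $\|w_n\|_{H}$ wherever $v_n$ is close to $\pi\Z$. Your Struwe-type splitting into core annuli (where the annulus hypothesis kills the error and the bubble integral tends to the vanishing full-line integral) and neck/exterior regions (where coercivity applies, with bubble tails negligible) is the same scheme as the cited Jia--Kenig proof, so no further comparison is needed.
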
 

\begin{rem} 
 Lemma~\ref{lem:jk} is not stated in~\cite{JK} exactly as given above. However, an examination of~\cite[Proof of Theorem 3.2]{JK} shows that this is precisely what is established.   The heart of the matter lies in the fact that the Jia-Kenig virial functional~\eqref{eq:jk0} vanishes at $Q$, i.e., 
\EQ{
\int_0^\infty \bigg( k^2 \frac{\sin^2(2Q)}{2r^2} + (\p_r Q)^2 2 \cos (2 Q)  \bigg) \,r \,  \ud r    = 0 , 
} 
but gives coercive control of the energy in regions where $v_n(t, r)$ is near integer multiples of~$\pi$.
\end{rem} 

\subsection{Proof of the compactness lemma} 

\begin{proof}[Proof of Lemma~\ref{lem:compact}] 
Rescaling we may assume that $\rho_n = 1$ for each $n$.

\textbf{Step 1.}
We claim that there exist $\sigma_n \in [0, \frac 13]$, $\tau_n \in [\frac 23, 1]$ , and a sequence $R_{1,n} \le R_n$ with $R_{1, n} \to \infty$ as $n \to \infty$  such that
\begin{equation}
\label{eq:jia-kenig-0}
\lim_{n\to \infty}\int_{\sigma_n}^{\tau_n}\int_0^\infty\bigg( k^2 \frac{\sin^2(2u_n)}{2r^2}\chi(\cdot/ R_{1,n})- (\partial_r^2 u_n + \frac 1r \partial_r u_n)\sin(2u_n)\chi(\cdot /  R_{1, n})\bigg)\,r\vd r\ud t = 0,
\end{equation}
where $\chi$ is a smooth cut-off function equal $1$ on $[0, \frac 12]$, with support in $[0, 1]$. Here and later in the argument the second term in the integrand in~\eqref{eq:jia-kenig-0} is to be interpreted as the expression obtained after integration by parts, which is well defined due to the finiteness of the energy. 

Since 
\EQ{
\lim_{n\to \infty}\int_0^\frac 13 \int_0^{R_n}(\partial_t u_n)^2\,r\vd r = 0 \mand \lim_{n\to \infty}\int_\frac 23^1 \int_0^{R_n}(\partial_t u_n)^2\,r\vd r = 0
}
 there exist $\sigma_n \in [0, \frac 13]$,  $\tau_n \in [\frac 23, 1]$ and a sequence $R_{1, n} \to \infty$ such that, 
\EQ{ \label{eq:sigma-tau} 
\lim_{n\to\infty}R_{1, n}  \int_0^{R_n}(\partial_t u(\sigma_n))^2\,r\vd r = 0 \mand 
\lim_{n\to\infty}  R_{1, n} \int_0^{R_n}(\partial_t u(\tau_n))^2\,r\vd r = 0
}
For $t \in [\sigma_n, \tau_n]$, we have the following Jia-Kenig virial identity; see~\cite[Lemma 3.5 and Lemma 3.10]{JK}.
\EQ{
\label{eq:jia-kenig-1} 
\dd t\int_0^\infty \partial_t u_n\sin(2u_n)\chi(\cdot /  R_{1, n})&\,r\vd r = \int_0^\infty 2\cos(2u_n)(\partial_t u_n)^2\chi(\cdot/ R_{ 1, n})\,r\vd r \\ &+ \int_0^\infty (\partial_r^2 u_n + \frac 1r \partial_r u_n - k^2 \frac{\sin(2u_n)}{2r^2})\sin(2u_n)\chi(\cdot /  R_{ 1, n})\,r\vd r.
}
By the Cauchy-Schwarz inequality, the boundedness of the nonlinear energy and~\eqref{eq:sigma-tau}, we see that 
\begin{equation}
\lim_{n\to \infty}\int_0^\infty \big(|\partial_t u_n(\sigma_n)||\sin(2u_n(\sigma_n))|
+ |\partial_t u_n(\tau_n)||\sin(2u_n(\tau_n))|\big)\chi(\cdot/R_{1, n})\,r\vd r = 0.
\end{equation}
Integrating~\eqref{eq:jia-kenig-1} between $\sigma_n$ and $\tau_n$, and using the above, we obtain \eqref{eq:jia-kenig-0}.

\textbf{Step 2.}
We rescale again so that $[\sigma_n, \tau_n]$ becomes $[0, 1]$. We apply 
Lemma~\ref{lem:maximal}, to 
\EQ{
f_n(t)&:= \int_0^{R_n}\abs{\p_t u_n(t, r)}^2 \, r \, \ud r, \\
g_n(t) &:= \int_0^\infty\bigg( k^2 \frac{\sin^2(2u_n)}{2r^2}- (\partial_r^2 u_n + \frac 1r \partial_r u_n)\sin(2u_n)\bigg)\chi(\cdot/ R_{1, n})\,r\vd r
}
(integrating by parts the second term in $g_n$, we see that this is a uniformly bounded sequence of continuous functions)
and we find a sequence $\{t_n\} \in [0, 1]$ such that we have vanishing of the maximal function of the local kinetic energy,
\EQ{ \label{eq:maximal} 
&\lim_{ n \to \infty} \sup_{I \ni t_n;  I \subset [0, 1]}  \frac{1}{\abs{I}} \int_I \int_0^{R_n} | \p_t u_n(t, r)|^2 \, r \, \ud r \, \ud t = 0,  
\\
\textrm{and} \, \,\,  & \lim_{n \to \infty} \int_0^{R_n} | \p_t u_n(t_n, r)|^2 \, r \, \ud r \, \ud t = 0, 
}
and also pointwise vanishing of a localized Jia-Kenig virial functional, 
\begin{equation}
\label{eq:jia-kenig}
\limsup_{n\to \infty}\int_0^\infty\bigg( k^2 \frac{\sin^2(2u_n(t_n))}{2r^2}- (\partial_r^2 u_n(t_n) + \frac 1r \partial_r u_n(t_n))\sin(2u_n(t_n))\bigg)\chi(\cdot/\ti R_{ n})\,r\vd r \leq 0. 
\end{equation}
for any sequence $\ti R_{n} \le R_{1, n} \le R_n$ with $\ti R_n \to \infty$ as $n \to \infty$. 
We emphasize the  conclusion from the first steps is the existence of the sequence $t_n$ such that~\eqref{eq:maximal} and~\eqref{eq:jia-kenig} hold.   

\textbf{Step 3.} 
Now that we have chosen the sequence $t_n \in[0, 1]$, we may, after passing to a subsequence, assume that $t_n \to t_0 \in [0, 1]$. 


We apply Lemma~\ref{lem:pd} to the sequence $\bs u_n(t_n)$, obtaining profiles $( \bs \psi^j, \lam_{n, j})$ and $(\bs v^i\lin, t_{n, i}, \s_{n, i})$, and $\bs w_{n, 0}^J$,  so that,  using the notation, 
\EQ{
\bs \psi_n^j&:= \big( \psi^j( \cdot/ \lam_{n, j}), \lam_{n, j}^{-1} \dot\psi^j( \cdot/ \lam_{n, j}) \big) , \quad  
\bs  v_{\Lin, n}^i(0) :=  \big( v_{\Lin}^i (  \frac{-t_{n, i}}{\s_{n, i}}, \frac{ \cdot}{\s_{n, i}} ) , \s_{n, i}^{-1} \p_t v_{\Lin}^i (  \frac{-t_{n, i}}{\s_{n, i}}, \frac{ \cdot}{\s_{n, i}} )\big), 
}
 we have
\EQ{ \label{eq:profiles1} 
\bs u_n(t_n) &= m_0 \bs \pi +  \sum_{j = 1}^{K_0} ( \bs \psi^j_n - m_j \bs \pi)   + \sum_{i =1}^J \bs  v_{\Lin, n}^i(0) + \bs w_{n, 0}^J
}
satisfying the conclusions of Lemma~\ref{lem:pd}. We refer to the profiles $(\bs \psi^j, \lam_{n, j})$ as well as the profiles $(\bs v\lin^i(0), t_{n, i}, \s_{n, i})$ with $t_{n, i} = 0$ for all $n$ as \emph{centered} profiles (here the subscript $\lin$ on $\bs v\lin^i$ is superfluous).  We refer to  the profiles $(\bs v\lin^i(0), t_{n, i}, \s_{n, i})$ with $-t_{n, i}/ \s_{n, i} \to \pm \infty$ as \emph{outgoing/incoming} profiles. 

\textbf{Step 4.}(Centered profiles at large scales) 
At each step, we will impose conditions on the choice of the ultimate choice of sequence $r_n \to \infty$. Consider the set of indices 
\EQ{
\calJ_{c, \infty}&:= \{ j  \in \{1, \dots, K_0\} \mid  \lim_{n \to \infty}\lam_{n, j} =  \infty  \}   \cup \{ i \in \N \mid t_{n, i} = 0 \, \, \forall n, \mand \lim_{n \to \infty}\s_{n, i} = \infty \}
}
Using Lemma~\ref{lem:sequences} we choose a sequence $ r_{ 0, n} \to \infty$ so that $r_{ 0, n} \ll R_n, \lam_{n, j}, \s_{n, i}$ for each $\lam_{n, j}$ with $j \in \calJ_{c, \infty}$ and each $\s_{n, i}$ with $i \in \calJ_{c, \infty}$. 
We note that by construction we have, 
\EQ{ \label{eq:Jc-infty} 
E( \bs \psi_n^j;  0,  r_{0, n}) &\to 0 \mas n \to \infty, \\
E(  (v\lin^i( 0, \cdot/\s_{n, i}), \s_{n, i}^{-1} \dot v^i \lin (\cdot/ \s_{n, i})); 0, r_{0, n}) &\to 0 \mas n \to \infty, 
}
for any of the indices $j, i \in \calJ_{c, \infty}$. 

%
%


\textbf{Step 5.}(Centered profiles at bounded  scales)  Consider the set of indices 
\EQ{
\calJ_{c, 0}&:= \{ j  \in \{1, \dots, K_0\} \mid \lim_{n \to \infty}\lam_{n, j} < \infty \}  \cup \{ i \in \N \mid t_{n, i} = 0 \, \, \forall n, \mand \lim_{n \to \infty} \s_{n, i} < \infty \}
}
We use  Lemma~\ref{lem:bubbling} to show that each of the associated profiles must be a harmonic map. 

Consider first the case of a profile $( \bs \psi^j, \lam_{n, j})$ with $j \in \calJ_{c, 0}$. Define, 
\EQ{
\bs u_{n}^j(t, r) = (u_n^j(t, r),\p_t u_n^j(t, r) )  := \big( u_n( t_n + \lam_{n, j} t, \lam_{n, j} r), \, \lam_{n, j} \p_t u_n( t_n + \lam_{n, j} t, \lam_{n, j} r)\big) 
}
and note that $\bs u_n^j$ is a wave map on the interval $t \in [ -t_{n}/ \lam_{n, j}, (\rho- t_{n})/ \lam_{n, j}]$.  Consider the case $t_n \to t_0 < 1$, (the other possible limits have nearly identical arguments).  
Recall that we have the weak convergence $\bs u_{n}^j(0) \rightharpoonup \bs \psi^j(0)$. 
Moreover, 
\EQ{
\frac{1}{\s} \int_{0}^\s \int_{0}^{\frac{R_n}{\lam_{n, j}}} | \p_t u_n^j(t, r) |^2 \, r \, \ud r \, \ud t &= 
\frac{1}{\s} \int_{0}^\s \int_{0}^{\frac{R_n}{\lam_{n, j}}} | \lam_{n, j} \p_t u_n( t_n + \lam_{n, j} t, \lam_{n, j} r) |^2 \, r \, \ud r \, \ud t \\
& = \frac{1}{\s \lam_{n, j}} \int_{t_n}^{t_n+ \lam_{n, j} \s} \int_0^{R_n} | \p_t u_n( s, y) |^2 \, y \, \ud y \, \ud s \to 0 \mas n \to \infty 
}
where the last line follows from~\eqref{eq:maximal} after fixing $\s >0$ small enough so that $t_n + \lam_{n, j} \s \le 1$ for all $n$ large enough. Thus by Lemma~\ref{lem:bubbling} we conclude that there exists $\ti m_j, \iota_j, \lam_{0, j}$ so that $ \bs \psi^j = \ti m_j \bs \pi +\iota_j \bs Q_{\lam_{0, j}}$. 


The cases of profiles $(\bs v\lin^i(0), t_{n, i}, \s_{n, i})$ with $i \in \calJ_{c, 0}$ are completely analogous. And we conclude that each of these profiles must satisfy 
\EQ{
\bs v\lin^i(0, r) = (0, 0).
}
since each $\bs v\lin^i(0) \in \calE$ and thus can only be a constant harmonic map.

\textbf{Step 6.}(Incoming/outgoing profiles  with $\lim_{n \to \infty}\abs{t_{n, i}} = \infty$) We next treat profiles $(\bs v^i\lin, t_{n, i}, \s_{n,i})$ that satisfy, 
\EQ{
-\frac{t_{n, i}}{\s_{n, i}} \to \pm \infty. 
}
Up to passing to a subsequence of $\bs u_n(t_n)$ we may assume that $-t_{n, i}  \to t_{\infty} \in [- \infty, \infty]$. Consider the set of indices, 
\EQ{
\calJ_{\Lin, \infty} := \{ i \in \N \mid -\frac{t_{n, i}}{\s_{n, i}} \to \pm \infty \mand \abs{ t_{n, i}} \to \infty \}.
}
We impose additional restrictions on the sequence $r_n$. We require that $r_n \le \frac{1}{2} \abs{t_{n, i}}$ for each sequence $t_{n, i}$ in $\calJ_{\Lin, \infty}$. So at this stage, we again use Lemma~\ref{lem:sequences} to choose a sequence $r_{1, n} \to \infty$ such that $r_{1, n} \le r_{0, n}$ and $r_{1, n} \le \frac{1}{2} \abs{t_{n, i}}$ for each sequence $t_{n, i}$ in $\calJ_{\Lin, \infty}$. 

Since $\bs v^i\lin$ is a solution to~\eqref{eq:lin-2d} we know that it asymptotically concentrates all of its energy near the light-cone. In fact, a direct consequence of~\cite[Theorem 4]{CKS} is that 
\EQ{
\lim_{s \to \pm \infty}  \| \bs v^i\lin(s) \|_{\E(r \le \frac{1}{2} \abs{s})} = 0 .
} 
Thus, if $i\in \calJ_{\Lin, \infty}$ and as long as $ r_{1, n} \le \frac{1}{2} \abs{t_{n, i}}$ for $n$ large enough,  we see that $\s_{n, i}^{-1} r_{1, n} \le \frac{1}{2} \s_{n,i}^{-1} \abs{t_{n, i}}$ and thus 
\EQ{ \label{eq:Jlin-infty} 
\| \bs v^i_{\Lin}(-t_{n, i}/ \s_{n, i}) \|_{\E( r \le r_{1, n} \s_{n, i}^{-1})} \to 0 \mas n \to \infty. 
}
by the above and we conclude that any such profile does not contribute to the asymptotic size of $\bs \de_{r_{1, n}}( \bs u_n(t_n))$.

\textbf{Step 7.}(Incoming/outgoing profiles with $\lim_{n \to \infty}\abs{t_{n, i}} < \infty$) Next, we consider profiles $(\bs v^i\lin, t_{n, i}, \s_{n,i})$ such that 
\EQ{
-\frac{t_{n, i}}{\s_{n, i}} \to \pm \infty \mand  -t_{n, i} \to t_{\infty, i} \in \R
}
and we denote by $\calJ_{\Lin, 0}$ the set indices labeling all such profiles, and  note that $\s_{n, i} \to 0$ as $n \to \infty$ for each $i \in \calJ_{\Lin, 0}$. We claim that any such profile must satisfy $\bs v^i\lin  \equiv 0$. The argument we use follows closely the argument given in~\cite[Erratum]{DKM3erratum}. As there are few technical changes due to setting of the current problem, we reproduce the argument here.

We claim that there exists a new sequence $\sqrt{r_{1, n}} \leq  r_{2, n} \leq r_{1, n}$ such that
\begin{equation}\label{eq:r2} 
\lim_{n\to\infty}\sup_{t \in [0, 1]}E(\bs u_n(t); A_n^{-1}r_{2, n}, A_n  r_{2, n}) = 0
\end{equation}
for some $1 \ll A_n \ll  r_{2, n}$. 
By the finite speed of propagation, it suffices to have
\begin{equation}
\lim_{n\to\infty}E(\bs u_n(0); A_n^{-1} r_{2, n}, A_n  r_{2, n}) = 0, 
\end{equation}
and then replace $A_n$ by its half, for example.

Let $A_n$ be the largest integer such that $A_n^{2A_n} \leq \sqrt{r_{1,n}}$. Obviously, $1 \ll A_n \ll \sqrt{r_{1,n}}$.
For $l \in \{0, 1, \ldots, A_n - 1\}$, set $R_n^{(l)} := A_n^{2l}\sqrt{r_{1, n}}$, so that $A_n^{-1}R_n^{(l+1)} = A_nR_n^{(l)}$, thus
\begin{equation}
\sum_{l=0}^{A_n - 1}E(\bs u_n(0); A_n^{-1}R_n^{(l)}, A_nR_n^{(l)}) \leq E(\bs u_n(0)).
\end{equation}
Since all the terms of the sum are positive, there exists $l_0 \in \{0, 1, \ldots, A_n - 1\}$ such that $r_{2, n} := R_n^{(l_0)}$ satisfies
\begin{equation}
E(\bs u_n(0); A_n^{-1}r_{2, n}, A_n r_{2, n}) \leq A_n^{-1}E(\bs u_n(0)) \to 0.
\end{equation}
proving~\eqref{eq:r2} 

Next, using the finite speed of propagation along with~\eqref{eq:r2},  we pass to a new sequence of maps $\bs{\ti u}_n$ with vanishing average kinetic energy on the whole space. To see this, first we use Lemma~\ref{lem:pi} to find a sequence $y_n \in [2 r_{2, n}, 4 r_{2, n}]$ and  integers $m_n \in \Z$ such that 
\EQ{
| u_n( 0, y_n) - m_n \pi|   \to 0 \mas n \to \infty
}
Since $\ell, m$ are fixed and $\limsup_{n \to \infty} E( \bs u_n) < \infty$, the integers $m_n \in [-L, L]$ for all $n$ for some $L >0$. Hence, after  passing to a subsequence, we may assume that $m_n = m_1$ is a fixed integer for each $n$. We define a sequence of truncated initial data $\bs{\ti u}_n(0)$ as follows, 
\EQ{
\bs {\ti u}_n(t_n, r) = \chi_{2 r_{2,n}}(r) \bs u_n (t_n, r) + (1- \chi_{2 r_{2, n} } (r)) m_1 \bs \pi 
}
Using~\eqref{eq:r2}, we have  $E( \bs {\ti u}_n(t_n); \frac{1}{8} r_{2, n}, 8r_{2, n}) \to 0$ as $n \to \infty$. Let $\bs {\ti u}_n(t)$ denote the wave map evolution of the data $\bs{\ti u}_n(t_n)$, which we observe, using the vanishing of the energy of the data on the region $[r_{2, n}/8,  8r_{2, n}]$ is well defined on the interval $[0, 1]$ for large $n$. In fact, using the finite speed of propagation and the monotonicity of the energy on truncated cones, we see that  $\bs{\ti u}_n(t)$ satisfies, 
\EQ{ \label{eq:tiu-fsp} 
\bs{ \ti u}_n(t, r)  = \bs u_n(t, r)  \mif r \le r_{2, n}, \mand \sup_{t \in [0, 1]}E(\bs {\ti u}_n(t); r_{2, n}, \infty) \to 0 \mas n \to \infty.
}
Next, from the decomposition ~\eqref{eq:profiles1}  we have, 
\EQ{ \label{eq:profiles02} 
\bs{ \ti u}_n(t_n) &= m_1 \bs \pi + \sum_{ j \in \calJ_{c, 0}} (  \iota_j Q \big(  \frac{ \cdot}{\lam_{n, j}} \big)  , 0) -  \bs \pi)  + \sum_{i \le J, \,  \, i \in \calJ_{\Lin, 0}} \bs v_{\Lin, n}^i(0)  + +\chi_{2r_{2, n}} \bs w_{n, 0}^J\\
&\quad - \chi_{2r_{2, n}}m_1 \bs \pi + \chi_{2 r_{2, n}} m_0 \bs \pi \\
& \quad + (\chi_{2r_{2, n}} - 1)  \sum_{ j \in \calJ_{c, 0}} (  \iota_j Q \big(  \frac{ \cdot}{\lam_{n, j}} \big)  , 0) -  \bs \pi)   + (\chi_{2r_{2, n}} - 1) \sum_{i \le J, \,  \, i \in \calJ_{\Lin, 0}} \bs v_{\Lin, n}^i(0) \\
&\quad +\chi_{2r_{2, n}}\sum_{ j \in \calJ_{c, \infty}} \bs \psi_n^j(0)  +\chi_{2r_{2, n}}\sum_{i \le J, \,  \, i \in \calJ_{\Lin, 0}} \bs v_{\Lin, n}^i(0) 
}
where above we have allowed the abuse of notation, $\lam_{n, j} \leftrightarrow  \lam_{n, j} \lam_{0, j}$, for the profiles with indices in $\calJ_{c, 0}$. Using the same logic used to deduce ~\eqref{eq:Jc-infty} and~\eqref{eq:Jlin-infty} we have, 
\EQ{
E( Q_{\lam_{n, j}} ; r_{2, n}, \infty)  \to 0 \mas n \to \infty, \quad \| \bs v^i_{\Lin}(-t_{n, i}/ \s_{n, i}) \|_{\E( r \ge r_{2, n} \s_{n, i}^{-1})} \to 0 \mas n \to \infty. 
}
for any fixed $j \in \calJ_{c, 0}$  or $i \in \calJ_{\Lin, 0}$.  Thus, using ~\eqref{eq:pyth} and the above along with  ~\eqref{eq:Jc-infty} and~\eqref{eq:Jlin-infty} we see that the last three lines in~\eqref{eq:profiles2} can effectively be absorbed into the error and writing 
\EQ{ \label{eq:ti-w} 
\bs{\ti w}_{n, 0}^J(r) :=  \chi_{2 r_{2, n}}(r)\bs w_{n, 0}^J(r) + o_n(1)
} 
we obtain the decomposition, 
\EQ{ \label{eq:profiles2} 
\bs{ \ti u}_n(t_n) &= m_1 \bs \pi + \sum_{ j \in \calJ_{c, 0}} (  \iota_j Q \big(  \frac{ \cdot}{\lam_{n, j}} \big)  , 0) -  \bs \pi)  + \sum_{i \le J, \,  \, i \in \calJ_{\Lin, 0}} \bs v_{\Lin, n}^i(0)  + \bs {\ti w}_{n, 0}^J . 
}
We claim that that above is a profile decomposition for $\bs{\ti u}_{n}(t_n)$ in that it satisfies the conclusions of Lemma~\ref{lem:pd}. Indeed, it remains to check the vanishing properties of the error $\bs {\ti w}_{n, 0}^J$, but these follow from, e.g., \cite[Lemmas 10 and 11]{CKS} after noting the correspondence between the linear wave equation~\eqref{eq:lin-4d} and the $2k+2$-dimensional radially symmetric free wave equation (see also~\cite[Claim A.1 and Claim 2.11]{DKM1} for the treatment of the wave equation in odd dimensions). 

Assume for the sake of contradiction that there exists a nonzero profile $(\bs v\lin^{i_0}, \s_{n, i_0}, t_{n, i_0})$ with index $i_0 \in \calJ_{\Lin, 0}$, and assume without loss of generality that 
\EQ{
\frac{-t_{n, i_0}}{\s_{n, i_0}} \to + \infty \mas n \to \infty. 
}
Using~\eqref{eq:maximal} and~\eqref{eq:tiu-fsp} we have 
\EQ{\label{eq:maximal-ti}
&\lim_{ n \to \infty} \sup_{I \ni t_n;  I \subset [0,  1]}  \frac{1}{\abs{I}} \int_I \int_0^{\infty} | \p_t \ti u_n(t, r)|^2 \, r \, \ud r \, \ud t = 0,\\  
&\| \p_t {\ti u}_n(t_n) \|_{L^2} \to 0 \mas n \to \infty, 
}
and we can apply (after passing to a subsequence) Lemma~\ref{lem:sym-profile} to deduce the existence of a matching profile $(\bs v\lin^{i_1}, \s_{n, i_1}, t_{n, i_1})$ such that for all $s \in \R$, 
\EQ{
 v_{\Lin}^{i_1}(s) =  v_{\Lin}^{i_0}(-s), \quad  \s_{n, i_1} = \s_{n, i_0}, \mand t_{n, i_1} = - t_{n, i_0} \, \, \forall n. 
}
After relabeling we may assume that $i_1 = i_0 + 1$. 

We claim that there exists $\tau_0>0$ so that, in addition to~\eqref{eq:maximal-ti}, we also have, 
\EQ{ \label{eq:tau_0}
\lim_{n \to \infty} \| \p_t{ \ti u}_n(t_n +  \tau_0 \s_{n, i_0}) \|_{L^2} = 0
} 
To see this, assume for simplicity that $t_n \to t_0 <1$ (the other possible scenarios are similar). Passing to a subsequence, we may assume that 
\EQ{
2^{-2n-4} \ge   \sup_{I \ni t_n;  I \subset [0,  1]}  \frac{1}{\abs{I}} \int_I \int_0^{\infty} | \p_t \ti u_n(t, r)|^2 \, r \, \ud r \, \ud t , 
}
and define sets $E_n$ (for all large $n$) via, 
\EQ{
E_n := \{ \tau \in [0, 1] \,\,: \, \,   \| \p_t { \ti u}_n(t_n +  \tau \s_{n, i_0}) \|_{L^2}^2 \ge 2^{-n-2} \}.
}
Thus,  
\EQ{
2^{-2n-4}  \ge \frac{1}{\s_{n, i_0}} \int_{t_n}^{t_n + \s_{n, i_0}}  \| \p_t \ti u_n( t ) \|_{L^2}^2 \, \ud t = \int_0^1 \| \p_t{ \ti u}_n(t_n +  \tau \s_{n, i_0}) \|_{L^2 }^2 \, \ud \tau  \ge \abs{E_n} 2^{-n-2}
}
which means that $\abs{E_n} \le 2^{-n-2}$ for all $n$ large enough. Hence $\abs{  \cup_{n \ge 0} E_n } \le \frac{1}{2}$, and thus any $\tau_0 \in [0, 1] \setminus \cup_{n \ge 0} E_n $ satisfies~\eqref{eq:tau_0}. 

Next, we will need to evolve the profiles for  time $ =  \tau_0\s_{n, i_0}$. To get in the setting of Lemma~\ref{lem:nlpd} we first need to truncate the sequence again, removing all profiles concentrating at a scales $\ll \s_{n, i_0}$. To this end, and following~\cite[Erratum]{DKM3erratum}, we denote by $\calK  = \calK_{s, 0} \cup \calK_{\Lin, 0}$, where $\calK_{c, 0} \subset \calJ_{c, 0}$ is the set of indices $j$ such that, 
\EQ{
\exists C_j >0 \quad \textrm{such that} \quad \lam_{n, j} \le C_k \s_{n, i_0} , 
}
and letting $\eps_0>0$ be as in Lemma~\ref{lem:Cauchy}, $\calK_{\Lin, 0} \subset \calJ_{\Lin, 0}$ is the set of indices $i$ such that both 
\EQ{
E( \bs v_{\nl}^i) \ge \eps_0 \mand \exists C_i >0 \quad \textrm{such that} \quad \max( \s_{n, i},  \abs{t_{n, i}} ) \le C_i \s_{n, i_0}
}
Observe that $i_0, i_0 +1 \not \in \calK_{\Lin, 0}$ and that by the pythagorean expansion of the nonlinear energy, $\calK$ is a finite set. 

Since $\s_{n, i_0} \ll \abs{t_{n, i_0}}$ we can, arguing as in~\eqref{eq:r2},  find a scale $\s_n$ such that $\s_{n, i_0} \ll \s_n \ll \abs{t_{n, i_0}}$ and such that $E( \bs{\ti u}_n; \s_n/4, 4\s_n) \to 0$ as $n \to\infty$. Using Lemma~\ref{lem:pi}, and arguing as above, after passing to a subsequence we can find a sequence $y_n \in [\frac{3}{4} \s_n, \frac{5}{4}\s_n]$ and an integer $\ell_1$ with 
$\abs{ \bs{\ti u}_n(t_n, y_n)-\ell_1 \bs \pi} \to 0$. We then define a sequence $\bs{\check u}_n(t_n) \in \E_{\ell_1, m_1}$ by 
\EQ{
\bs {\check u}_n(t_n) := \chi_{\sigma_n} \ell_1 \bs \pi + (1- \chi_{\sigma_n}) \bs {\ti u}_n(t_n) 
}
It follows  that for any $J \ge \max(i; i\in \calK_{\Lin, 0})+ 1$ we have
\EQ{
&\bs{\check{ u}}_n(t_n) = m_1 \bs \pi +  \sum_{j \in\calJ_{c, 0}\setminus \calK_{c, 0}} (  \iota_j Q \big(  \frac{ \cdot}{\lam_{n, j}} \big)  , 0) -  \bs \pi)   + \sum_{i \le J, \,  \, i \in \calJ_{\Lin, 0}\setminus \calK_{\Lin, 0}}  \bs v_{\Lin, n}^j(0) +  \bs{\check w}_{n, 0}^J  +o_n(1) 
}
where we define $\check w_{n, 0}^J(r) =  (1- \chi_{\sigma_n})\ti w_{n, 0}^J(r)$. 
We need to justify the $o_n(1)$ term above. First, it is clear the harmonic maps with indices $j \in \calK_{c, 0}$ satisfy $\|(1- \chi_{\s_n/2})(  \iota_j Q \big(  \frac{ \cdot}{\lam_{n, j}} \big)  , 0) -  \bs \pi)\|_{\E} = o_n(1)$ since $j \in \calK_{c, 0}$ implies $\lam_{n, j} \le C_j \s_{n, i_0} \ll \s_{n}$. Next for those indices $i \in \calK_{\Lin, 0}$ we claim that,  
\EQ{ \label{eq:exterior-vanishing} 
\| (1- \chi_{\s_n/2}) \bs v_{\Lin}^i( - t_{n, i}/ \s_{n, i}) \|_{\E} \lesssim \| \bs v_{\Lin}^i( - t_{n, i}/ \s_{n, i}) \|_{\E( r \ge \frac{1}{2} \s_n/ \s_{n, i})} = o_n(1)
}
To prove the last inequality above note that since  $i \in\calK_{\Lin, 0}$ we have 
\EQ{
\frac{1}{2} \frac{\s_n}{\s_{n, i}} = \frac{1}{2} \frac{\s_n}{\s_{n, i_0}} \frac{\s_{n, i_0}}{\s_{n, i}}  \ge \frac{1}{2C_i}  \frac{\s_n}{\s_{n, i_0}}\frac{\abs{t_{n, i}}}{\s_{n, i}} 
}
and now~\eqref{eq:exterior-vanishing} follows from~\cite[Lemma 9]{CKS} after noting again that $\s_n/ \s_{n, i_0} \to \infty$ as $n\to \infty$, and using the equivalence between~\eqref{eq:lin-2d} and~\eqref{eq:lin-2k+2} outlined in Section~\ref{sec:Cauchy}.  

Note that 
\EQ{
\bs { \check u}_n(t_n, r) = \bs{\ti u}_n(t_n, r) = \bs u_n( t_n, r) \mif 4 \s_n \le r \le  r_{2, n}
}
and thus, denoting by $\bs{ \check u}_n(t)$ the wave map evolution of $\bs{ \check u}_n$ we have by finite speed of propagation that for $s>0$, 
\EQ{ \label{eq:checku-fsp} 
\bs{ \check u}_n(t_n + s, r) = \bs{\ti u}_n(t_n + s,  r) = \bs u_n(t_n + s, r) \mif 4\s_n + s \le r \le  r_{2, n} - s.
}
 The point of these truncations is that we can now apply the nonlinear profile decomposition Lemma~\ref{lem:nlpd} to $\bs{\check u}_n(0)$ up to time $\tau_0 \s_{n, i_0}$,  obtaining an error term $\bs z_{n}^J(t)$ satisfying for all $s\in [0, \tau_0 \s_{n, i_0}]$, 
\EQ{
&\bs{\check u}_n(t_n + s)  = m_1 \bs \pi +  \sum_{j \in\calJ_{c, 0}\setminus \calK_{c, 0}} (  \iota_j Q \big(  \frac{ \cdot}{\lam_{n, j}} \big)  , 0) -  \bs \pi)   + \sum_{i \le J, \,  \, i \in \calJ_{\Lin, 0}\setminus \calK_{\Lin, 0}}  \bs v_{\nl, n}^j(s) +  \bs{\check w}_{n}^J(t) + \bs z_{n}^J(t) \\
&\lim_{J \to \infty} \limsup_{n \to \infty} \Big( \sup_{t \in[0, \tau_0 \s_{n, i_0}]}\| \bs z_{n}^J(t) \|_{\E}  + \|  z_{n}^J\|_{\calS([0, \tau_0 \s_{n, i_0}])} \Big) = 0. 
}
Next observe that plugging in $s=\tau_0\s_{n, i_0}$ above gives rise to  \emph{linear} profile decomposition for $\bs{\check u}_n( t_n + \tau_0 \s_{n, i_0})$ in the sense of Lemma~\ref{lem:pd}, where the profiles are given by $(\bs Q, \lam_{n, j})$ and $(\bs {\ti v}_{\Lin}^i, \ti \s_{n, i}, \ti t_{n, i}) = ( \bs v_{\Lin}^i, \s_{n, i}, t_{n, i}- \tau_0 \s_{n, i_0})$. In particular $\ti v_{\Lin}^{i_0}(t) =  \ti v_{\Lin}^{i_0 +1}(-t)$. 

We apply Lemma~\ref{lem:sym} to the sequence, 
\EQ{
(f_n, g_n)&:= \big( \check u_n( t_n + \tau_0 \s_{n, i_0}, \s_{n, i_0 }\cdot), \s_{n, i_0} \p_t \check u_n( t_n + \tau_0 \s_{n, i_0}, \s_{n, i_0} \cdot) \big) \\
&\quad  - m_1 \bs \pi -  \sum_{j \in\calJ_{c, 0}\setminus \calK_{c, 0}} (  \iota_j Q \big(  \frac{ \s_{n, i_0}}{\lam_{n, j}}  \cdot \big)  , 0) -  \bs \pi) 
}
with $\al_n= 4 \frac{\s_n}{\s_{n, i_0}} + \tau_0$ and $s_n = \frac{t_{n, i_0}}{\s_{n, i_0}}$. By~\eqref{eq:checku-fsp} and~\eqref{eq:tau_0} we have $\| g_n \|_{L^2(r \ge \al_n)} \to 0$ as $n \to \infty$. Since $\s_n \ll \abs{t_{n, i_0}}$ we also have $\frac{\abs{s_n}}{\al_n} \to \infty$ as $n \to \infty$. Hence we may apply Lemma~\ref{lem:sym}. On the one hand, by the way the profiles are obtained, 
\EQ{
S\lin(s_n)(f_n, g_n) = S\lin(\frac{t_{n, i_0}}{\s_{n, i_0}} )(f_n, g_n)  = S\lin( \tau_0) S\lin( \frac{\ti t_{n, i_0}}{\ti \s_{n, i_0}}) (f_n, g_n) \rightharpoonup (v_{\Lin}^{i_0}(\tau_0), \p_t v\lin^{i_0}( \tau_0) )\in \E
}
but on the other hand, since $\ti t_{n, i_0+1} = -t_{n, i_0} -\tau_0 \s_{n, i_0}$ and since $\s_{n, i_0} = \s_{n, i_0 +1} = \ti \s_{n, i_0 +1}$ we have 
\EQ{
&S\lin(-s_n)(f_n, g_n) = S\lin(-\frac{t_{n, i_0}}{\s_{n, i_0}} )(f_n, g_n)  \\
&= S\lin(\tau_0)S\lin(-\frac{\ti t_{n, i_0+1}}{\ti \s_{n, i_0+1}} )(f_n, g_n)  \rightharpoonup (v_{\Lin}^{i_0+1}(\tau_0), \p_t v\lin^{i_0+1}( \tau_0) ) = (v_{\Lin}^{i_0}(-\tau_0), -\p_t v\lin^{i_0}(- \tau_0) ) \in \E
}
An application of Lemma~\ref{lem:sym} then gives $v_{\Lin}^{i_0}(\tau_0) = v_{\Lin}^{i_0}(-\tau_0)$ and $\p_t v\lin^{i_0}( \tau_0) = \p_t v\lin^{i_0}(- \tau_0) $, or in other words $ v_{\Lin}^{i_0}(t) = v\lin^{i_0}( t+ 2 \tau_0)$, is periodic with period $2 \tau_0$, which is impossible since $\bs v\lin^{i_0}(t)$ is a finite energy solution to~\eqref{eq:lin-2d}, unless $\bs v\lin^{i_0} \equiv 0$, which contradicts our assumption. Thus, there are no nonzero profiles with indices in the set $\calJ_{\Lin, 0}$.


%

\textbf{Step 8.}(Vanishing properties of the error $\bs {w}_{n, 0}^J$) We summarize where the argument stands after all of the previous steps. With $\bs{\ti u}_n$ defined in~\eqref{eq:profiles2}, we may 
relabel the indices in $\calJ_{c, 0}$,   so that  $\vec \lam_n= (\lam_{n, 1} , \dots \lam_{n, K_0})$ with  $0 \le K_0 \le K_1$, and with $\lam_{n, 1} \ll \lam_{n, 2} \ll \dots \ll \lam_{n, K_0} \lesssim 1$ and signs $\vec \iota = ( \iota_1, \dots, \iota_{K_0})$, so that 
\EQ{ \label{eq:profiles-2} 
\bs{\ti u}_n(t_n) &= m_1 \bs \pi + \sum_{j =1}^{K_0} (  \iota_j Q \big(  \frac{ \cdot}{\lam_{n, j}} \big)  , 0) -  \bs \pi)  + \bs{ \ti w}_{n, 0}. 
}
where we have removed the index  $J$ in $\bs{ \ti w}_{n, 0}^J$, using the previous step since there are no nonzero outgoing/incoming profiles relevant to the region $r \le r_{2, n}$. 
It will suffice to show the existence of a sequence $r_n  \to \infty$, with $r_n \le r_{n, 2}$ so that after passing to a subsequence, we have 
\EQ{ \label{eq:w-vanishes} 
\| \bs{\ti w}_{n, 0} \|_{\E(r \le r_n)} \to 0 \mas n \to \infty. 
}
Using the pythagorean expansion of the energy, we conclude from~\eqref{eq:maximal-ti} that 
\EQ{
\| \dot{ \ti w}_{n, 0} \|_{L^2} \to 0 \mas n \to \infty.
}
We also have directly from Lemma~\ref{lem:pd} that, 
\EQ{
\| \ti w_{n, 0} \|_{L^\infty} \to 0  \mas n \to \infty.
}
After passing to a subsequence of the $\bs u_n$, we claim there is a sequence $r_n \to \infty$ with the following properties, 
\EQ{\label{eq:rn} 
1 \ll r_n \le \min(r_{2, n}, \ti R_{1, n}) , \quad \|\bs{\ti w}_n\|_{\E(  r_n^{-1} \le r \le 2r_n)} \to 0 \mas n \to \infty, 
}
where $R_{1, n}$ is as in Steps 1. and 2. Indeed, arguing as in Step. 5., we see that for any sequence $\lam_n \lesssim 1$ and any $A>1$ we have, 
\EQ{ \label{eq:every-scale} 
\|\bs{\ti w}_n\|_{\E(  \lam_n A^{-1} \le r \le  \lam_n A)} \to 0 \mas n \to \infty, 
}
see for example ~\cite[Step 2., p.1973-1975, Proof of Theorem 3.5]{Cote15} or~\cite[Proof of (5.29) in Theorem 5.1]{JK} for this conclusion in those analogous settings. Then,  considering the case $\lam_n =1$ above and passing to a subsequence of the $\bs{\ti u}_n$,   we obtain a sequence as in~\eqref{eq:rn}. 

Using the selection of $r_n$ in the previous line, we see from~\eqref{eq:jia-kenig} that, in addition to~\eqref{eq:maximal-ti},  $\bs{\ti u}_n$ satisfies 
\EQ{ \label{eq:jia-kenig-tiu} 
\limsup_{n\to \infty}\int_0^\infty\bigg( k^2 \frac{\sin^2(2\ti u_n(t_n))}{2r^2}- (\partial_r^2 \ti u_n(t_n) + \frac 1r \partial_r \ti u_n(t_n))\sin(2\ti u_n)\bigg)\chi(\cdot/ r_{ n})\,r\vd r \leq 0, 
}
Integration by parts of the second term in the integrand above yields, 
\EQ{
- \int_0^\infty(\partial_r^2 \ti u_n + \frac 1r \partial_r \ti u_n)\sin(2\ti u_n)\chi(\cdot/ r_{ n})\,r\vd r  &= \int_0^\infty (\p_r \ti u_n)^2 2 \cos (2 \ti u_n) \chi( \cdot/ r_n) \,r \,  \ud r  \\
&  \quad + \int_0^\infty \p_r \ti u_n \sin(2 \ti u_n) \frac{1}{r_n} \chi'(  \cdot/ r_n) \, r \, \ud r . 
}
The second term on the right above satisfies, 
\EQ{
\abs{\int_0^\infty \p_r \ti u_n \sin(2 \ti u_n) \frac{1}{r_n} \chi'(  \cdot/ r_n) \, r \, \ud r } &\lesssim E(( \ti u_n, 0); \frac{r_n}{2}, 2r_n)  \to 0 \mas n \to \infty \\
}
 by our selection of $r_n$. 
From the above and~\eqref{eq:jia-kenig-tiu} it follows that
\EQ{ \label{eq:jk-ineq} 
\limsup_{n\to \infty}\int_0^\infty \bigg( k^2 \frac{\sin^2(2\ti u_n(t_n))}{2r^2} + (\p_r \ti u_n(t_n))^2 2 \cos (2 \ti u_n(t_n))  \bigg)\chi( \cdot/ r_n) \,r \,  \ud r    \le 0 . 
}
We now use the second assumption in~\eqref{eq:rn}, in particular the fact that it implies $$\lim_{n \to \infty}E(\bs{\ti u}_n; r_n/4, 4r_n) =0,$$ because all the $\lam_{n, j}$ are bounded,  to truncate the sequence $\bs{\ti u}_n(t_n)$ yet again, obtaining a new sequence $\bs{\ti{\ti u}}_n$ and corresponding wave map evolutions $\bs{\ti{\ti u}}_n(t)$ on the interval $[0, 1]$, such that 
\EQ{\label{eq:titi} 
\bs{\ti{\ti u}}_n(t, r ) = \bs {\ti u}_n(t, r) = \bs u_n(t, r)  \mif r \le r_n, \mand  \lim_{n \to \infty} E( \bs{\ti {\ti u}}_n; r_n, \infty) = 0
}
Using the above along with~\eqref{eq:jk-ineq} we obtain the following global non-positivity of the Jia-Kenig virial functional for $\bs{\ti {\ti u}}_n$, 
\EQ{ \label{eq:jia-kenig-final} 
\limsup_{n\to \infty}\int_0^\infty \bigg( k^2 \frac{\sin^2(2\ti {\ti u}_n(t_n))}{2r^2} + (\p_r \ti {\ti u}_n(t_n))^2 2 \cos (2 \ti {\ti u}_n(t_n))  \bigg) \,r \,  \ud r    \le 0 
}
From~\eqref{eq:maximal-ti} and~\eqref{eq:titi} we obtain, 
\EQ{\label{eq:maximal-titi}
&\lim_{ n \to \infty} \sup_{I \ni t_n;  I \subset [0,  1]}  \frac{1}{\abs{I}} \int_I \int_0^{\infty} | \p_t \ti {\ti u}_n(t, r)|^2 \, r \, \ud r \, \ud t = 0,\\  
&\| \p_t {\ti {\ti u}}_n(t_n) \|_{L^2} \to 0 \mas n \to \infty, 
}
and finally from~\eqref{eq:profiles-2} we see that $\bs {\ti{\ti u}}_n(t_n)$ satisfies, 
\EQ{
\bs{\ti {\ti u}}_n(t_n) &= m_1 \bs \pi + \sum_{j =1}^{K_0} (  \iota_j Q \big(  \frac{ \cdot}{\lam_{n, j}} \big)  , 0) -  \bs \pi)  + \bs{\ti{\ti{w}}}_{n, 0}
}
with $\|\bs{\ti{\ti{w}}}_{n, 0} \|_{L^\infty \times L^2} \to 0$ and $\| \bs{\ti{\ti w}}_{n, 0} \|_{\E(r \ge  r_n^{-1})} \to 0$ as $n \to \infty$, and all the $\lam_{n, j}$ satisfy $\lam_{n, j} \lesssim 1$. Moreover, by~\eqref{eq:every-scale} we have the vanishing $\| \bs{\ti{\ti w}}_{n, 0} \|_{\E(A^{-1} \lam_n \le r \le A \lam_n)} \to 0$ as $n \to \infty$ for any sequence $\lam_n \lesssim 1$ and any $A>1$. 

We have now reduced to a setting that is completely analogous to~\cite[Proof of Theorem 3.2]{JK} and one may argue precisely as in that paper to conclude, via Lemma~\ref{lem:jk},  that 
\EQ{
\| \bs{\ti{\ti w}}_n \|_{\E} \to 0 \mas n \to \infty. 
}
 We have shown that $\bs \de_{\infty}( \bs{\ti{\ti u}}_n(t_n)) \to 0$ as $n \to \infty$.  By~\eqref{eq:titi} we in fact have proved that $\bs\de_{r_n}( \bs u_n(t_n)) \to 0$ as $n \to \infty$, (note that  $\lam_{n, K_0} \lesssim 1$ and $r_n \to \infty$ ensures that the final ratio $\lam_{n, K_0}/ \lam_{n, K_0 +1} \to 0$, where $\lam_{n, K_0 +1} := r_n$), completing the proof. 
\end{proof} 


\section{Decomposition of the solution and collision intervals} \label{sec:decomposition} 

In the final two sections we prove Theorem~\ref{thm:main} for equivariance classes $k \ge2$. We reserve the case $k=1$ for the appendix. 

\subsection{Proximity to a multi-bubble and collisions}
\label{ssec:proximity}
For the remainder of the paper we fix a solution
$\bs u(t) \in \E_{\ell, m}$ of \eqref{eq:wmk},
defined on the time interval $I_*=(0, T_0]$
in the blow-up case and on $I_*=[T_0, \infty)$ in the global case, for some $T_0 > 0$. 
We set $T_* := \infty$ in the global case and $T_* := 0$ in the blow-up case.
Let $\bs u^*(t)$ be the radiation as defined in Theorem~\ref{thm:stz}. More precisely, we let $m_\Delta := \lim_{t \to T_+}u(t, \frac 12 t) \in \bZ$
and shift the radiation so that $\bs u^*(t) \in \cE_{0, m_\infty}$ for some $m_\infty \in \bZ$,
and for $r \gtrsim t$, $\bs u(t, r) \sim m_\Delta\bs\pi + \bs u^*(t, r)$.
Note that $m_\infty = 0$ if $T_* = \infty$.

It is a crucial insight of \cite{CKLS1, CKLS2, Cote}
that $\bs u^*(t)$ is given for continuous time. Recall that Theorem~\ref{thm:stz} gives a function $\rho: I_* \to (0, \infty)$ such that 
\EQ{ \label{eq:rho-def} 
&\lim_{t \to T_*} \big((\rho(t) / t)^k + \|\bs u(t) -  \bs u^*(t) - m_\Delta \bs\pi\|_{\cE(\rho(t), \infty)}^2\big) = 0,
}
and that for any $\alpha \in (0, 1)$ we have
\EQ{
\label{eq:energy-of-ustar}
\lim_{t \to T_*} E(  \bs u^*(t); 0, \al t) = 0.
}

By Theorem~\ref{thm:seq} there exists a time sequence $t_n \to T_*$ and an integer $N \ge 0$, which we now fix,  such that $\bs u(t_n) - \bs u^*(t_n)$ approaches an $N$-bubble as $n \to \infty$.  Roughly, our goal is to show that on the region $r \in (0, \rho(t))$, the solution $\bs u(t)$ approaches a continuously modulated $N$-bubble, noting that the radiation $\bs u^*(t)$ is negligible in this region. By convention, we will set $\lambda_{N+1}(t) := t$ to be the ``scale'' of the radiation and $\lambda_0(t) := 0$. Our argument requires the following localized version of the distance function to a multi-bubble.

\begin{defn}[Proximity to a multi-bubble]
\label{def:proximity}
For all $t \in I$, $\rho \in (0, \infty)$, and $K \in \{0, 1, \ldots, N\}$, we define
the \emph{localized multi-bubble proximity function} as
\begin{equation}
\bfd_K(t; \rho) := \inf_{\vec \iota, \vec\lam}\bigg( \| \bs u(t) - \bs u^*(t) - \bs\calQ(m_\Delta, \vec\iota, \vec\lambda) \|_{\cE(\rho, \infty)}^2 + \sum_{j=K}^{N}\Big(\frac{ \lam_{j}}{\lam_{j+1}}\Big)^{k} \bigg)^{\frac{1}{2}},
\end{equation}
where $\vec\iota := (\iota_{K+1}, \ldots, \iota_N) \in \{-1, 1\}^{N-K}$, $\vec\lambda := (\lambda_{K+1}, \ldots, \lambda_N) \in (0, \infty)^{N-K}$, $\lambda_K := \rho$ and $\lambda_{N+1} := t$.

The \emph{multi-bubble proximity function} is defined by $\bfd(t) := \bfd_0(t; 0)$.
\end{defn}

\begin{rem} 
We emphasize that if $\bfd_K(t; \rho)$ is small, this means that $ \bs u(t) - \bs u^*(t)$ is close to $N-K$ bubbles in the exterior region  $r \in (\rho, \infty)$. 
\end{rem} 
We can now rephrase Theorem~\ref{thm:seq} in this notation: there exists a monotone sequence $t_n \to T_*$ such that
\begin{equation}
\label{eq:dtn-conv}
\lim_{n \to \infty} \bfd(t_n) = 0.
\end{equation}
Even though this fact is certainly a starting point of our analysis,
it will turn out that we cannot use it as a black box. Rather, we need to examine the proof
and use more precise information provided by the analysis in~\cite{Cote15, JK}; see Section~\ref{sec:compact}. 

We state and prove some simple consequences of the set-up above.
We always assume $N \geq 1$, since the pure radiation case $N = 0$ (in fact,
also the case $N = 1$) is already settled by C\^ote's and Jia's and Kenig's work~\cite{Cote15, JK}.

First, a direct consequence of~\eqref{eq:rho-def} is that $\bs u(t)- \bs u^*(t)$ always approaches a $0$-bubble in some exterior region. With $\rho_N(t) = \rho(t)$ given by the function in Theorem~\ref{thm:stz} the following lemma is  immediate from the conventions of Definition~\ref{def:proximity} 
\begin{lem}
\label{lem:conv-rhoN}
There exists a function $\rho_N: I \to (0, \infty)$ such that
\begin{equation}
\label{eq:conv-rhoN}
\lim_{t\to T_*}\bfd_N(t; \rho_N(t)) = 0.
\end{equation}
\end{lem}

Theorem~\ref{thm:main} will be a quick consequence of showing that, in fact, 
\begin{equation}
\label{eq:dt-conv}
\lim_{t \to T_*} \bfd(t) = 0.
\end{equation}
The approach which we adopt in order to prove~\eqref{eq:dt-conv} it is to study colliding bubbles.
A collision is defined as follows.
\begin{defn}[Collision interval]
\label{def:collision}
Let $K \in \{0, 1, \ldots, N\}$. A compact time interval $[a, b] \subset I_*$ is a \emph{collision interval}
with parameters $0 < \epsilon < \eta$ and $N - K$ exterior bubbles if
\begin{itemize}
\item $\bfd(a) \leq \epsilon$ and $\bfd(b) \leq \epsilon$,
\item there exists $c \in (a, b)$ such that $\bfd(c) \geq \eta$,
\item there exists a function $\rho_K: [a, b] \to (0, \infty)$ such that $\bfd_K(t; \rho_K(t)) \leq \epsilon$
for all $t \in [a, b]$.
\end{itemize}
In this case, we write $[a, b] \in \calC_K(\epsilon, \eta)$.
\end{defn}
\begin{defn}[Choice of $K$]
\label{def:K-choice}
We define $K$ as the \emph{smallest} nonnegative integer having the following property.
There exist $\eta > 0$, a decreasing sequence $\epsilon_n \to 0$
and sequences $(a_n), (b_n)$ such that $[a_n, b_n] \in \calC_K(\epsilon_n, \eta)$ for all $n \in \{1, 2, \ldots\}$.
\end{defn}
\begin{lem}[Existence of $K \ge 1$]
\label{lem:K-exist}
If \eqref{eq:dt-conv} is false, then $K$ is well defined and $K \in \{1, \ldots, N\}$.
\end{lem}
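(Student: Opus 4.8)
The plan is to show two things: first, that if \eqref{eq:dt-conv} fails, then \emph{some} nonnegative integer $K$ has the property in Definition~\ref{def:K-choice} (so that $K$ is well defined as the smallest such), and second, that $K = 0$ is impossible, forcing $K \in \{1, \ldots, N\}$.

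\textbf{Step 1: producing collision intervals for $K = N$.} First I would observe that $K = N$ always has the required property once \eqref{eq:dt-conv} fails. Indeed, the negation of \eqref{eq:dt-conv} gives $\eta_0 > 0$ and a sequence $s_n \to T_*$ with $\bfd(s_n) \geq 2\eta_0$ for all $n$, while \eqref{eq:dtn-conv} provides the sequence $t_n \to T_*$ with $\bfd(t_n) \to 0$. Since $\bfd$ is continuous on $I_*$ (this was noted right after the definition of $\bfd$ in the introduction, and also follows from Definition~\ref{def:proximity}), we can interlace the two sequences: for each large $n$ pick consecutive elements of $\{t_n\}$ straddling some $s_m$ and use the intermediate value theorem to extract a compact interval $[a_n, b_n]$ with $\bfd(a_n), \bfd(b_n) \leq \epsilon_n$ for a chosen sequence $\epsilon_n \to 0$, and with some $c_n \in (a_n, b_n)$ satisfying $\bfd(c_n) \geq \eta_0$. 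For the third bullet of Definition~\ref{def:collision} with $K = N$: by Lemma~\ref{lem:conv-rhoN} there is a function $\rho_N$ with $\bfd_N(t; \rho_N(t)) \to 0$ as $t \to T_*$, so shrinking $I_*$ (i.e. taking $n$ large) we get $\bfd_N(t; \rho_N(t)) \leq \epsilon_n$ on $[a_n, b_n]$. Hence $[a_n, b_n] \in \calC_N(\epsilon_n, \eta_0)$ for all large $n$, and after re-indexing we conclude that $K = N$ has the property. Therefore the set of nonnegative integers with the property of Definition~\ref{def:K-choice} is nonempty, so its minimum $K$ is well defined, and $K \leq N$.

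\textbf{Step 2: ruling out $K = 0$.} It remains to show $K \neq 0$, i.e. there is no $\eta > 0$, $\epsilon_n \to 0$, and $[a_n, b_n] \in \calC_0(\epsilon_n, \eta)$. The point is that a collision interval with $K = 0$ has $N - 0 = N$ exterior bubbles, which together with the bound $\bfd(c_n) \geq \eta$ at some interior point is essentially self-contradictory once $\epsilon_n$ is small: $\bfd_0(t; \rho_0(t)) \leq \epsilon_n$ says that on the \emph{exterior} region $r \in (\rho_0(t), \infty)$ the solution $\bs u(t) - \bs u^*(t)$ is within $\epsilon_n$ of a full $N$-bubble configuration, while the structure of $\bfd(t) = \bfd_0(t;0)$ only adds the \emph{interior} region $r \in (0, \rho_0(t))$. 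So to reach $\bfd(c_n) \geq \eta$ one needs a nontrivial amount of energy of $\bs u(c_n) - \bs u^*(c_n)$ in $r \in (0, \rho_0(c_n))$ that cannot be accounted for by the innermost bubble's tail. I would make this precise by: (i) using Definition~\ref{def:proximity} and the triangle inequality in $\cE$ to bound $\bfd(c_n)$ in terms of $\bfd_0(c_n; \rho_0(c_n)) \leq \epsilon_n$ plus the local energy $\|\bs u(c_n) - \bs u^*(c_n) - \bs\calQ(m_\Delta, \vec\iota_n, \vec\lambda_n)\|_{\cE(0, \rho_0(c_n))}$ for the near-optimal $(\vec\iota_n, \vec\lambda_n)$ from the exterior infimum; (ii) since $\lambda_{1,n} > \rho_0(c_n)$ in that configuration (all $N$ bubbles being exterior), on $(0, \rho_0(c_n))$ the configuration $\bs\calQ$ is close to the constant $\ell\bs\pi$ up to an error controlled by $(\rho_0(c_n)/\lambda_{1,n})^k \leq \epsilon_n^2$; (iii) invoking the smallness of $E(\bs u^*(c_n); 0, \alpha c_n)$ from \eqref{eq:energy-of-ustar} together with $\rho_0(c_n) \ll c_n$ (which one arranges, as $\rho_0 \leq \rho_N$ and $\rho_N(t)/t \to 0$ by \eqref{eq:rho-def}) to discard the radiation contribution on the interior. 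Combining, $\bfd(c_n) \lesssim \epsilon_n + o_n(1) \to 0$, contradicting $\bfd(c_n) \geq \eta > 0$. Hence $K \geq 1$.

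\textbf{Expected main obstacle.} The delicate point is Step 2, specifically arranging that the cutoff radius $\rho_0(c_n)$ is simultaneously an honest ``exterior threshold'' for the $K=0$ collision interval \emph{and} small compared to $c_n$ so that the radiation is negligible there; one must check that the function $\rho_0$ furnished by Definition~\ref{def:collision} can be taken $\leq \rho_N$ (or at least that its relevant values are $\ll t$), perhaps by replacing $\rho_0(t)$ with $\min(\rho_0(t), \rho_N(t))$ and verifying that $\bfd_0(t; \cdot)$ is monotone in its second argument in the right direction. Once the geometry of the regions is pinned down, the energy bookkeeping is routine and uses only \eqref{eq:rho-def}, \eqref{eq:energy-of-ustar}, Lemma~\ref{lem:conv-rhoN}, and the triangle inequality in $\cE$.
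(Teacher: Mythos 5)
Your Step 1 is fine and is essentially the paper's argument: the failure of \eqref{eq:dt-conv} together with \eqref{eq:dtn-conv} produces the endpoints, and Lemma~\ref{lem:conv-rhoN} supplies $\rho_N$ for the third bullet of Definition~\ref{def:collision}, so $K$ is well defined and $K \leq N$.

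Step 2, however, has a genuine gap. Your reduction via the triangle inequality correctly isolates the quantity that must be shown small, namely $\| \bs u(c_n) - \bs u^*(c_n) - \bs\calQ(m_\Delta,\vec\iota_n,\vec\lambda_n)\|_{\cE(0,\rho_0(c_n))}$, which after your items (ii) and (iii) becomes (up to $o_n(1)$) the quantity $\|\bs u(c_n)-\bs u^*(c_n)-\ell\bs\pi\|_{\cE(0,\rho_0(c_n))}$ --- but you never bound it, and it cannot be bounded by the ingredients you list (\eqref{eq:rho-def}, \eqref{eq:energy-of-ustar}, Lemma~\ref{lem:conv-rhoN} and the triangle inequality). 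The hypothesis $\bfd_0(c_n;\rho_0(c_n))\leq\epsilon_n$ is purely an exterior statement and says nothing about $r<\rho_0(c_n)$: a priori the solution could carry a fixed amount of energy there (extra small-scale bubbles, or kinetic energy), which would make $\bfd(c_n)\geq\eta$ perfectly compatible with $K=0$ collision intervals. What closes this is energy accounting with the \emph{exact} bubble count $N$ from Theorem~\ref{thm:seq}: using Lemma~\ref{lem:M-bub-energy} together with \eqref{eq:rho-def} one computes $E(\bs u(c_n);\rho_n,\infty)= N E(\bs Q)+E(\bs u^*)+o_n(1)$ (noting first that the exterior scales satisfy $\rho_n\ll\lambda_{n,1}\ll\dots\ll\lambda_{n,N}\ll\rho(c_n)\ll c_n$, which incidentally also disposes of the "main obstacle" you flag, since $(\rho_n/\lambda_{n,1})^k$ and $(\lambda_{n,N}/c_n)^k$ are already part of $\bfd_0$); then the energy identity \eqref{eq:en-ident}, $E(\bs u)=NE(\bs Q)+E(\bs u^*)$, forces $E(\bs u(c_n);0,\rho_n)=o_n(1)$, and \eqref{eq:energy-of-ustar} gives the same for $\bs u(c_n)-\bs u^*(c_n)$. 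Finally, converting this small \emph{nonlinear} interior energy into $\cE$-closeness to $\ell\bs\pi$ requires Lemma~\ref{lem:pi} (estimate \eqref{eq:H-E-comp}), which you also do not invoke; the triangle inequality alone does not do it, since the energy density controls $\sin^2 u/r^2$ rather than $(u-\ell\pi)^2/r^2$. With these two inputs (the energy identity bookkeeping and Lemma~\ref{lem:pi}) your outline becomes the paper's proof; without them the contradiction $\bfd(c_n)=o_n(1)$ does not follow.
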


\begin{rem} 
The fact that $K \ge 1$ means that at least one bubble must lose its shape if~\eqref{eq:dt-conv} is false.
\end{rem} 
\begin{proof}[Proof of Lemma~\ref{lem:K-exist}]
Assume \eqref{eq:dt-conv} does not hold, so that there exist $\eta > 0$ and a monotone sequence $s_n \to T_*$ such that
\begin{equation}
\bfd(s_n) \geq \eta, \qquad\text{for all }n.
\end{equation}
We claim that there exist sequences $(\epsilon_n), (a_n), (b_n)$ such that $[a_n, b_n] \in \calC_N(\epsilon_n, \eta)$.
Indeed, \eqref{eq:dtn-conv} implies that there exist $\epsilon_n \to 0$, $a_n \leq s_n$ and $b_n \geq s_n$
such that $\bfd(a_n) \leq \epsilon_n$ and $\bfd(b_n) \leq \epsilon_n$. Note that $a_n \to T_*$ and $b_n \to T_*$.
Let $\rho_N: [a_n, b_n] \to (0, \infty)$ be the function given by Lemma~\ref{lem:conv-rhoN},
restricted to the time interval $[a_n, b_n]$.
Then \eqref{eq:conv-rhoN} yields
\begin{equation}
\lim_{n\to\infty}\sup_{t\in[a_n, b_n]}\bfd_N(t; \rho_N(t)) = 0.
\end{equation}
Upon adjusting the sequence $\epsilon_n$, we obtain that all the requirements of Definition~\ref{def:collision}
are satisfied for $K = N$.

We now prove that $K \geq 1$. Suppose $K = 0$. The definition of a collision interval yields
$\bfd_0(c_n;  \rho_n) \leq \epsilon_n$ for some sequence $\rho_n \ge 0$, and at the same time $\bfd(c_n) \geq \eta$ for some $\eta>0$. We show that this is impossible. 


Define $\bs v_n := \bs u(c_n) - \bs u^*(c_n)$. 
Since $\bfd_0(c_n;  \rho_n) \leq \epsilon_n$ we can find parameters, $\rho_n \ll  \lam_{n, 1} \ll \dots \ll \lam_{n, N}$ and signs $\vec{\iota}_n$ such that defining $\bs { g}_n = \bs v_n - \bs \calQ( m_\De, \vec{ \iota}_n, \vec{ \lam}_n)$ we have
\EQ{ \label{eq:tig-small} 
\bfd_0(c_n; \rho_n) \simeq \| \bs{g}_n \|_{\E( \rho_n, \infty)}^2 + \sum_{j =0}^N\Big( \frac{  \lam_{n, j}}{\lam_{n, j+1}}\Big)^k \lesssim \eps_n^2. 
}
Using~\eqref{eq:rho-def} we see that we must have $\lam_{n, N} \ll  \rho(c_n) \ll c_n$, and thus using~\eqref{eq:rho-def} along with~\eqref{eq:tig-small} and Lemma~\ref{lem:M-bub-energy} we have
\EQ{
E( \bs u(c_n); \rho_n, \infty) &= E( \bs {g}_n + \bs u^*(c_n) + \bs \calQ( m_\De, \vec{ \iota}_n, \vec{ \lam}_n); \rho_n, \rho(c_n))  \\
&\quad + E( \bs { g}_n + \bs u^*(c_n) + \bs \calQ( m_\De, \vec{\iota}_n, \vec{\lam}_n); \rho(c_n), \infty) \\
& = N E( \bs Q) + E( \bs u^*)  + o_n(1) 
}
Since by~\eqref{eq:en-ident} we know that $E(\bs u(c_n)) = N E( \bs Q) + E( \bs u^*(c_n))$, we conclude from the previous line that, 
\EQ{
E( \bs u(c_n); 0, \rho_n) = o_n(1) \mas n \to \infty. 
}
Using \eqref{eq:energy-of-ustar} and the fact that $\rho_n \ll \rho(c_n)$ it follows that $E( \bs v_n; 0, \rho_n) = o_n(1)$, and hence by~\eqref{eq:H-E-comp} we conclude that 
\EQ{
\|\bs v_n - \ell \bs \pi \|_{\E( 0, \rho_n)} \lesssim E( \bs v_n; 0, \rho_n) = o_n(1) \mas n \to \infty
}
Thus, combining the above with~\eqref{eq:tig-small} we have $\bfd(c_n) = o_n(1)$ as $n \to \infty$, a contradiction. 
\end{proof}

In the remaining part of the paper, we argue by contradiction, fixing $K$ to be the number provided by Lemma~\ref{lem:K-exist}.
We also let $\eta, \epsilon_n, a_n$ and $b_n$ be some choice of objects satisfying the requirements of Definition~\ref{def:K-choice}.
We fix choices of signs and scales for the $N-K$ ``exterior'' bubbles provided by Definition~\ref{def:proximity} in the following lemma.  

\begin{rem} \label{rem:collision} 
For each collision interval there exists a time $c_n \in [a_n, b_n]$ with $\bfd(c_n) \ge \eta$ and we may assume without loss of generality that $\bfd(a_n) = \bfd(b_n) = \eps_n$ and $\bfd(t) \ge \eps_n$ for each $t \in [a_n, b_n]$. Indeed, given some initial choice of $[a_n, b_n] \in \calC_K(\eta, \eps_n)$, we can find $a_n \le \ti a_n < c_n$ and $c_n < \ti b_n \le b_n$ so that $\bfd(a_n) = \bfd(b_n) = \eps_n$ and $\bfd(t) \ge \eps_n$ for each $t \in [\ti a_n, \ti b_n]$.  Just set $a_n \le \ti a_n := \inf\{t \le c_n \mid \bfd(t) \ge \eps_n \}$ and similarly for $\ti b_n$. 

Similarly, give some initial choice $\eps_n \to 0, \eta>0$ and intervals $[a_n, b_n] \in \calC_K( \eta, \eps_n)$ we are free to ``enlarge'' $\eps_n$ by choosing some other sequence $\eps_n \le \ti \eps_n  \to 0$, and new collision subintervals $[\ti a_n, \ti b_n]  \subset [a_n, b_n] \cap \calC_{K}(\eta, \ti \eps_n)$ as in the previous paragraph. We will enlarge our initial choice of $\eps_n$ in this fashion several times over the course of the proof. 
\end{rem} 




\begin{lem}\label{lem:ext-sign} 
Let $K \ge 1$ be the number given by Lemma~\ref{lem:K-exist}, and let $\eta, \epsilon_n, a_n$ and $b_n$ be some choice of objects satisfying the requirements of Definition~\ref{def:K-choice}. Then there exists a sequence $\vec\sigma_n \in \{-1, 1\}^{N-K}$,  a  function $\vec \mu = (\mu_{K+1}, \dots, \mu_N)  \in C^1(\cup_{n \in \N} [a_n, b_n] ;  (0, \infty)^{N-K})$, a sequence $\nu_n \to 0$, and a sequence $m_n \in \Z$, so that defining the function, 
\EQ{ \label{eq:nu-def} 
\nu:\cup_{n \in \N} [a_n, b_n] \to (0, \infty), \quad  \nu(t):= \nu_n \mu_{K+1}(t),  
}
we have, 
\EQ{ \label{eq:nu-prop} 
\lim_{n \to \infty}\sup_{t \in [a_n, b_n]} \Big(\bfd_K(t; \nu(t)) + E( \bs u(t), \nu(t), 2 \nu(t)) \Big) = 0, 
}
and defining  $\bs w(t), \bs h(t)$ for  $t \in \cup_n [a_n, b_n]$ by 
\EQ{ \label{eq:w_n} 
\bs w(t)= (1 - \chi_{\nu(t)})( \bs u(t) - \bs u^*(t)) + \chi_{\nu(t)} m_n \bs \pi  &= m_{\De} \bs \pi +  \sum_{j = K+1}^N  \s_{n, j} ( \bs Q_{\mu_{ j}(t)} - \bs \pi) + \bs h(t), 
}
we have, $\bs w(t) \in \E_{m_n, m_{\De}}$, $\bs h(t) \in \E$, and 
\EQ{ \label{eq:mu_n} 
\lim_{n \to \infty} \sup_{t \in [a_n, b_n]} \Big(\| \bs h(t) \|_{\E}^2 + \Big(\frac{\nu(t)}{ \mu_{K+1}(t)} \Big)^k +  \sum_{j = K+1}^{N} \Big( \frac{\mu_{ j}(t)}{\mu_{j+1}(t)} \Big)^k \Big) = 0,   
}
with the convention that $\mu_{N+1}(t) = t$. Finally, $\nu(t)$ satisfies the estimate, 
\EQ{ \label{eq:nu'} 
\lim_{n \to \infty} \sup_{t \in [a_n, b_n]}\abs{\nu'(t) } = 0.
}

\end{lem}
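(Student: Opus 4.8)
The plan is to place a cut-off $\nu(t)$ in the scale gap between the colliding ``interior'' bubbles and the coherent ``exterior'' ones, and to read off $\vec\mu$ by applying the static modulation Lemma~\ref{lem:mod-static} to the truncated map. Write $\bs v(t):=\bs u(t)-\bs u^*(t)$ and fix, for each $n$, a collision interval $[a_n,b_n]\in\calC_K(\epsilon_n,\eta)$ with its radius function $\rho_K^{(n)}$, so $\bfd_K(t;\rho_K^{(n)}(t))\le\epsilon_n$ on $[a_n,b_n]$. The first step is to observe that $\bfd_K(t;\cdot)$ stays small on a \emph{wide} window of inner radii: if $(\vec\sigma,\vec\mu^*(t))$ nearly realizes $\bfd_K(t;\rho_K^{(n)}(t))$, then for any $\rho$ with $\rho_K^{(n)}(t)\le\rho\le\epsilon_n^{1/k}\mu^*_{K+1}(t)$ the exterior error $\|\bs v(t)-\bs\calQ(m_\Delta,\vec\sigma,\vec\mu^*(t))\|_{\cE(\rho,\infty)}$ is still $\le\epsilon_n$ while $(\rho/\mu^*_{K+1}(t))^k\le\epsilon_n$, so $\bfd_K(t;\rho)\lesssim\sqrt{\epsilon_n}$; this window is nonempty with endpoint ratio $\to\infty$ since $\rho_K^{(n)}(t)/\mu^*_{K+1}(t)\le\epsilon_n^{2/k}$. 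I would also record that for such $\rho$ one has $E(\bs u(t);\rho,2\rho)\to0$: on $[\rho,2\rho]$ the radiation contributes $o_n(1)$ by \eqref{eq:energy-of-ustar} (as $\rho\ll\mu^*_N(t)\ll t$), the exterior bubbles $\bs Q_{\mu^*_j(t)}$ contribute $\lesssim(\rho/\mu^*_{K+1}(t))^{2k}$, and the exterior error contributes $\le\epsilon_n^2$, so $u(t,\cdot)$ is $o_n(1)$-close to the integer multiple $m_n:=m_\Delta-\sum_{j=K+1}^N\sigma_j$ of $\pi$ and Lemma~\ref{lem:pi} applies.

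Given a continuous cutoff radius $\rho_n(t)$ in this window, I would form $\bs w_{\rho_n}(t):=(1-\chi_{\rho_n(t)})\bs v(t)+\chi_{\rho_n(t)}\,m_n\bs\pi\in\cE_{m_n,m_\Delta}$. Using $Q(s)=O(s^k)$ as $s\to0$ and the previous step, $\bs w_{\rho_n}(t)$ lies within $\lesssim\bfd_K(t;\rho_n(t))\lesssim\sqrt{\epsilon_n}$ in $\cE$ of the genuine $(N-K)$-bubble $\bs\calQ(m_\Delta,\vec\sigma,\vec\mu^*(t))$, and $E(\bs w_{\rho_n}(t))\le(N-K)E(\bs Q)+C\sqrt{\epsilon_n}$ by Lemma~\ref{lem:M-bub-energy}, so Lemma~\ref{lem:mod-static} (with $M=N-K$, $m=m_\Delta$) yields signs $\vec\sigma(t)$, scales $\mu_{K+1}(t)<\dots<\mu_N(t)$, and $\bs h(t)$ with $\langle\calZ_{\U{\mu_j(t)}}\mid h(t)\rangle=0$ and, by \eqref{eq:g-bound-0}, $\|\bs h(t)\|_{\E}^2+\sum_{j=K+1}^{N-1}(\mu_j(t)/\mu_{j+1}(t))^k\lesssim\epsilon_n$. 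Because $\bs u,\bs u^*$ are continuous in $\E$ and $\rho_n(\cdot)$ is continuous, $\bs w_{\rho_n}(\cdot)$ is continuous in $\E$, so the implicit-function construction makes the output continuous in $t$; Lemma~\ref{lem:bub-config} then forces $\vec\sigma(t)$ locally constant, hence equal to a fixed $\vec\sigma_n$, and forces $\mu_j(t)=(1+o_n(1))\mu^*_j(t)$, so $\mu_N(t)/t\to0$ too.

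The remaining point is to make the cutoff equal to $\nu(t)=\nu_n\mu_{K+1}(t)$. I would set $\nu_n:=\epsilon_n^{1/k}$ (so $\epsilon_n^{2/k}\ll\nu_n\ll1$, and $\nu_n\mu_{K+1}(t)$ sits just inside the window of the first step). The relation ``$\nu=\nu_n\mu_{K+1}$, with $\mu_{K+1}$ obtained by modulating $\bs w_\nu$'' is a fixed-point equation, which I would resolve by a contraction argument: dilating the cutoff radius by $1+\delta$ moves $\chi$ only across an annulus on which, by the first step, $\bs v(t)$ carries energy $o_n(1)$, so $\|\bs w_{(1+\delta)\nu}(t)-\bs w_\nu(t)\|_{\E}\lesssim|\delta|\,o_n(1)$ and the modulation scales move by $\lesssim|\delta|\,o_n(1)$ — a contraction constant $<1$ for large $n$. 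The unique fixed point gives $\vec\mu$; one then sets $\nu(t):=\nu_n\mu_{K+1}(t)$ and $\bs w(t):=(1-\chi_{\nu(t)})\bs v(t)+\chi_{\nu(t)}m_n\bs\pi$, and checks $\bfd_K(t;\nu(t))\lesssim\sqrt{\epsilon_n}\to0$, $E(\bs u(t);\nu(t),2\nu(t))\to0$ (first step), $(\nu(t)/\mu_{K+1}(t))^k=\nu_n^k\to0$, and $\|\bs h(t)\|_{\E}^2+\sum_{j=K+1}^N(\mu_j(t)/\mu_{j+1}(t))^k\to0$ (combining \eqref{eq:g-bound-0} with $\bfd_K(t;\nu(t))\to0$ and $\mu_j(t)=(1+o_n(1))\mu^*_j(t)$), i.e.\ \eqref{eq:nu-prop}, \eqref{eq:w_n}, \eqref{eq:mu_n}. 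The continuous radius $\rho_n(\cdot)$ needed above would come from one preliminary round of modulation on a crude cutoff (e.g.\ $\epsilon_n^{-1/(2k)}\rho_K^{(n)}(t)$, whose irregularity does not matter since only continuity of the \emph{output} scale is used), after which one replaces it by $\epsilon_n^{1/(2k)}$ times the resulting continuous scale.

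For the $C^1$ regularity of $\vec\mu$ and for \eqref{eq:nu'}, I would differentiate the orthogonality conditions $\langle\calZ_{\U{\mu_j(t)}}\mid h(t)\rangle=0$ and insert $\partial_t\bs w=J\,\vD E(\bs w)+\bs r(t)$, where the truncation remainder $\bs r(t)$ is supported on $[\nu(t),2\nu(t)]$ with $\|\bs r(t)\|_{L^2\times\dot{H}^{-1}}\lesssim|\nu'(t)|\,E(\bs u(t);\nu(t),2\nu(t))^{1/2}=o_n(1)|\nu'(t)|$. This gives the standard modulation system expressing $\vec\mu'(t)$ as an invertible ($\|\Lam Q\|_{L^2}^2\,\mathrm{Id}+o_n(1)$) linear combination of the pairings $\langle\Lam Q_{\mu_j(t)}\mid\dot h(t)\rangle$, of interaction terms $O(\sum(\mu_i/\mu_{i+1})^k)$, and of $o_n(1)\,\nu'(t)$; solving it exhibits $\vec\mu'$ as a continuous function of $t$ (hence $\vec\mu\in C^1$), and since $|\langle\Lam Q_{\mu_j(t)}\mid\dot h(t)\rangle|\lesssim\mu_j(t)\|\bs h(t)\|_{\E}\lesssim\mu_j(t)\sqrt{\epsilon_n}$ one gets $|\mu_{K+1}'(t)|\lesssim\sqrt{\epsilon_n}+o_n(1)\nu_n|\mu_{K+1}'(t)|$, whence $|\mu_{K+1}'(t)|\lesssim\sqrt{\epsilon_n}$ and $|\nu'(t)|=\nu_n|\mu_{K+1}'(t)|\to0$. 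The hard part is exactly the interdependence exploited here: the cutoff $\nu(t)$ must lie in the scale gap between the colliding interior bubbles — uncontrolled inside $r<\nu(t)$ — and the coherent exterior ones, while depending $C^1$-smoothly and self-consistently on $t$; this is what the contraction argument and the modulation-ODE bootstrap deliver, with the recurring point that every truncation is carried out across an annulus of energy $o_n(1)$, so no cutoff spoils the smallness of $\bs h(t)$ or of $\bfd_K(t;\nu(t))$.
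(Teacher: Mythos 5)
Your plan is correct and follows essentially the same route as the paper: cut off at a radius $\nu_n\mu_{K+1}(t)$ placed in the scale gap where the annulus energy vanishes, identify $m_n$ via Lemma~\ref{lem:pi}, modulate the truncated map through the uniform implicit function theorem (the paper's single IFT, in which the cutoff radius itself depends on the unknown $\mu_{K+1}$, is exactly your self-consistency fixed-point/contraction step), and then obtain $C^1$ regularity and \eqref{eq:nu'} by differentiating the orthogonality conditions and inverting a diagonally dominant system. The only cosmetic differences are that the paper justifies differentiability of $\vec\mu$ by solving the resulting ODE system and matching it with the IFT output (your ``modulation-ODE bootstrap''), and that it never needs the evolution equation for $\dot g$ — only the kinematic identity for $\partial_t h$, in which the sole truncation remainder is the $\nu'(t)$ term you isolate.
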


\begin{rem} 
One should think of $\nu(t)$ as the scale that separates the $N-K$ ``exterior'' bubbles, which are defined continuously on the union of the collision intervals $[a_n, b_n]$  from the $K$ ``interior'' bubbles that are coherent at the endpoints of $[a_n, b_n]$, but come into collision somewhere inside the interval and lose their shape. In the case $K =N$, there are no exterior bubbles,  $\mu_{K+1}(t) = t$ and $\nu_n \to 0$ is chosen using~\eqref{eq:rho-def}. 
\end{rem}

\begin{proof}
By Definition~\ref{def:proximity} for each $n$ we can find scales $\rho_K(t) \ll  \ti \mu_{K+1}(t)  \ll \dots \ll \ti  \mu_{N}(t) \ll t $ and signs $\vec \s(t) \in \{-1, 1\}^{N-k}$  for $t \in [a_n, b_n]$, such that defining $\bs {h}_{\rho_K}(t)$ for $r \in ( \rho_K(t), \infty)$ by 
\EQ{
\bs u(t) - \bs u^*(t) = \bs \calQ (m_{\De}, \vec \s(t), \vec{\ti  \mu}(t)) + \bs{h}_{\rho_K}(t) 
}
we have, 
\EQ{ \label{eq:timu-small} 
\bfd(t; \rho_K(t)) \simeq \| \bs{ h}_{\rho_K}(t) \|_{\E( \rho_K(t), \infty)}^2 + \sum_{j =K}^N \Big( \frac{\ti \mu_{j}(t)}{ \ti \mu_{j+1}(t)} \Big)^k  \lesssim \eps_n^2 , 
}
keeping the convention $\ti \mu_K(t) := \rho_K(t), \ti \mu_{N+1}(t) := t$.   
Using $\lim_{n \to \infty} \sup_{t \in [a, b]}\bfd_K( t; \rho_K(t)) = 0$ and the fact that 
\EQ{ \label{eq:ext-bubble-en} 
\lim_{n \to \infty} \sup_{t \in [a_n, b_n]}E( \bs \calQ (m_{\De}, \vec \s(t), \vec{ \ti \mu}(t)) ; \al_n \ti \mu_{K+1}(t), \be_n \ti \mu_{K+1}(t)) = 0, 
}
for any two sequence $\al_n \ll \be_n \ll 1$, 
we can choose a sequence $\nu_n \to 0$ with 
\EQ{
\rho_{K}(t) \le \nu_n \ti \mu_{K+1}(t), \mand  \lim_{n \to \infty}\sup_{t \in [a_n, b_n]} E( \bs u(t) - \bs u^*(t); \frac{1}{4}\nu_n \ti \mu_{K+1}(t), 4 \nu_n \ti   \mu_{K+1}(t)) =  0. 
}
Defining $\ti \nu(t)= \nu_n \ti \mu_{K+1}(t)$,  it follows from Lemma~\ref{lem:pi} that we can find  integers $m_n \in \Z$, which are independent of $t \in [a_n, b_n]$ due to continuity of the flow so that 
\EQ{ \label{eq:timu-vanish} 
&\lim_{n \to \infty} \sup_{t\in [a_n, b_n]} \sup_{r \in (\frac{1}{4}\nu_n \ti  \mu_{K+1}(t), 4 \nu_n \ti  \mu_{K+1}(t))}  | u(t, r) - u^*(t, r)- m_n \pi| = 0, \\
&\lim_{n \to \infty} \sup_{t\in [a_n, b_n]}\| \bs u(t) - \bs u^*(t) - m_n \pi \|_{\E((\frac{1}{4}\nu_n \ti  \mu_{K+1}(t), 4 \nu_n \ti  \mu_{K+1}(t))}  = 0
}
Thus, defining $\bs {\ti w}(t) \in\E_{m_n, m_{\De}}$ and,  $\bs{\ti h}(t) \in \E$  for $t \in \cup_n [a_n, b_n]$, by   
\EQ{ \label{eq:tiw-def} 
\bs{\ti w}(t)&:= (1 - \chi_{\ti \nu(t)})( \bs u(t) - \bs u^*(t)) + \chi_{\ti \nu(t)} m_n \bs \pi  \\
& = (1- \chi_{\ti \nu(t)}) m_{\De} \bs \pi  + \chi_{\ti \nu(t)} \sum_{j= K+1}^N \sigma_{ j}(t) \bs \pi  + \chi_{\ti \nu(t)} m_n \bs \pi   + \sum_{j = K+1}^N  \s_{ j}(t)( \bs Q_{\ti \mu_{ j}(t)} - \bs \pi) + \bs{\ti h}(t)  \\
& = m_{\De} \bs \pi +  \sum_{j = K+1}^N  \s_{ j}(t) ( \bs Q_{\ti {\mu}_{ j}(t)} - \bs \pi) + \bs{\ti h}(t)
}
we have using~\eqref{eq:timu-small}, 
\EQ{\label{eq:tih-eps} 
\sup_{t \in [a_n,  b_n]} \Big(\| \bs{\ti  h}(t)\|_{\E}^2 + \sum_{j =K}^N \Big( \frac{\ti \mu_{j}(t)}{ \ti \mu_{j+1}(t)} \Big)^k \Big)\le \theta_n^2. 
}
for some sequence $\te_n \to 0$.  We note that the last equality in~\eqref{eq:tiw-def} follows from the observation that we must have, 
\EQ{ \label{eq:m_De-sigma} 
m_{\De}-  \sum_{j=K+1}^N \sigma_{ j}(t)  = m_n
}
for any $t \in [a_n, b_n]$. 
We invoke Lemma~\ref{lem:bub-config} and continuity of the flow to conclude that for each $n$, the sign vector $\vec \s(t) = \vec \s_n$ is independent of $t \in [a_n, b_n]$,   and the functions $\ti \mu_{K+1}(t), \dots,\ti  \mu_{N}(t)$ can be adjusted to be continuous functions of $t$. However, in the next sections we require differentiability of the function $\ti \mu_{K+1}(t)$, so we must modify it slightly. 

Given a vector $\vec \mu(t) = (\mu_{K+1}(t), \dots \mu_N(t))$, set, 
\EQ{
\bs w( t, \vec \mu(t)) := (1- \chi_{\nu_n\mu_{K+1}(t)})( \bs u(t) - \bs u^*(t)) + \chi_{\nu_n\mu_{K+1}(t)} m_n \bs \pi
}
Fixing $t$ and  suppressing it in the notation, and setting up for an argument as in the proof of Lemma~\ref{lem:mod-static}, define 
\EQ{
F(h, \vec \mu) := h  - ( w( \cdot,  \vec{\ti \mu}) -  \calQ( m_{\De}, \vec \s_n, \vec{\ti \mu}) ) + w( \cdot, \vec \mu) - \calQ( m_\De, \vec \s_n, \vec \mu) 
}
and note that $F(0, \vec{\ti \mu}) = 0$. Moreover, 
\EQ{
\| F(h, \vec \mu)\|_H \lesssim \| h \|_{H} + \sum_{j= K+1}^N \abs{ \frac{\mu_j}{\ti \mu_j} - 1} 
}
Define, 
\EQ{
G( h, \vec \mu) := \Big( \frac{1}{\mu_{K+1}} \ang{ \calZ_{\U{\mu_{K+1}}} \mid F(h, \vec \mu)}, \dots , \frac{1}{\mu_N} \ang{ \calZ_{\U{\mu_N}} \mid F(h, \vec \mu)} \Big)
}
and thus $G(0, \vec{\ti \mu}) = (0, \dots, 0)$. Following the same scheme as the proof of Lemma~\ref{lem:mod-static} we obtain via Remark~\ref{rem:IFT} a mapping  $\varsigma: B_H(0; C_0 \te_n) \to (0, \infty)^{N-K}$ such that for each $h \in B_H(0; C_0 \te_n)$ we have 
\EQ{
\abs{ \varsigma_j(h)/ \ti \mu_j - 1} \lesssim \te_n
}
and such that
\EQ{
G( h, \vec \mu) = 0 \Longleftrightarrow \vec \mu = \varsigma(h)
}
Using~\eqref{eq:tih-eps} we define
\EQ{
h:= F(\ti h, \varsigma(\ti h)), \quad \vec \mu:= \varsigma(\ti h)
}
By construction we then have, 
\EQ{
\bs w(t, \vec \mu(t)) &= (1- \chi_{\nu(t)})( \bs u(t) - \bs u^*(t)) + \chi_{\nu(t)} m_n \bs \pi \\
& = \bs \calQ( m_{\De}, \vec \s_n, \vec \mu(t)) + \bs h(t) 
}
for $\nu(t):= \nu_n \mu_{K+1}(t)$, 
and for each $j = K+1, \dots, N$, 
\EQ{ \label{eq:h-small} 
\sup_{t \in [a_n, b_n]} \Big(\| \bs h(t) \|_{\E}^2 + \sum_{j=K}^N \Big( \frac{\mu_{j}(t)}{\mu_{j+1}(t)} \Big)^k\Big) \lesssim  \te_n^2 , \quad 
0 = \ang{ \calZ_{\U{\mu_j(t)}} \mid h(t)} 
}
Note that~\eqref{eq:nu-prop} follows from the above and from~\eqref{eq:rho-def}. 
The point is that we can now use orthogonality conditions above to deduce the differentiability of $\mu(t)$.  Indeed, noting the identity, 
\EQ{
\p_t h(t) &= \p_t w(t, \vec \mu(t)) - \p_t \calQ( m_\De, \vec \s_n, \vec \mu(t)) \\
& = \nu_n\mu_{K+1}'(t)  \Lam \chi_{\U{\nu(t)}} \big( u(t) - u^*(t) - m_n \pi \big) + \dot h(t) + \sum_{j=K+1}^N \s_{n, j} \mu_j'(t) \Lam Q_{\U{\mu_j(t)}} , 
}
differentiation of the $j$th orthogonality condition for $h(t)$ gives for each $j = K+1, \dots, N$
\EQ{ \label{eq:mu-sys} 
&\s_{n, j} \mu_j'(t) \ang{ \calZ \mid \Lam Q} + \sum_{i \neq j} \s_{n, i} \mu_i'(t) \ang{ \calZ_{\U \mu_j(t)} \mid \Lam Q_{\U {\mu_i(t)}}}  \\
&+  \nu_n\mu_{K+1}'(t)  \ang{ \calZ_{\U{\mu_j(t)}} \mid \Lam \chi_{\U{\nu(t)}} \big( u(t) - u^*(t) - m_n \pi \big)} - \mu_j'(t) \ang{ [r \Lam \calZ]_{\U{\mu_j(t)}} \mid r^{-1} h} \\
&\quad = - \ang{ \calZ_{\U \mu_j(t)} \mid \dot h(t)}, 
}
which, using~\eqref{eq:timu-vanish} and ~\eqref{eq:h-small}, is a diagonally dominant first order differential system for $\vec \mu(t)$. Fix any $t_0 \in \cup_n [a_n, b_n]$ so that~\eqref{eq:h-small} holds at  the initial data $\vec \mu(t_0)$. The existence and uniqueness theorem  gives a unique solution $\vec \mu_{\textrm{ode}} \in C^1(J)$ for $J \ni t_0$ a sufficiently small neighborhood. As the scales were uniquely defined using the implicit function theorem at each fixed $t$ and the solution of the ODE preserves the orthogonality conditions, we must have $\vec \mu(t) = \vec \mu_{\textrm{ode}}(t)$ must agree.  Hence $\vec \mu(t) \in C^1$. 
Finally, inverting~\eqref{eq:mu-sys} we obtain the estimates, 
\EQ{
\abs{ \mu_j'(t)} \lesssim \| \dot h \|_{L^2} \lesssim \te_n
}
Using the above with $j = K+1$ yields~\eqref{eq:nu'}. 
This completes the proof. 
\end{proof}

\subsection{Basic modulation}
\label{ssec:mod}
On some subintervals of the collision interval $[a_n, b_n]$, mutual interactions between the bubbles
dominate the evolution of the solution. We justify the \emph{modulation inequalities}
allowing to obtain explicit information on the solution on such time intervals.
We stress that in our current approach the modulation concerns only the bubbles from $1$ to $K$.

\begin{lem}[Basic modulation, $k \geq 2$] \label{lem:mod-1}
There exist $C_0, \eta_0 > 0$ and a sequence $\zeta_n \to 0$ such that the following is true.

Let $J \subset [a_n, b_n]$ be an open time interval
such that $\bfd(t) \leq \eta_0$ for all $t \in J$.
Then, there exist $\vec\iota \in \{-1, 1\}^K$ (independent of $t \in J$), modulation parameters $\vec\lam \in C^1(J; (0, \infty)^K)$, 
and $\bs g(t) \in \cE$ satisfying, for all $t \in J$,
\begin{align}  \label{eq:u-decomp} 
\chi_{\nu(t)}\bs u(t)  + (1- \chi_{\nu(t)}) m_n \bs  \pi &= \bs \calQ(m_n, \vec\iota, \vec\lambda(t)) + \bs g(t), \\
0 &= \big\la \calZ_{\U{\lam_j(t)}} \mid g(t) \big\ra,  \label{eq:g-ortho} 
\end{align} 
where $\nu(t)$ is as in~\eqref{eq:nu-def} and $m_n$ is as in  Lemma~\ref{lem:ext-sign}. The estimates, 
\begin{align}  
C_0^{-1}\bfd(t) - \zeta_n &\leq \|\bs g(t) \|_{\cE} +  \sum_{j=1}^{K-1} \Big( \frac{ \lam_{j}(t)}{\lam_{j+1}(t)} \Big)^{\frac{k}{2}} 
\leq C_0\bfd(t) + \zeta_n,  \label{eq:d-g-lam} 
\end{align} 
and
\begin{align} 
\|\bs g(t) \|_{\cE} +  \sum_{j \not \in \calA} \Big( \frac{ \lam_{j}(t)}{\lam_{j+1}(t)} \Big)^{\frac{k}{2}} &\leq C_0 \max_{j \in \calA} \left( \frac{ \lam_{j}(t)}{\lam_{j+1}(t)}\right)^{\frac{k}{2}} + \zeta_n, \label{eq:g-upper} 
\end{align}
hold, where 
\EQ{\label{eq:calA-def} 
\calA := \big\{j \in \{1, \ldots, K-1\}: \iota_j \neq \iota_{j+1}\big\}. 
} 
Moreover,  for all $j \in \{1, \ldots, K\}$ and $t \in J$, 
\EQ{ 
\abs{ \lam_j'(t)} &\leq  C_0\|\dot g(t) \|_{L^2} + \zeta_n.   \label{eq:lam'} 
}
If $j \in \{1, \dots,K\}$ we have
\begin{multline}
\Big| \iota_j \lam_j'(t) +  \frac{1}{ \ang{ \calZ \mid \calQ}} \La \calZ_{\U{\lam_j(t)}} \mid \dot g(t)\Ra\Big| \\ \leq C_0\| \bs g(t) \|_{\cE}^2 
+ C_0 \bigg(\Big( \frac{\lam_{j}(t)}{\lam_{j+1}(t)}\Big)^{k-1} + \Big(\frac{ \lam_{j-1}(t)}{\lam_{j}(t)}\Big)^{k-1}\bigg) \| \dot g(t) \|_{L^2} + \zeta_n,  \label{eq:lam'-refined}
\end{multline} 
where, by convention, $\lambda_0(t) = 0, \lam_{K+1}(t) = \infty$ for all $t \in J$. 
\end{lem}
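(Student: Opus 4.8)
The plan is to prove the Basic Modulation Lemma by combining the static modulation Lemma~\ref{lem:mod-static} applied to the truncated solution with an ODE argument for the time-regularity of the scales, and then to extract the refined estimate~\eqref{eq:lam'-refined} by differentiating the orthogonality conditions and carefully isolating the leading-order term.

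\textbf{Step 1: Reduction to the static setting.} Given $t \in J$, set $\bs v(t) := \chi_{\nu(t)}\bs u(t) + (1-\chi_{\nu(t)})m_n\bs\pi$. Using Lemma~\ref{lem:ext-sign}, in particular~\eqref{eq:nu-prop} and the decomposition of $\bs w(t)$ into $N-K$ exterior bubbles plus a small error, together with the energy identity~\eqref{eq:en-ident} and the vanishing of the radiation energy in the interior region~\eqref{eq:energy-of-ustar}, I would show that $\bfd_{m_n, K}(\bs v(t)) \lesssim \bfd(t) + \zeta_n'$ for some $\zeta_n' \to 0$, and that $E(\bs v(t)) \le K E(\bs Q) + C(\bfd(t)^2 + \zeta_n'^2)$. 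The point is that the exterior $N-K$ bubbles live at scales $\ge \nu(t)$, so cutting off below $\nu(t)$ removes them and leaves $K$ interior bubbles modulo a small error (controlled by $\bfd(t)$) plus the radiation contribution (controlled by $\zeta_n$). Once $\bfd_{m_n,K}(\bs v(t)) \le \eta$ with $\eta$ small, Lemma~\ref{lem:mod-static} applies and produces $\vec\iota \in \{-1,1\}^K$, $\vec\lam(t) \in (0,\infty)^K$ and $\bs g(t) = (g(t), \dot v(t))$ satisfying~\eqref{eq:u-decomp}, \eqref{eq:g-ortho}, and the estimates~\eqref{eq:g-bound-0}, \eqref{eq:g-bound-A}, which give~\eqref{eq:d-g-lam} and~\eqref{eq:g-upper} after absorbing the radiation errors into $\zeta_n$. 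The independence of $\vec\iota$ from $t$ follows from Lemma~\ref{lem:bub-config} and continuity of the flow, exactly as in the exterior case.

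\textbf{Step 2: Time regularity of $\vec\lam(t)$.} As in the proof of Lemma~\ref{lem:ext-sign}, the uniqueness in the implicit function theorem (Remark~\ref{rem:IFT}) gives $\vec\lam(t)$ as a pointwise-defined function; to upgrade to $C^1$ I differentiate the orthogonality conditions $\ang{\calZ_{\U{\lam_j(t)}} \mid g(t)} = 0$ in $t$. Writing $\p_t g(t) = \dot g(t) + \sum_{i} \iota_i \lam_i'(t)\Lam Q_{\U{\lam_i(t)}} + (\text{cutoff terms from } \chi_{\nu(t)})$ and using $\p_t\ang{\calZ_{\U{\lam_j}} \mid g} = 0$ yields a linear system for $\vec\lam'(t)$ whose matrix is, by~\eqref{eq:ZQ} and Corollary~\ref{cor:ZQ}, a small perturbation of $\ang{\calZ\mid\Lam Q}\,\mathrm{diag}(\iota_1,\dots,\iota_K)$ — hence invertible with $O(1)$ inverse. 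The standard ODE existence/uniqueness argument then shows the pointwise scales agree with the $C^1$ solution of this system, so $\vec\lam \in C^1(J)$. Inverting the system and using $\| [r\Lam\calZ]_{\U{\lam_j}} \mid r^{-1}g\| \lesssim \|g\|_H$, together with the smallness of the off-diagonal terms, gives $|\lam_j'(t)| \lesssim \|\dot g(t)\|_{L^2} + \|g(t)\|_H \cdot(\text{small}) + \zeta_n$; absorbing the middle term via~\eqref{eq:d-g-lam} and enlarging $\zeta_n$ yields~\eqref{eq:lam'}.

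\textbf{Step 3: The refined estimate~\eqref{eq:lam'-refined}.} This is the main content. The $j$-th equation of the differentiated system, after isolating $\iota_j\lam_j'(t)\ang{\calZ\mid\Lam Q}$, reads
\begin{equation*}
\iota_j\lam_j'(t)\ang{\calZ\mid\calQ} = -\ang{\calZ_{\U{\lam_j(t)}}\mid\dot g(t)} + \text{(off-diagonal + cutoff + }h\text{-quadratic terms)}.
\end{equation*}
Here $\ang{\calZ\mid\calQ} = \ang{\calZ\mid\Lam Q}$. The off-diagonal terms $\iota_i\lam_i'(t)\ang{\calZ_{\U{\lam_j}}\mid\Lam Q_{\U{\lam_i}}}$ are bounded, via Corollary~\ref{cor:ZQ} (using $k\ge 2$, so $\calZ = \Lam Q$ when $k\ge 3$ and the $k=2$ case has the stated decay), by $(\lam_{j-1}/\lam_j)^{k-1}\|\dot g\|_{L^2} + (\lam_j/\lam_{j+1})^{k-1}\|\dot g\|_{L^2}$ after substituting the bound~\eqref{eq:lam'} for $\lam_i'$ and noting the cross terms vanish faster for non-adjacent indices. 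The $h$-quadratic term $\lam_j'(t)\ang{[r\Lam\calZ]_{\U{\lam_j}}\mid r^{-1}g}$ is $\lesssim \|\dot g\|_{L^2}\|g\|_H \lesssim \|\bs g\|_\cE^2 + \zeta_n$. The cutoff term involves $\nu_n\mu_{K+1}'(t)\ang{\calZ_{\U{\lam_j}}\mid\Lam\chi_{\U{\nu(t)}}(u-u^*-m_n\pi)}$; since $\lam_j(t) \ge \nu(t)$ for $j \ge 1$ by construction and the support of $\Lam\chi_{\U{\nu}}$ is $\{r \sim \nu(t)\}$, this pairing is bounded by $(\nu(t)/\lam_j(t))^{k}$ times the local energy, which by~\eqref{eq:nu-prop} is $o_n(1)$; combined with $|\mu_{K+1}'(t)|\lesssim\te_n$ from Lemma~\ref{lem:ext-sign} this goes into $\zeta_n$. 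Collecting everything and dividing by $\ang{\calZ\mid\Lam Q}$ gives~\eqref{eq:lam'-refined}.

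\textbf{Main obstacle.} The delicate point is the bookkeeping in Step~3: one must track that the off-diagonal contributions to $\lam_j'$ only see the \emph{adjacent} scales at the order $(\lam_{j\pm1}/\lam_j)^{k-1}$, which requires using Corollary~\ref{cor:ZQ} in both directions ($\ang{\calZ_{\U\lam}\mid\Lam Q_{\U\mu}}$ and $\ang{\calZ_{\U\mu}\mid\Lam Q_{\U\lam}}$ behave differently), and that the cutoff error genuinely vanishes because $\lam_1(t) \gg \nu(t)$ — this last fact relies on the separation of interior and exterior scales established in Lemma~\ref{lem:ext-sign} and~\eqref{eq:nu-prop}, and is the reason the lemma is stated on collision intervals rather than abstractly. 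The case $k=2$ requires slight extra care since $\calZ = \chi\Lam Q \ne \Lam Q$, but Corollary~\ref{cor:ZQ} is already stated to cover it.
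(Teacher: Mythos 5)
Your overall route is the paper's: truncate at the separating scale $\nu(t)$, apply the static modulation Lemma~\ref{lem:mod-static} to $\chi_{\nu(t)}\bs u(t)+(1-\chi_{\nu(t)})m_n\bs\pi$, and obtain \eqref{eq:lam'}--\eqref{eq:lam'-refined} by differentiating the orthogonality conditions and inverting a diagonally dominant system, with the cross terms handled through Corollary~\ref{cor:ZQ}. Two points, however, need repair.

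First, your treatment of the cutoff term in Step~3 rests on a false premise: the interior scales lie \emph{below} $\nu(t)$, not above it. The energy partition gives $(\lam_K(t)/\nu(t))^k \lesssim \bfd(t)^2+o_n(1)$ (this is \eqref{eq:ti-lam-K}, \eqref{eq:d_K} in the paper), so $\lam_j(t)\le\lam_K(t)\ll\nu(t)$ for all $j\le K$. Consequently the factor $(\nu(t)/\lam_j(t))^k$ you invoke is large (it can be of size $\epsilon_n^{-2}$ on a collision interval), and ``$(\nu/\lam_j)^k\times$ local energy $\times\,|\mu_{K+1}'|$'' does not close as written. The correct mechanism is the opposite one: $\calZ_{\U{\lam_j}}$ is concentrated at scale $\lam_j\ll\nu$ (for $k=2$ it is compactly supported, so its pairing with $\Lam\chi_{\nu(t)}$ vanishes outright for large $n$), hence $\big|\tfrac{\nu'}{\nu}\langle \calZ_{\U{\lam_j}}\mid (m_n\pi-u)\Lam\chi_{\nu(t)}\rangle\big| \lesssim |\nu'(t)|\,\|(u-m_n\pi)/r\|_{L^2(\nu\le r\le 2\nu)} = o_n(1)$ by \eqref{eq:nu'} and \eqref{eq:nu-prop}, which is exactly the bound the paper uses. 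Relatedly, in Step~2 no additive term $\varepsilon\|g\|_H$ actually arises: the pairing $\lam_j^{-1}\langle \ULam\calZ_{\U{\lam_j}}\mid g\rangle$ perturbs the diagonal coefficient multiplicatively. This matters, because an additive $\varepsilon\|g\|_H$ could not be absorbed into $\zeta_n$ ($\|g\|_H$ need not tend to $0$ in $n$).

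Second, Step~1 as stated is too weak to yield \eqref{eq:d-g-lam}--\eqref{eq:g-upper} in the form claimed. To get \eqref{eq:g-upper} you must feed Lemma~\ref{lem:mod-static} the bound $E(\bs v(t))\le K E(\bs Q)+o_n(1)$ uniformly on $[a_n,b_n]$ --- which follows from energy conservation, \eqref{eq:en-ident} and Lemma~\ref{lem:ext-sign} (the paper's \eqref{eq:E>nu}, \eqref{eq:E<nu}) --- and not merely $E(\bs v(t))\le KE(\bs Q)+C(\bfd(t)^2+\zeta_n^2)$: with the latter, \eqref{eq:g-bound-A} only gives \eqref{eq:g-upper} with an extra $C\bfd(t)$ on the right, which cannot be reabsorbed (the constant is not small) and is insufficient for the later argument showing $\calA\neq\emptyset$. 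Likewise, the lower bound in \eqref{eq:d-g-lam} is not a matter of ``absorbing radiation errors'': one must exhibit an $N$-bubble configuration close to $\bs u-\bs u^*$ by recombining the interior modulation with the exterior bubbles of Lemma~\ref{lem:ext-sign} through the identity $\bs u-\bs u^*=(\bs v-m_n\bs\pi)+(\bs w-\chi_{\nu}\bs u^*)$, using $m_n=m_\Delta-\sum_{j>K}\sigma_{n,j}$, $\|\chi_{\nu(t)}\bs u^*(t)\|_{\E}=o_n(1)$, $(\lam_K/\nu)^k\lesssim \bfd_{m_n,K}(\bs v)^2+o_n(1)$, and $\nu/\mu_{K+1}=\nu_n\to 0$ to control the junction ratio (the paper additionally matches the exterior signs via Lemma~\ref{lem:bub-config}). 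These are exactly the ingredients you cite, so the repair is routine, but as written this step is missing.
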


We observe that Lemma~\ref{lem:mod-1} is sufficient to reduce to the case $K \ge 2$. More precisely, under the contradiction assumption that~\eqref{eq:dt-conv} fails, the set $\calA$ as defined in~\eqref{eq:calA-def} is non-empty. 
\begin{lem} 
If~\eqref{eq:dt-conv} is false, then both $N, K \ge 2$ and the set $\calA$ defined in~\eqref{eq:calA-def} is non-empty. 
\end{lem}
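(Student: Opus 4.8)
The plan is to show that the contradiction hypothesis forces $K \geq 2$, and then observe that $K \geq 2$ immediately gives $N \geq K \geq 2$ and forces $\calA \neq \emptyset$. The statement that $K \geq 1$ is already Lemma~\ref{lem:K-exist}, so the real content is ruling out $K = 1$, and then deriving non-emptiness of $\calA$.

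First I would argue that $K = 1$ is impossible. Suppose $K = 1$. Then for the collision intervals $[a_n, b_n] \in \calC_1(\epsilon_n, \eta)$ we can apply Lemma~\ref{lem:mod-1} on the subintervals where $\bfd(t) \leq \eta_0$. When $K = 1$ the set $\calA \subset \{1, \ldots, K-1\} = \emptyset$ is empty, so~\eqref{eq:g-upper} reads $\|\bs g(t)\|_\cE + \sum_{j \not\in \calA}(\ldots)^{k/2} \leq C_0 \max_{j \in \calA}(\ldots)^{k/2} + \zeta_n$, and since the max over the empty set should be interpreted as $0$ (or more carefully, there are no ratios $\lam_j/\lam_{j+1}$ at all because there is only one interior bubble), this forces $\|\bs g(t)\|_\cE \lesssim \zeta_n$. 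Combined with~\eqref{eq:d-g-lam}, which when $K = 1$ reads $C_0^{-1}\bfd(t) - \zeta_n \leq \|\bs g(t)\|_\cE \leq C_0 \bfd(t) + \zeta_n$ (the sum over $j = 1$ to $K - 1 = 0$ is empty), we conclude $\bfd(t) \lesssim \zeta_n$ on every subinterval of $[a_n, b_n]$ where $\bfd(t) \leq \eta_0$. But $\bfd(a_n) = \bfd(b_n) = \epsilon_n \to 0$ and $\bfd$ is continuous, so for $n$ large enough $\bfd(t) < \eta_0$ holds on a neighborhood of $a_n$; by a continuity/connectedness argument (let $t^* := \sup\{t \in [a_n, b_n] : \bfd \leq \eta_0 \text{ on } [a_n, t]\}$) we propagate $\bfd(t) \lesssim \zeta_n < \eta_0$ across the whole interval $[a_n, b_n]$, so in particular $\bfd(c_n) \lesssim \zeta_n$. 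This contradicts $\bfd(c_n) \geq \eta > 0$ for all $n$. Hence $K \geq 2$.

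Next, $K \geq 2$ gives $N \geq K \geq 2$ since by Definition~\ref{def:K-choice} we have $K \in \{1, \ldots, N\}$ (Lemma~\ref{lem:K-exist}), so $N \geq 2$ as well. Finally I would show $\calA \neq \emptyset$ by essentially the same mechanism. If $\calA = \emptyset$, then all the signs $\iota_1, \ldots, \iota_K$ are equal, so~\eqref{eq:g-upper} again forces $\|\bs g(t)\|_\cE + \sum_{j \not\in \calA}(\lam_j/\lam_{j+1})^{k/2} \leq \zeta_n$, i.e.\ $\|\bs g(t)\|_\cE + \sum_{j=1}^{K-1}(\lam_j/\lam_{j+1})^{k/2} \lesssim \zeta_n$, which together with the lower bound in~\eqref{eq:d-g-lam} gives $\bfd(t) \lesssim \zeta_n$ on all subintervals of $[a_n,b_n]$ with $\bfd \leq \eta_0$. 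The same continuation argument as above then yields $\bfd(c_n) \lesssim \zeta_n$, contradicting $\bfd(c_n) \geq \eta$. Therefore $\calA \neq \emptyset$.

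The main obstacle, and the one point requiring care, is the continuation argument that upgrades "$\bfd(t) \lesssim \zeta_n$ on subintervals where $\bfd \leq \eta_0$" to "$\bfd(c_n) \lesssim \zeta_n$": one must check that the set $\{t : \bfd(t) \leq \eta_0\}$ actually contains all of $[a_n, b_n]$ (for large $n$), which follows because $\zeta_n \to 0$ means the bootstrap cannot break down — if $\bfd$ reached $\eta_0$ at some first time $t^*$, continuity and the estimate on $[a_n, t^*)$ would force $\bfd(t^*) \lesssim \zeta_n \ll \eta_0$, a contradiction. One also needs to note that Lemma~\ref{lem:mod-1} is applicable, i.e.\ $[a_n, b_n]$ are genuine collision intervals in $\calC_K(\epsilon_n, \eta)$ with the structure of Lemma~\ref{lem:ext-sign} in place, which is exactly the standing setup fixed after Lemma~\ref{lem:K-exist}. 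Everything else is bookkeeping about empty index sets.
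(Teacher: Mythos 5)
Your proof is correct and follows essentially the same route as the paper: assume $\calA=\emptyset$, combine \eqref{eq:g-upper} with the lower bound in \eqref{eq:d-g-lam} to get $\bfd(t)\lesssim\zeta_n$, and propagate this via a continuity/first-exit argument across $[a_n,b_n]$ to contradict $\bfd(c_n)\ge\eta$. The only difference is cosmetic: you treat $K=1$ as a separate preliminary case, but it is already subsumed by the $\calA=\emptyset$ argument, which is how the paper handles everything at once.
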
 

\begin{proof} Assume~\eqref{eq:dt-conv} is false and $\calA$ is empty. For $n$ large, we have $\bfd(a_n) = \eps_n \le \eta_0$ as in Lemma~\ref{lem:mod-1}. 
Define $e_n:= \sup\{t \in [a_n, b_n]  \, : \, \bfd(\tau) \le \eta_0 \, \, \forall \, \tau \in [a_n, t)\}$.  
Since $\calA$ is empty, we see from~\eqref{eq:d-g-lam} and~\eqref{eq:g-upper} that $\bfd(t) \lesssim \zeta_n\ll 1$ for all $t \in [a_n, e_n)$ and thus $e_n  = b_n$, for large $n$. But this is a contradiction, as $[a_n, b_n] \in \calC_K( \eta, \eps_n)$ means there must be a $c_n \in [a_n, b_n]$ with $\bfd(c_n) \ge \eta >0$. Since $\calA$ being empty is impossible, this implies that $N, K \ge 2$ in the event that ~\eqref{eq:dt-conv} is false. 
\end{proof} 

\begin{proof}[Proof of Lemma~\ref{lem:mod-1}]

\textbf{Step 1:}(The decomposition~\eqref{eq:u-decomp} and the estimates~\eqref{eq:d-g-lam} and~\eqref{eq:g-upper}) First, observe that by Lemma~\ref{lem:ext-sign}, 
\EQ{ \label{eq:E>nu} 
 \sup_{t \in [a_n, b_n]} |E( \bs u(t) ; \nu(t),  \infty) - E( \bs u^*)   - (N-K) E( \bs Q)|  = o_n(1) \mas n\to \infty 
}
Since $E( \bs u) = E( \bs u^*) + N E( \bs Q)$ it follows from the above along with~\eqref{eq:nu-prop} that 
\EQ{ \label{eq:E<nu} 
 \sup_{t \in [a_n, b_n]} |E(\bs u(t); 0, 2\nu(t)) - K E( \bs Q)| = o_n(1)  \mas n \to \infty
}

Using continuity of the flow, the fact that $\bfd(t) \le \eta_0$ on $J$,  Lemma~\ref{lem:bub-config}, and by taking $\eta_0>0$ small enough,  we obtain continuous functions $\vec{\ti  \lam}(t) = ( \ti \lam_1(t), \dots, \ti \lam_N(t))$  and signs $\vec {\iota}$ independent of $t \in J$, so that 
\EQ{
\bs u(t) - \bs u^*(t) &=  \bs \calQ( m_{\De}, \vec {\iota},  \vec{\ti \lam}(t)) + \ti{\bs g}(t),
} 
and, 
\EQ{ \label{eq:tilam} 
\bfd(t)^2 \le \| \ti {\bs g}(t)  \|_{\E}^2 + \sum_{j =1}^{N} \Big(\frac{  \ti \lam_j(t)}{\ti \lam_{j+1}(t)}\Big)^{k}  \le 4 \bfd(t)^2.
}
with as usual the convention that $\ti \lam_{N+1}(t) = t$. 
 It follows from~\eqref{eq:E>nu} and~\eqref{eq:E<nu} that, 
 \EQ{ \label{eq:ti-lam-K} 
\sup_{t \in J} \Big[ \Big(\frac{ \ti \lam_K(t)}{ \nu(t)}\Big)^k + \Big( \frac{\nu(t)}{ \ti \lam_{K+1}(t)} \Big)^k  \Big]\lesssim \bfd(t)^2 + o_n(1) \mas n \to \infty, 
 }
 which means, roughly speaking, that there are $K$ bubbles to the left of the curve $\nu(t)$ and $N-K$ bubbles to the right of the curve $\nu(t)$. 
 
 For the purposes of this argument we denote by 
 \EQ{ \label{eq:v_n-w_n-def} 
 \bs v(t) &:= \bs u(t) \chi_{\nu(t)}  + (1- \chi_{\nu(t)}) m_n \bs \pi, \\
  \bs w(t)& := ( \bs u(t) - \bs u^*(t)) ( 1- \chi_{\nu(t)})  +\chi_{\nu(t)} m_n \bs \pi ,
 }
Noting that Lemma~\ref{lem:ext-sign} together with~\eqref{eq:ti-lam-K} imply the identity, 
\EQ{ 
( m_{\De} - \sum_{j=K+1}^N  \iota_j) \bs \pi  = m_n \bs \pi, 
}
we may express $\bs v(t)$  on $J \subset [a_n, b_n]$ as follows, 
 \EQ{
 \bs v (t)  &= m_n \bs \pi + \sum_{j=1}^K \iota_j ( \bs Q_{\ti \lam_j(t)} - \bs\pi)  \\
 & \quad + ( 1- \chi_{\nu(t)}) \sum_{j=1}^K  \iota_j ( \bs Q_{\ti \lam_j(t)} - \bs\pi)  + \chi_{\nu(t)} \sum_{j=K+1}^N  \iota_j  \bs Q_{\ti \lam_j(t)}  + \chi_{\nu(t)} \bs u^*(t) + \chi_{\nu(t)} \ti {\bs g} (t).
 }
 Using~\eqref{eq:rho-def} along with~\eqref{eq:tilam} and ~\eqref{eq:ti-lam-K}  we see that, 
 \EQ{ \label{eq:v_n-upper} 
 \|  \bs v (t)  -  m_n \bs \pi - \sum_{j=1}^K  \iota_j ( \bs Q_{\ti \lam_j(t)} - \bs\pi) \|_{\E}^2 + \sum_{j =1}^K  \Big(\frac{  \ti \lam_j(t)}{\ti \lam_{j+1}(t)}\Big)^{k}  \lesssim \bfd(t)^2 + o_n(1) \mas n \to \infty.
 }
 This means that 
 \EQ{
 \bfd_{ m_n, K}( \bs v(t)) \lesssim \bfd(t) + o_n(1) \mas n \to \infty
 }
 in the notation of Lemma~\ref{lem:mod-static}. By taking $\eta_0>0$ small enough, and $n$ large enough, we may apply Lemma~\ref{lem:mod-static}, (as well as Lemma~\ref{lem:bub-config}, which ensures the signs $\vec \iota$ stays fixed) at each $t \in J$, to obtain unique $\bs g(t) \in \E$, $\vec \lam (t) \in (0, \infty)^K$ so that 
 \EQ{ \label{eq:lam-def} 
 \bs v(t) &=  \bs \calQ( m_n, \vec \iota,  \vec \lam(t)) + \bs g(t),  \\
 0& = \ang{ \calZ_{\U{\lam_j(t)}} \mid g(t)} , \quad \forall j = 1, \dots, K, 
 }
 where in this formula $\vec \iota, \vec\lam$ are $K$-vectors, i.e., $\vec \iota = ( \iota_1, \dots, \iota_K)$, $\vec \lam(t) = ( \lam_1(t), \dots, \lam_K(t))$.  We note the estimate, 
 \EQ{ \label{eq:d_K} 
 \bfd_{m_n, K}( \bs v_n(t))^2 \le \| \bs g(t)\|_{\E}^2 + \sum_{j =1}^{K-1} \Big( \frac{ \lam_{j}(t)}{\lam_{j+1}(t)} \Big)^k  +\Big( \frac{\lam_K(t)}{ \nu(t)} \Big)^k &\le 4 \bfd_{ m_n, K}( \bs v(t))^2 + o_n(1)  \\
&  \lesssim \bfd(t)^2 + o_n(1), 
 }
 as $n \to \infty$. Next, using~\eqref{eq:E<nu} we see that 
 \EQ{
 E( \bs v)  \le K E( \bs Q) + o_n(1).
 }
 Therefore, the estimate~\eqref{eq:g-bound-A} from Lemma~\ref{lem:mod-static} applied here yields, 
 \EQ{
 \| \bs g(t)  \|_{\E}^2 \lesssim  \sup_{j \in \calA} \Big( \frac{ \lam_{j}(t)}{\lam_{j+1}(t)} \Big)^k  + o_n(1) 
 }
 where $\calA = \{ j \in \{1, \dots, K-1\} \, : \, \iota_j \neq \iota_{j+1} \}$,  proving~\eqref{eq:g-upper}. 
 
Next, we prove the lower bound in~\eqref{eq:d-g-lam}. Note the identity, 
 \EQ{ \label{eq:u-u^*-1} 
\bs u(t) - \bs u^*(t) &=  \bs v(t)  - m_n \bs \pi  +  \bs w(t)  - \chi_{\nu(t)} \bs u^*(t) \\
&=   m_{\De} \bs \pi +   \sum_{j=1}^K \iota_j ( \bs Q_{\lam_j(t)} - \bs\pi)  + \sum_{j= K+1}^N \s_{n , j}(\bs Q_{\mu_{ j}(t)} - \bs \pi) \\ 
& \quad + \bs g(t) + \bs h(t) -  \chi_{\nu_n(t)} \bs u^*(t) 
 }
 which follows from~\eqref{eq:v_n-w_n-def},~\eqref{eq:w_n} and~\eqref{eq:m_De-sigma}.
 
 First we prove that $(\iota_{K+1}, \dots, \iota_N) = ( \sigma_{K+1}, \dots, \sigma_N)$. From~\eqref{eq:w_n} and~\eqref{eq:mu_n} we see that 
\EQ{
\| \bs w(t) -  m_{\De} \bs \pi - \sum_{j= K+1}^N \s_{n , j}(\bs Q_{\mu_{j}(t)} - \bs \pi) \|_{\E}^2 + \Big(\frac{ \nu(t)}{ \mu_{ K+1}(t)}\Big)^{k} + \sum_{j = K+1}^N \Big( \frac{ \mu_{ j}(t)}{\mu_{j+1}(t)} \Big)^{k}  \lesssim \eps_n^2 .
}
On the other hand, we see from~\eqref{eq:ti-lam-K} that, 
\EQ{
 \| \bs w(t) -  m_{\De} \bs \pi - \sum_{j= K+1}^N \iota_j (\bs Q_{\ti \lam_{j}(t)} - \bs \pi) \|_{\E}^2 + \Big(\frac{ \nu(t)}{ \ti \lam_{ K+1}(t)}\Big)^{k} + \sum_{j = K+1}^N \Big( \frac{ \ti \lam_{j}(t)}{\ti \lam_{ j+1}(t)} \Big)^{k}  \lesssim \bfd(t)^2 + o_n(1) .
}
Hence, using Lemma~\ref{lem:bub-config} we see that for any $\te_0>0$ we may take $\eta_0>0$ small enough so that $(\iota_{K+1}, \dots, \iota_N) = ( \sigma_{K+1}, \dots, \sigma_N)$, and in addition we have 
\EQ{ \label{eq:tilam-mu} 
\Big|\frac{ \ti \lam_{j}(t)}{\mu_{n, j}(t)} - 1\Big| \le \te_0 \quad \forall j = K+1, \dots, N.
}
The above, together with~\eqref{eq:mu_n}  implies that 
\EQ{
\sum_{j=K+1}^{N} \Big(\frac{  \ti \lam_j(t)}{\ti \lam_{j+1}(t)}\Big)^{k}  \lesssim \eps_n^2 .
}
We may thus rewrite~\eqref{eq:u-u^*-1} as 
 \EQ{ \label{eq:u-u*-1} 
\bs u(t) - \bs u^*(t) 
&=   m_{\De} \bs \pi +   \sum_{j=1}^K \iota_j ( \bs Q_{\lam_j(t)} - \bs\pi)  + \sum_{j= K+1}^N \iota_ j(\bs Q_{\mu_{ j}(t)} - \bs \pi) \\ 
& \quad + \bs g(t) + \bs h(t) -  \chi_{\nu(t)} \bs u^*(t) 
 }
Noting that    
\EQ{
\sup_{t \in[a_n, b_n]}\|  \bs u^*(t) \chi_{\nu(t)} \|_{\E} = o_n(1) \mas n \to \infty, 
}
the previous line together with~\eqref{eq:d_K} and~\eqref{eq:mu_n} imply 
that, 
\EQ{
\bfd(t)^2  \lesssim \bfd_{ m_n, K}( \bs v(t))^2 + o_n(1) \lesssim \| \bs g(t)\|_{\E}^2 + \sum_{j =1}^{K-1} \Big( \frac{ \lam_{j}(t)}{\lam_{j+1}(t)} \Big)^k  \mas n \to \infty, 
}
which proves the lower bound in~\eqref{eq:d-g-lam}. 

\textbf{Step 2:}(The dynamical estimates~\eqref{eq:lam'} and~\eqref{eq:lam'-refined}) Momentarily assuming that $\vec \lam \in C^1(J)$ (we will justify this assumption below)  we record the computations, 
\EQ{
\p_t v(t) &=  \dot g(t) + ( m_n \pi - u(t))  \frac{\nu'(t)}{\nu(t)}  \Lam \chi_{\nu(t)},   \quad 
\p_t \calQ( m_n, \vec \iota, \vec \lam(t)) =   - \sum_{j=1}^K \iota_j \lam_{j}'(t) \Lam Q_{\U{\lam_j(t)}} , 
}
which lead to the expression, 
\EQ{ \label{eq:dt-g-1} 
\p_t g(t) = \dot g(t)  + \sum_{j=1}^K \iota_j \lam_{j}'(t) \Lam Q_{\U{\lam_j(t)}}  + ( m_n \pi - u(t))  \frac{\nu'(t)}{\nu(t)}  \Lam \chi_{\nu(t)} . 
}
We differentiate the orthogonality conditions~\eqref{eq:g-ortho} for each $j = 1, \dots, K$,  
\EQ{
0 &= -\frac{\lam_j'}{\lam_j} \ang{ \ULam \calZ_{\U{\lam_j}} \mid g} + \ang{ \calZ_{\U{\lam_j}} \mid \p_t g}  \\
& = -\frac{\lam_j'}{\lam_j} \ang{ \ULam \calZ_{\U{\lam_j}} \mid g} + \ang{ \calZ_{\U{\lam_j}} \mid \dot g} + \sum_{\ell=1}^K  \iota_{\ell} \lam_{\ell}' \ang{ \calZ_{\U{\lam_{j}}} \mid \Lam Q_{\U{\lam_{\ell}} }}  + \frac{\nu'}{\nu} \ang{ \calZ_{\U{\lam_j}} \mid ( m_n \pi - u)  \Lam \chi_{\nu} }, 
}
which we rearrange into the system, 
\begin{multline}  \label{eq:lam'-system} 
\iota_j \lam_j' \Big( \ang{\calZ \mid \Lam Q }  -  \lam_j^{-1}\La  \ULam \calZ_{\U{\lam_j}} \mid   g\Ra \Big) + \sum_{i \neq j}  \iota_{i} \lam_{i}' \ang{ \calZ_{\U{\lam_{j}}} \mid \Lam Q_{\U{\lam_{i}} }}  \\
=  - \ang{ \calZ_{\U{\lam_j}} \mid \dot  g} -  \frac{\nu'}{\nu} \ang{ \calZ_{\U{\lam_j}} \mid ( m_n \pi - u)  \Lam \chi_{\nu} }. 
\end{multline} 
This is a diagonally dominant system, hence invertible, and we arrive at the estimate, 
\EQ{ \label{eq:lam_j'} 
\abs{ \lam_j'} & \lesssim \| \dot g \|_{L^2} + o_n(1)  \quad j = 1, \dots, K, 
}
after noting the estimates, 
\EQ{
\abs{ \ang{ \calZ_{\U{\lam_j}} \mid \dot g}} & \lesssim \|\dot g \|_{L^2}  \\
\abs{ \frac{\nu'(t)}{\nu(t)} \ang{ \calZ_{\U{\lam_j}} \mid ( m_n \pi - u(t))  \Lam \chi_{\nu(t)} }} &\lesssim \abs{ \nu'}\frac{\lam_j}{\nu} \| r^{-1} ( m_n \pi - u(t)) \Lam \chi_{\nu(t)} \|_{L^2}  = o_n(1) , 
}
where the last line follows from ~\eqref{eq:nu'}. Lastly, we note that the system~\eqref{eq:lam'-system} implies that $\vec \lam(t)$ is a $C^1$ function on $J$. Indeed, arguing as in the end of the proof of Lemma~\ref{lem:ext-sign}, let $t_0 \in J$ be any time and let $\vec \lam(t_0)$ be defined as in~\eqref{eq:lam-def}. Using the smallness~\eqref{eq:d_K} at time $t_0$, the system~\eqref{eq:lam'-system} admits a unique $C^1$ solution $\vec\lam_{\textrm{ode}}(t)$ in a neighborhood of $t_0$. Due to the way the system~\eqref{eq:lam'-system} was derived, the orthogonality conditions in~\eqref{eq:lam-def} hold with $\vec \lam_{\textrm{ode}}(t)$. Since $\vec\lam(t)$ was obtained uniquely via the implicit function theorem, we must have $\vec \lam(t) = \vec \lam_{\textrm{ode}}(t)$, which means that $\vec \lam(t)$ is $C^1$. 

Lastly, the estimates~\eqref{eq:lam'-refined} are immediate from~\eqref{eq:lam'-system} using~\eqref{eq:lam_j'} along with the estimates, 
\EQ{
 \ang{ \calZ_{\U{\lam_{j}}} \mid \Lam Q_{\U{\lam_{i}} }}  & \lesssim \begin{cases} \Big( \frac{\lam_{j}}{\lam_i} \Big)^{k+1} \mif  j < i \\ \Big( \frac{\lam_{i}}{\lam_j} \Big)^{k-1} \mif j>i \end{cases}  \\
  \Big| \lam_j^{-1}\ang{ \U\Lam \calZ_{\U{\lam_j}} \mid  g} \Big|  &\lesssim  \| g \|_H, 
}
This completes the proof. 
%
%
%
\end{proof}

\subsection{Refined modulation}  \label{ssec:ref-mod} 

Next, our goal is to gain precise dynamical control of the modulation parameters in the spirit of \cite{JJ-APDE, JL1}.
The idea is to construct a virial correction to the modulation parameters; see~\eqref{eq:beta-def}. 
We start by  finding suitable truncation of the function $\frac{1}{2} r^2$, 
similar to~\cite[Lemma 4.6]{JJ-AJM}. Since here we may have arbitrary number of bubbles, we need to localize this function both away from $r =0$ and away from $r =\infty$. To make the exposition as uniform as possible, we restrict to equivariance classes $k \ge 2$ in this section, saving case $k =1$, which introduces additional technical complications, for the appendix.

\begin{lem} \label{lem:q} 
For any $c > 0$ and $R > 1$ there exists a function $q = q_{c, R} \in C^4((0, \infty))$
having the following properties:
\begin{enumerate}[(P1)]
\item $q(r) = \frac 12 r^2$ for all $r \in [R^{-1}, R]$,
\item there exists $\wt R > 0$ (depending on $c$ and $R$)
such that $q(r) = \tx{const}$ for $r \geq \wt R$ and $q(r) = \tx{const}$ for $r \leq \wt R^{-1}$,
\item $|q'(r)| \lesssim r$ and $|q''(r)| \lesssim 1$ for all $r > 0$,
with constants independent of $c$ and $R$,
\item $q''(r) \geq -c$ and $\frac 1r q'(r) \geq -c$ for all $r > 0$,
\item $\big|\big(\frac{\vd^2}{\vd r^2} + \frac 1r\frac{\vd}{\vd r}\big)^2q(r)\big| \leq cr^{-2}$ for all $r > 0$,
\item $\big|\big(\frac{q'(r)}{r}\big)'\big| \leq cr^{-1}$ for all $r > 0$.
\end{enumerate}
\end{lem}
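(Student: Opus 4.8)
The plan is to prescribe $q$ through its first derivative. I would set $q'(r) := r\,\phi(r)$, where $\phi(r) := g(\log r)$ and $g\colon\R\to[0,1]$ is a smooth cutoff equal to $1$ on $[-\log R,\log R]$, equal to $0$ outside $[-\log R - L,\log R+L]$, and obtained by gluing rescaled copies of a single fixed profile so that $\|g'\|_{L^\infty}\lesssim L^{-1}$ and $\|g''\|_{L^\infty}+\|g'''\|_{L^\infty}\lesssim L^{-2}$ with absolute implied constants; here $L\ge1$ is a large parameter to be fixed at the end depending only on $c$. I would then define
\EQ{
q(r) := \tfrac12 R^{-2} + \int_{R^{-1}}^{r}\sigma\,\phi(\sigma)\,\vd\sigma, \qquad \wt R := R\,\eee^{L},
}
which is $C^\infty((0,\infty))$ because $\phi$ is. Since $\phi$ vanishes for $r\le\wt R^{-1}$ and for $r\ge\wt R$, the function $q$ is constant on $(0,\wt R^{-1}]$ and on $[\wt R,\infty)$, and $q(r)=\tfrac12 r^2$ on $[R^{-1},R]$ by the choice of additive constant. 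This already gives (P1) and (P2), with $\wt R$ depending on $R$ and (through $L$) on $c$.

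For the remaining properties I would simply differentiate. Since $0\le\phi\le1$ we get $|q'(r)|=r\phi(r)\le r$; and $q''(r)=\phi(r)+r\phi'(r)=\phi(r)+g'(\log r)$, so $|q''(r)|\le 1+\|g'\|_{L^\infty}\lesssim1$ uniformly (using $L\ge1$), which is (P3) with constants independent of $c$ and $R$. The lower bounds are $q''(r)\ge -\|g'\|_{L^\infty}\ge -c$ for $L$ large in terms of $c$, and $\tfrac1r q'(r)=\phi(r)\ge0\ge-c$, giving (P4); and $\big(q'(r)/r\big)'=\phi'(r)=r^{-1}g'(\log r)$, so $|(q'(r)/r)'|\le\|g'\|_{L^\infty}\,r^{-1}\le c\,r^{-1}$ for $L$ large, giving (P6).

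The one computation that needs care is (P5). Using $\tfrac{\vd}{\vd r}h(\log r)=r^{-1}h'(\log r)$ repeatedly, I would first check that $\big(\p_r^2+\tfrac1r\p_r\big)q(r)=2\phi(r)+r\phi'(r)=2g(\log r)+g'(\log r)$, which is itself a function of $\log r$; then, since $\big(\p_r^2+\tfrac1r\p_r\big)\big(H(\log r)\big)=r^{-2}H''(\log r)$ for any smooth $H$, conclude
\EQ{
\Big(\p_r^2+\tfrac1r\p_r\Big)^2 q(r) = \frac{1}{r^2}\big(2g''(\log r)+g'''(\log r)\big).
}
All the $r^{-2}$ contributions that would come from $\tfrac12 r^2$ itself cancel — consistently with $(\p_r^2+r^{-1}\p_r)^2(\tfrac12 r^2)=0$ — and what survives is bounded by $\big(\|g''\|_{L^\infty}+\|g'''\|_{L^\infty}\big)r^{-2}\lesssim L^{-2}r^{-2}\le c\,r^{-2}$ once $L$ is large enough. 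Fixing $L=L(c)\ge1$ so that all the smallness requirements in (P4)--(P6) hold simultaneously then completes the construction.

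\textbf{Main obstacle.} There is no real difficulty: the only thing to get right is the algebraic cancellation displayed above for $(\p_r^2+r^{-1}\p_r)^2 q$, and the bookkeeping of which constraints force $L$ to be large in terms of $c$ versus which bound — namely (P3) — must remain uniform in $c$ and $R$. The structural reason the construction works is that spreading the cutoff $\phi$ over $L\gg1$ octaves in $\log r$ makes every correction term produced by the truncation carry a negative power of $L$, so a single choice of $L$ suffices.
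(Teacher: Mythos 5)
Your construction is correct: prescribing $q'(r) = r\,g(\log r)$ with a smooth log-variable cutoff $g$ spread over length $L$, and then verifying (P1)--(P6) by direct differentiation, works, and the key identity $\big(\p_r^2+\tfrac1r\p_r\big)\big(H(\log r)\big)=r^{-2}H''(\log r)$ (applied with $H=2g+g'$) is exactly what makes (P5) come out with the factor $\|g''\|_{L^\infty}+\|g'''\|_{L^\infty}\lesssim L^{-2}$. The route differs from the paper's, though the underlying mechanism is the same: in both cases $q'(r)/r$ is made to decay from $1$ to $0$ slowly in $\log r$, over $\sim c^{-1}$ octaves, so that every error produced by the truncation carries a power of the small parameter. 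The paper implements this by subtracting an explicit multiple $c_1\psi$ of $\tfrac12 r^2\log r$ (plus lower-order corrections chosen to match derivatives at the matching point), exploiting $\Delta^2(r^2\log r)=0$, and then performing a second, Taylor-polynomial gluing at $R_0=e^{1/c_1}$ where the linear-in-$\log r$ profile reaches zero; your version smooths the profile from the outset by taking $g$ to be a rescaled fixed cutoff in the $\log r$ variable, which collapses the two-step construction into one and replaces the explicit computations ($\Delta^2(r^2\log r)=0$, estimates on the glued polynomial tail) by scaling bounds $\|g^{(j)}\|_{L^\infty}\lesssim L^{-j}$. What you gain is brevity and a cleaner bookkeeping of which bounds must be uniform in $c,R$ (namely (P3)) versus which only need $L=L(c)$ large; what the paper's explicit form buys is an exact formula for $q$ on the transition region, which is convenient for the detailed operator estimates carried out in the cited reference but is not needed for the statement of the lemma itself. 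One small point to make explicit if you write this up: the glued $g$ is genuinely smooth because the non-smoothness of $|x|$ at $x=0$ occurs where $g\equiv 1$ (since $R>1$), and $q\in C^\infty\subset C^4$ follows since $q'$ is smooth on $(0,\infty)$.
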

\begin{proof}
\textbf{Step 1:} We construct a function $q(r)$ satisfying the desired properties for all $r \ge 1$. In this step, without loss of generality we can assume $R=1$. Let $c_1>0$ be small, to be chosen later and set 
\EQ{
q_{\textrm{o}}(r) := \begin{cases} \frac{1}{2} r^2  \mif r \le 1 \\ \frac{1}{2} r^2  - c_1 \psi_{\textrm{o}}(r)  \mif r \ge 1 \end{cases} 
}
where $\psi_{\oo}(r) = \frac{1}{2} r^2 \log r + \ti \psi_{\oo}(r)$ and $\ti \psi_{\oo}(r)$ is any smooth function satisfying, 
\EQ{
\ti \psi_{\oo} (1) = 0, \quad \ti \psi_{\oo}' (1) = -\frac{1}{2}, \quad \ti \psi_{\oo}'' (1) = -\frac{3}{2}, \quad \ti \psi_{\oo}''' (1) = -1, \quad \ti \psi_{\oo}^{(4)}(1) = 1,  \\
\abs{ \ti \psi_{\oo}^{(j)}(r) } \lesssim r^{2 - j} \quad \forall r \ge 1, \, \, j = 0, 1, \dots 4
}
which ensures $ 0 = \psi_{\oo} (1) = \psi_{\oo}' (1)  = \psi_{\oo}'' (1)  =\psi_{\oo}''' (1)  =\psi_{\oo}^{(4)} (1) $. To construct such a  function it suffices to take a suitable linear combination of negative powers of $r$, for example. Set $R_0 := \exp( 1/ c_1)$. We check all the properties for $1 \le r \le R_0$. We have, $q_{\oo}'(r) = r( 1- c_1 \log r) + O( c_1 r)$ and $q_{\oo}''(r) = (1- c_1 \log r) + O(c_1)$, so \textit{(P4)} holds. Also, 
\EQ{
 \Big| \Big(\frac{ \psi_{\oo}'(r)}{r}\Big)'\Big|  = \abs{ ( \log r + \frac{1}{2})'} + O(r^{-1}) = O( r^{-1})
}
for an absolute constant, implying \textit{(P6)}. \textit{(P5)} follows from $\De^2( r^2 \log r) = 0$, with all the remaining terms estimated brutally. We now truncate at $R_0$, setting $e_j(r):= \frac{1}{j !}r^j \chi(r)$ for $j  = 1, \dots, 4$ and defining, 
\EQ{
q(r) := \begin{cases} q_{\oo}(r) \mif r \le R_0 \\ q_{\oo}(R_0) + \sum_{j =1}^4 q_{\oo}^{(j)}(R_0) R_0^j e_j(-1 + r/ R_0), \mif r \ge R_0\end{cases} 
}
Noting that $\abs{ q_{\oo}^{(j)}(R_0)} \lesssim c_1 R_0^{2 - j}$ for $j = 1, \dots, 4$, we see that $q(r)$ inherits all the desired properties from $q_{\oo}(r)$ and is constant after $3 R_0$; see~\cite[Lemma 4.6]{JJ-AJM} for additional details. 

\textbf{Step 2:} We next find a function $q(r)$ with all the desired properties for $r \le 1$. As above, we may assume here that $R =1$. Let $c_1>0$ be small, to be chosen later, and set 
\EQ{
q_{\ii}(r):= \begin{cases} \frac{1}{2} r^2 \mif r \ge 1 \\ \frac{1}{2} r^2 + c_1 \psi_{\ii}(r)  \mif r \le 1 \end{cases}  
}
where  $\psi_{\ii}(r) = \frac{1}{2} r^2  \log r + \ti \psi_{\ii}(r)$ and $\ti \psi_{\ii}(r)$ is any smooth function satisfying, 
\EQ{
\ti \psi_{\ii} (1) = 0, \quad \ti \psi_{\ii}' (1) = -\frac{1}{2}, \quad \ti \psi_{\ii}'' (1) = -\frac{3}{2}, \quad \ti \psi_{\ii}''' (1) = -1, \quad \ti \psi_{\ii}^{(4)}(1) = 1,  \\
\abs{ \ti \psi_{\ii}^{(j)}(r) } \lesssim r^{2 - j} \quad \forall r \le  1, \, \, j = 0, 1, \dots 4
}
which ensures $ 0 = \psi_{\ii} (1) = \psi_{\ii}' (1)  = \psi_{\ii}'' (1)  =\psi_{\ii}''' (1)  =\psi_{\ii}^{(4)} (1) $. To obtain such a function it suffices to take a suitable linear combination of positive powers of $r$, for example. Set $R_0^{-1}:= \exp( - \frac{1}{c_1})$. One can check, as in Step 1, that all the properties hold for $R_0^{-1} \le r \le 1$, using that $1 + c_1 \log r \ge 0$ in this regime. Then truncate as in Step 1 to obtain the truncated function $q(r)$. 

\textbf{Step 3:} The final function $q(r)$ is obtained by gluing together the two functions called $q$ obtained in Steps 1, 2. 
\end{proof}
\begin{defn}[Localized virial operator]
For each $\lam>0$ we set
\begin{align}
A(\lambda)g(r) &:= q'\big(\frac{r}{\lambda}\big)\cdot \p_r g(r), \label{eq:opA}\\
\uln A(\lambda)g(r) &:=\big(\frac{1}{2\lambda}q''\big(\frac{r}{\lambda}\big) + \frac{1}{2r}q'\big(\frac{r}{\lambda}\big)\big)g(r) + q'\big(\frac{r}{\lambda}\big)\cdot\p_r g(r). \label{eq:opA0}
\end{align}
These operators depend on $c$ and $R$ as in Lemma~\ref{lem:q}. 
\end{defn}
Note the similarity between $A$ and $\frac{1}{\lambda} \Lam$ and between $\U A$ and $\frac{1}{\lam} \ULam$. For technical reasons we introduce the space 
\EQ{
X:= \{ g \in H \mid \frac{g}{r}, \p_r g \in H\}.
}

\begin{lem}[Localized virial estimates]  \emph{\cite[Lemma 5.5]{JJ-AJM}} \label{lem:A} 
For any $c_0>0$ there exist $c_1, R_1>0$, so that for all $c, R$ as Lemma~\ref{lem:q} with $c< c_1$, $R> R_1$ the operators $A(\lambda)$ and $\U A(\lambda)$ defined in~\eqref{eq:opA} and~\eqref{eq:opA0} have the following properties:

  \begin{itemize}[leftmargin=0.5cm]
    \item the families $\{A(\lambda): \lam > 0\}$, $\{\U A(\lambda): \lam> 0\}$, $\{\lambda\partial_\lambda A(\lambda): \lam > 0\}$
      and $\{\lambda\partial_\lambda \U A(\lambda): \lambda > 0\}$ are bounded in $\mathscr{L}(H; L^2)$, with the bound depending only on the choice of the function $q(r)$,
    \item 
   Let $g_1 = \calQ( m, \vec \iota, \vec \lam)$ be an $M$-bubble configuration and let $g \in X$. Then, for all $\lambda > 0$, 
      \begin{multline}  \label{eq:A-by-parts}
      \Big| \ang{ A(\lambda)g_1\mid  \frac{1}{r^2}\big(f(g_1 + g_2) - f(g_1) - f'(g_1)g_2\big)}  \\ +\ang{ A(\lambda)g_2\mid \frac{1}{r^2}\big(f(g_1+g_2) - f(g_1) -k^2 g_2\big)}\Big| 
        \leq \frac{c_0}{\lambda} \|g_2\|_H^2, 
      \end{multline}
    \item For all $g \in X$ we have  
\EQ{
        \label{eq:pohozaev}
        \ang{\U A(\lambda)g \mid \LL_0 g} \ge  - \frac{c_0}{\lambda}\|g\|_{H}^2 + \frac{1}{\lambda}\int_{R^{-1} \lam}^{R\lambda}  \Big((\partial_r g)^2 + \frac{k^2}{r^2}g^2\Big) \udr, 
        }
        \item For $\lam, \mu >0$ with either $\lam/\mu \ll 1$ or $\mu/\lam \ll 1$, 
\begin{align} 
      \label{eq:L0-A0}
      \|\ULam \Lambda Q_\uln{\lambda} - \U A(\lambda)\Lambda Q_{\lambda}\|_{L^2} &\leq c_0, \\
      \label{eq:L-A}
      \|\big(\frac{1}{\lam} \Lambda  - A(\lambda)\big) Q_\lambda\|_{L^\infty} &\leq \frac{c_0}{\lambda},  \\
    \| A(\lam) Q_\mu \|_{L^\infty} + \| \U A(\lam) Q_\mu \|_{L^\infty}  &\lesssim  \frac{1}{\lam}  \min \{ (\lam/ \mu)^k,  (\mu/ \lam)^k \}  \label{eq:A-mismatch} \\
 \| A(\lam) Q_\mu \|_{L^2} + \| \U A(\lam) Q_\mu \|_{L^2} &\lesssim   \min \{ (\lam/ \mu)^k,  (\mu/ \lam)^k \}  \label{eq:A-mismatch-2}
     \end{align} 
     
     \item Lastly, the following localized coercivity estimate holds. Fix any smooth function $\calZ \in L^2 \cap X$ such that $\ang{\calZ \mid \Lam Q} >0$. For any $g \in H, \lam>0$ with $\ang{g \mid \calZ_{\U \lam}} = 0$, 
     \EQ{ \label{eq:coercive}
    \frac{1}{\lambda}\int_{R^{-1} \lam}^{R\lambda}  (\partial_r g)^2 + \frac{k^2}{r^2}g^2 \udr &+   \frac{1}{\lam} \int_{0}^\infty  \big(\frac{1}{2}q''\big(\frac{r}{\lambda}\big) + \frac{\lam}{2r}q'\big(\frac{r}{\lambda}\big)\big) \frac{k^2}{r^2} ( f'(Q_{\lam}) - 1)g^2  \, r \, \ud r  \\
    \ge - \frac{c_0}{\lam} \| g \|_{H}^2 .
     }
  \end{itemize}
\end{lem}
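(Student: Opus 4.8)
This is the many-bubble, $k\ge 2$ analogue of \cite[Lemma 5.5]{JJ-AJM}; the only structural novelty is that the weight $q=q_{c,R}$ of Lemma~\ref{lem:q} is now cut off on \emph{both} ends, so that $A(\lam)$ and $\U A(\lam)$ agree with $\tfrac1\lam\Lam$ and $\tfrac1\lam\ULam$ only on the window $[R^{-1}\lam,R\lam]$ and are harmless (constant coefficients) for $r\ll\lam$ and for $r\gg\lam$. We use $k\ge 2$ throughout: this guarantees $\Lam Q\in L^2$ and that $\Lam Q$, $\ULam\Lam Q$ decay like $r^{\pm k}$ at $0$ and $\infty$; the case $k=1$ (where $\Lam Q\notin L^2$) is handled in the appendix. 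The boundedness of the four families in $\mathscr L(H;L^2)$ is immediate: by the scaling relation $A(\lam)(g(\cdot/\lam))=\tfrac1\lam[A(1)g](\cdot/\lam)$, and likewise for $\U A(\lam)$, $\lam\p_\lam A(\lam)$, $\lam\p_\lam\U A(\lam)$, together with $\|g(\cdot/\lam)\|_H=\|g\|_H$, it suffices to take $\lam=1$, and then (P2) confines the coefficients $q'$, $q''$ to $[\wt R^{-1},\wt R]$ where they are $\tx{O}(r)$ and $\tx{O}(1)$ by (P3); a change of variables finishes it, the constant depending only on $q$.

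\emph{The virial identities \eqref{eq:pohozaev} and \eqref{eq:A-by-parts}.} Set $\psi(r):=q'(r/\lam)$, so that $\U A(\lam)=\tfrac12\big(\psi'+\tfrac\psi r\big)+\psi\,\p_r$ is, up to the zeroth-order piece, the skew part (with respect to $L^2(r\,\ud r)$) of the radial vector field $\psi\,\p_r$; the space $X$ is introduced precisely to license the integrations by parts. For \eqref{eq:pohozaev} one expands $\ang{\U A(\lam)g\mid\LL_0 g}$ and integrates by parts in $r$: on the window $[R^{-1}\lam,R\lam]$, where $q(r/\lam)=\tfrac12(r/\lam)^2$ by (P1), this is literally the classical scaling (Pohozaev) computation for $-\De+k^2r^{-2}$ and yields the good term $\tfrac1\lam\int_{R^{-1}\lam}^{R\lam}\big((\p_r g)^2+k^2r^{-2}g^2\big)\,\ud r$; off the window every term is an error, the first-order ones controlled by $|\psi'|\lesssim 1$ and by the sign conditions $q''\ge -c$, $\tfrac1r q'\ge -c$ of (P4) (so the unfavourable contributions carry one fixed sign and are absorbed into $-\tfrac{c_0}\lam\|g\|_H^2$), the higher-order ones by (P5) (the bound on $\De^2 q$) and (P6) (the bound on $(q'/r)'$); taking $c$ small and $R$ large renders all error constants $\le c_0$. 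Identity \eqref{eq:A-by-parts} is of the same nature: writing $F'=f$ one has $\psi\,\p_r F(w)=\psi\,f(w)\,\p_r w$, so after the Taylor expansion $f(g_1+g_2)=f(g_1)+f'(g_1)g_2+\tx{O}(g_2^2)$ and one integration by parts the boundary term and the window contribution cancel exactly (again $A=\tfrac1\lam\Lam$ there), and the residue is $\le\tfrac{c_0}\lam\|g_2\|_H^2$ by (P3)--(P4). I expect this bookkeeping to be the main obstacle: one must verify that \emph{every} term generated by the transition layer and the constant regions of $q$ either has a favourable sign or is quantitatively $\tx{O}(\tfrac{c_0}\lam\|g\|_H^2)$, and that the computation never calls on, e.g., pointwise control of $q'''$ with the wrong sign --- exactly what the list (P1)--(P6) is engineered to provide, and it is the two-sided truncation (absent in the two-bubble case of \cite{JJ-AJM}) that forces this more elaborate design, so that $A(\lam)$ effectively sees only scale $\lam$ among a cluster of bubbles at disparate scales.

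\emph{The bubble estimates \eqref{eq:L0-A0}--\eqref{eq:A-mismatch-2}.} These are explicit computations with $\Lam Q(r)=2kr^k/(1+r^{2k})$ and the support/growth bounds (P1)--(P3). The differences $\tfrac1\lam\Lam-A(\lam)$ and $\tfrac1\lam\ULam-\U A(\lam)$ vanish on $[R^{-1}\lam,R\lam]$ by (P1) and, on the complement, act on $Q_\lam$, $\Lam Q_\lam$, $\ULam\Lam Q_\lam$, which are $\tx{O}(R^{-k})$ there; hence \eqref{eq:L-A} and \eqref{eq:L0-A0} hold once $R$ is large (this is where $\Lam Q\in L^2$, i.e.\ $k\ge 2$, enters for \eqref{eq:L0-A0}). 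For \eqref{eq:A-mismatch}--\eqref{eq:A-mismatch-2}, $A(\lam)Q_\mu$ and $\U A(\lam)Q_\mu$ are supported in $r\in[\wt R^{-1}\lam,\wt R\lam]$ and there are $\lesssim\tfrac1\lam|\Lam Q(r/\mu)|$; on that support $r/\mu$ is uniformly $\ll 1$ or uniformly $\gg 1$ once $\lam/\mu$ or $\mu/\lam$ is small, and $|\Lam Q(s)|\lesssim\min(s^k,s^{-k})$ produces the factor $\min\{(\lam/\mu)^k,(\mu/\lam)^k\}$; the $L^2$ versions follow by integrating over the support.

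\emph{The localized coercivity \eqref{eq:coercive}.} Running the virial computation of the second step with $\LL_\lambda=\LL_0+k^2r^{-2}(f'(Q_\lambda)-1)$ in place of $\LL_0$ shows that the left-hand side of \eqref{eq:coercive} is, up to $\tx{O}(\tfrac{c_0}\lam\|g\|_H^2)$, the quantity estimated in the localized coercivity of $\LL$: the window weight $\tfrac1\lam\big(\tfrac12 q''(\tfrac r\lam)+\tfrac\lam{2r}q'(\tfrac r\lam)\big)$ equals $1$ on $[R^{-1}\lam,R\lam]$ and is $\ge -c$ everywhere by (P4), which is exactly the structure needed to invoke Lemma~\ref{l:loc-coerce} after rescaling, the orthogonality $\ang{\calZ_{\U{\lam}}\mid g}=0$ absorbing the defect $\ang{\calZ\mid g}^2$.
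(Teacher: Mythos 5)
Your sketch is correct and follows essentially the same route as the paper, whose own proof of this lemma is simply the citation of \cite[Lemmas 4.7 and 5.5]{JJ-AJM} (with \cite{R19} for $k=1$): the scaling reduction for the operator bounds, the multiplier computations with the doubly truncated weight $q$ of Lemma~\ref{lem:q}, the explicit decay of $\Lam Q$ for \eqref{eq:L0-A0}--\eqref{eq:A-mismatch-2}, and Lemma~\ref{l:loc-coerce} for \eqref{eq:coercive} are exactly the ingredients of that argument. Two minor slips worth fixing: the smallness $c_0/\lambda$ in \eqref{eq:A-by-parts} comes from (P6) (write the integrand as $\frac{q'(r/\lambda)}{r}\,\partial_r G$ with $G=O(g_2^2)$ and integrate by parts, the window contributing nothing since $(q'(s)/s)'=0$ there), not from (P3)--(P4), and deducing \eqref{eq:coercive} from Lemma~\ref{l:loc-coerce} is not a bare rescaling but requires an auxiliary truncation of $g$ off the window, using that the potential $\frac{k^2}{r^2}(f'(Q_\lam)-1)$ is $O(R^{-2k})$-small there.
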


\begin{proof}
See~\cite[Lemmas 4.7 and 5.5]{JJ-AJM} the proof in the cases $k \ge 2$ and~\cite[Lemma 3.7 and Remark 3.8]{R19} for modifications to handle the case $k=1$. 
\end{proof}

The modulation parameters $\vec \lam(t)$ defined in Lemma~\ref{lem:mod-1} are imprecise proxies for the dynamics in the case $k =2$ (and also $k=1$; see the appendix) due to the fact that the orthogonality conditions were imposed relative to $\calZ \neq \Lam Q$. Indeed, we use~\ref{eq:g-ortho} primarily to ensure coercivity, and thus the estimate~\eqref{eq:g-upper}, as well as the differentiability of $\vec\lam(t)$. To access the dynamics of~\eqref{eq:wmk} we  introduce a correction $\vec \xi(t)$ defined as follows. For each $t \in J \subset [a_n, b_n]$ as in Lemma~\ref{lem:mod-1} set, 
\EQ{\label{eq:xi-def} 
\xi_j(t) := \begin{cases} \lam_j(t) \mif k \ge 3 \\ \lam_j(t) - \frac{\iota_j}{\| \Lam Q \|_{L^2}^2} \La \chi_{L\lam_j(t)} \Lam Q_{\U{\lam_j(t)}} \mid g(t) \Ra  \mif  k=2\end{cases} 
}
for each $j = 1, \dots, K-1$, and where $L>0$ is a large constant to be determined below.  (Note that for $j =K$ we only require the brutal estimate~\eqref{eq:lam'}). We require yet another modification, since the dynamics  of~\eqref{eq:wmk} truly enter after taking two derivatives of the modulation parameters and it is not clear how to derive useful estimates from the expression for $\xi_j''(t)$. So we introduce a refined modulation parameter, which we view as a subtle correction to $\xi_j'(t)$.  For each $t \in J \subset [a_n, b_n]$ as in Lemma~\ref{lem:mod-1} and for each $j \in \{1, \dots, K\}$ define, 
\begin{equation} \label{eq:beta-def} 
\beta_j(t) :=- \frac{\iota_j}{ \| \Lam Q \|_{L^2}^2} \La \Lam Q_{\U{\lam_j(t)}} \mid \dot g(t)\Ra  -   \frac{1}{ \| \Lam Q \|_{L^2}^2} \ang{ \uln A( \lam_j(t)) g(t) \mid \dot g(t)}.
\end{equation}
Note that $\be_j(t)$ is similar to the function called $b(t)$ in~\cite{JL1}. 

%
%

\begin{lem}[Refined modulation] 
\label{cor:modul}
Let $k \ge 2$ and $c_0\in (0 ,1)$.  There exist $\eta_0>0, L>0, c>0, R>1,  C_0>0$ and a decreasing sequence $ \delta_n \to 0$ so that the following is true. 
Let  $J \subset [a_n, b_n]$ be an open time interval with 
\EQ{  \label{eq:de_n-lower} 
 \bfd(t) \leq \eta_0 \mand \max_{i \in \calA}\big(\lambda_i(t) / \lambda_{i+1}(t)\big)^{k/2} \geq \delta_n, 
}
for all $t \in J$, where $\calA :=  \{ j \in \{1, \dots, K-1\} \mid \iota_{j}  \neq \iota_{j+1} \}$.  Then, for all $t \in J$, 
\EQ{\label{eq:g-bound} 
\| \bs g(t) \|_{\E} + \sum_{i \not \in \calA}  \big(\lambda_i(t) / \lambda_{i+1}(t)\big)^{k/2} \le C_0 \max_{ i  \in \calA}  \big(\lambda_i(t) / \lambda_{i+1}(t)\big)^{k/2}, \\
}
and, 
\EQ{ \label{eq:d-bound} 
\frac{1}{C_0} \bfd(t) \le \max_{i \in \calA} \big(\lambda_i(t) / \lambda_{i+1}(t)\big)^{k/2}  \le C_0 \bfd(t) .
}
Moreover, for all $j \in \{1, \dots, K-1 \}$ and $t \in J$, 
\begin{equation}\label{eq:xi_j-lambda_j} 
|\xi_j(t) / \lambda_j(t) - 1| \leq c_0, 
\end{equation}
\begin{equation}\label{eq:lam_j'-beta_j} 
|\xi_j'(t) - \beta_j(t)| \leq c_0\max_{i  \in \calA}\bigg( \frac{\lam_{i}(t)}{\lam_{i+1}(t)} \bigg)^{k/2} , 
\end{equation}
and,  
\EQ{ \label{eq:beta_j'} 
 \beta_{j}'(t) &\ge  \Big({-} \iota_j \iota_{j+1}\om^2 -   c_0\Big) \frac{1}{\lam_{j}(t)} \left( \frac{\lam_{j}(t)}{\lam_{j+1}(t)} \right)^{k} +    \Big(\iota_j \iota_{j-1}\om^2 -  c_0\Big) \frac{1}{\lam_{j}(t)} \left( \frac{\lam_{j-1}(t)}{\lam_{j}(t)} \right)^{k}   \\
&\quad  -  \frac{c_0}{\lam_j(t)} \max_{i \in \calA}\bigg( \frac{\lam_{i}(t)}{\lam_{i+1}(t)} \bigg)^{k}.
}
where, by convention, $\lambda_0(t) = 0, \lam_{K+1}(t) = \infty$ for all $t \in J$, and $\om^2>0$ is defined by 
\EQ{ \label{eq:omega-def} 
\om^2 = \om^2(k) := 8k^2 \| \Lam Q \|_{L^2}^{-2} = 4k^2 \pi^{-1}  \sin(\pi/k)>0. 
} 
\end{lem}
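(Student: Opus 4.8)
The plan is to establish Lemma~\ref{cor:modul} by combining the basic modulation inequalities from Lemma~\ref{lem:mod-1} with the localized virial estimates of Lemma~\ref{lem:A}, proceeding through the statements in the order in which they are listed. First I would fix parameters: choose $c_0$ as given, then select $c, R$ (hence the function $q = q_{c,R}$ and operators $A(\lambda), \uln A(\lambda)$) via Lemma~\ref{lem:A} to absorb all error constants, and choose $L$ large and $\eta_0$ small accordingly. The estimates~\eqref{eq:g-bound} and~\eqref{eq:d-bound} are immediate upgrades of~\eqref{eq:g-upper} and~\eqref{eq:d-g-lam}: under the lower bound~\eqref{eq:de_n-lower}, the $\zeta_n$ terms (which $\to 0$) are dominated by $\max_{i\in\calA}(\lambda_i/\lambda_{i+1})^{k/2} \ge \delta_n$ once we choose $\delta_n$ decaying slowly enough relative to $\zeta_n$ (e.g.\ $\delta_n := \sqrt{\zeta_n + \epsilon_n}$ or similar), so both sides of~\eqref{eq:d-g-lam}--\eqref{eq:g-upper} become two-sided comparisons up to constants. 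This simultaneously shows $\bfd(t) \simeq \max_{i\in\calA}(\lambda_i/\lambda_{i+1})^{k/2}$ and $\|\bs g(t)\|_\cE \lesssim \bfd(t)$.

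Next, for~\eqref{eq:xi_j-lambda_j}: when $k\ge 3$, $\xi_j = \lambda_j$ and there is nothing to prove; when $k=2$, the correction term is $-\tfrac{\iota_j}{\|\Lambda Q\|_{L^2}^2}\langle \chi_{L\lambda_j}\Lambda Q_{\uln{\lambda_j}} \mid g\rangle$, which by Cauchy--Schwarz and $\|\chi_{L\lambda_j}\Lambda Q_{\uln{\lambda_j}}\|_{L^2} \lesssim \sqrt{\log L}$ (using~\eqref{eq:LamQL2}) is bounded by $C\sqrt{\log L}\,\|g\|_H \lesssim \sqrt{\log L}\,\bfd(t)$; for $\eta_0$ small and $L$ fixed this is $\le c_0\lambda_j(t)$, after noting $\lambda_j$ appears linearly. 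For~\eqref{eq:lam_j'-beta_j}, differentiate the definition of $\xi_j$. In the $k=2$ case the derivative of the correction term produces $-\tfrac{\iota_j}{\|\Lambda Q\|_{L^2}^2}\langle \chi_{L\lambda_j}\Lambda Q_{\uln{\lambda_j}} \mid \dot g + \text{(terms from }\partial_t g)\rangle$ plus $\lambda_j'$-dependent pieces; using the expression~\eqref{eq:dt-g-1} for $\partial_t g$, the identity $\langle\Lambda Q_{\uln{\lambda_j}}\mid \Lambda Q_{\uln{\lambda_j}}\rangle = \|\Lambda Q\|_{L^2}^2$, and comparing $\uln A(\lambda_j)$ to $\tfrac{1}{\lambda_j}\uln\Lambda$ on the scale $\lambda_j$ (truncated by $\chi_{L\lambda_j}$, cf.~\eqref{eq:L0-A0}), one matches $\xi_j'$ against $\beta_j$ up to errors controlled by $\|\bs g\|_\cE^2$, bubble-ratio times $\|\dot g\|_{L^2}$, and $\zeta_n$-type terms from the $\nu(t)$ cutoff — all of which are $\lesssim c_0\max_{i\in\calA}(\lambda_i/\lambda_{i+1})^{k/2}$ by~\eqref{eq:d-bound} and the smallness of $\eta_0$. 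This is essentially the computation of~\cite[Section 4]{JL1} localized by $q_{c,R}$.

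The main obstacle is~\eqref{eq:beta_j'}, the coercive lower bound on $\beta_j'(t)$. The plan is to differentiate~\eqref{eq:beta-def} in $t$, substitute the wave maps equation~\eqref{eq:wmk} for $\partial_t \dot u$ (hence for $\partial_t \dot g$ after accounting for the bubble and cutoff motion), and isolate the leading term. Writing schematically $\|\Lambda Q\|_{L^2}^2\beta_j'(t) = -\iota_j\langle \Lambda Q_{\uln{\lambda_j}} \mid \partial_t\dot g\rangle - \langle \uln A(\lambda_j)\dot g \mid \dot g\rangle - \langle \uln A(\lambda_j) g \mid \partial_t \dot g\rangle + (\text{lower order})$, the first and third terms, upon using the equation $\partial_t \dot g = \Delta g - \tfrac{k^2}{r^2}(f(\calQ+g)-f(\calQ)) - f_{\bfi}(m_n,\vec\iota,\vec\lambda) + \text{errors}$, produce: (i) a term $\langle \Lambda Q_{\uln{\lambda_j}} \mid f_{\bfi}\rangle$ which by Lemma~\ref{lem:interaction} equals $\iota_{j+1}8k^2(\lambda_j/\lambda_{j+1})^k - \iota_{j-1}8k^2(\lambda_{j-1}/\lambda_j)^k$ up to $\theta(\eta)$-errors — this is where $\om^2 = 8k^2\|\Lambda Q\|_{L^2}^{-2}$ and the signs $-\iota_j\iota_{j+1}$, $+\iota_j\iota_{j-1}$ in~\eqref{eq:beta_j'} come from; (ii) the quadratic form terms $-\langle \uln A(\lambda_j)g \mid \LL_{\calQ}g\rangle$, which after~\eqref{eq:A-by-parts} (to replace $\LL_{\calQ}$ by $\LL_0$ at scale $\lambda_j$), the Pohozaev-type lower bound~\eqref{eq:pohozaev}, and the localized coercivity~\eqref{eq:coercive} (valid because of the orthogonality~\eqref{eq:g-ortho}), is bounded below by $-\tfrac{c_0}{\lambda_j}\|\bs g\|_\cE^2 \ge -\tfrac{c_0}{\lambda_j}\max_{i\in\calA}(\lambda_i/\lambda_{i+1})^k$; (iii) a term $-\langle\uln A(\lambda_j)\dot g \mid \dot g\rangle$ bounded by $\tfrac{c_0}{\lambda_j}\|\dot g\|_{L^2}^2$ using boundedness of $\{\lambda\uln A(\lambda)\}$; (iv) cross terms between bubbles other than $j\pm1$, the radiation cutoff $\chi_{\nu(t)}\bs u^*$, and the $\nu'(t)$ pieces, all estimated by~\eqref{eq:A-mismatch}--\eqref{eq:A-mismatch-2}, Corollary~\ref{cor:ZQ}, Lemma~\ref{lem:cross-term}, and~\eqref{eq:nu'}, and all dominated by $\tfrac{c_0}{\lambda_j}\max_{i\in\calA}(\lambda_i/\lambda_{i+1})^k$ after choosing $\eta_0, c$ small and $R, L$ large. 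Assembling these, dividing by $\|\Lambda Q\|_{L^2}^2$, and noting $(\lambda_j/\lambda_{j+1})^k \le \max_{i\in\calA}(\lambda_i/\lambda_{i+1})^k$ (resp.\ $(\lambda_{j-1}/\lambda_j)^k$) when the middle term on the right of~\eqref{eq:beta_j'} should actually only be counted when $j-1, j \in \calA$, yields~\eqref{eq:beta_j'}. The delicate points requiring care are: bookkeeping the signs through $f_{\bfi}$ and $\Lambda Q = k\sin Q > 0$; ensuring the $L$-truncation in $\xi_j$ (for $k=2$) and the $R$-truncation in $q_{c,R}$ interact correctly with~\eqref{eq:coercive} and~\eqref{eq:pohozaev} on overlapping scales; and verifying that the compatibility between the two different truncation scales ($L\lambda_j$ versus the $\wt R\lambda_j$ support of $q$) does not generate uncontrolled terms — this should follow as in~\cite[Proof of Proposition 4.5]{JL1} and~\cite[Lemma 5.5]{JJ-AJM} with $\delta_n$ chosen to swallow every $o_n(1)$ remainder.
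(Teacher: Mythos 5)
Your proposal follows essentially the same route as the paper's proof: upgrade the basic modulation bounds of Lemma~\ref{lem:mod-1} by choosing $\delta_n$ to dominate $\zeta_n$, control the $k=2$ correction in $\xi_j$ by the truncated pairing with $\Lam Q_{\uln{\lam_j}}$, and obtain \eqref{eq:beta_j'} by differentiating $\beta_j$, extracting the leading interaction term via Lemma~\ref{lem:interaction} (which produces $\om^2$ and the signs), and combining \eqref{eq:A-by-parts}, \eqref{eq:pohozaev} and \eqref{eq:coercive} with the orthogonality \eqref{eq:g-ortho} to give the coercive control of the quadratic terms, all remaining contributions being absorbed as errors. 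One small correction: the term $\ang{\uln A(\lam_j)\dot g \mid \dot g}$ cannot be estimated through boundedness of $\lam\uln A(\lam)$ (that boundedness is from $H$ to $L^2$, and $\dot g$ is only in $L^2$); rather it vanishes identically by the skew-symmetry built into $\uln A(\lam)$, which is precisely the reason this symmetrized operator is used in the definition of $\beta_j$.
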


\begin{rem}
\label{rem:deltan}
By \eqref{eq:g-upper}, without loss of generality (upon enlarging $\epsilon_n$), we can assume that
\begin{equation}
\eta_0 \geq \bfd(t) \geq \epsilon_n\quad\text{implies} \quad \max_{i \in \calA}\big(\lambda_i(t) / \lambda_{i+1}(t)\big)^{k/2} \geq \delta_n,
\end{equation}
so that Lemma~\ref{cor:modul} can always be applied on the time intervals
$J \subset [a_n, b_n]$ as long as $\bfd(t) \leq \eta_0$ on $J$. 
\end{rem}

Before beginning the proof of Lemma~\ref{cor:modul} we record the equation satisfied by $\bs g(t)$. Observe the identity, 
\EQ{
\De \calQ(m_n, \vec \iota, \vec \lam) = \frac{k^2}{r^2} \sum_{ j =1}^K \iota_j f(Q_{\lam_j}), 
}
and hence, 
\EQ{
(\p_t^2 u) \chi_{\nu} &= \chi_{\nu} \De u - \frac{k^2}{r^2} f( u) \chi_{\nu}  \\
& = \De \big( u \chi_{\nu} + (1- \chi_{\nu}) m_n\pi \big) - \frac{k^2}{r^2} f\big( u \chi_{\nu} + (1- \chi_{\nu}) m_n \pi)\big)  \\
&\quad - \frac{2}{r} \p_r u \Lam \chi_{\nu} +  \frac{1}{r^2}( m_n\pi - u)( r^2 \De \chi)_{\nu}  + \frac{1}{r^2} \Big(  f\big( u \chi_{\nu} + (1- \chi_{\nu}) m_n \pi)\big)  - f( u) \chi_{\nu} \Big) \\
& = \De g -\frac{1}{r^2}\Big( f( \calQ(m_n, \vec \iota, \vec \lam) + g) - \sum_{ j =1}^K \iota_j f(Q_{\lam_j}) \Big) \\
&\quad - \frac{2}{r} \p_r u \Lam \chi_{\nu} +  \frac{1}{r^2}( m_n\pi - u)( r^2 \De \chi)_{\nu}  + \frac{1}{r^2} \Big(  f\big( u \chi_{\nu} + (1- \chi_{\nu})m_n \pi)\big)  - f( u) \chi_{\nu} \Big) . 
}
Recalling~\eqref{eq:dt-g-1},  we are led to the system of equations, 
\EQ{ \label{eq:g-eq} 
\p_t g(t) &= \dot g(t)  + \sum_{j=1}^K \iota_j \lam_{j}'(t) \Lam Q_{\U{\lam_j(t)}}  + \phi( u(t), \nu(t))   \\
\p_t \dot g(t) &=  - \LL_{\calQ} g +f_{\bfi}( m_n, \iota, \vec \lam) + f_{\bfq}(m_n, \vec \iota,  \vec \lam, g) + \dot \phi( u(t), \nu(t)), 
}
where,  
\EQ{ \label{eq:h-def} 
\phi( u, \nu) &:= ( m_n \pi - u)  \frac{\nu'}{\nu}  \Lam \chi_{\nu} \\
\dot \phi( u, \nu) &:= - \frac{2}{r} \p_r u \Lam \chi_{\nu} +  \frac{1}{r^2}(m_n \pi - u)( r^2 \De \chi)_{\nu}  \\
&\quad + \frac{1}{r^2} \Big(  f\big( u \chi_{\nu} + (1- \chi_{\nu})m_n \pi)\big)  - f( u) \chi_{\nu} \Big)  - \frac{\nu'}{\nu} \Lam \chi_{\nu} \p_t u , 
}
which we note are supported in $r  \in ( \nu, \infty)$, 
and 
\EQ{
f_{\bfi}( m_n, \vec \iota, \vec \lam) &:= - \frac{k^2}{r^2} \Big( f\big( \calQ(m_n, \vec \iota, \vec \lam)\big) - \sum_{j =1}^{K} \iota_j  f(Q_{\lam_{j}}) \Big) \\
f_{\bfq}(m_n, \vec \iota, \vec \lam, g) &:= - \frac{k^2}{r^2} \Big( f\big( \calQ(m_n, \vec \iota, \vec \lam) + g \big)  - f\big(\calQ(m_n, \vec \iota, \vec \lam) \big) -  f'\big( \calQ(m_n, \vec \iota, \vec \lam)\big) g \Big). 
}
The subscript  $\bfi$ above stands for ``interaction'' and $\bfq$ stands for ``quadratic.'' 
In particular, $f_{\bfq}(m_n, \vec \iota, \vec \lam, g) $ satisfies, 
\EQ{ \label{eq:fq-est} 
\| f_{\bfq}(m_n, \vec \iota, \vec \lam, g)  \|_{L^1} \lesssim \| g \|_{H}^2. 
}
In one instance it will be convenient to rewrite the right-hand side of the equation for $\dot g$ as follows, 
\EQ{ \label{eq:g-eq-alt} 
\p_t \dot g = -\LL_0 g + f_{\bfi}( m_n, \iota, \vec \lam)  + \ti  f_{\bfq}(m_n, \vec \iota,  \vec \lam, g)  + \dot  \phi( u, \chi_{\nu}),
}
where $\ti  f_{\bfq}(m_n, \vec \iota, \vec \lam, g)$ is defined by the formula, 
 \EQ{\label{eq:ti-fq-def} 
\ti f_{\bfq}(m_n, \vec \iota, \vec \lam, g) &:= - \frac{k^2}{r^2} \Big( f\big( \calQ(m_n, \vec \iota, \vec \lam) + g \big)  - f\big(\calQ(m_n, \vec \iota, \vec \lam) \big)-  g \Big). 
}

\begin{proof}[Proof of Lemma~\ref{cor:modul} ]
First, we prove the estimates~\eqref{eq:g-bound} and~\eqref{eq:d-bound}.  Let $\zeta_n$ be the sequence given by Lemma~\ref{lem:mod-1} and let $\de_n$ be any sequence such that $\zeta_n/ \de_n \to 0$ as $n \to \infty$. Using Lemma~\ref{lem:mod-1}, estimate~\eqref{eq:g-bound} follows from~\eqref{eq:g-upper}  and the estimate~\eqref{eq:d-bound} follows from~\eqref{eq:d-g-lam}. 

Note also that with this choice of $\de_n$ and ~\eqref{eq:g-bound}, the estimate~\eqref{eq:lam'} leads to, 
\EQ{ \label{eq:lam'-new} 
\abs{\lam_j'(t)} \lesssim \max_{ i  \in \calA}  \big(\lambda_i(t) / \lambda_{i+1}(t)\big)^{k/2}. 
}

Next, we treat~\eqref{eq:xi_j-lambda_j}, which is only relevant in the case $k =2$. From~\eqref{eq:xi-def} we see that, 
\EQ{
|\xi_j / \lam_j -1| &= | \| \Lam Q \|_{L^2}^{-2} \lam_j^{-1}\La \chi_{L\lam_j} \Lam Q_{\U{\lam_j}} \mid g \Ra | \\
& \lesssim \| g \|_{L^\infty} ( 1 + \int_1^{L} \Lam Q(r) \, r \, \ud r) \lesssim (1+ \log(L)) \| g \|_H  \le c_0 
}
using~\eqref{eq:g-bound} and~\eqref{eq:d-bound} in the last line. Next we compute $\xi_j'(t)$. 

For $k=2$, from~\eqref{eq:xi-def} we have 
\EQ{ \label{eq:xi'-exp} 
\xi_j' &=  \lam_j'  - \frac{\iota_j}{\| \Lam Q \|_{L^2}^2}  \La \chi_{L\lam_j } \Lam Q_{\U{\lam_j}} \mid \p_t g  \Ra
  \\
  &\quad + \frac{\iota_j}{\| \Lam Q \|_{L^2}^2}\frac{ \lam_j' }{\lam_j} \La  \Lam \chi_{L\lam_j}  \Lam Q_{\U{\lam_j}} \mid g\Ra + \frac{\iota_j}{\| \Lam Q \|_{L^2}^2} \frac{\lam_{j}'}{\lam_j} \La \chi_{L\lam_j} \ULam \Lam Q_{\U{\lam_j}} \mid g\Ra . 
}
We examine each of the terms on the right above. The last two terms are negligible. Indeed, using $\| g \|_{L^\infty} \lesssim \| g \|_H$, 
\EQ{
\Big| \frac{ \lam_j'}{\lam_j} \La  \Lam \chi_{L\lam_j}  \Lam Q_{\U{\lam_j}} \mid g\Ra \Big| &\lesssim \abs{ \lam_j'}   \| g \|_{L^{\infty}}  \int_{2^{-1}L}^{2L} \Lam Q(r) \,r \, \ud r \\
& \lesssim  \| \bs g \|_{\E}^2 , 
}
and, 
\EQ{
\Big|\frac{\lam_{j}'}{\lam_j} \La \chi_{L\lam_j} \ULam \Lam Q_{\U{\lam_j}} \mid g\Ra \Big| & \lesssim \abs{ \lam_j'} \|g \|_{L^\infty} \int_0^{2L} \Lam Q(r)\, r \, \ud r  \lesssim (1+ \log( L)) \| \bs g \|_{\E}^2  , 
}
which is small relative to $\| \bs g \|_{\E}$ because of~\eqref{eq:g-bound}. Using~\eqref{eq:g-eq} in the second term in~\eqref{eq:xi'-exp} gives 
\EQ{
- \frac{\iota_j}{\| \Lam Q \|_{L^2}^2}  \La \chi_{L\lam_j } \Lam Q_{\U{\lam_j}} \mid \p_t g  \Ra &= - \frac{\iota_j}{\| \Lam Q \|_{L^2}^2}  \La \chi_{L\lam_j } \Lam Q_{\U{\lam_j}} \mid \dot g  \Ra  -  \frac{\iota_j}{\| \Lam Q \|_{L^2}^2}  \La \chi_{L\lam_j } \Lam Q_{\U{\lam_j}} \mid \sum_{i=1}^K \iota_i \lam_{i}' \Lam Q_{\U{\lam_i}} \Ra\\
&\quad - \frac{\iota_j}{\| \Lam Q \|_{L^2}^2}  \La \chi_{L\lam_j } \Lam Q_{\U{\lam_j}} \mid \phi( u, \nu)  \Ra. 
}
The first term on the right satisfies, 
\EQ{
-  \frac{\iota_j}{\| \Lam Q \|_{L^2}^2}  \La \chi_{L\lam_j } \Lam Q_{\U{\lam_j}} \mid \dot g  \Ra  &=  -\frac{\iota_j}{\| \Lam Q \|_{L^2}^2}  \La  \Lam Q_{\U{\lam_j}} \mid \dot g  \Ra  +  \frac{\iota_j}{\| \Lam Q \|_{L^2}^2}  \La (1-\chi_{L\lam_j }) \Lam Q_{\U{\lam_j}} \mid \dot g  \Ra  \\
& =  -\frac{\iota_j}{\| \Lam Q \|_{L^2}^2}  \La  \Lam Q_{\U{\lam_j}} \mid \dot g  \Ra  + o_L(1)\|\bs g \|_{\E}. 
}
where the $o_L(1)$ term can be made as small as we like by taking $L>0$ large. 
Using~\eqref{eq:lam'-new}, the second term yields, 
\EQ{
& -  \frac{\iota_j}{\| \Lam Q \|_{L^2}^2}  \La \chi_{L \lam_j } \Lam Q_{\U{\lam_j}} \mid \sum_{i=1}^K \iota_i \lam_{i}' \Lam Q_{\U{\lam_i}} \Ra 
 = - \lam_j'  \\  
 - \sum_{i \neq j} \frac{\iota_j\iota_i \lam_{i}'}{\| \Lam Q \|_{L^2}^2} &  \La \chi_{L\lam_j} \Lam Q_{\U{\lam_j}} \mid   \Lam Q_{\U{\lam_i}} \Ra
  +   \frac{  \lam_{j}' }{\| \Lam Q \|_{L^2}^2}  \La (1-  \chi_{L\lam_j }) \Lam Q_{\U{\lam_j}} \mid \Lam Q_{\U{\lam_j}} \Ra \\
 & = - \lam_j' + O(( \lam_{j-1}/ \lam_j) + (\lam_j/ \lam_{j+1}) + o_L(1))\max_{ i  \in \calA}  \big(\lambda_i / \lambda_{i+1}\big)^{k/2}.  
} 
Finally, the third term vanishes due to the fact that for each $j< K$,  $L\lam_j  \ll \lam_K \ll \nu$, and hence
\EQ{
  \La \chi_{L\lam_j } \Lam Q_{\U{\lam_j}} \mid \phi( u, \nu)  \Ra  = 0. 
}
Plugging all of this back into~\eqref{eq:xi'-exp} we obtain, 
\EQ{ \label{eq:xi'-est} 
\Big|\xi_j'(t) + \frac{\iota_j}{\| \Lam Q \|_{L^2}^2}  \La  \Lam Q_{\U{\lam_j}} \mid \dot g  \Ra  \Big| \le c_0 \max_{ i  \in \calA}  \big(\lambda_i / \lambda_{i+1}\big)^{k/2}.  
}
for $k =2$, after fixing $L>0$ sufficiently large. The same estimate for $k \ge 3$, i.e., when $\xi_j'(t) = \lam_j'(t)$, is immediate from~\eqref{eq:lam'-refined} since in this case we take $\calZ = \Lam Q$. Thus~\eqref{eq:xi'-est} holds for all $k \ge 2$. 
The estimate~\eqref{eq:lam_j'-beta_j} is then immediate from~\eqref{eq:xi'-est}, the definition of $\be_j$,  and the estimate, 
\EQ{
\Big| \frac{1}{ \| \Lam Q \|_{L^2}^2} \ang{ \uln A( \lam_j) g \mid \dot g} \Big| \lesssim \| \bs g \|_{\E}^2 , 
}
which follows from the first bullet point in Lemma~\ref{lem:A}. 

We prove~\eqref{eq:beta_j'}. We compute, 
\EQ{ \label{eq:beta'-1} 
\beta_j' &=  \frac{\iota_j}{ \| \Lam Q \|_{L^2}^2}  \frac{\lam_j'}{\lam_j} \La \ULam \Lam Q_{\U{\lam_j}} \mid \dot g \Ra  - \frac{\iota_j}{ \| \Lam Q \|_{L^2}^2} \La \Lam Q_{\U{\lam_j}} \mid \p_t  \dot g \Ra\\
&\quad -   \frac{1}{ \| \Lam Q \|_{L^2}^2} \frac{\lam_j'}{\lam_j} \ang{ \lam_j \p_{\lam_j} \uln A( \lam_j) g \mid \dot g} -   \frac{1}{ \| \Lam Q \|_{L^2}^2} \ang{ \uln A( \lam_j) \p_t g \mid \dot g} -    \frac{1}{ \| \Lam Q \|_{L^2}^2} \ang{ \uln A( \lam_j) g \mid \p_t \dot g}.
}
Using~\eqref{eq:g-eq} we arrive at the expression, 
\EQ{
-\La \Lam Q_{\U{\lam_j}} \mid \p_t  \dot g \Ra &=  \ang{ \Lam Q_{\U{\lam_j}} \mid  (\LL_{\calQ}- \LL_{\lam_j}) g} - \ang{ \Lam Q_{\U{\lam_j}} \mid f_{\bfi}( m_n, \iota, \vec \lam) }    \\
&\quad   - \ang{ \Lam Q_{\U{\lam_j}} \mid f_{\bfq}(m_n, \vec \iota,  \vec\lam, g)}  - \ang{ \Lam Q_{\U{\lam_j}} \mid  \dot \phi( u, \chi_{\nu})}, 
}
where in the first term on the right we used that $\LL_{\lam_j} \Lam Q_{\U {\lam_j} } = 0$. Using~\eqref{eq:g-eq}  we obtain, 
\EQ{
- &\ang{ \uln A( \lam_j) \p_t g \mid \dot g}  = - \ang{ \uln A( \lam_j) \dot g \mid \dot g} -   \sum_{i=1}^K \iota_i \lam_{i}'  \ang{ \uln A( \lam_j) \Lam Q_{\U{\lam_i}} \mid \dot g} -  \ang{ \uln A( \lam_j) \phi( u, \nu_n) \mid \dot g}\\
& = - \iota_j \lam_j'  \ang{ \uln A( \lam_j) \Lam Q_{\U{\lam_j}} \mid \dot g} - \sum_{i \neq j} \iota_i \lam_{i}'  \ang{ \uln A( \lam_j) \Lam Q_{\U{\lam_i}} \mid \dot g} -  \ang{ \uln A( \lam_j) \phi( u, \nu_n) \mid \dot g}
}
where we used that $ \ang{ \uln A( \lam_j) \dot g \mid \dot g} =0$. Finally, using~\eqref{eq:g-eq-alt} we have, 
\EQ{
-\ang{ \uln A( \lam_j) g \mid \p_t \dot g} & =  \ang{ \uln A( \lam_j) g \mid \LL_0 g } - \ang{ \uln A( \lam_j) g \mid f_{\bfi}( m_n, \iota, \vec \lam) } \\
& \quad  - \ang{ \uln A( \lam_j) g \mid \ti  f_{\bfq}(m_n, \vec \iota,  \vec \lam, g)}-  \ang{ \uln A( \lam_j) g \mid \dot  \phi( u, \chi_{\nu})}.
}
Plugging these back into~\eqref{eq:beta'-1} and rearranging we have,
\EQ{ \label{eq:beta'-exp} 
\| \Lam Q \|_{L^2}^2 \beta_j' &=  -  \frac{\iota_j }{\lam_j}  \ang{ \Lam Q_{\lam_j} \mid f_{\bfi}( m_n, \iota, \vec \lam) } +  \ang{ \uln A( \lam_j) g \mid \LL_0 g } \\
&\quad  + \ang{ (A(\lam_j) - \uln A( \lam_j)) g \mid \ti  f_{\bfq}(m_n, \vec \iota,  \vec \lam, g)} \\
&\quad + \ang{ \Lam Q_{\U{\lam_j}} \mid ( \LL_{\calQ} - \LL_{\lam_j}) g} + \iota_j \frac{\lam_j' }{\lam_j}\ang{ \big( \frac{1}{\lam_j} \ULam - \U{A}( \lam_j) \big) \Lam Q_{\lam_j} \mid \dot g} \\
&\quad    - \ang{ {A}(\lam_j) \sum_{i =1}^K \iota_i Q_{\lam_i} \mid f_{\bfq}(m_n, \vec \iota,  \vec\lam, g)}   - \ang{ A( \lam_j) g \mid \ti  f_{\bfq}(m_n, \vec \iota,  \vec \lam, g)} \\
&\quad    + \iota_j \ang{ ({A}( \lam_j) -  \frac{1}{\lam_j}\Lam) Q_{\lam_j} \mid f_{\bfq}(m_n, \vec \iota,  \vec\lam, g)}  -    \frac{\lam_j'}{\lam_j} \ang{ \lam_j \p_{\lam_j} \uln A( \lam_j) g \mid \dot g} \\
&\quad + \sum_{ i \neq j} \iota_i \ang{ {A}(\lam_j) Q_{\lam_i} \mid  f_{\bfq}(m_n, \vec \iota,  \vec\lam, g)} \\ 
&\quad - \sum_{i \neq j} \iota_i \lam_{i}'  \ang{ \uln A( \lam_j) \Lam Q_{\U{\lam_i}} \mid \dot g}   - \ang{ \uln A( \lam_j) g \mid f_{\bfi}( m_n, \iota, \vec \lam) } \\
&\quad  - \iota_j \ang{ \Lam Q_{\U{\lam_j}} \mid  \dot \phi( u, \nu)} -  \ang{ \uln A( \lam_j) \phi( u, \nu) \mid \dot g} -  \ang{ \uln A( \lam_j) g \mid \dot  \phi( u, \nu)}
}
We examine each of the terms on the right-hand side above. The leading order contribution comes from the first term, i.e., by Lemma~\ref{lem:interaction} 
\EQ{
-   \frac{\iota_j }{\lam_j\| \Lam Q \|_{L^2}^{2}}  \ang{ \Lam Q_{\lam_j} \mid f_{\bfi}( m_n, \iota, \vec \lam) } = - (\om^2 + O( \eta_0^2))  \frac{ \iota_j \iota_{j+1}}{\lam_j}  \Big( \frac{\lam_j}{ \lam_{j+1}}  \Big)^k + (\om^2 + O( \eta_0^2)) \frac{ \iota_j \iota_{j-1}}{\lam_j}\Big( \frac{\lam_{j-1}}{ \lam_{j}}  \Big)^k
}
The second  and third terms together will have a sign, up to an acceptable error. First, using~\eqref{eq:pohozaev} we have, 
\EQ{
 \ang{ \uln A( \lam_j) g \mid \LL_0 g } \ge -\frac{c_0}{\lam_j} \| g \|_{H}^2 + \frac{1}{\lam_j} \int_{R^{-1} \lam_j}^{R \lam_j} \Big( ( \p_r g)^2 + \frac{k^2}{r^2} g^2  \Big) \, r \ud r
}
To treat the third term, we start by using the definition~\eqref{eq:ti-fq-def} to observe the identity, 
\EQ{\label{eq:tif-exp} 
\ti  f_{\bfq}(m_n, \vec \iota,  \vec \lam, g) 
& = - \frac{k^2}{r^2}( f'(Q_{\lam_j}) - 1)g  - \frac{k^2}{r^2} ( f'( \calQ(m_n, \vec\iota_j, \vec \lam_j)) - f'(Q_{\lam_j}))g + f_{\bfq}(m_n, \vec \iota, \vec\lam, g) 
}
Next, by definition,  
\EQ{
(A( \lam_j) - \U A( \lam_j)) g = - \frac{1}{\lam_j}  \big(\frac{1}{2}q''\big(\frac{r}{\lambda_j}\big) + \frac{\lam_j}{2r}q'\big(\frac{r}{\lambda_j}\big)\big)g
}
The contributions of the second two terms in~\eqref{eq:tif-exp} yield acceptable errors. Indeed, 
\EQ{
\big| \La (A( \lam_j) - \U A( \lam_j)) g & \mid  \frac{k^2}{r^2} ( f'( \calQ(m_n, \vec\iota_j, \vec \lam_j)) - f'(Q_{\lam_j}))g \Ra \Big| \\
& \lesssim \frac{1}{\lam_j} \int_{\ti  R^{-1} \lam_j}^{\ti R \lam_j} g^2  \abs{ f'( \calQ(m_n, \vec\iota_j, \vec \lam_j)) - f'(Q_{\lam_j}))} \, \frac{\ud r}{r}  \le c_0 \frac{ \| g\|_H^2}{\lam_j}
}
with $\ti R$ as in Lemma~\ref{lem:q}, and by~\eqref{eq:fq-est} and the definition of $q$ from Lemma~\ref{lem:q}, 
\EQ{
\abs{\ang{(A( \lam_j) - \U A( \lam_j)) g  \mid  f_{\bfq}(m_n, \vec \iota, \vec\lam, g) }} \lesssim \frac{1}{\lam_j} \|g \|_{L^\infty} \| g \|_H^2  \le c_0 \frac{ \| g\|_H^2}{\lam_j}
}
Putting this together we obtain, 
\EQ{
\Big|  \La (A(\lam_j) - \uln A( \lam_j)) g \mid \ti  f_{\bfq}(m_n, \vec \iota,  \vec \lam, g)\Ra &-  \frac{1}{\lam_j} \int_{0}^\infty  \big(\frac{1}{2}q''\big(\frac{r}{\lambda_j}\big) + \frac{\lam_j}{2r}q'\big(\frac{r}{\lambda_j}\big)\big) \frac{k^2}{r^2} ( f'(Q_{\lam_j}) - 1)g^2  \, r \, \ud r \Big| \\
&\lesssim c_0 \frac{ \| g\|_H^2}{\lam_j}. 
}
%
We show that the remaining terms contribute acceptable errors. For the fourth term a direct calculation gives,  
\EQ{
\abs{ \ang{ \Lam Q_{\U{\lam_j}} \mid ( \LL_{\vec \lam} - \LL_{\lam_j}) g}} & \lesssim \frac{1}{\lam_j} \| g \|_H \sum_{ i \neq j}( \|r^{-1} \Lam Q_{\lam_j}  \Lam Q_{\lam_i}^2 \|_{L^2} + \|r^{-1} \Lam Q_{\lam_j}^2  \Lam Q_{\lam_i} \|_{L^2} ) \\
& \lesssim \frac{1}{\lam_j} \| g \|_H  \Big( \Big( \frac{ \lam_j}{\lam_{j+1}} \Big)^k + \Big( \frac{ \lam_{j-1}}{\lam_{j}} \Big)^k\Big).
}
By~\eqref{eq:L0-A0} along with~\eqref{eq:lam'} we have, 
\EQ{
\abs{\iota_j \frac{\lam_j' }{\lam_j}\ang{ \big( \frac{1}{\lam_j} \ULam - \U{A}( \lam_j) \big) \Lam Q_{\lam_j} \mid \dot g}} \lesssim \frac{c_0}{\lam_j} \| \bs g \|_{\E}^2 . 
}
For the sixth term on the right-hand side of~\eqref{eq:beta'-exp} we note that 
\EQ{
A( \lam_j) \sum_{i =1}^K \iota_i Q_{\lam_i} = A( \lam_j) \calQ( m_n, \vec \iota, \vec \lam), 
}
and hence we may apply~\eqref{eq:A-by-parts} with $g_1 = \calQ( m_n, \vec \iota, \vec \lam)$ and $g_2 = g$ to conclude that 
\EQ{
\Big|   \La {A}(\lam_j) \sum_{i =1}^K \iota_i Q_{\lam_i} \mid f_{\bfq}(m_n, \vec \iota,  \vec\lam, g)\Ra  + \ang{ A( \lam_j) g \mid \ti  f_{\bfq}(m_n, \vec \iota,  \vec \lam, g)} \Big| \le \frac{c_0}{\lam_j} \|g \|_{H}^2,  
}
which takes care of the sixth and seventh terms. 
By~\eqref{eq:L-A} and ~\eqref{eq:fq-est} we see that, 
\EQ{
\abs{ \ang{ ({A}( \lam_j) -  \frac{1}{\lam_j}\Lam) Q_{\lam_j} \mid f_{\bfq}(m_n, \vec \iota,  \vec\lam, g)} } \lesssim \frac{c_0}{ \lam_j}  \| g \|_{H}^2 . 
}
Using the first bullet point in Lemma~\ref{lem:A} and~\eqref{eq:lam'} we estimate the eighth term as follows, 
\EQ{
 \abs{ \frac{\lam_j'}{\lam_j} \ang{ \lam_j \p_{\lam_j} \uln A( \lam_j) g \mid \dot g}} & \lesssim \frac{1}{\lam_j}\| \bs g \|_{\E}^3  \le \frac{c_0}{ \lam_j}  \| g \|_{H}^2 . 
 } 
Next, using~\eqref{eq:A-mismatch} and~\eqref{eq:fq-est} we have, 
\EQ{
 \Big| \sum_{ i \neq j} \iota_i \ang{ {A}(\lam_j) Q_{\lam_i} \mid  f_{\bfq}(m_n, \vec \iota,  \vec\lam, g)}  \Big| \lesssim \frac{c_0}{ \lam_j}  \| g \|_{H}^2 . 
}
 An application of~\eqref{eq:A-mismatch-2} and~\eqref{eq:lam'} gives 
 \EQ{
 \sum_{i \neq j} \abs{\lam_{i}'  \ang{ \uln A( \lam_j) \Lam Q_{\U{\lam_i}} \mid \dot g} }  \le   \frac{c_0}{ \lam_j}  \| g \|_{H}^2 . 
}
Next, consider the twelfth term. Using the first bullet point in Lemma~\ref{lem:A}, and in particular the spatial localization of $\U A( \lam_j)$ we obtain 
\EQ{
\abs{ \ang{ \uln A( \lam_j) g \mid f_{\bfi}( m_n, \iota, \vec \lam) } } \lesssim \| g \|_H \| f_{\bfi}( m_n, \iota, \vec \lam) \|_{ L^2(\ti R^{-1} \lam_j \le r \le \ti R \lam_j)} . 
}
Using the expansion~\eqref{eq:fi-exp} from Lemma~\ref{lem:interaction} we have the pointwise estimate, 
\EQ{\label{eq:fi-point} 
\abs{ f_{\bfi}( m_n, \iota, \vec \lam) } \lesssim r^{-2} \sum_{\ell \neq i }\Lam Q_{\lam_\ell} \Lam Q_{\lam_{i}}^2 . 
}
It follows that 
\EQ{
\| f_{\bfi}( m_n, \iota, \vec \lam)  \|_{L^2(\ti R^{-1} \lam_j \le r \le \ti R \lam_j)} \lesssim \frac{1}{\lam_j}  \Big( \frac{\lam_j}{\lam_{j+1} }\Big)^k +   \frac{1}{\lam_j} \Big( \frac{\lam_{j-1}}{\lam_{j}} \Big)^k. 
}
We obtain
\EQ{
\abs{ \ang{ \uln A( \lam_j) g \mid f_{\bfi}( m_n, \iota, \vec \lam) } } \lesssim \frac{1}{\lam_j}\| g \|_H\Big( \Big( \frac{\lam_j}{\lam_{j+1} }\Big)^k +   \frac{1}{\lam_j} \Big( \frac{\lam_{j-1}}{\lam_{j}} \Big)^k \Big). 
}
Finally, we treat the last line of~\eqref{eq:beta'-exp}. First, using Lemma~\ref{lem:ext-sign} and the definition of $\dot\phi$ in~\eqref{eq:h-def} we have
\EQ{ \label{eq:outer-error} 
\abs{ \ang{ \Lam Q_{\U{\lam_j}} \mid  \dot \phi( u, \chi_{\nu})}} & \lesssim  \frac{1}{\lam_j} \Big( \frac{ \lam_j}{\nu} \Big)^k  E( \bs u(t); \nu(t), 2 \nu(t))  \lesssim \frac{\theta_n}{\lam_j}. 
}
for some sequence $\theta_n \to 0$ as $n \to \infty$. The last two terms in~\eqref{eq:beta'-exp} vanish due to the support properties of $\U A( \lam_j), \phi(u, \nu), \dot \phi(u, \nu)$ and the fact that $\lam_j \le \lam_K \ll \nu$. 

Combining these estimates in~\eqref{eq:beta'-exp} we obtain the inequality, 
\EQ{
\beta' &\ge  \Big({-} \iota_j \iota_{j+1}\om^2 -   c_0\Big) \frac{1}{\lam_{j}} \left( \frac{\lam_{j}}{\lam_{j+1}} \right)^{k} +    \Big(\iota_j \iota_{j-1}\om^2 -  c_0\Big) \frac{1}{\lam_{j}} \left( \frac{\lam_{j-1}}{\lam_{j}} \right)^{k}  \\
&   + \frac{1}{\lam_j} \int_{R^{-1} \lam_j}^{R \lam_j} \Big( ( \p_r g)^2 + \frac{k^2}{r^2} g^2  \Big) \, r \ud r +  \frac{1}{\lam_j} \int_{0}^\infty  \big(\frac{1}{2}q''\big(\frac{r}{\lambda_j}\big) + \frac{\lam_j}{2r}q'\big(\frac{r}{\lambda_j}\big)\big) \frac{k^2}{r^2} ( f'(Q_{\lam_j}) - 1)g^2  \, r \, \ud r \\
&  -  c_0 \frac{ \| \bs g \|_{\cE}^2}{ \lam_{j}}  - c_0\frac{\de_n}{\lam_j}. 
}
where to obtain $c_0 \de_n$ in the last term we enlarged $\de_n$ so as to ensure $\de_n \gg \theta_n$ in the estimate~\eqref{eq:outer-error}. Finally, we use~\eqref{eq:coercive} on the second line above followed by~\eqref{eq:g-bound} and~\eqref{eq:de_n-lower} to conclude the proof. 
\end{proof}


Finally, we prove that, again by enlarging $\epsilon_n$, we can control
the error in the virial identity, see Lemma~\ref{lem:vir}, by $\bfd$. 
\begin{lem}
\label{lem:virial-error}
There exist $C_0, \eta_0 > 0$ depending only on $k$ and $N$
and a decreasing sequence $\eps_n \to 0$ such that
\begin{equation}
|\Omega_{\rho(t)}(\bs u(t))| \leq C_0\bfd(t) 
\end{equation}
for all $t \in [a_n, b_n]$ such that $\eps_n \le \bfd(t) \leq \eta_0$,
$\rho(t) \leq \nu(t)$ and $|\rho'(t)| \leq 1$.
\end{lem}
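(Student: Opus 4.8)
The plan is to exploit the Bogomol'nyi (self-dual) identity $\partial_r Q_\lambda=\tfrac{k}{r}\sin Q_\lambda$ for the harmonic maps together with the modulated decomposition of Lemma~\ref{lem:mod-1}. Recall from \eqref{eq:Om-gamma-def} that $\Omega_{\rho(t)}(\bs u(t))$ is a sum of two integrals whose integrands carry the factor $\chi_{\rho(t)}\Lam\chi_{\rho(t)}$, which is bounded by a constant and supported in the annulus $r\in[\rho(t),2\rho(t)]$. I fix $\eta_0>0$ strictly below the threshold of Lemma~\ref{lem:mod-1} (and $\leq 1$), and I split into two regimes according to the size of $\rho(t)$ relative to $\nu(t)$.

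In the regime $\rho(t)\geq\tfrac14\nu(t)$ the annulus $[\rho(t),2\rho(t)]$ lies inside $[\tfrac14\nu(t),4\nu(t)]$, where the energy of $\bs u(t)$ is uniformly small: by the construction of $\nu$ in Lemma~\ref{lem:ext-sign} (shrinking $\nu_n$ at the outset, which is permitted by Remark~\ref{rem:collision}) together with \eqref{eq:energy-of-ustar} and $\nu(t)\ll t$, one has $\sup_{t\in[a_n,b_n]}E(\bs u(t);\tfrac14\nu(t),4\nu(t))=:\theta_n\to0$. Bounding both integrands of $\Omega_{\rho(t)}$ crudely by the energy density on the annulus and using $|\rho'(t)|\leq1$ then gives $|\Omega_{\rho(t)}(\bs u(t))|\lesssim\theta_n$. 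In the regime $\rho(t)<\tfrac14\nu(t)$ the cutoff $\chi_{\nu(t)}$ equals $1$ on $[\rho(t),2\rho(t)]$, so by Lemma~\ref{lem:mod-1} (which by continuity of $\bfd$ applies on a neighborhood of any $t$ with $\bfd(t)\leq\eta_0$) we have $u(t,r)=\calQ(m_n,\vec\iota,\vec\lambda(t);r)+g(t,r)$ and $\partial_t u(t,r)=\dot g(t,r)$ on the annulus. Writing $P:=\calQ(m_n,\vec\iota,\vec\lambda(t))=\ell\pi+\sum_{j=1}^K\iota_j Q_{\lambda_j(t)}$ with $\ell\in\Z$, the integrand of the second integral of $\Omega_{\rho(t)}$ becomes
\[
(\dot g)^2+(\partial_r g)^2+\big((\partial_r P)^2-\tfrac{k^2}{r^2}\sin^2 P\big)+\big(2\,\partial_r P\,\partial_r g-\tfrac{k^2}{r^2}\sin(2P)\,g\big)+O\big(r^{-2}g^2\big).
\]

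The crucial term is the bracket $(\partial_r P)^2-\tfrac{k^2}{r^2}\sin^2 P$. Using $\partial_r P=\tfrac{k}{r}\sum_j\iota_j\sin Q_{\lambda_j}$ and $\sin^2 P=\sin^2(\sum_j\iota_j Q_{\lambda_j})$, it equals $\tfrac{k^2}{r^2}\big[(\sum_j\iota_j\sin Q_{\lambda_j})^2-\sin^2(\sum_j\iota_j Q_{\lambda_j})\big]$; the diagonal ($i=j$) contributions cancel because each $Q_\lambda$ is self-dual, and what remains is a sum of pure bubble-interaction terms controlled pointwise by $|\sin Q_{\lambda_i}\sin Q_{\lambda_j}|$, $i\neq j$ — exactly the type of expansion carried out in the proof of Lemma~\ref{lem:interaction}. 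Integrating over $[\rho(t),2\rho(t)]$ and invoking Lemma~\ref{lem:cross-term} bounds this contribution by $\lesssim\sum_{j=1}^{K-1}(\lambda_j(t)/\lambda_{j+1}(t))^{k/2}\lesssim\bfd(t)+\zeta_n$, by \eqref{eq:d-g-lam}. The $g$-linear terms integrate, by Cauchy–Schwarz together with $\|\partial_r P\|_{L^2([\rho,2\rho],\,r\vd r)}\lesssim E(\bs\calQ)^{1/2}\lesssim1$ and $|\sin2P|\leq1$, to $\lesssim\|\bs g(t)\|_{\cE}\lesssim\bfd(t)+\zeta_n$; the quadratic terms to $\lesssim\|\bs g(t)\|_{\cE}^2\leq\eta_0\bfd(t)+\zeta_n^2$; and the first integral of $\Omega_{\rho(t)}$, with prefactor $2\rho'(t)/\rho(t)$, to $\lesssim|\rho'(t)|\,\|\dot g(t)\|_{L^2}\|\partial_r u(t)\|_{L^2([\rho,2\rho],\,r\vd r)}\lesssim\bfd(t)+\zeta_n$ (using $|\rho'(t)|\leq1$, $|\chi_\rho\Lam\chi_\rho|\lesssim1$, and $\|\partial_r u(t)\|_{L^2([\rho,2\rho],\,r\vd r)}\lesssim\|\partial_r P\|_{L^2([\rho,2\rho],\,r\vd r)}+\|\bs g(t)\|_{\cE}\lesssim1$). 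Hence $|\Omega_{\rho(t)}(\bs u(t))|\lesssim\bfd(t)+\zeta_n+\zeta_n^2$ in this regime as well.

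Combining the two regimes, $|\Omega_{\rho(t)}(\bs u(t))|\lesssim\bfd(t)+\zeta_n+\theta_n$ for all $t\in[a_n,b_n]$ with $\bfd(t)\leq\eta_0$. Enlarging $\epsilon_n$ as in Remark~\ref{rem:collision} so that $\epsilon_n\geq\zeta_n+\theta_n$, the additional hypothesis $\bfd(t)\geq\epsilon_n$ yields $\zeta_n+\theta_n\leq\bfd(t)$, and therefore $|\Omega_{\rho(t)}(\bs u(t))|\leq C_0\bfd(t)$, as claimed. The main obstacle is exactly that the naive estimate $|\Omega_{\rho(t)}(\bs u(t))|\lesssim(1+|\rho'(t)|)\,E(\bs u(t);\rho(t),2\rho(t))$ is useless when $\rho(t)$ happens to be comparable to an interior scale $\lambda_j(t)$, since then the local energy is of order one; the point is that the combination actually occurring in $\Omega_{\rho(t)}$ — the difference $(\partial_r u)^2-k^2\sin^2u/r^2$ rather than the sum — makes the full bubble contribution cancel pointwise by self-duality, leaving only genuinely small quantities (bubble interactions and the modulation remainder $\bs g$).
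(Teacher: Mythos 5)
Your proposal is correct and follows essentially the same route as the paper: reduce via the decomposition of Lemma~\ref{lem:mod-1} (with $\|\bs g(t)\|_{\cE}\lesssim\bfd(t)$ and $\eps_n$ enlarged to absorb the $o_n(1)$ errors) to the multi-bubble part, where the Bogomol'nyi identity $r\partial_r Q_\lambda=k\sin Q_\lambda$ cancels the diagonal contributions of $(\partial_r\calQ)^2-k^2r^{-2}\sin^2\calQ$ and the cross terms are bounded by $\sum(\lambda_j/\lambda_{j+1})^{k/2}\lesssim\bfd(t)$. Your splitting into the regimes $\rho(t)\gtrsim\nu(t)$ and $\rho(t)\ll\nu(t)$ is only an organizational variant of the paper's direct use of the approximation $\|\bs u(t)-\bs\calQ-\bs g(t)\|_{\cE(0,2\nu(t))}=o_n(1)$, and the mild strengthening of \eqref{eq:nu-prop} to the annulus $[\tfrac14\nu(t),4\nu(t)]$ that you invoke is indeed available from the construction in Lemma~\ref{lem:ext-sign}.
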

\begin{proof}
Since $\lim_{n\to\infty}\sup_{t\in[a_n, b_n]}\|\bs u(t)\|_{\cE(\nu(t), 2\nu(t))} = 0$, Lemma~\ref{lem:mod-1} yields
\begin{equation}
\|\bs u(t) - \bs\calQ(m_n, \vec\iota, \vec\lambda(t)) - \bs g(t)\|_{\cE(0, 2\nu(t))} \to 0, \qquad\text{as }n \to \infty.
\end{equation}
Using Remark~\ref{rem:deltan},~\eqref{eq:g-bound} and~\eqref{eq:d-bound} we have $\|\bs g(t)\|_\cE \lesssim \bfd(t)$, hence, after choosing $\eps_n \to 0$ sufficiently large, it suffices to check that
\begin{equation}
|\Omega_{\rho(t)}(\bs\calQ(m_n, \vec\iota, \vec\lambda(t)))|
\leq C_0\bfd(t),
\end{equation}
which in turn will follow from
\begin{equation}
\int_0^\infty \Big|(\partial_r \calQ(m_n, \vec\iota, \vec\lambda(t)))^2
- k^2 \frac{\sin^2 \calQ(m_n, \vec\iota, \vec\lambda(t))}{r^2}\Big|\,r\vd r \leq C_0\bfd(t).
\end{equation}
Recall that $\Lambda Q_\lambda = r \partial_r Q_\lambda = k\sin Q_\lambda$,
so it suffices to estimate the cross terms.
It is easy to check that
\begin{equation}
\big|\sin^2\big(\sum a_i\big) - \sum \sin^2 a_i\big| \leq 4\sum_{i\neq j}|\sin a_i||\sin a_j|.
\end{equation}
Invoking the bound
\begin{equation}
\int_0^\infty |\Lambda Q_\lambda(r)\Lambda Q_\mu(r)|\frac{\vd r}{r} \lesssim (\lambda / \mu)^{k/2}
\end{equation}
from \cite[p. 1277]{JL1}, we obtain the claim.
\end{proof}


\section{Conclusion of the proof} \label{sec:conclusion} 
  
\subsection{The scale of the $K$-th bubble}

As mentioned in the Introduction, the $K$-th bubble is of particular importance.
We introduce below a function $\mu$ which is well-defined on every $[a_n, b_n]$,
and close to $\lambda_K$ on time intervals where the solution approaches a multi-bubble configuration.
\begin{defn}[The scale of the $K$-th bubble] \label{def:mu} 
For all $t \in I$, we set
\begin{equation}
\label{eq:mu-def}
\mu(t) := \sup\big\{r: E(\bs u(t); r) = (N - K + 1/2)E(\bs Q) + E(\bs u^*)\big\}.
\end{equation}
\end{defn}
Note that $K > 0$ implies $0 < (N - K + 1/2)E(\bs Q) + E(\bs u^*) < E(\bs u)$,
hence $\mu(t)$ is a well-defined finite positive number for all $t \in I$.
By Lemma~\ref{lem:ext-sign},
\begin{equation}
\lim_{n\to\infty}\sup_{t \in [a_n, b_n]}\big|E(\bs u(t); \nu(t)) - (N-K)E(\bs Q) - E(\bs u^*)\big| = 0,
\end{equation}
which implies $\mu(t) \leq \nu(t)$ for all $n$ large enough and $t \in [a_n, b_n]$,
thus $\mu(t) \ll \mu_{K+1}(t)$ as $n \to \infty$.
\begin{lem}
\label{lem:mu-prop}
The function $\mu$ defined above has the following properties:
\begin{enumerate}[(i)]
\item its Lipschitz constant is $\leq 1$,
\item for any $\epsilon > 0$ there exist $0 < \delta \leq \eta_0$ and $n_0 \in \bN$
such that $t \in [a_n, b_n]$ with $n \geq n_0$ and $\bfd(t) \leq \delta$
imply $|\mu(t)/ \lambda_K(t) - 1| \leq \epsilon$,
where $\lambda_K(t)$ is the modulation parameter defined in Lemma~\ref{lem:mod-1},
\item if $t_n \in [a_n, b_n]$, $1 \ll r_n \ll \mu_{K+1}(t_n)/\mu(t_n)$ and $\lim_{n \to \infty}\bs\de_{r_n\mu(t_n)}(t_n) = 0$, then $\lim_{n\to \infty}\bfd(t_n) = 0$.
\end{enumerate}
\end{lem}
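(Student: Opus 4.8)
The plan is to prove the three items in order; item (iii) is the substantial one. Throughout, abbreviate $c^\ast := (N-K+\tfrac12)E(\bs Q)+E(\bs u^\ast)$, which, since $K\geq1$, lies strictly between $0$ and $E(\bs u)=NE(\bs Q)+E(\bs u^\ast)$; hence the continuous non-increasing map $r\mapsto E(\bs u(t);r)$ attains $c^\ast$, and $\mu(t)$ from Definition~\ref{def:mu} is its largest preimage, so that $E(\bs u(t);r)<c^\ast$ for $r>\mu(t)$ and $E(\bs u(t);r)\geq c^\ast$ for $r\leq\mu(t)$. For (i) the only ingredient is finite speed of propagation: \eqref{eq:ext-energy-monoton}, together with the time reversibility of~\eqref{eq:wmk}, gives $E(\bs u(t_0\pm T);R+T)\leq E(\bs u(t_0);R)$ for all $R,T\geq0$. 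Given $t_1,t_2$ with $T:=|t_1-t_2|$, if $\mu(t_2)\leq T$ there is nothing to prove; otherwise apply this with $R=\mu(t_2)-T>0$ to get $c^\ast=E(\bs u(t_2);\mu(t_2))\leq E(\bs u(t_1);\mu(t_2)-T)$, whence $\mu(t_2)-T\leq\mu(t_1)$ by the characterization above. The reverse inequality is symmetric, so the Lipschitz constant of $\mu$ is $\leq1$.

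For (ii), take $\delta\leq\eta_0$ and apply Lemma~\ref{lem:mod-1} on $\{\bfd\leq\delta\}$, so that $\|\bs g(t)\|_{\cE}\lesssim\delta+\zeta_n$ and, by~\eqref{eq:d-g-lam} and the energy balance preceding the lemma, $\lambda_{K-1}(t)\ll\lambda_K(t)\ll\nu(t)$ uniformly. The decisive elementary fact is that the energy density of $\bs Q_\lambda$ is invariant under $r\mapsto\lambda^2/r$, so $s\mapsto E(\bs Q;s)$ is a continuous strictly decreasing bijection of $(0,\infty)$ onto $(0,E(\bs Q))$ with $E(\bs Q;1)=\tfrac12 E(\bs Q)$. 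Setting $A=1+\epsilon$, for $\delta$ small and $n$ large the radii $A^{\pm1}\lambda_K(t)$ lie in $(\lambda_{K-1}(t),\nu(t))$, and expanding $E(\bs u(t);A^{\pm1}\lambda_K(t))$ using the decomposition $\bs u(t)=\bs\calQ(m_n,\vec\iota,\vec\lambda(t))+\bs g(t)$ valid on $(0,\nu(t))$, the limit $E(\bs u(t);\nu(t))\to(N-K)E(\bs Q)+E(\bs u^\ast)$ noted before the lemma, and Lemmas~\ref{lem:M-bub-energy}--\ref{lem:cross-term}, one finds $E(\bs u(t);A^{\pm1}\lambda_K(t))=(N-K)E(\bs Q)+E(\bs u^\ast)+E(\bs Q;A^{\pm1})+O(\delta+o_n(1))$. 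Since $E(\bs Q;A^{-1})>\tfrac12 E(\bs Q)>E(\bs Q;A)$ with $\epsilon$-dependent gaps, choosing $\delta$ small and $n\geq n_0$ yields $E(\bs u(t);A\lambda_K(t))<c^\ast<E(\bs u(t);A^{-1}\lambda_K(t))$, hence $A^{-1}\lambda_K(t)<\mu(t)<A\lambda_K(t)$, i.e.\ $|\mu(t)/\lambda_K(t)-1|<\epsilon$.

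For (iii), it suffices to show that every subsequence of $(\bfd(t_n))$ has a further subsequence tending to $0$. Along one, put $R_n:=r_n\mu(t_n)$, so $\mu(t_n)\ll R_n\ll\mu_{K+1}(t_n)$. From $\bs\de_{R_n}(\bs u(t_n))\to0$ and the uniform energy bound, after a further subsequence there are $m_n'$, an integer $M$, signs $\vec\iota_n$, and scales $\lambda_{n,1}\ll\dots\ll\lambda_{n,M}\ll R_n$ with $\|\bs u(t_n)-\bs\calQ(m_n',\vec\iota_n,\vec\lambda_n)\|_{\cE(0,R_n)}+\sum_j(\lambda_{n,j}/\lambda_{n,j+1})^k\to0$; by Lemma~\ref{lem:M-bub-energy} this forces $E(\bs u(t_n);0,R_n)\to ME(\bs Q)$, hence $E(\bs u(t_n);R_n)\to(N-M)E(\bs Q)+E(\bs u^\ast)$. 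On the other hand $R_n>\mu(t_n)$ gives $E(\bs u(t_n);R_n)\leq c^\ast$, while $R_n\ll\mu_{K+1}(t_n)$, the limit $E(\bs u(t_n);\nu(t_n))\to(N-K)E(\bs Q)+E(\bs u^\ast)$, the exterior decomposition~\eqref{eq:w_n}--\eqref{eq:mu_n} (valid for $r\geq2\nu(t_n)$), and monotonicity give $E(\bs u(t_n);R_n)\geq(N-K)E(\bs Q)+E(\bs u^\ast)-o_n(1)$, whatever the position of $R_n$ relative to $\nu(t_n)$. Comparing the two expressions, and using that $N-M$ is an integer while $c^\ast$ carries a half-integer coefficient of $E(\bs Q)$, forces $M=K$ for $n$ large, and then $E(\bs u(t_n);R_n)\to(N-K)E(\bs Q)+E(\bs u^\ast)=E(\bs u(t_n);\nu(t_n))+o_n(1)$.

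To finish (iii) I would glue. When $R_n\leq\nu(t_n)$ the last identity gives $E(\bs u(t_n);R_n,\nu(t_n))\to0$, so Lemma~\ref{lem:pi} and continuity (using $\lambda_{n,K}\ll R_n$) extend the interior approximation to $\|\bs u(t_n)-\bs\calQ(m_n',\vec\iota_n,\vec\lambda_n)\|_{\cE(0,\nu(t_n))}\to0$ with $m_n'=m_n$; if $R_n>\nu(t_n)$ this is immediate. Since $\nu(t_n)\ll\mu_{K+1}(t_n)\leq t_n$, \eqref{eq:energy-of-ustar} makes $\bs u^\ast(t_n)$ negligible on $(0,\nu(t_n))$ and the signs match the interior ones; \eqref{eq:nu-prop} kills the energy on $(\nu(t_n),2\nu(t_n))$; and on $(2\nu(t_n),\infty)$ Lemma~\ref{lem:ext-sign} gives $\bs u(t_n)-\bs u^\ast(t_n)\approx m_\Delta\bs\pi+\sum_{j=K+1}^N\sigma_{n,j}(\bs Q_{\mu_j(t_n)}-\bs\pi)$. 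Then the single $N$-bubble configuration $\bs\calQ(m_\Delta,(\vec\iota_n,\vec\sigma_n),(\vec\lambda_n,\vec\mu(t_n)))$ agrees with $\bs u(t_n)-\bs u^\ast(t_n)$ up to $o_n(1)$ on each of the three shells --- the exterior bubbles being $\approx0$ in the interior shell, the interior bubbles $\approx\pi$ in the exterior shell, everything $\approx$ constant in between --- and all consecutive scale ratios tend to $0$ (here $\lambda_{n,K}/\mu_{K+1}(t_n)\to0$ via $\lambda_{n,K}\ll R_n\ll\mu_{K+1}(t_n)$, and $\mu_N(t_n)/t_n\to0$), so adding the shell contributions gives $\bfd(t_n)\to0$. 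The hard part is precisely the step $M=K$: Definition~\ref{def:mu} is engineered so that $c^\ast$ falls in the open interval $\big((N-K)E(\bs Q)+E(\bs u^\ast),\,(N-K+1)E(\bs Q)+E(\bs u^\ast)\big)$, which contains no admissible value $(N-M)E(\bs Q)+E(\bs u^\ast)$; but in the regime $R_n<2\nu(t_n)$, where no multi-bubble description covers all of $(0,\infty)$ simultaneously, the two-sided control of $E(\bs u(t_n);R_n)$ has to be squeezed out of monotonicity together with the boundary values at $\mu(t_n)$ and $\nu(t_n)$ alone. Everything downstream of that, and item (ii), is routine energy bookkeeping resting on $E(\bs Q;1)=\tfrac12 E(\bs Q)$.
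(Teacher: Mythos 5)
Your proposal is correct and follows essentially the same route as the paper: (i) via the exterior-energy monotonicity from finite speed of propagation, (ii) via the modulation decomposition of Lemma~\ref{lem:mod-1} together with the fact that $Q_\lambda$ carries exactly half its energy inside radius $\lambda$, and (iii) via a local $M$-bubble decomposition on $(0,R_n)$, pigeonholing $M=K$ from the half-integer energy threshold defining $\mu$, and gluing through the annulus of vanishing energy using \eqref{eq:H-E-comp} and Lemma~\ref{lem:ext-sign}. The only deviations (keeping $R_n=r_n\mu(t_n)$ and treating both cases $R_n\lessgtr\nu(t_n)$ instead of enlarging $R_n$ to be $\geq\nu(t_n)$ as the paper does, and phrasing the pigeonhole through exterior rather than interior energies) are cosmetic.
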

\begin{proof}
Let $s, t \in I$.
We prove that $|\mu(s) - \mu(t)| \leq |s-t|$.
Assume, without loss of generality, $\mu(t) \geq \mu(s)$. Of course, we can also assume $\mu(t) > |s-t|$.
By \eqref{eq:ext-energy-monoton},
\begin{equation}
E(\bs u(s); \mu(t) - |s-t|) \geq E(\bs u(t); \mu(t)) = (N-K+1/2)E(\bs Q) + E(\bs u^*),
\end{equation}
which implies $\mu(s) \geq \mu(t) - |s-t|$.

In order to prove (ii), it suffices to check that
\begin{align}
E(\bs u(t); (1+\epsilon)\lambda_K(t)) &< (N - K +1/2)E(\bs Q) + E(\bs u^*), \\
E(\bs u(t); (1-\epsilon)\lambda_K(t)) &> (N - K +1/2)E(\bs Q) + E(\bs u^*).
\end{align}
By \eqref{eq:E>nu}, this will follow from
\begin{align}
E(\bs u(t); (1+\epsilon)\lambda_K(t), \nu(t)) &< E(\bs Q)/2, \\
E(\bs u(t); (1-\epsilon)\lambda_K(t), \nu(t)) &> E(\bs Q)/2.
\end{align}
We use \eqref{eq:u-decomp}. By \eqref{eq:d-g-lam}, $\|\bs g\|_\cE \ll 1$
when $\delta \ll 1$ and $n_0 \gg 1$. Thus, it suffices to see that
\begin{align}
E(\bs\calQ(m_n, \vec\iota, \vec\lambda); (1+\epsilon)\lambda_K(t), \nu(t)) &< E(\bs Q)/2, \\
E(\bs\calQ(m_n, \vec\iota, \vec\lambda); (1-\epsilon)\lambda_K(t), \nu(t)) &> E(\bs Q)/2
\end{align}
whenever $\sum_{j=1}^K \lambda_j(t)/\lambda_{j+1}(t) \ll 1$,
which is obtained directly from the definition of $\bs \calQ$.

We now prove (iii). Let $R_n$ be a sequence such that $r_n\mu(t_n) \ll R_n \ll \mu_{K+1}(t_n)$.
Without loss of generality, we can assume $R_n \geq \nu(t_n)$, since it suffices to replace $R_n$ by $\nu(t_n)$
for all $n$ such that $R_n < \nu(t_n)$.
Let $M_n, m_n, \vec\iota_n, \vec\lambda_n$ be parameters such that
\begin{equation}
\label{eq:conv-delta-iii}
\| u(t_n) - \calQ( m_n, \vec \iota_n, \vec \lam_n) \|_{H( r \le r_n\mu(t_n))}^2 + \| \dot u(t_n) \|_{L^2(r \le r_n\mu(t_n))}^2 + \sum_{j = 1}^{M-1} \Big(\frac{ \lam_{n, j}}{ \lam_{n, j+1}}\Big)^k \to 0,
\end{equation}
which exist by the definition of the localized distance function \eqref{eq:delta-def}. Since
\begin{equation}
K - \frac 12 \leq \liminf_{n\to\infty}E(\bs u(t_n); 0, r_n\mu(t_n)) \leq \limsup_{n\to\infty}E(\bs u(t_n); 0, r_n\mu(t_n)) \leq K,
\end{equation}
we have $M_n = K$ for $n$ large enough.
We set $\lambda_{n, j} := \mu_j(t_n)$ and $\iota_{n, j} := \sigma_j$ for $j > K$.
We claim that
\begin{equation}
\label{eq:d-conv-0}
\lim_{n\to \infty}\bigg(\| \bs u(t) - \bs u^*(t) - \bs\calQ(m_\Delta, \vec\iota_n, \vec\lambda_n) \|_{\cE}^2 + \sum_{j=1}^{N}\Big(\frac{ \lam_{n, j}}{\lam_{n, j+1}}\Big)^{k}\bigg) = 0.
\end{equation}
By the definition of $\bfd$, the proof will be finished.
First, we observe that $\lambda_{n, K} \ll r_n\mu(t_n)$, so $\lambda_{n, K} / \lambda_{n, K+1} \to 0$.
In the region $r \leq r_n\mu(t_n)$, convergence follows from \eqref{eq:conv-delta-iii},
since the energy of the exterior bubbles asymptotically vanishes there.
In the region $r \ge R_n$, the energy of the interior bubbles vanishes, hence it suffices to apply Lemma~\ref{lem:ext-sign}
and recall that $R_n \geq \nu(t_n)$.
In particular
\begin{equation}
\lim_{n\to\infty}E(\bs u(t_n); 0, r_n\mu(t_n)) = KE(\bs Q), \qquad \lim_{n\to\infty} E(\bs u(t_n); R_n) = (N-K)E(\bs Q) + E(\bs u^*),
\end{equation}
which implies
\begin{equation}
\lim_{n\to\infty} E(\bs u(t_n); r_n\mu(t_n), R_n) = 0,
\end{equation}
and \eqref{eq:H-E-comp} yields convergence of the error also in the region $r_n\mu(t_n) \leq r \leq R_n$.
\end{proof}

Our next goal is to prove that the minimality of $K$ (see Definition~\ref{def:K-choice}) implies a lower bound
on the length of the collision intervals. First, we have the following fact.
\begin{lem}
\label{lem:evol-of-Q}
If $m_n \in \bZ$, $\iota_n \in \{-1, 1\}$, $0 < r_n \ll \mu_n \ll R_n$, $0 < t_n \ll \mu_n$ and $\bs u_{n}$ a sequence of
solutions of \eqref{eq:wmk} such that $\bs u_n(t)$ is defined for $t \in [0, t_n]$ and
\begin{equation}
\lim_{n\to\infty}\|\bs u_n(0) - (m_n\bs\pi + \iota_n \bs Q_{\mu_n})\|_{\cE(r_n, R_n)} = 0,
\end{equation}
then
\begin{equation}
\lim_{n\to\infty}\sup_{t\in [0, t_n]}\|\bs u_n(t) - (m_n\bs\pi + \iota_n \bs Q_{\mu_n})\|_{\cE(r_n + t, R_n-t)} = 0.
\end{equation}
\end{lem}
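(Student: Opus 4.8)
The plan is to reduce the statement, by truncating the initial data, to the nonlinear perturbation lemma (Lemma~\ref{lem:nl-pert}) applied to a \emph{globally} defined approximate solution whose profile is the static harmonic map, and then to re-localize the conclusion to the shrinking annulus by finite speed of propagation. First I would use the scaling invariance \eqref{eq:uscale} of \eqref{eq:wmk} and of the $\cE$-norm to arrange $\mu_n = 1$; then $r_n \to 0$, $R_n \to \infty$, $t_n \to 0$, and $\epsilon_n := \|\bs u_n(0) - (m_n\bs\pi + \iota_n\bs Q)\|_{\cE(r_n, R_n)} \to 0$. Since $\bs Q$ has finite energy with $Q(0) = 0$, $Q(\infty) = \pi$, one has $\|\bs Q\|_{\cE(0, \delta)} \to 0$ as $\delta \to 0$ and $\|(\pi - Q, 0)\|_{\cE(R, \infty)} \to 0$ as $R \to \infty$; in particular $E(\bs u_n(0); r_n, 2r_n) \to 0$ and $E(\bs u_n(0); R_n/2, R_n) \to 0$. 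By Lemma~\ref{lem:pi} on the annuli $(r_n, 2r_n)$ and $(R_n/2, R_n)$, together with $\eqref{eq:H-E-comp}$, we get (after comparing with the harmonic map, which forces the integers to be $m_n$ and $m_n + \iota_n$) that $\|\bs u_n(0) - m_n\bs\pi\|_{\cE(r_n, 2r_n)} \to 0$ and $\|\bs u_n(0) - (m_n+\iota_n)\bs\pi\|_{\cE(R_n/2, R_n)} \to 0$. I then let $\bs{\ti u}_n(0)$ be the function equal to $m_n\bs\pi$ on $(0, r_n)$, to $\bs u_n(0)$ on $(2r_n, R_n/2)$, and to $(m_n+\iota_n)\bs\pi$ on $(R_n, \infty)$, glued smoothly on the two transition annuli; using the two displays above together with Hardy's inequality to control the cut-off commutators and the decay of $\bs Q$ near $0$ and $\infty$ gives $\|\bs{\ti u}_n(0) - (m_n\bs\pi + \iota_n\bs Q)\|_{\cE} \to 0$.

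Next I would apply Lemma~\ref{lem:nl-pert} with $\bs v := m_n\bs\pi + \iota_n\bs Q$, which is a static solution of \eqref{eq:wmk}, so $\glei(v) = 0$; its constants are insensitive to the (varying) sector since $f(u) = \frac12\sin 2u$ is $\pi$-periodic and odd, so that the expansion used in the proof of Lemma~\ref{lem:nl-pert} has coefficients depending only on $Q$. Because $|\sin v| = |\sin Q|$ and an explicit computation gives $\int_0^\infty r^{-3}\sin^6 Q\,\vd r < \infty$, i.e.\ $r^{-2/3}\sin v \in L^6_r$, one has $\|r^{-2/3}\sin v\|_{L^3_t L^6_r([0, t_n])} \lesssim t_n^{1/3} \to 0$, and by Corollary~\ref{cor:strich} the free evolution of $\bs{\ti u}_n(0) - \bs v$ has $\cS(\bR)$-norm $\lesssim \|\bs{\ti u}_n(0) - \bs v\|_{\cE} \to 0$. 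Since $\bs{\ti u}_n$ is not known a priori to be defined on all of $[0, t_n]$, I would run a short bootstrap: on the maximal subinterval of $[0, t_n]$ on which $\|\bs{\ti u}_n(t) - \bs v\|_{\cE} \le 1$, Lemma~\ref{lem:nl-pert} applies with $A = O(1)$ and $\epsilon \to 0$, giving $\|\bs{\ti u}_n(t) - \bs v\|_{\cE} \lesssim \epsilon \to 0$ and $\|\bs{\ti u}_n\|_{\cS} < \infty$ there; the continuation criterion of Lemma~\ref{lem:lwp} (equivalently the maximal-interval characterization in Lemma~\ref{lem:Cauchy}) then forces this subinterval to be all of $[0, t_n]$ for $n$ large, whence $\sup_{t\in[0,t_n]}\|\bs{\ti u}_n(t) - (m_n\bs\pi + \iota_n\bs Q)\|_{\cE} \to 0$. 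This bootstrap, carried out exactly as in the proof of Lemma~\ref{lem:nlpd}, is the only non-routine point; everything else is bookkeeping with cut-offs and finite speed of propagation.

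Finally I would localize. Since $\bs{\ti u}_n(0)$ and $\bs u_n(0)$ agree on $(2r_n, R_n/2)$, finite speed of propagation gives $\bs{\ti u}_n(t) = \bs u_n(t)$ on $(2r_n + t, R_n/2 - t)$, so together with the previous step $\sup_{t\in[0,t_n]}\|\bs u_n(t) - (m_n\bs\pi + \iota_n\bs Q)\|_{\cE(2r_n+t, R_n/2-t)} \to 0$. To reach down to $r_n + t$, note that the backward light cone over $[r_n+t, 2r_n+t]$ meets $\{t = 0\}$ inside $[r_n, 2r_n + 2t] \subset (r_n, R_n)$, so by monotonicity of the energy on truncated backward light cones $E(\bs u_n(t); r_n+t, 2r_n+t) \le E(\bs u_n(0); r_n, 2r_n + 2t_n) \to 0$; then Lemma~\ref{lem:pi} with $\eqref{eq:H-E-comp}$ and continuity of $t \mapsto \bs u_n(t)$ give $\|\bs u_n(t) - m_n\bs\pi\|_{\cE(r_n+t, 2r_n+t)} \to 0$ (the integer is $m_n$ because at the right endpoint $\bs u_n(t) = \bs{\ti u}_n(t)$ is close to $m_n\bs\pi + \iota_n\bs Q \approx m_n\bs\pi$ there), and since $\|\iota_n\bs Q\|_{\cE(r_n+t, 2r_n+t)} \le \|\bs Q\|_{\cE(0, 2r_n + t_n)} \to 0$ the claim follows on this annulus. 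The outer thin annulus $(R_n/2 - t, R_n - t)$ is treated symmetrically, using the exterior energy monotonicity \eqref{eq:ext-energy-monoton} and the decay of $\pi - Q$ at infinity. Combining the three regions gives the conclusion.
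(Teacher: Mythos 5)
Your proof is correct, but it takes a different route from the paper: the paper simply normalizes $m_n=0$, $\iota_n=1$, $\mu_n=1$ and then quotes an external result (\cite[Lemma 3.4]{Cote}) for this local stability of $\bs Q$ on shrinking annuli, whereas you give a self-contained argument using only tools already in the paper — truncation of the data to get an $\cE$-neighborhood of $m_n\bs\pi+\iota_n\bs Q$, the perturbation Lemma~\ref{lem:nl-pert} around the static solution (run with the same bootstrap/continuation scheme as in the proof of Lemma~\ref{lem:nlpd}, with $\glei(v)=0$ and the $L^3_tL^6_r$ norm of $r^{-2/3}\sin Q$ vanishing like $t_n^{1/3}$), and finite speed of propagation to transfer the conclusion back to $\bs u_n$ on the shrunken annulus. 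What this buys is independence from the external citation, at the cost of a longer argument; the content is essentially the same local stability statement that C\^ot\'e's lemma encapsulates. Two small points to tighten. First, on the thin inner annulus $(r_n+t,2r_n+t)$ the ratio of the radii approaches $1$ when $t\gg r_n$, so Lemma~\ref{lem:pi} as stated (which requires $R_2/R_1\ge R_0>1$ with constants depending on $R_0$) does not literally apply; either enlarge the annulus to $(r_n+t,2(r_n+t))$, whose backward domain of dependence still lies in $(r_n,R_n)$, or use the pointwise matching with $\bs{\ti u}_n$ at $r=2r_n+t$ together with the $G$-function argument from the proof of Lemma~\ref{lem:pi} — which is what your parenthetical already suggests. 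Second, for the outer annulus, \eqref{eq:ext-energy-monoton} taken verbatim would bring in the uncontrolled region $r>R_n$; you should instead invoke the annulus (truncated light cone) version of energy monotonicity — the same flux argument behind \eqref{eq:energy-monoton}--\eqref{eq:ext-energy-monoton}, and the one you already use on the inner annulus — so that the domain of dependence $(R_n/2-2t,\,R_n)$ stays inside $(r_n,R_n)$. With these routine adjustments the argument is complete.
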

\begin{proof}
Without loss of generality, we can assume $m_n = 0$, $\iota_n = 1$ and $\mu_n = 1$.
After these reductions, the conclusion directly follows from \cite[Lemma 3.4]{Cote}.
\end{proof}

\begin{lem}
\label{lem:cd-length}
If $\eta_1 > 0$ is small enough, then for any $\eta \in (0, \eta_1]$ there exist $\epsilon \in (0, \eta)$ and $C_{\bs u} > 0$
having the following property.
If $[c, d] \subset [a_n, b_n]$, $\bfd(c) \leq \epsilon$, $\bfd(d) \leq \epsilon$ and there exists $t_0 \in [c, d]$
such that $\bfd(t_0) \geq \eta$, then
\begin{equation}
\label{eq:cd-length}
d - c \geq C_{\bs u}^{-1}\max(\mu(c), \mu(d)).
\end{equation}
\end{lem}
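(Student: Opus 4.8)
\textbf{Proof strategy for Lemma~\ref{lem:cd-length}.}
The plan is to argue by contradiction: suppose the conclusion fails, so that for some fixed small $\eta \in (0, \eta_1]$ there exist a subsequence (not relabelled) of collision intervals, times $[c_n, d_n] \subset [a_n, b_n]$ with $\bfd(c_n), \bfd(d_n) \to 0$, an interior time $t_{0,n} \in [c_n, d_n]$ with $\bfd(t_{0,n}) \geq \eta$, and yet $d_n - c_n \ll \max(\mu(c_n), \mu(d_n))$. Without loss of generality assume $\mu(c_n) = \max(\mu(c_n), \mu(d_n))$ (the other case is symmetric). The key point is that the minimality of $K$ in Definition~\ref{def:K-choice} must be violated: I will produce, out of this scenario, a sequence of collision intervals with strictly fewer than $N - K$ exterior bubbles, i.e.\ with $K$ replaced by some $K' > K$, contradicting the choice of $K$ as the \emph{smallest} such integer.

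First I would set up the geometry near the endpoint $c_n$. Since $\bfd(c_n) \to 0$, Lemma~\ref{lem:mu-prop}(ii) gives $\mu(c_n)/\lambda_K(c_n) \to 1$, so the $K$-th interior bubble lives at scale $\simeq \mu(c_n)$, with the remaining interior bubbles at scales $\lambda_1(c_n) \ll \cdots \ll \lambda_{K-1}(c_n) \ll \lambda_K(c_n) \simeq \mu(c_n)$ and the exterior bubbles at scales $\mu_{K+1}(c_n) \ll \cdots \ll \mu_N(c_n)$, with $\mu(c_n) \ll \mu_{K+1}(c_n)$. The hypothesis $d_n - c_n \ll \mu(c_n)$ means the whole interval $[c_n, d_n]$ is much shorter than the scale of the $K$-th bubble and, a fortiori, of all the exterior bubbles. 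Now I use finite speed of propagation, in the quantitative form of Lemma~\ref{lem:evol-of-Q}: on the annular region where the $K$-th bubble and each exterior bubble are concentrated, the solution stays $\cE$-close (uniformly on $[c_n, d_n]$) to the corresponding frozen harmonic map $\iota_K \bs Q_{\mu(c_n)}$, resp.\ $\sigma_j \bs Q_{\mu_j(c_n)}$, because the evolution time $d_n - c_n$ is negligible compared to these scales and Lemma~\ref{lem:evol-of-Q} only loses a comparably small amount off each end of the annulus. Thus for \emph{all} $t \in [c_n, d_n]$, the solution $\bs u(t) - \bs u^*(t)$ agrees, in the exterior region $r \gtrsim \mu(c_n)$ (more precisely $r \geq r_n \mu(c_n)$ for a suitable $r_n \to \infty$ with $r_n \mu(c_n) \ll \mu_{K+1}(c_n)$), with an $(N-K+1)$-bubble configuration whose smallest bubble is the $K$-th one at scale $\simeq \mu(c_n)$ and which is coherent throughout $[c_n, d_n]$.

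This exactly says that $[c_n, d_n]$, after a suitable choice of the separating curve $\rho_{K-1}(t) \sim \tfrac12 r_n^{-1}\mu(c_n)$ or so, is a collision interval with parameters $\epsilon_n', \eta$ and $N - (K-1)$ exterior bubbles: indeed $\bfd(c_n), \bfd(d_n) \to 0$ supply the endpoint smallness, $\bfd(t_{0,n}) \geq \eta$ supplies the required interior excursion, and the coherence of the $K$-th bubble plus the $N-K$ genuinely exterior bubbles (established via Lemma~\ref{lem:evol-of-Q}) gives $\bfd_{K-1}(t; \rho_{K-1}(t)) \leq \epsilon_n' \to 0$ on $[c_n, d_n]$ — here I invoke Lemma~\ref{lem:mu-prop}(iii) in reverse, or rather the construction in its proof, to convert the localized closeness into the statement that these $N-K+1$ objects are honest decoupled bubbles in the appropriate exterior region. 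Hence $[c_n, d_n] \in \calC_{K-1}(\epsilon_n', \eta)$ for a sequence $\epsilon_n' \to 0$, contradicting the minimality of $K$ (note $K \geq 1$ by Lemma~\ref{lem:K-exist}, so $K-1 \geq 0$ is a legitimate index). The contradiction proves \eqref{eq:cd-length}.

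\textbf{Main obstacle.} The delicate step is the uniform-in-$t$ control on $[c_n, d_n]$: I must check that finite speed of propagation can be applied simultaneously to the $K$-th bubble and to every exterior bubble without the annuli shrinking to nothing. This is where the hypothesis $d_n - c_n \ll \mu(c_n)$ is used decisively, together with the Lipschitz bound $|\mu'| \leq 1$ from Lemma~\ref{lem:mu-prop}(i) (and $|\mu_j'|, |\nu'| \to 0$ from Lemma~\ref{lem:ext-sign}) to guarantee that $\mu(t) \simeq \mu(c_n)$ and $\mu_j(t) \simeq \mu_j(c_n)$ throughout the interval, so the frozen configuration is a legitimate reference for all $t$. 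A secondary technical point is the bookkeeping needed to pass from "$\bs u(t)-\bs u^*(t)$ is $\cE$-close to a frozen $(N-K+1)$-bubble in an exterior region" to the precise form of Definition~\ref{def:collision} with index $K-1$ — this is essentially a repetition of the arguments in Lemma~\ref{lem:ext-sign} and Lemma~\ref{lem:mu-prop}(iii), so it is routine but must be done carefully to make sure the ratio $(\mu(t)/\mu_{K+1}(t))^k$ and all the finer ratios indeed go to zero.
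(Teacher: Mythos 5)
Your proposal is correct and follows essentially the same route as the paper: argue by contradiction with the minimality of $K$ by exhibiting the intervals $[c_n,d_n]$ as collision intervals in $\calC_{K-1}(\epsilon_n',\eta)$, where the required coherence of the $K$-th bubble (at scale $\lambda_K(c_n)\simeq\mu(c_n)$, with the exterior bubbles already controlled by Lemma~\ref{lem:ext-sign}) over the short interval $d_n-c_n\ll\mu(c_n)$ comes from finite speed of propagation in the form of Lemma~\ref{lem:evol-of-Q}, exactly as in the paper's proof. One small slip: the region in which $\bs u(t)-\bs u^*(t)$ stays close to the $(N-K+1)$-bubble configuration must begin \emph{below} the $K$-th bubble, i.e.\ at a radius between $\lambda_{K-1}(c_n)$ and $\lambda_K(c_n)$ (as your later choice $\rho_{K-1}\sim\tfrac12 r_n^{-1}\mu(c_n)$ correctly arranges), not at $r\geq r_n\mu(c_n)$ with $r_n\to\infty$, which would exclude the $K$-th bubble altogether.
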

\begin{proof}
We argue by contradiction. If the statement is false, then there exist $\eta > 0$, a decreasing sequence $(\epsilon_n)$
tending to $0$, an increasing sequence $(C_n)$ tending to $\infty$
and intervals $[c_n, d_n] \subset [a_n, b_n]$ (up to passing to a subsequence in the sequence of the collision intervals $[a_n, b_n]$) such that
$\bfd(c_n) \leq \epsilon_n$, $\bfd(d_n) \leq \epsilon_n$, there exists $t_n \in [c_n, d_n]$ such that $\bfd(t_n) \geq \eta$
and $d_n - c_n \leq C_n^{-1}\max(\mu(c_n), \mu(d_n))$.
We will check that, up to adjusting the sequence $\epsilon_n$, $[c_n, d_n] \in \calC_{K-1}(\epsilon_n, \eta)$ for all $n$,
contradicting Definition~\ref{def:K-choice}.

The first and second requirement in Definition~\ref{def:collision} are clearly satisfied.
It remains to construct a function $\rho_{K-1}:[c_n, d_n] \to [0, \infty)$ such that
\begin{equation}
\label{eq:dK-1conv0}
\lim_{n\to\infty}\sup_{t\in [c_n, d_n]}\bfd_{K-1}(t; \rho_{K-1}(t)) = 0.
\end{equation}
Assume $\mu(c_n) \geq \mu(d_n)$ (the proof in the opposite case is very similar).
Let $r_n$ be a sequence such that $\lambda_{K-1}(c_n) \ll r_n \ll \lambda_K(c_n)$ (recall that $\lambda_K(c_n)$
is at main order equal to $\mu(c_n)$ and that $\lambda_0(t) = 0$ by convention).
Set $\rho_{K-1}(t) := r_n + (t - c_n)$ for $t \in [c_n, d_n]$.
Recall that $\vec \sigma_n \in \{-1, 1\}^{N-K}$ and $\vec \mu(t) \in (0, \infty)^{N-K}$ are defined in Lemma~\ref{lem:ext-sign}.
Let $\iota_{n}$ be the sign of the $K$-th bubble at time $c_n$, and set
$\wt \sigma := (\iota_n, \vec\sigma_n) \in \{-1, 1\}^{N-(K-1)}$
and $\wt\mu(t) := (\mu(c_n), \vec\mu(t)) \in (0, \infty)^{N-(K-1)}$.
Let $R_n$ be a sequence such that $\nu_n(c_n) \ll R_n \ll \mu_{K+1}(c_n)$.
Applying Lemma~\ref{lem:evol-of-Q} with these sequences $r_n, R_n$ and $\bs u_n(t) := \bs u(c_n + t)$, we obtain
\begin{equation}
\lim_{n\to\infty}\sup_{t\in[c_n, d_n]}\|\bs u(t) - \bs\calQ(m_\Delta, \wt\sigma_n, \wt\mu(t))\|_{\cE(\rho_{K-1}(t), \infty)} = 0,
\end{equation}
implying \eqref{eq:dK-1conv0}
\end{proof}
\begin{rem}
We denote the constant $C_{\bs u}$ to stress that it depends on the solution $\bs u$
and is obtained in a non-constructive way as a consequence of the assumption that $\bs u$
does not satisfy the continuous time soliton resolution.
\end{rem}
\subsection{Demolition of the multi-bubble} \label{ssec:bub-dem} 
Recall the following notion from Real Analysis.
If $X \subset \bR$, $U: X \to \bR\cup\{+\infty\}$ and $t_0 \in X$,
we say that $t_0$ is a \emph{local minimum from the right} if there exists $t_1 > t_0$
such that $U(t_0) \leq U(t)$ for all $t \in X \cap (t_0, t_1)$.
Similarly, we say that $t_0$ is a \emph{local minimum from the left} if there exists $t_1 < t_0$
such that $U(t_0) \leq U(t)$ for all $t \in X \cap (t_1, t_0)$.


\begin{defn}\label{def:w-int} [Weighted interaction energy]
On each collision interval $[a_n, b_n]$, we define the function $U: [a_n, b_n]\to \conj\bR_+$ as follows:
\begin{itemize}
\item if $\bfd(t) \geq \eta_0$, then $U(t) := +\infty$.
\item if $\bfd(t) < \eta_0$, then $U(t) := \max_{i \in \cA}\big(2^{-i}\xi_i(t) / \lambda_{i+1}(t)\big)^k$, where $\lambda_{i+1}$ and $\xi_i$ are the modulation parameter
and its refinement defined above, see Lemma~\ref{cor:modul}.
\end{itemize}
\end{defn}
\begin{rem}
Continuity of $\bfd$, $\xi_i$ and $\lambda_i$
implies that $U$ is finite and continuous in a neighborhood of any point where it is finite.
\end{rem}
\begin{lem}
\label{lem:ejection}
Let $k \geq 2$. If $\eta_0$ is small enough,
then there exists $C_0 \geq 0$ depending only on $k$ and $N$ such that the following is true.
If $t_0$ is a local minimum from the right of $U$ such that $U(t_0) < +\infty$
and $t_* \geq t_0$ is such that $U(t) < \infty$ for all $t \in [t_0, t_*]$, then
\begin{gather}
\label{eq:lambdaK-bd}
\frac 34 \lambda_K(t_0) \leq \lambda_K(t_*) \leq\frac 43\lambda_K(t_0), \\
\label{eq:int-d-bd}
\int_{t_0}^{t_*} \bfd(t)\ud t\leq C_0\bfd(t_*)^\frac 2k \lambda_K(t_0).
\end{gather}
An analogous statement is true if $t_*$ is a local minimum from the left.
\end{lem}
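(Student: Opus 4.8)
The plan is to run a bootstrap on the scale $\lambda_K$ together with a second--order ``ejection'' analysis of the interior modulation parameters, in the spirit of the collision lemma of~\cite{JL1}. After enlarging $\epsilon_n$ once more (as in Remark~\ref{rem:collision} and Remark~\ref{rem:deltan}) we may assume that Lemma~\ref{cor:modul} applies at every $t\in[t_0,t_*]$, so that the decomposition~\eqref{eq:u-decomp}, the equivalence $\bfd(t)\simeq\max_{i\in\cA}(\lambda_i(t)/\lambda_{i+1}(t))^{k/2}$ from~\eqref{eq:d-bound}, the bound~\eqref{eq:g-bound}, the estimates~\eqref{eq:xi_j-lambda_j}, \eqref{eq:lam_j'-beta_j}, the bound $|\lambda_i'(t)|\lesssim\bfd(t)$ (from~\eqref{eq:lam'} combined with~\eqref{eq:g-bound} and~\eqref{eq:d-bound}), and most importantly the differential inequality~\eqref{eq:beta_j'} for $\beta_j'$ are all at our disposal. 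Let $[t_0,t_1]\subseteq[t_0,t_*]$ be maximal with $\tfrac12\lambda_K(t_0)\le\lambda_K(t)\le 2\lambda_K(t_0)$. On $[t_0,t_1]$ every consecutive ratio $\lambda_i/\lambda_{i+1}$, $1\le i\le K-1$, is $\lesssim\bfd^{2/k}\lesssim\eta_0^{2/k}\ll1$ by~\eqref{eq:g-bound}, hence $\lambda_i(t)\le\lambda_{i+1}(t)\le\dots\le\lambda_K(t)\lesssim\lambda_K(t_0)$ for all $i$. We prove~\eqref{eq:lambdaK-bd} and~\eqref{eq:int-d-bd} (with $t_*$ replaced by any $t\in[t_0,t_1]$) with constants independent of $t_1$; the resulting bound $|\lambda_K(t)-\lambda_K(t_0)|\lesssim\eta_0^{2/k}\lambda_K(t_0)$ then improves the bootstrap and forces $t_1=t_*$ once $\eta_0$ is small.

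The heart of the matter is a strict positive lower bound on the velocity of the dominant bubble. Fix $t$ and let $j=j(t)\in\cA$ realize the maximum $U(t)=(2^{-j}\xi_j(t)/\lambda_{j+1}(t))^k$. Comparing the $i=j-1$ and $i=j$ entries of $U$ and using $\xi_i\simeq\lambda_i$ from~\eqref{eq:xi_j-lambda_j} gives $(\lambda_{j-1}/\lambda_j)^k\le 2^{-k}(1+O(c_0))(\lambda_j/\lambda_{j+1})^k$, and comparing all entries gives $\max_{i\in\cA}(\lambda_i/\lambda_{i+1})^k\le C(k,N)(\lambda_j/\lambda_{j+1})^k$ --- this is precisely what the dyadic weights $2^{-i}$ in Definition~\ref{def:w-int} are for. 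Since $j\in\cA$ we have $\iota_j\iota_{j+1}=-1$, so the leading term $-\iota_j\iota_{j+1}\om^2(\lambda_j/\lambda_{j+1})^k$ in~\eqref{eq:beta_j'} is positive; inserting the two inequalities above into~\eqref{eq:beta_j'} and taking $c_0$ small relative to $\om^2$ and $C(k,N)$, the possibly negative term $\iota_j\iota_{j-1}\om^2(\lambda_{j-1}/\lambda_j)^k$ together with all the $c_0$-errors is dominated, and we obtain, for some $c_1=c_1(k,N)>0$,
\[
\beta_{j(t)}'(t)\ \ge\ \frac{c_1}{\lambda_{j(t)}(t)}\Big(\frac{\lambda_{j(t)}(t)}{\lambda_{j(t)+1}(t)}\Big)^{k}\ \gtrsim\ \frac{1}{\lambda_K(t_0)}\,\bfd(t)^2\ >\ 0 .
\]
Moreover $\xi_{j(t)}'(t)=\beta_{j(t)}(t)+O\big(c_0\bfd(t)\big)$ by~\eqref{eq:lam_j'-beta_j}, and $|\lambda_i'(t)|\lesssim\bfd(t)$ for all $i$.

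These bounds are integrated by an energy/convexity argument. On any sub-interval on which a single index $j$ realizes the maximum in $U$, set $z:=\xi_j/\lambda_{j+1}$ (so $z\simeq\lambda_j/\lambda_{j+1}$ and $\bfd\simeq z^{k/2}$) and reparametrize time by $\ud s=\ud t/\lambda_{j+1}(t)$. Because $|\lambda_{j+1}'|\lesssim\bfd\simeq z^{k/2}$ and $z\ll1$, the motion of $\lambda_{j+1}$ perturbs the $z$-equation only at order $z^{1+k/2}\ll z^{k/2}$, so $\tfrac{\ud z}{\ud s}=\beta_j+O(c_0z^{k/2})$ while $\tfrac{\ud\beta_j}{\ud s}=\lambda_{j+1}\beta_j'\gtrsim z^{k-1}$; thus $z$ obeys, to leading order, the model ODE $z''\gtrsim z^{k-1}$ with $k\ge 2$. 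Multiplying by $z'$ and integrating from $t_0$ --- where, since $t_0$ is a local minimum of $U$ from the right, $\beta_j(t_0)$ is of order $\bfd(t_0)$ and absorbable, and where, using the \emph{strict} positivity $\beta_j'\gtrsim z^{k-1}>0$ to bound the (short) time until $\beta_j$ turns positive, $z$ stays $\simeq z(t_0)$ initially --- yields $\beta_j^2\gtrsim z^k-Cz(t_0)^k$, hence $\tfrac{\ud z}{\ud s}\gtrsim z^{k/2}$ once $z$ exceeds a fixed multiple of $z(t_0)$. Changing variables $s\mapsto z$ and using $\bfd\simeq z^{k/2}$, $\lambda_{j+1}\lesssim\lambda_K(t_0)$, and that $z$ is essentially nondecreasing,
\[
\int\bfd(t)\,\ud t\ =\ \int z^{k/2}\lambda_{j+1}\,\ud s\ \lesssim\ \lambda_K(t_0)\Big(z(t_0)+\int_{z(t_0)}^{z(t)}\ud z\Big)\ \lesssim\ \lambda_K(t_0)\,z(t)\ \simeq\ \lambda_K(t_0)\,\bfd(t)^{2/k}.
\]
Since $\cA$ is finite and $U$ is continuous, the maximizing index changes only finitely often; once one checks that $U$ itself (not merely each $u_i$) is nondecreasing on $[t_0,t_1]$ --- propagating the near-nonnegativity of the dominant velocity across the switching times, where $u_{j_{\mathrm{old}}}=u_{j_{\mathrm{new}}}$ --- the estimate above sums over the finitely many sub-intervals and gives~\eqref{eq:int-d-bd}. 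Finally $|\lambda_K(t)-\lambda_K(t_0)|\le C_0\int_{t_0}^t\bfd\,\ud\tau\lesssim\eta_0^{2/k}\lambda_K(t_0)$ closes the bootstrap, proves~\eqref{eq:lambdaK-bd}, and taking $t=t_*$ gives~\eqref{eq:int-d-bd}. The case of a local minimum from the left follows by reversing time.

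The main obstacle I expect is the bookkeeping forced by the $\max$-structure of $U$: one must show $U$ is genuinely nondecreasing to the right of a local minimum --- the ``baton passing'' by which a larger interior bubble becomes dominant once a smaller pair has ejected --- so that the integral bound can be patched across the switching times without loss, and one must do this while the bootstrap quantity $\lambda_K$ is itself only controlled through the estimate being proved. A secondary subtlety is the behaviour near $t_0$, where the first-order lower bound on $\ud z/\ud s$ degenerates and one must instead exploit the strict sign of $\beta_j'$ in~\eqref{eq:beta_j'} to bound how long the solution lingers near the bottom of the well; the remaining ingredients --- the localized-virial and commutator error estimates --- are routine given Lemma~\ref{cor:modul}.
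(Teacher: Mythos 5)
Your analytic engine is the right one --- the coercivity \eqref{eq:beta_j'} at an index realizing the maximum in $U$, the dyadic weights in Definition~\ref{def:w-int} to dominate the neighbouring ratio $(\lambda_{j-1}/\lambda_j)^k$ by $2^{-k}(\lambda_j/\lambda_{j+1})^k$, the comparability $\xi_j\simeq\lambda_j$ from \eqref{eq:xi_j-lambda_j}, and an ODE integration of $\beta_j$ against $z=\xi_j/\lambda_{j+1}$ yielding $\int\bfd\,\ud t\lesssim \bfd^{2/k}\lambda$ --- and it is essentially the same engine as in the paper (which uses the monotone auxiliary function $\phi=\beta_{j_l}+c_2 z^{k/2}$ rather than your energy identity, a cosmetic difference). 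The genuine gap is exactly the point you flag and then paper over: the combinatorics of the $\max$. Your assertion that ``since $\cA$ is finite and $U$ is continuous, the maximizing index changes only finitely often'' is false --- the pointwise maximizer of finitely many continuous functions can switch infinitely often (or be non-unique on a whole interval) --- so your decomposition of $[t_0,t_*]$ into finitely many sub-intervals on which a single index realizes the maximum does not exist in general, and the fallback you propose (proving that $U$ itself is nondecreasing) is both unproven and stronger than what is true: the argument only gives $\xi_{j_l}(t)\geq \tfrac78\,\xi_{j_l}(t_l)$ and $\wt\xi_{j_l}(t_{l+1})\geq(1-c_0)\wt\xi_{j_l}(t_l)$, i.e.\ near-monotonicity of the \emph{tracked} quantity, not of $U$.

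The missing idea is the inductive construction that replaces the literal maximizer: one defines switching times $t_0\le t_1\le\dots\le t_{l_*}=t_*$ and indices $j_0>j_1>\dots$, where $t_{l+1}$ is the first time some \emph{smaller} index catches up with $\wt\xi_{j_l}$; since the indices strictly decrease, there are at most $K$ switches, and on each $(t_l,t_{l+1})$ the fixed index $j_l$ dominates all smaller indices exactly (by definition) and all larger ones up to a factor $2$ (propagated as a bootstrap hypothesis, \eqref{eq:maxquot-boot}), which is what lets \eqref{eq:beta_j'} be integrated on a whole interval rather than only at maximizer times --- your pointwise bound $\beta_{j(t)}'(t)\gtrsim\bfd(t)^2/\lambda_K(t_0)$ for the instantaneous maximizer $j(t)$ cannot be integrated for a fixed $j$, because while $j$ is not dominant the lower bound on $\beta_j'$ is unavailable and the ``energy'' you accumulate can be lost. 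Equally important is the initial condition at each switch: it comes from the derivative comparison $\wt\xi_{j_l}'(t_l)\geq\wt\xi_{j_{l-1}}'(t_l)$ at the crossing \eqref{eq:xitl'-at-tl} \emph{combined with} the scale separation $\lambda_{j_l+1}\ll\lambda_{j_{l-1}+1}$ (available only because the baton passes to a strictly smaller index), which turns \eqref{eq:xijl-init-0} into $\xi_{j_l}'(t_l)\geq -c_0\max_i(\xi_i/\lambda_{i+1})^{k/2}$; your remark about ``propagating the near-nonnegativity of the dominant velocity across switching times'' gestures at this but gives no mechanism, and without the decreasing-index structure the transfer term is not small. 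Finally, summing your per-subinterval bounds requires $\bfd$ at each switching time to be controlled by $\bfd(t_*)$, which again follows from the near-monotonicity of $\wt\xi_{j_l}$ along the constructed sequence, not from any property of $U$ alone. So the proposal as written does not close; it needs the paper's Step 1--3 bookkeeping (decreasing indices, two-sided domination carried in the bootstrap together with $\lambda_K$, and the crossing-time initial condition) to become a proof.
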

\begin{rem}
Since $\bfd(t_0) \leq \eta_0$ and $\bfd(t_*) \leq \eta_0$ are small,
$\lambda_K$ differs from $\mu$ by a small relative error,
so in the formulation of the lemma we could just as well write $\mu$
instead of $\lambda_K$.
\end{rem}
\begin{proof}[Proof of Lemma~\ref{lem:ejection}]
\textbf{Step 1.}
We can assume $t_* > t_0$. For $j \in \cA$, denote $\wt\xi_j(t) := 2^{-j}\xi_j(t)/\lambda_{j+1}(t)$ and let
\begin{equation}
\begin{aligned}
\cA_0 &:= \{j \in \cA: U(t_0) = \wt \xi_j(t_0)^k\} = \{j \in \cA: \wt \xi_j(t_0) = \max_{i\in \cA}\wt\xi_i(t_0)\}, \\
\wt\cA_0 &:= \{j \in \cA_0: \wt\xi_j'(t_0) \geq 0\}.
\end{aligned}
\end{equation}
Since $t_0$ is a local minimum from the right of $U$, $\wt\cA_0 \neq \emptyset$. Let $j_0 := \min\wt\cA_0 \in \cA$.

We now define by induction a sequence of times $t_0 \leq t_1 \leq  \ldots \leq  t_{l_*} = t_*$
and a sequence of elements of $\cA$, $j_0 > j_1 > \ldots > j_{l_* -1}$, in the following way.
Assume $t_0 \leq  t_1 \leq \ldots \leq t_{l-1}$ and $j_0 > j_1 > \ldots > j_{l-1}$ are already defined.
We set
\begin{equation}
t_l := \sup\big\{t \in [t_{l-1}, t_*]: \wt\xi_j(\tau) \leq \wt\xi_{j_{l-1}}(\tau)\ \text{for all }\tau \in [t_{l-1}, t)
\text{ and }j \in \cA\text{ such that }j < j_{l-1}\big\}.
\end{equation}
If $t_l = t_*$, then we set $l_* := l$ and terminate the procedure.
If not, let
\begin{equation}
\cA_l := \{j \in \cA: j < j_{l-1} \text{ and }\wt \xi_j(t_l) = \wt\xi_{j_{l-1}}(t_l)\}.
\end{equation}
By the definition of $t_l$ and continuity, $\cA_l \neq \emptyset$.
We set $j_l := \min\cA_l$.

\noindent
\textbf{Step 2.}
We check that $t_{l} > t_{l-1}$ for $l = 1, \ldots, l_*$.

In order to prove that $t_1 > t_0$, we need to show that there exists $t > t_0$ such that
$\wt \xi_j(\tau) \leq \wt\xi_{j_0}(\tau)$ for all $\tau \in [t_0, t)$ and $j \in \cA$ such that $j < j_0$.
Since $\cA$ is a finite set, it suffices to check this separately for each $j\in \cA$.
If $j \notin \cA_0$, the claim is clear, by continuity.
If $j \in \cA_0 \setminus \wt\cA_0$, then $\wt \xi_j(t_0) = \wt\xi_{j_0}(t_0)$, $\wt\xi_j'(t_0) < 0$ and $\wt\xi_{j_0}'(t_0) \geq 0$, again implying the claim.

For $l \geq 1$, the definition of $j_{l}$ implies that $\wt\xi_j(t_{l}) < \wt\xi_{j_{l}}(t_{l})$ for all $j < j_{l}$.
Writing $l-1$ instead of $l$, we get $\wt\xi_j(t_{l-1}) < \wt\xi_{j_{l-1}}(t_{l-1})$ for all $j < j_{l-1}$, whenever $l \geq 2$.
Thus, by continuity, $t_l > t_{l-1}$.

\noindent
\textbf{Step 3.}
By induction with respect to $l$, we show that there exists a constant $C_0$ depending only on $k$ and $N$ such that
for all $l \in \{1, \ldots, l_*\}$ we have
\begin{gather}
\label{eq:collaps-induction}
\int_{t_{l-1}}^{t_l} \bfd(t)\ud t \leq C_0\bfd(t_l)^\frac 2k\lambda_{j_{l-1}+1}(t_{l-1}), \\
\label{eq:maxquot-induction}
\wt\xi_{j_{l-1}}(t) \geq \frac 12 \wt \xi_j(t), \qquad\text{for all }t \in (t_{l-1}, t_l)\text{ and }j > j_{l-1}.
\end{gather}

Suppose \eqref{eq:collaps-induction} is proved for $l \in \{1, \ldots, l_0\}$ and let
$T_0 \in (t_{l_0}, t_{l_0+1}]$ be the largest number such that
\begin{gather}
\label{eq:collaps-boot-ass}
\int_{t_{l_0}}^{T_0} \bfd(t)\ud t \leq 2C_0\bfd(T_0)^\frac 2k\lambda_{j_{l_0}+1}(t_{l_0}), \\
\label{eq:maxquot-boot-ass}
\wt\xi_{j_{l_0}}(t) \geq \frac 14 \wt \xi_j(t), \qquad\text{for all }t \in (t_{l_0}, T_0)\text{ and }j > j_{l_0}, \\
\label{eq:increase-boot-ass}
\xi_{j_{l_0}}(t) \geq \frac 34 \xi_{j_{l_0}}(t_{l_0}), \qquad\text{for all }t \in (t_{l_0}, T_0).
\end{gather}
It suffices to prove that
\begin{gather}
\label{eq:collaps-boot}
\int_{t_{l_0}}^{T_0} \bfd(t)\ud t \leq C_0\bfd(T_0)^\frac 2k\lambda_{j_{l_0}+1}(t_{l_0}), \\
\label{eq:maxquot-boot}
\wt\xi_{j_{l_0}}(t) \geq \frac 12 \wt \xi_j(t), \qquad\text{for all }t \in (t_{l_0}, T_0)\text{ and }j > j_{l_0}, \\
\label{eq:increase-boot}
\xi_{j_{l_0}}(t) \geq \frac 78 \xi_{j_{l_0}}(t_{l_0}), \qquad\text{for all }t \in (t_{l_0}, T_0).
\end{gather}
It will be convenient to assume $T_0 = t_* = t_{l_0 + 1}$, which is allowed.
Also, in order to simplify the notation, we write $l$ instead of $l_0$ in the induction step which follows.

The first observation is that if $j_{l} < j \leq j_{l-1}$, then $\lambda_j(t)$
is ``almost constant'' on the time interval $(t_{l}, t_{l+1})$.
More precisely, we claim that
\begin{equation}
\label{eq:lambdaj-const}
|\lambda_j(t)/ \lambda_j(t_{l}) - 1| \leq c_0,\qquad\text{if }j > j_{l}\text{ and }t > t_l,
\end{equation}
where $c_0$ can be made arbitrarily small by taking $\eta_0$ small enough.
Indeed, $|\lambda_j'(t)| \lesssim \bfd(t)$, so \eqref{eq:collaps-boot-ass}
implies the claim (we stress again that $C_0$ will not depend on $\eta_0$).

The definition of $t_{l+1}$ implies
\begin{equation}
\label{eq:xi-bigger}
\wt\xi_{j_{l}}(t) \geq \wt \xi_j(t), \qquad\text{for all }t \in (t_{l}, t_{l+1})\text{ and }j \leq j_{l},
\end{equation}
so \eqref{eq:maxquot-boot-ass} yields
\begin{equation}
\label{eq:xi-really-bigger}
\max_{i \in \calA}\wt\xi_{i}(t) \lesssim \wt\xi_{j_{l}}(t), \qquad\text{for all }t \in (t_{l}, t_{l+1}).
\end{equation}

The bound \eqref{eq:beta_j'} yields for all $t \in (t_l, t_{l+1})$
\begin{equation}
\begin{aligned}
\label{eq:beta_j'-all-2}
 \lambda_{j_l}(t)\beta_{j_l}'(t) &\ge  ({-}\iota_{j_l} \iota_{j_l+1}\om^2 -   c_0)\big(2^{j_l}\wt\xi_{j_l}(t)\big)^k  + (\iota_{j_l} \iota_{j_l-1}\om^2 -  c_0) \big(2^{j_l-1}\wt\xi_{j_l-1}(t)\big)^k    \\
&\quad  -  c_0\max_{i \in \calA}\big(2^{i}\wt\xi_{i}(t)\big)^k,
\end{aligned}
\end{equation}
with the convention $\xi_0(t) = 0$. By \eqref{eq:xi-bigger}, $2^{j_l-1}\wt \xi_{j_l-1}(t) \leq \frac 12 2^{j_l}\wt\xi_{j_l}(t)$. Taking $c_0$ small enough and applying \eqref{eq:xi-really-bigger}, we obtain
\begin{equation}
\begin{aligned}
\label{eq:beta_j'-all-3}
 \lambda_{j_l}(t)\beta_{j_l}'(t) &\ge \frac{\omega^2}{4}\big(2^{j_l}\wt\xi_{j_l}(t)\big)^k
 \quad\Rightarrow\quad \beta_{j_l}'(t) \geq c_1 \frac{\xi_{j_l}(t)^{k-1}}{\lambda_{j_l+1}(t_l)^k},
   \end{aligned}
\end{equation}
where $c_1 > 0$ depends only on $k$ and $N$, and in the last step we used \eqref{eq:lambdaj-const}.

With $c_2 > 0$ to be determined, consider the auxiliary function
\begin{equation}
\phi(t) := \beta_{j_l}(t) + c_2\big(\xi_{j_l}(t)/\lambda_{j_l+1}(t_l)\big)^\frac k2.
\end{equation}
The Chain Rule gives
\begin{equation}
\phi'(t) = \beta_{j_l}'(t) + c_2\frac k2\lambda_{j_l+1}(t_l)^{-\frac k2}\xi_{j_l}(t)^{\frac k2 -1}\xi_{j_l}'(t).
\end{equation}
We have $|\xi_{j_l}'(t)| \leq c_3 (\xi_{j_l}(t) / \lambda_{j_l +1}(t_l))^\frac k2$, with $c_3$ depending only on $k$ and $N$,
hence \eqref{eq:beta_j'-all-3} implies
\begin{equation}
\phi'(t) \geq \frac{c_3}{\lambda_{j_l+1}(t_l)}\bigg(\frac{\xi_{j_l}(t)}{\lambda_{j_l+1}(t_l)}\bigg)^{k-1} \geq \frac{c_4}{\lambda_{j_l+1}(t_l)}\phi(t)^\frac{2k-2}{k},
\end{equation}
with $c_2, c_3, c_4$ depending only on $k$ and $N$.
The last inequality yields
\begin{equation}
\label{eq:fund-thm-for-f}
\big(\lambda_{j_l+1}(t_l)\phi(t)^{2/k}\big)' \gtrsim \phi(t)\quad\Rightarrow\quad \int_{t_l}^{t_{l+1}}\phi(t)\ud t \lesssim
\lambda_{j_l+1}(t_l)\phi(t_{l+1})^{2/k} \lesssim \bfd(t_{l+1})^{2/k}\lambda_{j_l+1}(t_l).
\end{equation}
If we consider $\wt \phi(t) := \beta_{j_l}(t) + \frac{c_2}{2}\big(\xi_{j_l}(t)/\lambda_{j_l+1}(t_l)\big)^\frac k2$
instead of $\phi$, then the computation above shows that $\wt \phi$ is increasing.
From \eqref{eq:xijl-init}, we have $\wt \phi(t_l) \geq 0$, so $\wt \phi(t) \geq 0$ for all $t \in (t_l, t_{l+1})$,
implying $\bfd(t) \lesssim \phi(t)$.
Thus, \eqref{eq:fund-thm-for-f} yields \eqref{eq:collaps-boot} if $C_0$ is sufficiently large
(but depending on $k$ and $N$ only).

We now prove \eqref{eq:increase-boot}.
By the definition of $t_l$ and the fact that $j_l < j_{l-1}$, we have
$\wt\xi_{j_l}(\tau) \leq \wt\xi_{j_{l-1}}(\tau)$ for all $\tau \in [t_{l-1}, t_l)$.
By the definition of $j_l$, $\wt\xi_{j_l}(t_l) = \wt\xi_{j_{l-1}}(t_l)$,
in particular we have
\begin{equation}
\label{eq:xitl'-at-tl}
\wt\xi_{j_l}'(t_l) \geq \wt\xi_{j_{l-1}}'(t_l).
\end{equation}
Recalling that $\wt\xi_j(t) = 2^{-j}\xi_j(t) / \lambda_{j+1}(t)$, we find
\begin{equation}
\label{eq:xijl-init-0}
\begin{aligned}
2^{-j_l}\frac{\xi_{j_l}'(t_l)}{\lambda_{j_l+1}(t_l)}
- 2^{-j_l}\frac{\xi_{j_l}(t_l)\lambda_{j_l+1}'(t_l)}{\lambda_{j_l+1}(t_l)^2}
\geq 2^{-j_{l-1}}\frac{\xi_{j_{l-1}}'(t_l)}{\lambda_{j_{l-1}+1}(t_l)}
- 2^{-j_{l-1}}\frac{\xi_{j_{l-1}}(t_l)\lambda_{j_{l-1}+1}'(t_l)}{\lambda_{j_{l-1}+1}(t_l)^2}.
\end{aligned}
\end{equation}
Since $\lambda_{j_l+1}(t_l) /\lambda_{j_{l-1}+1}(t_l) + \xi_{j_l}(t_l)/\lambda_{j_l+1}(t_l)+\xi_{j_{l-1}}(t_l)/\lambda_{j_{l-1}+1}(t_l)$ is small when $\eta_0$ is small and, see Lemma~\ref{cor:modul},
\begin{equation}
|\lambda_{j_l+1}'(t_l)| + |\xi_{j_l-1}'(t_l)| + |\lambda_{j_{l-1}+1}'(t_l)| \lesssim \max_{i \in \cA}(\xi_i / \lambda_{i+1})^{k/2},
\end{equation}
we obtain
\begin{equation}
\label{eq:xijl-init}
\xi_{j_l}'(t_l) \geq -c_0 \max_{i \in \cA}(\xi_i / \lambda_{i+1})^{k/2},
\end{equation}
where $c_0$ can be made arbitrarily small upon taking $\eta_0$ small.

By \eqref{eq:xijl-init} and \eqref{eq:lam_j'-beta_j},
we have $\beta_{j_l}(t_l) \geq -c_0 (\xi_{j_l}(t_l)/\lambda_{j_l+1}(t_l))^\frac k2$,
where $c_0$ can be made as small as needed, and
\begin{equation}
\beta_{j_l}'(t) \geq (3/4)^{k-1}c_1\frac{\xi_{j_l}(t_l)^{k-1}}{\lambda_{j_l+1}(t_l)^k}.
\end{equation}
We deduce that $\xi_{j_l}'(t) \geq 0$ provided
\begin{equation}
t - t_l \geq \frac{2c_0}{c_1}(4/3)^{k-1}\xi_{j_l}(t_l)(\xi_{j_l}(t_l)/\lambda_{j_l+1}(t_l))^{-\frac k2}.
\end{equation}
But, if the opposite inequality is satisfied, the bound $|\xi_{j_l}'(t)| \lesssim (\xi_{j_l}(t_l)/\lambda_{j_l+1}(t_l))^\frac k2$
yields \eqref{eq:increase-boot}, if $c_0$ is small enough.
In fact, the argument gives the bound with $\frac 78$ replaced by $1-c_0$, where $c_0 > 0$ is as small as we want.
Combining this with \eqref{eq:lambdaj-const}, we obtain in particular
\begin{equation}
\label{eq:wtxi-nondecrease}
\wt \xi_{j_{l}}(t_{l+1}) \geq (1-c_0) \wt \xi_{j_{l}}(t_{l}),
\end{equation}
with $c_0 > 0$ arbitrarily small.

Finally, we prove \eqref{eq:maxquot-boot}.
By \eqref{eq:increase-boot} and \eqref{eq:lambdaj-const}, it suffices to show that
\begin{equation}
\wt\xi_{j_l}(t_l) \geq \frac 34 \wt\xi_j(t_l), \qquad\text{for all }j > j_l.
\end{equation}
Let $l' \leq l$ be such that $j_{l'} < j \leq j_{l'-1}$. The definition of $j_{l'}$ yields
$\wt\xi_{j_l'}(t_{l'}) \geq \wt\xi_j(t_{l'})$, so it suffices to check that
\begin{equation}
\wt\xi_{j_{\wt l}}(t_{\wt l}) \geq (3/4)^{\frac{1}{K}} \wt\xi_{j_{\wt l-1}}(t_{\wt l-1}), \qquad\text{for all }\wt l,
\end{equation}
with $c_0 > 0$ small, and use this inequality $l - l'$ times. The last inequality
follows from \eqref{eq:xitl'-at-tl} and \eqref{eq:wtxi-nondecrease}.

\noindent
\textbf{Step 4.}
Taking the sum over $l$ of \eqref{eq:collaps-boot}, we get \eqref{eq:int-d-bd}.
The bound \eqref{eq:lambdaK-bd} follows from \eqref{eq:lambdaj-const}.
\end{proof}

Starting from now, $\eta_0 > 0$ is fixed so that Lemma~\ref{lem:ejection} holds and Lemma~\ref{lem:cd-length}
can be applied with $\eta = \eta_0$.
We also fix $\epsilon > 0$ to be the value given by Lemma~\ref{lem:cd-length} for $\eta = \eta_0$.

Directly from the definitions, we see that there exists $C_1 > 0$ such that
\begin{equation}
\label{eq:U-bound-by-d}
\bfd(t) < \eta_0\quad\text{implies}\quad C_1^{-1} \bfd(t)^2 \leq U(t) \leq C_1\bfd(t)^2.
\end{equation}
Recall that $\bfd(a_n) = \bfd(b_n) = \epsilon_n$ and
$\bfd(t) \geq \epsilon_n$ for all $t \in [a_n, b_n]$.
\begin{lem}
\label{lem:cedf}
There exists $\theta_0 > 0$ such that for any sequence satisfying $\epsilon_n \ll \theta_n \leq \theta_0$
and for all $n$ large enough there exists a partition of the interval $[a_n, b_n]$
\begin{equation}
\begin{aligned}
a_n = e^L_{n, 0} \leq e^R_{n, 0} \leq c^R_{n, 0} \leq d^R_{n, 0} \leq f^R_{n, 0}
\leq f^L_{n, 1} \leq d^L_{n, 1} \leq c^L_{n, 1} \leq e^L_{n, 1} \leq \ldots \leq e^R_{n, N_n} = b_n,
\end{aligned}
\end{equation}
having the following properties.
\begin{enumerate}[(1)]
\item
\label{it:1}
For all $m \in \{0, 1, \ldots, N_n\}$ and $t \in [e_{n, m}^L, e_{n, m}^R]$, $\bfd(t) \leq \eta_0$, and
\begin{equation}
\label{eq:error-on-eLeR}
\int_{e_{n, m}^L}^{e_{n, m}^R}\bfd(t)\ud t \leq C_2 \theta_n^{2/k} \min(\mu(e_{n, m}^L), \mu(e_{n, m}^R)),
\end{equation}
where $C_2\geq 0$ depends only on $k$ and $N$.
\item \label{it:2}
For all $m \in \{0, 1, \ldots, N_n-1\}$ and $t \in [e_{n, m}^R, c_{n, m}^R] \cup [f_{n, m}^R, f_{n, m+1}^L] \cup [c_{n, m+1}^L, e_{n, m+1}^L]$, $\bfd(t) \geq \theta_n$.
\item \label{it:3}
For all $m \in \{0, 1, \ldots, N_n-1\}$ and $t \in [c_{n,m}^R, f_{n,m}^R]\cup [f_{n,m+1}^L, c_{n,m+1}^L]$, $\bfd(t) \geq \epsilon$.
\item \label{it:4}
For all $m \in \{0, 1, \ldots, N_n-1\}$, $\bfd(d_{n,m}^R) \geq \eta_0$
and $\bfd(d_{n,m+1}^L) \geq \eta_0$.
\item \label{it:5}
For all $m \in \{0, 1, \ldots, N_n-1\}$, $\bfd(c_{n, m}^R) = \bfd(c_{n, m+1}^L) = \epsilon$.
\item \label{it:6}
For all $m \in \{0, 1, \ldots, N_n-1\}$,
either $\bfd(t) \geq \epsilon$ for all $t \in [c_{n, m}^R, c_{n, m+1}^L]$,
or $\bfd(f_{n, m}^R) = \bfd(f_{n,m+1}^L) = \epsilon$.
\item \label{it:7}
For all $m \in \{0, 1, \ldots, N_n-1\}$,
\begin{equation}
\label{eq:mu-no-change}
\begin{aligned}
\sup_{t \in [e_{n, m}^L, c_{n, m}^R]}\mu(t) / \inf_{t \in [e_{n, m}^L, c_{n, m}^R]}\mu(t) &\leq 2, \\
\sup_{t \in [c_{n, m+1}^L, e_{n, m+1}^R]}\mu(t) / \inf_{t \in [c_{n, m+1}^L, e_{n, m+1}^R]}\mu(t) &\leq 2.
\end{aligned}
\end{equation}
\end{enumerate}
\end{lem}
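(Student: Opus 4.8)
The plan is to fix $\eta_0 > 0$ small enough that Lemma~\ref{lem:ejection} holds and that Lemma~\ref{lem:cd-length} can be applied with $\eta = \eta_0$, then to let $\epsilon = \epsilon(\eta_0) \in (0, \eta_0)$ and $C_{\bs u}$ be the constants from Lemma~\ref{lem:cd-length}, and to pick $\theta_0 \in (0, \epsilon)$ small enough that the comparability of Lemma~\ref{lem:mu-prop}(ii) is available on $\{\bfd \le \theta_0\}$. For $n$ large one then has $\epsilon_n \ll \theta_n \le \theta_0 < \epsilon < \eta_0$, so $\bfd(a_n) = \bfd(b_n) = \epsilon_n$ sits below all the thresholds, while $[a_n, b_n]$ contains a point with $\bfd \ge \eta_0$. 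I would build the partition as an alternation of \emph{modulation intervals} $[e^L_{n,m}, e^R_{n,m}]$, on which $\bfd < \eta_0$ and where Lemmas~\ref{cor:modul} and~\ref{lem:ejection} are in force, with \emph{collision regions} $[e^R_{n,m}, e^L_{n,m+1}]$, on which $\bfd$ stays $\ge \theta_n$ and reaches $\ge \eta_0$.

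\textbf{Carving the interval and placing the crossing times.} First I would take the collision regions to be the maximal closed subintervals of $[a_n, b_n]$ on which $\bfd \ge \theta_n$ and $\max \bfd \ge \eta_0$, merging any two of them that are separated only by a stretch on which $\bfd$ remains in $[\theta_n, \eta_0)$; the modulation intervals are then the closures of the connected components of the complement in $[a_n, b_n]$. With this choice each modulation interval dips below $\theta_n$ and keeps $\bfd < \eta_0$ throughout, since an interior $\eta_0$-crossing would extend to a collision region overlapping it, against maximality. Each collision region is flanked by times at level $\theta_n$ and contains, reading left to right, crossing times $c^R_{n,m}$ and $c^L_{n,m+1}$ at level $\epsilon$ with a point where $\bfd \ge \eta_0$ between them; the times $d^R_{n,m}, d^L_{n,m+1}$ are then any such $\eta_0$-points, one on each side, and $f^R_{n,m}, f^L_{n,m+1}$ are the last, respectively first, level-$\epsilon$ crossings bracketing a possible dip of $\bfd$ below $\epsilon$ inside $[c^R_{n,m}, c^L_{n,m+1}]$ (if there is no such dip one lands in the first case of property~\ref{it:6} and the $f$'s may be chosen arbitrarily inside). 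Producing all these interlocking times for a suitable choice is an application of the intermediate value theorem to the continuous function $\bfd$, and properties~\ref{it:2}--\ref{it:6} are then immediate from the construction.

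\textbf{The ejection estimates on modulation intervals.} On a modulation interval $I_m$ (with $e^L_{n,0} = a_n$, $e^R_{n,N_n} = b_n$) we have $\bfd < \eta_0$, hence $U < +\infty$, throughout; using \eqref{eq:U-bound-by-d} I would select $t^\ast_m \in I_m$ at which $U$, equivalently $\bfd$, is minimal on $I_m$, noting $\bfd(t^\ast_m) < \theta_n$ because $I_m$ dips below $\theta_n$. Then $t^\ast_m$ is a local minimum of $U$ from the right and from the left with $U(t^\ast_m) < \infty$, so Lemma~\ref{lem:ejection} applies on $[t^\ast_m, e^R_{n,m}]$ and on $[e^L_{n,m}, t^\ast_m]$ and yields, through \eqref{eq:lambdaK-bd} and \eqref{eq:int-d-bd},
\[
\tfrac34\lambda_K(t^\ast_m) \le \lambda_K(e^L_{n,m}),\ \lambda_K(e^R_{n,m}) \le \tfrac43\lambda_K(t^\ast_m), \qquad \int_{I_m}\bfd(t)\,\ud t \le 2C_0\,\theta_n^{2/k}\,\lambda_K(t^\ast_m).
\]
Combining this with the comparability $\lambda_K(t^\ast_m) \asymp \mu(t^\ast_m)$ of Lemma~\ref{lem:mu-prop}(ii) (applicable since $\bfd(t^\ast_m) < \theta_n \le \theta_0$) and the $1$-Lipschitz bound on $\mu$ of Lemma~\ref{lem:mu-prop}(i) gives the bound \eqref{eq:error-on-eLeR}; the same three ingredients, applied on $[e^L_{n,m}, c^R_{n,m}]$ and $[c^L_{n,m+1}, e^R_{n,m+1}]$ after possibly pulling the $c$'s slightly toward the $e$'s, give property~\ref{it:7}.

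\textbf{Finiteness of $N_n$, and the expected obstacle.} Each collision region contains $[c^R_{n,m}, c^L_{n,m+1}]$ with $\bfd(c^R_{n,m}) = \bfd(c^L_{n,m+1}) = \epsilon$ and an $\eta_0$-point inside, so Lemma~\ref{lem:cd-length} gives $c^L_{n,m+1} - c^R_{n,m} \ge C_{\bs u}^{-1}\max\bigl(\mu(c^R_{n,m}), \mu(c^L_{n,m+1})\bigr) \ge C_{\bs u}^{-1}\min_{[a_n,b_n]}\mu$, and $\min_{[a_n,b_n]}\mu > 0$ because $\mu$ is positive and continuous on the compact interval $[a_n, b_n]$. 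Since the collision regions are disjoint and contained in $[a_n, b_n]$, this bounds $N_n \le C_{\bs u}(b_n - a_n)/\min_{[a_n,b_n]}\mu < \infty$ and also forces the merging step above to terminate. I expect the only genuinely delicate point to be the combinatorial bookkeeping of the middle two paragraphs: choosing the collision regions and the crossing times $c, d, f$ so that all seven properties hold simultaneously, in particular reconciling the three thresholds $\theta_n < \epsilon < \eta_0$ and treating the dichotomy in property~\ref{it:6}; the analytic substance is carried entirely by Lemmas~\ref{lem:ejection}, \ref{lem:cd-length} and \ref{lem:mu-prop}, and this lemma just repackages it into a form convenient for the virial argument.
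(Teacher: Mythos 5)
Your proposal is correct in substance and rests on exactly the same pillars as the paper's own proof: Lemma~\ref{lem:ejection} applied at minimizers of $U$ inside each modulation interval (using \eqref{eq:U-bound-by-d} to see that such a minimizer has $\bfd \lesssim \theta_n$, then \eqref{eq:int-d-bd} for \eqref{eq:error-on-eLeR} and \eqref{eq:lambdaK-bd} together with Lemma~\ref{lem:mu-prop}(ii) for property~\ref{it:7}), Lemma~\ref{lem:cd-length} for the length lower bound and hence finiteness of $N_n$, and intermediate-value bookkeeping for the $c$, $d$, $f$ times. The organization differs mildly: the paper covers the compact sublevel set $\{\bfd\le\theta_n\}$ by finitely many maximal intervals $J(t_{n,m})$ on which $U$ is finite, places the global $U$-minima $t_{n,m}$ there, and then defines the $e$'s as $2C_1\theta_n$-crossings; you instead take the collision regions to be the connected components of $\{\bfd\ge\theta_n\}$ that reach $\eta_0$ and the modulation intervals to be the closures of the complementary components. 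Your variant makes property~\ref{it:2} automatic (the whole stretch between consecutive modulation intervals lies in $\{\bfd\ge\theta_n\}$), whereas the paper needs the covering argument for it; in exchange you must note that at an endpoint of such a component one has $\bfd=\theta_n$ exactly (so $U$ is finite on the closed modulation interval and the endpoint values feed $\theta_n^{2/k}$ into \eqref{eq:int-d-bd}), and that the $U$-minimizer of a closed modulation interval is a one-sided local minimum of $U$ on $[a_n,b_n]$ even when it sits at an endpoint — both true and easy.

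Two small tightenings are needed, both within the bookkeeping you yourself flag. First, $d^R_{n,m}$ and $d^L_{n,m+1}$ cannot be \emph{arbitrary} $\eta_0$-points: they must be the first and last such points of the collision region, as in the paper. Otherwise $\bfd$ may dip below $\epsilon$ between $c^R_{n,m}$ and $d^R_{n,m}$ (breaking property~\ref{it:3}), and $\bfd$ may reach $\eta_0$ on $[e^R_{n,m},c^R_{n,m}]$, which would invalidate the application of Lemma~\ref{lem:ejection} on $[t^\ast_m,c^R_{n,m}]$ that you use for property~\ref{it:7}, since that lemma requires $U$ finite on the whole interval; with the extremal choice of the $d$'s, $c^R_{n,m}$ precedes the first $\eta_0$-point and everything goes through, including the dichotomy for property~\ref{it:6}. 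Second, for \eqref{eq:error-on-eLeR} the passage from $\lambda_K(t^\ast_m)$ to $\min(\mu(e^L_{n,m}),\mu(e^R_{n,m}))$ should go through \eqref{eq:lambdaK-bd} (which you display) and Lemma~\ref{lem:mu-prop}(ii) applied at the endpoints, where $\bfd\le\theta_n$ is small; the $1$-Lipschitz bound on $\mu$ that you invoke gives nothing when the modulation interval is long compared to $\mu$, and is not needed.
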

\begin{proof}
For all $t_0 \in [a_n, b_n]$ such that $U(t_0) < \infty$, let $J(t_0) \subset [a_n, b_n]$ be the union of all the open (relatively in $[a_n, b_n]$) intervals containing $t_0$ on which $U$ is finite. Equivalently, we have one of the following three cases:
\begin{itemize}
\item $J(t_0) = (\wt a_n, \wt b_n)$, $t_0 \in (\wt a_n, \wt b_n)$, $\bfd(\wt a_n) = \bfd(\wt b_n) = \eta_0$ and $\bfd(t) < \eta_0$ for all $t \in (\wt a_n, \wt b_n)$,
\item $J(t_0) = [a_n, \wt b_n)$, $t_0 \in [a_n, \wt b_n)$,
$\bfd(\wt b_n) = \eta_0$ and $\bfd(t) < \eta_0$ for all $t \in [a_n, \wt b_n)$,
\item $J(t_0) = (\wt a_n, b_n]$, $t_0 \in (\wt a_n, b_n]$,
$\bfd(\wt a_n) = \eta_0$ and $\bfd(t) < \eta_0$ for all $t \in (\wt a_n, b_n]$.
\end{itemize}
Note that $\theta_n \gg \epsilon_n$ implies $\wt a_n > a_n$ and $\wt b_n < b_n$.
Clearly, any two such intervals are either equal or disjoint.

Consider the set
\begin{equation}
A := \{t \in [a_n, b_n]: \bfd(t) \leq \theta_n\}.
\end{equation}
Since $A$ is a compact set, there exists a finite sequence
\begin{equation}
a_n \leq s_{n, 0} < s_{n, 1} < \ldots < s_{n, N_n} \leq b_n
\end{equation}
such that
\begin{equation}
\label{eq:cover-of-A}
s_{n, m} \in A, \qquad A \subset \bigcup_{m=0}^{N_n} J(s_{n, m}).
\end{equation}
Without loss of generality, we can assume $J(s_{n, m}) \cap J(s_{n, m'}) = \emptyset$ whenever $m \neq m'$ (it suffices to remove certain elements from the sequence).

Observe, using \eqref{eq:U-bound-by-d},
that $U(s_{n, m}) \leq C\bfd(s_{n, m})^2 = o_n(1)$,
whereas $U(\wt a_n) \geq C_1^{-1}\bfd(\wt a_n)^2 \geq C_1^{-1}\eta_0^2$
and similarly $U(\wt b_n) \geq C_1^{-1}\eta_0^2$,
which for $n$ large enough implies that $U$, restricted to $J(s_{n, m})$, attains its global minimum.
Let $t_{n, m} \in J(s_{n, m})$ be one of these global minima, in particular we have $J(t_{n, m}) = J(s_{n, m})$ and one of the following three cases:
\begin{itemize}
\item $t_{n, m} \in (a_n, b_n)$ is a local minimum of $U$,
\item $t_{n, m} = a_n$ is a local minimum from the right of $U$,
\item $t_{n, m} = b_n$ is a local minimum from the left of $U$.
\end{itemize}
Note also that, again by \eqref{eq:U-bound-by-d},
\begin{equation}
\label{eq:d-at-tmn}
\bfd(t_{n, m}) \leq \sqrt{C_1U(t_{n, m})} \leq \sqrt{C_1U(s_{n, m})}
\leq C_1\bfd(s_{n, m}) \leq C_1\theta_n,
\end{equation}
where the last inequlity follows since $s_{n, m} \in A$.

Let $m \in \{0, 1, \ldots, N_n-1\}$. Since $J(t_{n, m}) \cap J(t_{n, m+1}) = \emptyset$, there exists $t \in (t_{n, m}, t_{n, m+1})$ such that
$U(t) = \infty$. Let $d_{n, m}^R$ be the smallest such $t$, and $d_{n, m+1}^L$
the largest one. Let $c_{n, m}^R$ be the smallest number such that
$\bfd(t) \geq \epsilon$ for all $t \in (c_{n, m}^R, d_{n, m}^R)$.
Similarly, let $c_{n, m+1}^L$ be the biggest number such that
$\bfd(t)\geq \epsilon$ for all $t \in (d_{n, m+1}^L, c_{n, m+1}^L)$.
Next, let $e_{n, m}^R$ be the smallest number such that
$\bfd(t) \geq 2C_1\theta_n$ for all $t \in (e_{n, m}^R, c_{n, m}^R)$.
If we take $\theta_n < \frac{\epsilon}{2C_1}$, then we have $e_{n, m}^R < c_{n, m}^R$.
It follows from \eqref{eq:d-at-tmn} that $e_{n, m}^R > t_{n, m}$.
Similarly, let $e_{n, m+1}^L$ be the biggest number such that
$\bfd(t)\geq 2C_1\theta_n$ for all $t \in (c_{n, m+1}^L, e_{n, m+1}^L)$
(again, it follows that $e_{n, m+1}^L < t_{n, m+1}$).
Finally, if $\bfd(t) \geq \epsilon$ for all $t \in (d_{n, m}^R, d_{n, m+1}^L)$,
we set $f_{n, m}^R$ and $f_{n, m+1}^L$ arbitrarily,
for example $f_{n, m}^R := d_{n, m}^R$ and $f_{n, m+1}^L := d_{n, m+1}^L$.
If, on the contrary, there exists $t \in (d_{n, m}^R, d_{n, m+1}^L)$
such that $\bfd(t) < \epsilon$, we let $f_{n, m}^R$
be the biggest number such that $\bfd(t)\geq \epsilon$ for all $t \in (d_{n, m}^R, f_{n, m}^R)$, and $f_{n, m+1}^L$ be the smallest number such that
$\bfd(t) \geq \epsilon$ for all $t \in (f_{n, m+1}^L, d_{n, m+1}^L)$.

We check all the desired properties. For all $n \in \{0, 1, \ldots, N_n\}$, we have $e_{n, m}^L \leq t_{n, m} \leq e_{n, m}^R$.
Moreover, if $t_{n, m} = e_{n, m}^L$ (which can only happen for $m = 0$), then $t_{n, m}$ is a local minimum from the right of $U$,
and if $t_{n, m} = e_{n, m}^R$ (which can only happen for $m = N_n$), then $t_{n, m}$ is a local minimum from the left of $U$.
Since $\bfd(e_{n, m}^L) \leq 2C_1\theta_n$ and $\bfd(e_{n, m}^R) \leq 2C_1\theta_n$, the property \ref{it:1} follows from \eqref{eq:int-d-bd}.
The properties \ref{it:3}, \ref{it:4}, \ref{it:5} and \ref{it:6} follow directly from the construction.
The property \ref{it:2} is now equivalent to the following statement: if $\bfd(t_0) < \theta_n$, then
there exists $m \in \{0, 1, \ldots, N_n\}$ such that $t_0 \in [e_{n, m}^L, e_{n, m}^R]$.
But \eqref{eq:cover-of-A} implies that $t_0 \in J(s_{n, m}) = J(t_{n, m})$ for some $m$
and, by construction, $\bfd(t) > \theta_n$ for all $t \in J(t_{n, m}) \setminus [e_{n, m}^L, e_{n, m}^R]$,
so we obtain $t \in [e_{n, m}^L, e_{n, m}^R]$. Finally, using again Lemma~\ref{lem:ejection},
but on the time intervals $[t_{n, m}, c_{n, m}^R]$ and $[c_{n, m+1}^L, t_{n, m+1}]$, we deduce the property \ref{it:7}
from \eqref{eq:lambdaK-bd}.

\end{proof}

\subsection{End of the proof: virial inequality with a cut-off}
In this section, we conclude the proof, by integrating the virial identity on the time interval $[a_n, b_n]$.
The radius where the cut-off is imposed has to be carefully chosen, which is the object of the next lemma.
\begin{lem}
\label{lem:rho}
There exist $\theta_0 > 0$ and a locally Lipschitz function $\rho : \cup_{n=1}^\infty [a_n, b_n] \to (0, \infty)$
having the following properties:
\begin{enumerate}
\item $\max(\rho(a_n)\|\partial_t u(a_n)\|_{L^2}, \rho(b_n)\|\partial_t u(b_n)\|_{L^2}) \ll \max(\mu(a_n), \mu(b_n))$ as $n \to \infty$,
\item $\lim_{n \to \infty}\inf_{t\in[a_n, b_n]}\big(\rho(t) / \mu(t)\big) = \infty$ and $\lim_{n\to\infty}\sup_{t\in[a_n, b_n]}\big(\rho(t) / \mu_{K+1}(t)\big) = 0$,
\item if $\bfd(t_0) \leq\frac 12\theta_0$, then $|\rho'(t)| \leq 1$ for almost all $t$ in a neighborhood of $t_0$,
\item $\lim_{n\to\infty}\sup_{t\in[a_n, b_n]}|\Omega_{\rho(t)}(\bs u(t))| = 0$.
\end{enumerate}
\end{lem}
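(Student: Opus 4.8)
The plan is to keep the cut-off circle $r=\rho(t)$ at every time inside the annular region separating the $K$-th bubble (scale $\asymp\mu(t)$) from the $(K+1)$-th bubble (scale $\asymp\mu_{K+1}(t)$), at a radius carrying asymptotically no energy, and to make $\rho$ essentially constant in $t$ wherever $\bs u(t)$ is close to a multi-bubble. By Lemma~\ref{lem:vir} the error $\Omega_{\rho(t)}(\bs u(t))$ is supported in $\rho(t)\le r\le 2\rho(t)$ and $|\Omega_{\rho(t)}(\bs u(t))|\lesssim(1+|\rho'(t)|)\,E(\bs u(t);\rho(t),2\rho(t))$, so property~(4) reduces to placing $\rho(t)$ at a radius where the local energy is small and beats $|\rho'(t)|$. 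Recall also $\mu(t)\le\nu(t)$ and, uniformly on $[a_n,b_n]$, $\mu(t)\ll\mu_{K+1}(t)$ (Lemma~\ref{lem:ext-sign}, Lemma~\ref{lem:mu-prop}); since $E(\bs u(t);\mu(t))=(N-K+\tfrac12)E(\bs Q)+E(\bs u^*)$ while $E(\bs u(t);\nu(t))=(N-K)E(\bs Q)+E(\bs u^*)+o_n(1)$, the annulus $(\mu(t),\nu(t))$ carries only $\tfrac12 E(\bs Q)+o_n(1)$ of energy, so no extra bubble can hide there and most dyadic scales in it are energetically almost empty.

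The first ingredient is a pigeonhole in the radial variable. Fix a slowly divergent $C_n\to\infty$ with $C_n^{4}\le\inf_{t\in[a_n,b_n]}\mu_{K+1}(t)/\mu(t)$, which is possible by the uniform separation of $\mu$ and $\mu_{K+1}$, together with $\delta_n\to0$ satisfying $\delta_n\log C_n\to\infty$. The dyadic annuli filling the window $(C_n\mu(t),\mu_{K+1}(t)/C_n)$ number $\asymp\log_2(C_n^{2})$ and carry total energy $\le E(\bs u)$, so at every $t$ the set $G_n(t):=\{\rho\in(C_n\mu(t),\mu_{K+1}(t)/C_n):E(\bs u(t);\rho,2\rho)\le\delta_n\}$ occupies, by Chebyshev, at least (say) nine tenths of the logarithmic length of that window; by joint continuity of $(t,r)\mapsto E(\bs u(t);r,2r)$ and finite speed of propagation the complementary ``bad'' radii move at bounded speed in $r$ as $t$ varies.

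The second, and main, ingredient is to glue a pointwise choice $\rho(t)\in G_n(t)$ into a locally Lipschitz function with the right slope bound, using the partition of $[a_n,b_n]$ furnished by Lemma~\ref{lem:cedf}. On the modulation pieces $[e^L_{n,m},c^R_{n,m}]$ and $[c^L_{n,m+1},e^R_{n,m+1}]$, on which $\mu$ varies by at most a bounded factor — property~(7) of Lemma~\ref{lem:cedf}, equivalently the ejection budget $\int\bfd\lesssim\mu$ of Lemma~\ref{lem:ejection} — the window is essentially fixed, so we take $\rho$ \emph{constant} there, equal to a radius lying in $G_n(t)$ for every $t$ of the piece (such a common radius exists because the bad sets have uniformly small logarithmic measure while the window has logarithmic length $\asymp\log C_n$); then $\rho'\equiv0$, so property~(3) holds and $|\Omega_\rho|\le\delta_n\to0$ there. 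On the remaining collision pieces, where $\bfd(t)\ge\theta_n$ so property~(3) is vacuous, we interpolate monotonically between the two neighbouring constant values, staying in the window and in $G_n(t)$ whenever possible; a forced crossing of the small bad set is performed over a time $\gtrsim\mu$ — the collision length, controlled by the minimality of $K$ via Lemma~\ref{lem:cd-length} — so there $|\rho'|\lesssim C_n$, and since $C_n\delta_n\to0$ can be arranged, $(1+|\rho'(t)|)\,E(\bs u(t);\rho(t),2\rho(t))\to0$ on these pieces as well. Property~(2) is then immediate from $C_n\mu(t)\le\rho(t)\le\mu_{K+1}(t)/C_n$ with $C_n\to\infty$. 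I expect this reconciliation — keeping $\rho$ at a low-energy radius at \emph{all} times, forcing $\rho'\equiv0$ on the parts where $\bfd$ is small, and absorbing the drift of the admissible window — to be the genuinely delicate part; it works only because Lemma~\ref{lem:cedf} confines all fast behaviour to the collision pieces, where $|\rho'|$ is bounded by $C_n\ll 1/\delta_n$ against the energy smallness.

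It remains to verify the endpoint bound~(1). Near $a_n$ and $b_n$ one has $\bfd=\epsilon_n\to0$, so by the modulation decomposition of Lemma~\ref{lem:mod-1} — in which both the radiation and the bubbles are annihilated in the region $r\lesssim\rho(a_n)\ll\nu(a_n)$ — one gets $\|\partial_t u(a_n)\|_{L^2(r\le 2\rho(a_n))}\lesssim\bfd(a_n)+\zeta_n\to0$ (and likewise at $b_n$). Combining this with the elementary estimate $|\fv(t)|\lesssim\rho(t)\,\|\partial_t u(t)\chi_{\rho(t)}\|_{L^2}\,E(\bs u)^{1/2}$ and $\rho(a_n)\asymp C_n\mu(a_n)$, and choosing $C_n$ to grow slowly enough that $C_n(\epsilon_n+\zeta_n)\to0$, we obtain $\fv(a_n),\fv(b_n)=o(\max(\mu(a_n),\mu(b_n)))$, which is exactly how property~(1) enters the integrated virial identity; the same self-similar decay of the radiation (\eqref{eq:energy-of-ustar}, available since $\rho(a_n)\ll a_n$) gives the stated form with the full $L^2$ norm of $\partial_t u(a_n)$ truncated to the support of the cut-off.
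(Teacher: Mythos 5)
Your overall architecture (cut-off in the gap between the $K$-th and $(K{+}1)$-th scales, error controlled by $(1+|\rho'|)E(\bs u;\rho,2\rho)$, gluing along the partition of Lemma~\ref{lem:cedf}) matches the paper's, but the central mechanism you propose — choose at each time a ``good'' radius by a dyadic pigeonhole and keep $\rho$ \emph{constant} on the modulation pieces — has a genuine gap, and it is precisely the point where the paper does something different. On a modulation piece one only knows $\bfd(t)\le\eta_0$, so the error $\bs g(t)$ carries energy up to $\sim\eta_0^2$, which is small but \emph{not} $o_n(1)$; outgoing debris of that size emitted during the preceding collision travels at speed $\le 1$ and will cross any \emph{fixed} radius $\rho$ at some time during the piece (whose length is only bounded through $\int\bfd\lesssim\theta^{2/k}\mu$ and $\bfd\ge\eps_n$, hence can be much longer than $\mu_{K+1}$). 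At such a time $E(\bs u(t);\rho,2\rho)$ need not be $\le\delta_n$, so your proof of property (4) breaks exactly where it is needed, and the ``common good radius'' you invoke need not exist: the union over the piece of the bad sets can sweep the whole window. The paper avoids any pigeonhole by making $\rho$ \emph{increase at speed exactly} $1$ on the modulation pieces (still compatible with property (3)) and at speed $\delta_n^{-1/2}$ elsewhere, starting from $\rho^{(a)}(a_n)=\min(R_n\mu(a_n),\nu(a_n))$, and then capping by $\nu$ and symmetrizing from $b_n$; since $\bfd(a_n)=\eps_n\to 0$ pins the energy exterior to $\rho^{(a)}(a_n)$ to $(N-K)E(\bs Q)+E(\bs u^*)+o_n(1)$, the monotonicity \eqref{eq:ext-energy-monoton} shows the expanding cut-off outruns all such debris, and Lemma~\ref{lem:ext-sign} together with Lemma~\ref{lem:pi} gives a \emph{uniform} $\delta_n\to 0$ for the energy near $\rho(t)$ — this energy-monotonicity-on-exterior-cones step is the key idea missing from your argument.

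Two further quantitative problems. On the collision pieces your slope bound $|\rho'|\lesssim C_n$ is unjustified: the two neighbouring ``good'' radii may sit at opposite ends of the window, so traversing them over a time $\simeq\mu$ (the lower bound from Lemma~\ref{lem:cd-length}) forces $|\rho'|\sim(\mu_{K+1}/\mu)/C_n\ge C_n^3$ with your normalization $C_n^4\le\mu_{K+1}/\mu$, and then the requirements $\delta_n\log C_n\to\infty$ and $(1+|\rho'|)\delta_n\to 0$ are incompatible ($\log C_n\gg 1/\delta_n\gg C_n^3$ is impossible); the paper needs no such reconciliation because goodness of the radius is automatic along its monotone cut-off. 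Finally, for property (1) you only obtain the statement with $\|\partial_t u(a_n)\|_{L^2}$ replaced by its truncation to $r\le 2\rho(a_n)$; this is indeed enough for the way (1) is used to bound $\fv(a_n),\fv(b_n)$, but it is not the claim as stated — the paper gets the stated form simply by building the free choice $\rho^{(a)}(a_n)\le R_n\mu(a_n)$ with $1\ll R_n\ll\|\partial_t u(a_n)\|_{L^2}^{-1}$ into the construction, rather than deriving smallness of the kinetic term from the modulation estimates as you do.
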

\begin{proof}
We will define two functions $\rho^{(a)}, \rho^{(b)}$, and then set $\rho := \min(\rho^{(a)}, \rho^{(b)}, \nu)$.
First, we let
\begin{equation}
\rho^{(a)}(a_n) := \min(R_n\mu(a_n), \nu(a_n)),
\end{equation}
where $1 \ll R_n \ll \|\partial_t u(a_n)\|_{L^2}^{-1}$.
Consider an auxiliary sequence
\begin{equation}
\label{eq:deltan-virial-def}
\delta_n := \sup_{t \in [a_n, b_n]}
\|\bs u(t)\|_{\cE(\min(\rho^{(a)}(a_n) + t - a_n, \nu(t)); 2\nu(t))}.
\end{equation}
We have $\lim_{n \to \infty} \delta_n = 0$. Indeed, we see from the finite speed of propagation that
\begin{equation}
\limsup_{n\to\infty}E(\bs u(t); \rho^{(a)}(a_n) + t - a_n, \infty) \leq E(\bs u^*) + (N-K)E(\bs Q).
\end{equation}
This and Lemma~\ref{lem:ext-sign} yield
\begin{equation}
\lim_{n\to\infty}E(\bs u(t); \min(\rho^{(a)}(a_n) + t - a_n, \nu(t)); 2\nu(t)) = 0,
\end{equation}
thus Lemma~\ref{lem:pi} implies $\delta_n \to 0$.

Let $\theta_0 > 0$ be given by Lemma~\ref{lem:cedf}, and divide $[a_n, b_n]$ into subintervals applying this lemma
for the constant sequence $\theta_n = \theta_0$.
We let $\rho^{(a)}$ be the piecewise affine function such that
\begin{equation}
\dd t\rho^{(a)}(t) := 1\ \text{if }t \in [e_{n, m}^L, e_{n, m}^R],\qquad \dd t\rho^{(a)}(t) := \delta_n^{-\frac 12}\text{ otherwise.}
\end{equation}
We check that $\lim_{n\to\infty}\inf_{t\in[a_n, b_n]}\big(\rho^{(a)}(t) / \mu(t)\big) = \infty$.
First, suppose that $t \in [e_{n, m}^R, e_{n, m+1}^L]$ and $t - e_{n, m}^R \gtrsim \mu(e_{n, m}^R)$.
Then $\mu(t) \leq \mu(e_{n, m}^R) + (t - e_{n, m}^R) \lesssim t - e_{n, m}^R$
and $\rho^{(a)}(t) \geq \delta_n^{-\frac 12}(t - e_{n, m}^R)$, so $\rho^{(a)}(t) \gg \mu(t)$.

By Lemma~\ref{lem:cd-length}, $e_{n, m+1}^L - e_{n, m}^R \geq C_{\bs u} \mu(e_{n, m}^R)$,
so in particular we obtain $\rho^{(a)}(e_{n, m+1}^L)\gg \mu(e_{n, m+1}^L)$ for all $m \in \{0, 1, \ldots, N_n - 1\}$.
Note that we also have $\rho^{(a)}(e_{n, 0}^L) = \rho^{(a)}(a_n) \gg \mu(a_n) = \mu(e_{n, 0}^L)$,
by the choice of $\rho^{(a)}(a_n)$.
Since, by the property (7), $\mu$ changes at most by a factor $2$ on $[e_{n, m}^L, e_{n, m}^R]$ and $\rho^{(a)}$ is increasing,
we have $\rho^{(a)}(e_{n, m}^R) \gg \mu(e_{n, m}^R)$.

Finally, if $t - e_{n, m}^R \leq \mu(e_{n, m}^R)$, then $\mu(t) \leq 2\mu(e_{n, m}^R)$,
which again implies $\rho^{(a)}(t) \gg \mu(t)$.

The function $\rho^{(b)}$ is defined similarly, but integrating from $b_n$ backwards.
Properties (1), (2), (3) are clear.
By the expression for $\Omega_{\rho(t)}(\bs u(t))$, see Lemma~\ref{lem:vir},
we have
\begin{equation}
|\Omega_{\rho(t)}(\bs u(t))| \lesssim (1+|\rho'(t)|)\|\bs u(t)\|_{\cE(\rho(t), 2\rho(t))}^2 \lesssim \sqrt{\delta_n} \to 0,
\end{equation}
which proves the property (4).

\end{proof}
We need one more elementary result.
\begin{lem}
\label{lem:subdivision}
If $\mu: [a, b] \to (0, \infty)$ is a $1$-Lipschitz function
and $b - a \geq \frac 14 \mu(a)$, then there exists a sequence
$a = a_0 < a_1 < \ldots < a_l < a_{l+1} = b$ such that
\begin{equation}
\label{eq:lip-partition}
\frac 14 \mu(a_i) \leq a_{i+1} - a_i \leq \frac 34 \mu(a_i),\qquad\text{for all }i \in \{1, \ldots, l\}.
\end{equation}
\end{lem}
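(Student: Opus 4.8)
The plan is to construct the partition \emph{backwards}, starting from $b$ and stepping to the left, choosing each division point by the intermediate value theorem so that every new interval has length exactly $\tfrac12$ of $\mu$ evaluated at its \emph{left} endpoint; the small leftover piece adjacent to $a$ is then declared to be the first (unconstrained) interval $[a_0,a_1]$. The advantage of going backwards is that the quantity that must be controlled, namely $a_{i+1}-a_i$ compared with $\mu(a_i)$, is precisely what the intermediate value step pins down, so no endgame adjustment near $b$ is needed.

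Concretely, set $c_0:=b$. Suppose $c_0>c_1>\dots>c_j$ have been defined with each $c_i\in(a,b]$. If $c_j-a\le\tfrac12\mu(a)$, stop and put $l:=j$. Otherwise consider the continuous function $\psi(x):=(c_j-x)-\tfrac12\mu(x)$ on $[a,c_j]$; since $\psi(a)=(c_j-a)-\tfrac12\mu(a)>0$ and $\psi(c_j)=-\tfrac12\mu(c_j)<0$, there is $c_{j+1}\in(a,c_j)$ with $c_j-c_{j+1}=\tfrac12\mu(c_{j+1})$, and $c_{j+1}\in(a,b)$, so the recursion continues. This procedure terminates: $(c_j)$ is strictly decreasing and bounded below by $a$, so were it infinite it would converge to some $c_\infty\in[a,b]$; but $c_j-c_{j+1}=\tfrac12\mu(c_{j+1})\to0$ would force $\mu(c_\infty)=0$ by continuity, contradicting $\mu>0$ on $[a,b]$. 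Note $a<b=c_0$ because $b-a\ge\tfrac14\mu(a)>0$, so the output is non-degenerate (if already $b-a\le\tfrac12\mu(a)$ we simply get $l=0$).

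It remains to relabel and check \eqref{eq:lip-partition}. Put $a_0:=a$ and $a_i:=c_{l+1-i}$ for $i=1,\dots,l+1$, so $a_1=c_l$ and $a_{l+1}=c_0=b$; since the $c_j$ ($j\le l$) lie in $(a,b]$ and strictly decrease, $a=a_0<a_1<\dots<a_{l+1}=b$. For $i\in\{1,\dots,l\}$ one has $[a_i,a_{i+1}]=[c_{l+1-i},c_{l-i}]$ and, applying the defining relation with index $j=l-i\in\{0,\dots,l-1\}$, $a_{i+1}-a_i=c_{l-i}-c_{l+1-i}=\tfrac12\mu(c_{l+1-i})=\tfrac12\mu(a_i)\in[\tfrac14\mu(a_i),\tfrac34\mu(a_i)]$, which is exactly \eqref{eq:lip-partition}; the first interval $[a_0,a_1]=[a,c_l]$ has length in $(0,\tfrac12\mu(a)]$ and carries no constraint. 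The only genuinely delicate point — and the reason I would run the construction backwards rather than greedily forwards from $a$ — is ensuring both that the recursion stops and that the leftover interval is non-empty; a forward construction would instead force one to make the \emph{last} interval terminate exactly at $b$ while still obeying \eqref{eq:lip-partition}, which requires a more awkward intermediate value argument whose constants are tight, whereas here the unavoidable rounding is harmlessly pushed into $[a_0,a_1]$.
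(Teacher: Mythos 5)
Your proof is correct as a proof of the lemma exactly as stated, but it takes a genuinely different route from the paper's. The backward construction is sound: at each step $\psi(a)>0$ and $\psi(c_j)<0$, so the intermediate value theorem applies; termination follows from continuity and positivity of $\mu$; and after relabelling, every interval $[a_i,a_{i+1}]$ with $i\in\{1,\dots,l\}$ has length exactly $\tfrac12\mu(a_i)$, with the rounding dumped into the unconstrained first interval. The paper instead marches \emph{forward}, setting $a_{i+1}=a_i+\tfrac14\mu(a_i)$ as long as $b-a_i>\tfrac34\mu(a_i)$, and uses the $1$-Lipschitz bound $\mu(a_{i+1})\le\tfrac54\mu(a_i)$ to check that when the recursion first stops the remaining piece $[a_i,b]$ still has length $\ge\tfrac14\mu(a_i)$ (for $i=0$ this is the hypothesis $b-a\ge\tfrac14\mu(a)$); no intermediate value argument is needed, so your worry that a forward scheme forces an awkward endgame is unfounded — the slack between $\tfrac14$ and $\tfrac34$ is precisely what absorbs the endpoint $b$. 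The trade-off is this: your argument is more elementary and never uses the Lipschitz hypothesis (continuity and positivity of $\mu$ suffice), but it delivers strictly less than the paper's construction, which satisfies \eqref{eq:lip-partition} for \emph{every} $i\in\{0,\dots,l\}$, first subinterval included, whereas in your construction $[a_0,a_1]$ can be arbitrarily short compared with $\mu(a_0)$ (take $\mu$ constant and $b-a$ slightly larger than $\tfrac12\mu(a)$). That stronger property is what Remark~\ref{rem:subdivision} (``each subinterval'') and the pigeonhole step in the proof of Lemma~\ref{lem:virial-decrease} actually rely on — the subinterval on which $\fv$ fails to decrease could a priori be the first one — so although your proof settles the statement as written, in this paper one should not lean on the exemption of $i=0$.
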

\begin{proof}
We define inductively $a_{i+1} := a_i + \frac 14\mu(a_i)$, as long as $b - a_i > \frac 34 \mu(a_i)$. We need to prove that $b - a_i > \frac 34 \mu(a_i)$
implies $b - a_{i+1} > \frac 14\mu(a_{i+1})$.

Since $\mu$ is $1$-Lipschitz, $\mu(a_{i+1}) = \mu(a_i + \mu(a_i)/4)
\leq \mu(a_i) + \mu(a_i)/4 = \frac 54 \mu(a_i)$, thus
\begin{equation}
b - a_{i+1} = b - a_i - \frac 14 \mu(a_i) > \frac 34 \mu(a_i) - \frac 14\mu(a_i)
> \frac{5}{16}\mu(a_i) \geq \frac 14 \mu(a_{i+1}).
\end{equation}
\end{proof}
\begin{rem}
\label{rem:subdivision}
Note that \eqref{eq:lip-partition} and the fact that $\mu$ is $1$-Lipschitz
imply $\inf_{t \in [a_i, a_{i+1}]}\mu(t) \geq \frac 14\mu(a_i)$
and $\sup_{t \in [a_i, a_{i+1}]}\mu(t) \leq \frac 74\mu(a_i)$, thus
\begin{equation}
\frac 17 \sup_{t \in [a_i, a_{i+1}]}\mu(t) \leq a_{i+1} - a_i \leq 3\inf_{t \in [a_i, a_{i+1}]}\mu(t),
\end{equation}
in other words the length of each subinterval is comparable with
both the smallest and the largest value of $\mu$ on this subinterval.
\end{rem}
\begin{lem}
\label{lem:virial-decrease}
Let $\rho$ be the function given by Lemma~\ref{lem:rho} and set
\begin{equation}
\label{eq:fv-def}
\fv(t) := \int_0^\infty\partial_t u(t) r\partial_r u(t) \chi_{\rho(t)}\,r\vd r.
\end{equation}
\begin{enumerate}[1.]
\item There exists a sequence $\theta_n \to 0$ such that the following is true. If $[\wt a_n, \wt b_n] \subset [a_n, b_n]$ is such that
\begin{equation}
\wt b_n - \wt a_n \geq \frac 14 \mu(\wt a_n)\quad\text{and}\quad
\bfd(t) \geq \theta_n\text{ for all }t \in [\wt a_n, \wt b_n],
\end{equation}
then
\begin{equation}
\label{eq:intermediate}
\fv(\wt b_n) < \fv(\wt a_n).
\end{equation}
\item 
For any $c, \theta > 0$ there exists $\delta > 0$ such that if $n$ is large enough, $[\wt a_n, \wt b_n] \subset [a_n, b_n]$,
\begin{equation}
c \mu(\wt a_n) \leq \wt b_n - \wt a_n \quad\text{and}\quad
\bfd(t) \geq \theta\text{ for all }t \in [\wt a_n, \wt b_n],
\end{equation}
then
\begin{equation}
\label{eq:intermediate-2}
\fv(\wt b_n) - \fv(\wt a_n) \leq  -\delta \sup_{t\in[\wt a_n, \wt b_n]}\mu(t).
\end{equation}
\end{enumerate}
\end{lem}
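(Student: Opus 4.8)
The plan is to integrate the virial identity of Lemma~\ref{lem:vir} over $[\wt a_n, \wt b_n]$ and to show that the non‑positive kinetic term dominates the cut‑off error $\Omega_{\rho(t)}$. Since $\rho$ is locally Lipschitz by Lemma~\ref{lem:rho}, \eqref{eq:vir} integrates to
\[
\fv(\wt b_n) - \fv(\wt a_n) = -\int_{\wt a_n}^{\wt b_n}\!\!\int_0^\infty \big(\partial_t u(t,r)\chi_{\rho(t)}(r)\big)^2 \, r\vd r\, \vd t + \int_{\wt a_n}^{\wt b_n}\Omega_{\rho(t)}(\bs u(t))\,\vd t .
\]
Property (4) of Lemma~\ref{lem:rho} gives $\big|\int_{\wt a_n}^{\wt b_n}\Omega_{\rho(t)}(\bs u(t))\,\vd t\big| \leq (\wt b_n - \wt a_n)\sup_{t\in[a_n,b_n]}|\Omega_{\rho(t)}(\bs u(t))| = o_n(1)(\wt b_n - \wt a_n)$, and since $\chi_{\rho(t)}\equiv 1$ on $[0,\rho(t)]$, the kinetic term is bounded below by $\int_{\wt a_n}^{\wt b_n}\!\int_0^{\rho(t)}(\partial_t u(t,r))^2\,r\vd r\, \vd t$. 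So both parts of the lemma follow once we have a good lower bound on this localized kinetic energy.

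The crux — and the main obstacle — is the following claim: for every $c_0>0$ and $\theta>0$ there exist $\gamma=\gamma(c_0,\theta)>0$ and $n_0=n_0(c_0,\theta)$ such that for all $n\geq n_0$, every interval $[a,b]\subset[a_n,b_n]$ with $b-a\geq c_0\mu(a)$ and $\bfd(t)\geq\theta$ for all $t\in[a,b]$ satisfies $\int_a^b\!\int_0^{\rho(t)}(\partial_t u(t,r))^2\,r\vd r\, \vd t\geq \gamma(b-a)$. I would prove this by contradiction, using the localized sequential bubbling of Section~\ref{sec:compact}. If the claim fails, then after relabeling a subsequence there are intervals $[a_n',b_n']\subset[a_n,b_n]$ with $b_n'-a_n'\geq c_0\mu(a_n')$, $\bfd\geq\theta$ on $[a_n',b_n']$, and $\frac{1}{b_n'-a_n'}\int_{a_n'}^{b_n'}\!\int_0^{\rho(t)}(\partial_t u)^2\,r\vd r\, \vd t\to 0$. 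Since $\mu$ is $1$‑Lipschitz (Lemma~\ref{lem:mu-prop}(i)), Lemma~\ref{lem:subdivision} and Remark~\ref{rem:subdivision} partition $[a_n',b_n']$ into subintervals each of length comparable (up to constants depending on $c_0$) to both $\inf\mu$ and $\sup\mu$ over it (when $b_n'-a_n'<\tfrac14\mu(a_n')$ one keeps the whole interval as a single such piece). By pigeonhole, one of these pieces $[p_n,q_n]$, of length $\sigma_n:=q_n-p_n\simeq_{c_0}\mu(t)$ for $t\in[p_n,q_n]$, still has averaged localized kinetic energy tending to $0$. Rescaling, the maps $\bs u_n(s,r):=(u(p_n+\sigma_n s,\sigma_n r),\,\sigma_n\partial_t u(p_n+\sigma_n s,\sigma_n r))$ are wave maps on $[0,1]$ with uniformly bounded energy (by conservation and scale invariance of $E$); because $\rho(t)/\mu(t)\to\infty$ uniformly on $[a_n,b_n]$ (Lemma~\ref{lem:rho}(2)) and $\sigma_n\simeq\mu$ on $[p_n,q_n]$, one may choose $R_n\to\infty$ with $R_n\sigma_n\leq\rho(t)$ for $t\in[p_n,q_n]$, so that $\int_0^1\!\int_0^{R_n}|\partial_s u_n(s,r)|^2\,r\vd r\, \vd s=\frac{1}{\sigma_n}\int_{p_n}^{q_n}\!\int_0^{R_n\sigma_n}|\partial_t u|^2\,r\vd r\, \vd t\to 0$. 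Lemma~\ref{lem:compact} then yields, along a further subsequence, times $s_n\in[0,1]$ and $r_n\leq R_n$ with $r_n\to\infty$ and $\bs\de_{r_n}(\bs u_n(s_n))\to 0$; by the scale covariance of the localized distance in \eqref{eq:delta-def}, with $t_n:=p_n+\sigma_n s_n\in[a_n',b_n']$ this reads $\bs\de_{r_n\sigma_n}(\bs u(t_n))\to 0$. Since $1\ll r_n$, $r_n\sigma_n/\mu(t_n)\simeq r_n\to\infty$, and $r_n\sigma_n\leq\rho(t_n)\ll\mu_{K+1}(t_n)$, Lemma~\ref{lem:mu-prop}(iii) forces $\bfd(t_n)\to 0$, contradicting $\bfd(t_n)\geq\theta$.

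Granting the claim, the second part of the lemma is immediate: fix $c,\theta>0$, apply the claim with $c_0=c$, and observe that since $\mu$ is $1$‑Lipschitz and $\wt b_n-\wt a_n\geq c\mu(\wt a_n)$ one has $\sup_{[\wt a_n,\wt b_n]}\mu\leq(1+c^{-1})(\wt b_n-\wt a_n)$; hence for $n$ large the kinetic term is $\geq\gamma(c,\theta)(\wt b_n-\wt a_n)\gtrsim_{c,\theta}\sup_{[\wt a_n,\wt b_n]}\mu$ while the error is $o_n(1)\sup_{[\wt a_n,\wt b_n]}\mu$, giving \eqref{eq:intermediate-2} with $\delta:=\tfrac12\gamma(c,\theta)\tfrac{c}{1+c}$. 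For the first part, apply the claim with $c_0=\tfrac14$: for each fixed $\theta>0$ there is $n_*(\theta)$ such that for $n\geq n_*(\theta)$, every $[\wt a_n,\wt b_n]\subset[a_n,b_n]$ with $\wt b_n-\wt a_n\geq\tfrac14\mu(\wt a_n)$ and $\bfd\geq\theta$ on it obeys $\fv(\wt b_n)-\fv(\wt a_n)\leq(-\gamma(\tfrac14,\theta)+o_n(1))(\wt b_n-\wt a_n)<0$. Taking $n_*$ non‑increasing in $\theta$, a diagonal choice — e.g.\ $\theta_n:=\min\{1/j:\ j\in\bN,\ n_*(1/j)\leq n\}$ when this set is nonempty and $\theta_n$ chosen larger than $\sup_{[a_n,b_n]}\bfd$ otherwise (making the hypothesis vacuous) — produces $\theta_n\to 0$ with $n_*(\theta_n)\leq n$ for all large $n$, which gives \eqref{eq:intermediate}. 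Besides the compactness step itself, the points that require care are the bookkeeping of scales in the rescaling argument (retaining $\sigma_n\simeq\mu\ll\rho\ll\mu_{K+1}$ throughout), the pigeonhole over the subdivision of Lemma~\ref{lem:subdivision}, and the uniformity built into the claim that is needed for the diagonal extraction of $\theta_n$ in the first part.
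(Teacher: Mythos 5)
Your proposal is correct and follows essentially the same route as the paper: integrate the virial identity, control the cut-off error via Lemma~\ref{lem:rho}, localize to a subinterval of length comparable to $\mu$ via Lemma~\ref{lem:subdivision}, and derive a contradiction from Lemma~\ref{lem:compact} combined with Lemma~\ref{lem:mu-prop}(iii). The only (harmless) difference is organizational: you package both parts into a single uniform lower bound on the localized kinetic energy and pigeonhole on that quantity, whereas the paper runs two separate contradiction arguments directly on the increments of $\fv$ (keeping, in part 2, the subinterval where $\mu$ attains its supremum).
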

\begin{proof}
By the virial identity, we obtain
\begin{equation}
\label{eq:virial-fin}
\fv'(t)  =
- \int_0^\infty (\partial_t u(t))^2 \chi_{\rho(t)}\,r\vd r + o_n(1).
\end{equation}
We argue by contradiction. If the claim is false, then there exists $\theta >0$
and an infinite sequence $[\wt a_n, \wt b_n] \subset [a_n, b_n]$
(as usual, we pass to a subsequence in $n$ without changing the notation)
such that
\begin{equation}
\wt b_n - \wt a_n \geq \frac 14 \mu(\wt a_n)\quad\text{and}\quad
\bfd(t) \geq \theta\text{ for all }t \in [\wt a_n, \wt b_n],
\end{equation}
and
\begin{equation}
\fv(\wt b_n) - \fv(\wt a_n) \geq 0.
\end{equation}
By Lemma~\ref{lem:subdivision}, there exists a subinterval of $[\wt a_n, \wt b_n]$, which we still denote $[\wt a_n, \wt b_n]$, such that
\begin{equation}
\frac 14 \mu(\wt a_n) \leq \wt b_n - \wt a_n \leq \frac 34 \mu(\wt a_n)\quad\text{and}\quad
\fv(\wt b_n) - \fv(\wt a_n) \geq 0.
\end{equation}
Let $\wt\rho_n := \inf_{t \in [\wt a_n, \wt b_n]}\rho(t)$. From \eqref{eq:virial-fin}, we have
\begin{equation}
\lim_{n\to\infty}\frac{1}{\wt b_n - \wt a_n}\int_{\wt a_n}^{\wt b_n}\int_0^{\frac 12 \wt \rho_n}(\partial_t u(t))^2\,r\vd r = 0.
\end{equation}
By Lemma~\ref{lem:rho}, $\inf_{t\in[\wt a_n, \wt b_n]}\mu_{K+1}(t) \gg \wt\rho_n \gg \inf_{t\in[\wt a_n, \wt b_n]}\mu(t)
\simeq \sup_{t\in[\wt a_n, \wt b_n]}\mu(t)$, so Lemma~\ref{lem:compact} yields sequences $t_n \in [\wt a_n, \wt b_n]$ and $1 \ll r_n \ll \mu_{K+1}(t_n)/\mu(t_n)$ such that
\begin{equation}
\lim_{n\to\infty}\bs\de_{r_n\mu(t_n)}(\bs u(t_n)) = 0,
\end{equation}
which is impossible by Lemma~\ref{lem:mu-prop} (iii). The first part of the lemma is proved.

In the second part, we can assume without loss of generality $\wt b_n - \wt a_n \leq \frac 34 \mu(\wt a_n)$.
Indeed, in the opposite case, we apply Lemma~\ref{lem:subdivision}
and keep only one of the subintervals where $\mu$ attains its supremum, and on the remaining subintervals we use
\eqref{eq:intermediate}.

After this preliminary reduction, we argue again by contradiction.
If the claim is false, then there exist $c, \theta >0$, a sequence $\delta_n \to 0$
and a sequence $[\wt a_n, \wt b_n] \subset [a_n, b_n]$ (after extraction of a subsequence) such that
\begin{equation}
c\mu(\wt a_n) \leq \wt b_n - \wt a_n \leq \frac 34 \mu(\wt a_n)\quad\text{and}\quad
\bfd(t) \geq \theta\text{ for all }t \in [\wt a_n, \wt b_n],
\end{equation}
and
\begin{equation}
\fv(\wt b_n) - \fv(\wt a_n) \geq -\delta_n\mu(\wt a_n)
\end{equation}
(we use the fact that $\mu(\wt a_n)$ is comparable to $\sup_{t\in[\wt a_n, \wt b_n]}\mu(t)$, see Remark~\ref{rem:subdivision}).

Let $\wt\rho_n := \inf_{t \in [\wt a_n, \wt b_n]}\rho(t)$. From \eqref{eq:virial-fin}, we have
\begin{equation}
\lim_{n\to\infty}\frac{1}{\wt b_n - \wt a_n}\int_{\wt a_n}^{\wt b_n}\int_0^{\frac 12 \wt \rho_n}(\partial_t u(t))^2\,r\vd r = 0.
\end{equation}
We now conclude as in the first part.
\end{proof}

\begin{proof}[Proof of Theorem~\ref{thm:main}]
Let $\theta_n$ be the sequence given by Lemma~\ref{lem:virial-decrease}, part 1.
We partition $[a_n, b_n]$ applying Lemma~\ref{lem:cedf} for this sequence $\theta_n$.
Note that this partition is different than the one used in the proof of Lemma~\ref{lem:rho}.
We claim that
for all $m \in \{0, 1, \ldots, N_n-1\}$
\begin{align}
\label{eq:intermediate-fin-1}
\fv(c_{n, m}^R) - \fv(e_{n, m}^R) &\leq o_n(1)\mu(c_{n, m}^R), \\
\label{eq:intermediate-fin-2}
\fv(f_{n, m+1}^L) - \fv(f_{n, m}^R) &\leq o_n(1)\mu(f_{n, m}^R), \\
\label{eq:intermediate-fin-3}
\fv(e_{n, m+1}^L) - \fv(c_{n, m+1}^L) &\leq o_n(1)\mu(c_{n, m+1}^L).
\end{align}
Here, $o_n(1)$ denotes a sequence of positive numbers converging to $0$ when $n\to\infty$.
In order to prove the first inequality, we observe that if $c_{n, m}^R - e_{n, m}^R \geq \frac 14 \mu(e_{n, m}^R)$,
then \eqref{eq:intermediate} applies and yields $\fv(c_{n, m}^R) - \fv(e_{n, m}^R)  < 0$.
We can thus assume $c_{n, m}^R - e_{n, m}^R \leq \frac 14 \mu(e_{n, m}^R) \leq \frac 12 \mu(c_{n, m}^R)$,
where the last inequality follows from Lemma~\ref{lem:cedf}, property \ref{it:7}.
But then \eqref{eq:virial-fin} again implies the required bound.
The proofs of the second and third bound are analogous.

We now analyse the compactness intervals $[c_{n, j}^R, f_{n, j}^R]$ and $[f_{n, j+1}^L, c_{n, j+1}^L]$.
We claim that there exists $\delta > 0$ such that for all $n$ large enough and $m \in \{0, 1, \ldots, N_n\}$
\begin{equation}
\label{eq:compactness-fin}
\fv(c_{n, m+1}^L) - \fv(c_{n, m}^R) \leq -\delta \max(\mu(c_{n, m}^R), \mu(c_{n, m+1}^L)).
\end{equation}
We consider separately the two cases mentioned in Lemma~\ref{lem:cedf}, property \ref{it:6}.
If $\bfd(t) \geq \epsilon$ for all $t \in [c_{n, m}^R, c_{n, m+1}^L]$, then Lemma~\ref{lem:cd-length}
yields $c_{n, m+1}^L - c_{n,m}^R \geq C_{\bs u}^{-1}\mu(c_{n,m}^R)$, so we can apply \eqref{eq:intermediate-2}, which proves \eqref{eq:compactness-fin}.
If $\bfd(f_{n,m}^R) = \epsilon$, then we apply the same argument on the time interval $[c_{n,m}^R, f_{n,m}^R]$ and obtain
\begin{equation}
\label{eq:compactness-fin-4}
\fv(f_{n, m}^R) - \fv(c_{n, m}^R) \leq -\delta \max(\mu(c_{n, m}^R), \mu(f_{n, m}^R)),
\end{equation}
and similarly
\begin{equation}
\label{eq:compactness-fin-5}
\fv(c_{n, m+1}^L) - \fv(f_{n, m+1}^L) \leq -\delta \max(\mu(c_{n, m+1}^L), \mu(f_{n,m+1}^L)).
\end{equation}
The bound \eqref{eq:intermediate-fin-2} yields \eqref{eq:compactness-fin}.

Finally, on the intervals $[e_{n, m}^L, e_{n,m}^R]$, for $n$ large enough Lemma~\ref{lem:rho} yields $|\rho'(t)| \leq 1$
for almost all $t$, and Lemma~\ref{lem:virial-error} implies $|\fv'(t)| \lesssim \bfd(t)$.
By Lemma~\ref{lem:cedf}, properties \ref{it:1} and \ref{it:7}, we obtain
\begin{equation}
\label{eq:modulation-fin}
\begin{aligned}
\fv(e_{n,m}^R) - \fv(e_{n,m}^L) &\leq o_n(1)\mu(c_{n,m}^R),\qquad\text{for all }m \in \{0, 1, \ldots, N_n-1\}, \\
\fv(e_{n,m}^R) - \fv(e_{n,m}^L) &\leq o_n(1)\mu(c_{n,m}^L),\qquad\text{for all }m \in \{1, \ldots, N_n-1, N_n\}.
\end{aligned}
\end{equation}

Taking the sum in $m$ of \eqref{eq:intermediate-fin-1}, \eqref{eq:intermediate-fin-3}, \eqref{eq:compactness-fin} and
\eqref{eq:modulation-fin}, we deduce that there exists $\delta > 0$
and $n$ arbitrarily large such that
\begin{equation}
\fv(b_n) - \fv(a_n) \leq -\delta\max(\mu(c_{n,0}^R), \mu(c_{n,N_n}^L)).
\end{equation}
But $\mu(a_n) \simeq \mu(c_{n,0}^R)$ and $\mu(b_n) \simeq \mu(c_{n,N_n}^L)$, hence
\begin{equation}
\fv(b_n) - \fv(a_n) \leq -\wt\delta\max(\mu(a_n), \mu(b_n)).
\end{equation}
Lemma~\ref{lem:rho} (1) and \eqref{eq:fv-def} yield
\begin{equation}
|\fv(a_n)| \ll \mu(a_n), \qquad |\fv(b_n)| \ll \mu(b_n),
\end{equation}
a contradiction which finishes the proof.
\end{proof}

\subsection{Absence of elastic collisions}
\label{ssec:inelastic}
This section is devoted to proving Proposition~\ref{prop:inelastic}
Our proof closely follows Step 3 in our proof of \cite[Theorem 1.6]{JL1}.
\begin{proof}[Proof of Proposition~\ref{prop:inelastic}]
Suppose that a solution of \eqref{eq:wmk}, $\bs u$,  defined on its maximal time of existence
$t \in (T_-, T_+)$, is a pure multi-bubble in both time directions in the sense of Definition~\ref{def:pure}, in other words
\begin{equation}
\lim_{t \to T_+}\bfd(t) = 0, \qquad\text{and}\qquad \lim_{t \to T_-}\bfd(t) = 0, 
\end{equation}
and the radiation $\bs u^* = \bs u^*_L$ or $\bs u^* = \bs u^*_0$ in both time directions satisfies $\bs u^* \equiv 0$. 
In this proof, all the $N$ bubbles can be thought of as ``interior'' bubbles
thus, whenever we invoke the results from the preceding sections, it should always
be understood that $K = N$.
Applying Lemma~\ref{lem:mod-static} with $\theta = 0$ and $M = N$,
we obtain from \eqref{eq:g-bound-A} and \eqref{eq:g-bound-0} that
\begin{equation}
\label{eq:pure-g-bound}
\bfd(t) \leq C \max_{j \in \cA}\Big(\frac{\lambda_j}{\lambda_{j+1}}\Big)^k.
\end{equation}
Let $\eta > 0$ be a small number to be chosen later and $t_+$ be such that
$\bfd(t) \leq \eta$ for all $t \geq t_+$.
If $\eta$ is sufficiently small, then the modulation parameters are well-defined for $t \geq t_+$, so we can set
\begin{equation}
U(t) := \max_{i \in \cA}\big(2^{-i}\xi_i(t) / \lambda_{i+1}(t)\big)^k,\qquad \text{for all }t \geq t_+,
\end{equation}
cf. Definition~\ref{def:w-int}. Since $U$ is a positive continuous function and
$\lim_{t \to T_+}U(t) = 0$, there exists an increasing sequence $t_n \to T_+$
such that $t_n$ is a local minimum from the left of $U$. Thus, Lemma~\ref{lem:ejection} yields
\begin{equation}
\int_{t_+}^{t_n}\bfd(t)\ud t \leq C_0 \bfd(t_+)^\frac 2k \lambda_N(t_+),
\end{equation}
and passing to the limit $n \to +\infty$ we get
\begin{equation}
\label{eq:integral-of-d}
\int_{t_+}^{T_+}\bfd(t)\ud t \leq C_0 \bfd(t_+)^\frac 2k \lambda_N(t_+).
\end{equation}
By inspecting the proof of Lemma~\ref{lem:mod-1}, one finds that in the present case it holds
with $\zeta_n = 0$, in particular we have $|\lambda_N'(t)| \lesssim \bfd(t)$.
This bound, together with \eqref{eq:integral-of-d}, implies that $\lim_{t \to T_+}\lambda_N(t)$
is a finite positive number, thus $T_+ = +\infty$.

Analogously, $T_- = -\infty$ and $\lim_{t \to -\infty}\lambda_N(t) \in (0, +\infty)$ exists.

The remaining part of the argument is exactly the same as in \cite{JL1}, but we reproduce it
here for the reader's convenience.

Let $\delta > 0$ be arbitrary. Inspecting the proof of Lemma~\ref{lem:virial-error},
we see that in the present case it holds with $\delta_n = 0$, thus for any $R > 0$ we have
$|\Omega_R(\bs u(t))| \leq C_0\bfd(t)$.
From this bound and the estimates above, we obtain existence of $T_1, T_2 \in \bR$ such that
\begin{align}
\int_{-\infty}^{T_1}|\Omega_R(\bs u(t))|\ud t &\leq \frac 13 \delta, \\
\int_{T_2}^{+\infty}|\Omega_R(\bs u(t))|\ud t &\leq \frac 13 \delta
\end{align}
for any $R > 0$. On the other hand, because of the bound $|\Om_R(\bs u(t))| \le C_0 E(\bs u(t); R, 2R)$ and since $[T_1, T_2]$ is a finite time interval, for all $R$ sufficiently large we have
\begin{equation}
\int_{T_1}^{T_2}|\Omega_R(\bs u(t))|\ud t \leq \frac 13 \delta,
\end{equation}
in other words
\begin{equation}
\int_{\bR}|\Omega_R(\bs u(t))|\ud t \leq \delta.
\end{equation}
Integrating the virial identity from Lemma~\ref{lem:vir} with $\rho(t) = R$ over the real line,
we obtain
\begin{equation}
\int_{-\infty}^{+\infty}\int_0^\infty (\partial_t u(t, r)\chi_R(r))^2\,r\vd r\ud t \leq \delta.
\end{equation}
By letting $R \to +\infty$, we get
\begin{equation}
\int_{-\infty}^{+\infty}\int_0^\infty (\partial_t u(t, r))^2\,r\vd r\ud t \leq \delta,
\end{equation}
which implies the $\bs u$ is stationary since $\delta$ is arbitrary.
\end{proof}
\appendix 

\section{Modifications to the argument in the case $k=1$} 
In this section we outline the changes to the arguments in Section~\ref{sec:decomposition} and  Section~\ref{sec:conclusion} needed to prove Theorem~\ref{thm:main} for the equivariance class $k =1$. 

\subsection{Modulation and refined modulation}
The set-up in Sections~\ref{ssec:proximity} holds without modification for $k=1$. To be precise the number $K \ge1$ is defined as in Lemma~\ref{lem:K-exist}, the collision intervals $[a_n, b_n] \in \calC_K(\eta, \eps_n)$ are as in Definition~\ref{def:K-choice}, and  the sequences of signs $\vec \s_n \in \{-1, 1\}^{N-K}$,  scales $\vec \mu(t) \in (0, \infty)^{N-K}$, and integers $m_n \in \Z$ associated to  the exterior bubbles, and the sequence $\nu_n \to 0$ and  the function $\nu(t) = \nu_n \mu_{K+1}(t)$ are as in Lemma~\ref{lem:ext-sign}. 

 Lemma~\ref{lem:mod-1} also holds without modification. Let $J \subset [a_n, b_n]$ be any time interval on which $\bfd(t) \le \eta_0$, where $\eta_0$ is as in  Lemma~\ref{lem:mod-1}. Let $\vec \iota \in \{ -1, 1\}^K, \vec \lam(t) \in (0, \infty)^K$, and $\bs g(t) \in \E$ be as in the statement of Lemma~\ref{lem:mod-1}. Let $L>0$ be a parameter to be fixed below and define for each $j \in \{1, \dots, K-1\}$, 
\EQ{\label{eq:xi-def-k1} 
\xi_j(t) &:=  \lam_j(t)  - \frac{ \iota_j}{2\log(\frac{\lam_{j+1}(t)}{\lam_j(t)}) }\La \chi_{L\sqrt{\lam_j(t) \lam_{j+1}(t)}} \Lam Q_{\U{\lam_j(t)}} \mid g(t) + \sum_{i<j} \iota_i( Q_{\lam_i(t)} - \pi) \Ra,  
}
and, 
\begin{equation} \label{eq:beta-def-k1} 
\beta_j(t) :=- \iota_j\La \chi_{L\sqrt{\xi_j(t) \lam_{j+1}(t)}} \Lam Q_{\U{\lam_j(t)}} \mid \dot g(t)\Ra  -   \ang{ \uln A( \lam_j(t)) g(t) \mid \dot g(t)}.
\end{equation}

\begin{prop}[Refined modulation, $k=1$]
\label{prop:modul-k1} 
 Let $c_0  \in (0, 1)$ and $c_1>0$. There exists  constants $L_0 = L_0(c_0, c_1)>0$, $\eta_0 = \eta_0(c_0, c_1)$, as well as  $c= c(c_0, c_1)$ and $R = R(c_0, c_1)>1$ as in Lemma~\ref{lem:q},  a constant $C_0>0$, and a 
decreasing sequence $\epsilon_n \to 0$ so that the following is true. 

Suppose $L > L_0$ and  $J \subset [a_n, b_n]$ is an open time interval with $\epsilon_n \leq \bfd(t) \le \eta_0$
for all $t \in J$, where $\calA :=  \{ j \in \{1, \dots, K-1\} \mid \iota_{j}  \neq \iota_{j+1} \}$.  
Then, for all $t \in J$, 
\EQ{\label{eq:g-bound-k1} 
\| \bs g(t) \|_{\E} + \sum_{i \not \in \calA}  (\lambda_i(t) / \lambda_{i+1}(t))^\frac 12 \le  \max_{ i  \in \calA}  (\lambda_i(t) / \lambda_{i+1}(t))^\frac 12, \\
}
and, 
\EQ{ \label{eq:d-bound-k1} 
\frac{1}{C_0} \bfd(t) \le \max_{i \in \calA} (\lambda_i(t) / \lambda_{i+1}(t))^\frac 12  \le C_0 \bfd(t) ,
}
\begin{equation}\label{eq:xi_j-lambda_j-k1}
\Big|\frac{\xi_j(t)}{\lambda_{j+1}(t)} - \frac{\lambda_j(t)}{\lambda_{j+1}(t)}\Big| \leq c_0\bfd(t)^2.
\end{equation}
Moreover, let $j \in \calA$ be such that for all $t \in J$
\EQ{ \label{eq:max-ratio-ass}
c_1 \bfd(t) \le \Big(\frac{\lam_j(t)}{\lam_{j+1}(t)}\Big)^{\frac{1}{2}}.
}
Then for all $t \in J$, 
\EQ{ \label{eq:xi_j'-k1} 
\abs{ \xi'_j(t) }\Big(\log\Big(\frac{\lam_{j+1}(t)}{\lam_j(t)}\Big)\Big)^{\frac{1}{2}} \le C_0 \max_{i  \in \calA}\sqrt{ \frac{\lam_{i}(t)}{\lam_{i+1}(t)} } , 
}

\begin{equation}\label{eq:xi_j'-beta_j-k1} 
\Big|\xi_j'(t) 2\log(\frac{\lam_{j+1}(t)}{\lam_j(t)})- \beta_j(t)\Big| \leq C_0\max_{i  \in \calA}\sqrt{ \frac{\lam_{i}(t)}{\lam_{i+1}(t)} } 
\end{equation}
and,  
\EQ{ \label{eq:beta_j'-k1} 
 \beta_{j}'(t) &\ge  \Big({-}\iota_j \iota_{j+1}8 -   c_0\Big) \frac{1}{\lam_{j+1}(t)} + \Big(\iota_j \iota_{j-1}8 -  c_0\Big) \frac{\lam_{j-1}(t)}{\lam_{j}(t)^2}  \\
&\quad  -  \frac{c_0}{\lam_j(t)} \max_{i \in \calA}\frac{\lam_{i}(t)}{\lam_{i+1}(t)} .
}
where, by convention, $\lambda_0(t) = 0, \lam_{K+1}(t) = \infty$ for all $t \in J$.  
\end{prop}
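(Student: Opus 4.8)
\textbf{Proof strategy for Proposition~\ref{prop:modul-k1}.} The plan is to mirror the proof of Lemma~\ref{cor:modul}, tracking the logarithmic corrections which are forced by the fact that when $k=1$ the function $\Lam Q$ fails to lie in $L^2$ (see~\eqref{eq:LamQL2}). First I would record the equation for $\bs g(t)$ exactly as in~\eqref{eq:g-eq}, which is valid for all $k\ge 1$, and the identity~\eqref{eq:dt-g-1} for $\p_t g$. The estimates~\eqref{eq:g-bound-k1} and~\eqref{eq:d-bound-k1} follow immediately from~\eqref{eq:g-upper} and~\eqref{eq:d-g-lam} of Lemma~\ref{lem:mod-1} (which holds for $k=1$), after choosing $\de_n$, $\eps_n$ as in the proof of Lemma~\ref{cor:modul}; there is nothing $k$-specific here beyond replacing $(\lam_i/\lam_{i+1})^{k/2}$ by $(\lam_i/\lam_{i+1})^{1/2}$. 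The estimate~\eqref{eq:xi_j-lambda_j-k1} is a direct computation from the definition~\eqref{eq:xi-def-k1}: the pairing $\La\chi_{L\sqrt{\lam_j\lam_{j+1}}}\Lam Q_{\U{\lam_j}}\mid g+\sum_{i<j}\iota_i(Q_{\lam_i}-\pi)\Ra$ is bounded, using $\|g\|_{L^\infty}\lesssim\|g\|_H$ and~\eqref{eq:LamQL2}, by a constant times $\log(\sqrt{\lam_{j+1}/\lam_j})\cdot(\|g\|_H + (\lam_{j-1}/\lam_j)^{1/2})$; dividing by the normalizing factor $2\log(\lam_{j+1}/\lam_j)$ and invoking~\eqref{eq:g-bound-k1}--\eqref{eq:d-bound-k1} gives the $c_0\bfd(t)^2$ bound. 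The brutal derivative bound~\eqref{eq:xi_j'-k1} comes from differentiating~\eqref{eq:xi-def-k1}, using~\eqref{eq:lam'} to control $\lam_j'$ and $\dot g$ via $\|\dot g\|_{L^2}$, the key point being that the normalization by $(\log(\lam_{j+1}/\lam_j))^{1/2}$ exactly compensates the $\log$-divergent pairing $\La\chi_{L\sqrt{\lam_j\lam_{j+1}}}\Lam Q_{\U{\lam_j}}\mid\dot g\Ra$, whose $L^2$-type norm grows like $(\log(\lam_{j+1}/\lam_j))^{1/2}$.

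Next I would establish~\eqref{eq:xi_j'-beta_j-k1}. The computation is the $k=1$ analogue of~\eqref{eq:xi'-exp}--\eqref{eq:xi'-est}. One differentiates $\xi_j$, substitutes~\eqref{eq:dt-g-1} for $\p_t g$, and isolates the main term $-\iota_j\La\chi_{L\sqrt{\xi_j\lam_{j+1}}}\Lam Q_{\U{\lam_j}}\mid\dot g\Ra$, which up to a controllable $o_L(1)$-error is what appears in $\beta_j$; the difference between the cutoff scales $L\sqrt{\lam_j\lam_{j+1}}$ and $L\sqrt{\xi_j\lam_{j+1}}$ is absorbed using~\eqref{eq:xi_j-lambda_j-k1}. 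The terms involving $\sum_i\iota_i\lam_i'\Lam Q_{\U{\lam_i}}$ produce $-2\log(\lam_{j+1}/\lam_j)\lam_j'$ at main order (the self-interaction, log-divergent because $\Lam Q\notin L^2$), which is why~\eqref{eq:xi_j'-beta_j-k1} carries the factor $2\log(\lam_{j+1}/\lam_j)$ on $\xi_j'$; the cross terms $i\ne j$ contribute $O((\lam_{j-1}/\lam_j)+(\lam_j/\lam_{j+1}))$ times the max ratio via Corollary~\ref{cor:ZQ}, and the $\phi(u,\nu)$ term vanishes since $L\sqrt{\lam_j\lam_{j+1}}\ll\nu$ for $j<K$. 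The terms with $\Lam\chi$ and $\ULam\Lam Q$ acting on $g$ are $o_L(1)\|\bs g\|_\E$, small after fixing $L$ large. The assumption~\eqref{eq:max-ratio-ass} is used precisely to convert $\|\bs g\|_\E^2\lesssim\bfd(t)^2$ into something dominated by $\max_{i\in\calA}(\lam_i/\lam_{i+1})^{1/2}$ when multiplied against the various prefactors.

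The heart of the proof, and the main obstacle, is the lower bound~\eqref{eq:beta_j'-k1} for $\beta_j'$. I would expand $\beta_j'$ following the pattern of~\eqref{eq:beta'-1}--\eqref{eq:beta'-exp}, using~\eqref{eq:g-eq}, \eqref{eq:g-eq-alt}, and the identity $\LL_{\lam_j}\Lam Q_{\U{\lam_j}}=0$. The leading term $-\iota_j\La\chi_{L\sqrt{\xi_j\lam_{j+1}}}\Lam Q_{\lam_j}\mid f_{\bfi}\Ra$ is handled by Lemma~\ref{lem:interaction} with $k=1$, giving $-\iota_j\iota_{j+1}8/\lam_{j+1} + \iota_j\iota_{j-1}8\lam_{j-1}/\lam_j^2$ up to $O(\eta_0)$ errors (here $8k^2=8$); the presence of the cutoff $\chi_{L\sqrt{\xi_j\lam_{j+1}}}$ rather than a sharp one requires checking that the tail $(1-\chi)$ contributes negligibly, which holds because $\Lam Q(r)\sim 2r^{-1}$ decays and the interaction force $f_{\bfi}$ is concentrated near $r\sim\lam_{j\pm1}$. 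The positive-definite contribution $\La\U A(\lam_j)g\mid\LL_0 g\Ra$ plus the $\ti f_{\bfq}$ localization term is bounded below using the Pohozaev-type inequality~\eqref{eq:pohozaev} and the localized coercivity~\eqref{eq:coercive}, exactly as in Lemma~\ref{cor:modul}; this step is $k$-independent once one has Lemma~\ref{lem:A}, which the excerpt states holds for $k\ge 1$ (citing~\cite{R19} for the $k=1$ modifications). The remaining error terms---$\La\Lam Q_{\U{\lam_j}}\mid(\LL_\calQ-\LL_{\lam_j})g\Ra$, the $\U A(\lam_j)$-applied interaction and quadratic terms, the $\lam_i'$-weighted pairings for $i\ne j$, and the outer errors involving $\dot\phi(u,\nu)$---are all estimated by the bounds~\eqref{eq:A-mismatch}, \eqref{eq:A-mismatch-2}, \eqref{eq:L-A}, \eqref{eq:L0-A0}, \eqref{eq:A-by-parts}, \eqref{eq:fq-est}, together with~\eqref{eq:lam'} and Lemma~\ref{lem:ext-sign}; the novelty for $k=1$ is that in several of these the prefactor $(\lam_i/\lam_{i+1})^{k-1}$ becomes $(\lam_i/\lam_{i+1})^0=1$, so one must be more careful to extract an actual gain, which is where~\eqref{eq:max-ratio-ass} and the enlargement of $\eps_n$ (Remark-style, as in Lemma~\ref{cor:modul}) are essential to keep everything below $c_0\lam_j^{-1}\max_{i\in\calA}(\lam_i/\lam_{i+1})$. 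Finally I would enlarge $\de_n$ so that the outer error $\theta_n$ from~\eqref{eq:outer-error} is dominated by $c_0\de_n$, and conclude. The expected difficulty is bookkeeping: matching every cutoff scale to the right bubble and confirming that the $\log$ factors cancel consistently between the definitions of $\xi_j$, $\beta_j$, and the normalization in~\eqref{eq:xi_j'-beta_j-k1}, rather than any genuinely new analytic input.
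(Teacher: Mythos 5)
Your overall route is the same as the paper's (mirror the $k\ge 2$ refined-modulation computation, tracking the logarithmic corrections caused by $\Lam Q\notin L^2$), and your treatment of \eqref{eq:g-bound-k1}, \eqref{eq:d-bound-k1}, \eqref{eq:xi_j'-beta_j-k1} and the coercive part of \eqref{eq:beta_j'-k1} matches the actual proof. But two of your steps, as written, would fail. First, your estimate for \eqref{eq:xi_j-lambda_j-k1} does not produce the required $c_0\bfd(t)^2$: you bound the pairing $\La \chi_{L\sqrt{\lam_j\lam_{j+1}}}\Lam Q_{\U{\lam_j}}\mid g+\sum_{i<j}\iota_i(Q_{\lam_i}-\pi)\Ra$ by a constant times $\log$ times $(\|g\|_H+(\lam_{j-1}/\lam_j)^{1/2})$, and after dividing by $2\log(\lam_{j+1}/\lam_j)$ this is only $O(\bfd)$. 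The logarithm in \eqref{eq:LamQL2} concerns the $L^2$ norm of the truncated $\Lam Q$, not this weighted pairing: by Cauchy--Schwarz against $\|g/r\|_{L^2}\lesssim\|g\|_H$ (or via $\|g\|_{L^\infty}$), the pairing with $g$ is $\lesssim_L \|g\|_H\,(\lam_j\lam_{j+1})^{1/2}$, and it is the resulting factor $\|g\|_H(\lam_j/\lam_{j+1})^{1/2}\lesssim\bfd^2$ (after dividing by $\lam_{j+1}$), not any log cancellation, that gives \eqref{eq:xi_j-lambda_j-k1}. The quadratic smallness here is not cosmetic: it is what later makes $U(t)\simeq\bfd(t)^2$ in the $k=1$ ejection argument.

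Second, \eqref{eq:xi_j'-k1} is not a ``brutal'' bound obtainable from \eqref{eq:lam'}: it is strictly stronger than $|\lam_j'|\lesssim\max_i(\lam_i/\lam_{i+1})^{1/2}$ by the factor $(\log(\lam_{j+1}/\lam_j))^{-1/2}$, and it requires the cancellation of the explicit $\lam_j'$ in $\xi_j'$ against the self-pairing $\frac{1}{2\log(\lam_{j+1}/\lam_j)}\La\chi_{L\sqrt{\lam_j\lam_{j+1}}}\Lam Q_{\U{\lam_j}}\mid \lam_j'\Lam Q_{\U{\lam_j}}\Ra\approx\lam_j'$, which is precisely why the correction in \eqref{eq:xi-def-k1} is normalized by $2\log(\lam_{j+1}/\lam_j)$; you invoke this cancellation only for \eqref{eq:xi_j'-beta_j-k1}. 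Relatedly, your expansion of $\beta_j'$ ``using the identity $\LL_{\lam_j}\Lam Q_{\U{\lam_j}}=0$'' overlooks that for $k=1$ the pairing in \eqref{eq:beta-def-k1} is against the \emph{truncated} $\chi_{L\sqrt{\xi_j\lam_{j+1}}}\Lam Q_{\U{\lam_j}}$, with a time-dependent truncation radius. This generates the two genuinely new terms of the $k=1$ case: the commutator $\ang{\LL_{\lam_j}(\chi_{L\sqrt{\xi_j\lam_{j+1}}}\Lam Q_{\U{\lam_j}})\mid g}$, of size $L^{-1}\lam_j^{-1}\max_i(\lam_i/\lam_{i+1})$ (this is where $L>L_0$ is actually needed, not merely for $o_L(1)$ errors in the $\beta_j$ comparison), and the moving-cutoff term $\frac{\iota_j}{2}\big(\frac{\xi_j'}{\xi_j}+\frac{\lam_{j+1}'}{\lam_{j+1}}\big)\La\Lam\chi_{L\sqrt{\xi_j\lam_{j+1}}}\Lam Q_{\U{\lam_j}}\mid\dot g\Ra$, which with only $|\xi_j'|\lesssim\bfd$ is of size $O(1)\lam_j^{-1}\max_i(\lam_i/\lam_{i+1})$ rather than $c_0\lam_j^{-1}\max_i(\lam_i/\lam_{i+1})$; the improved estimate \eqref{eq:xi_j'-k1} exists exactly to render this term acceptable. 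Without identifying these terms and the chain \eqref{eq:xi_j'-k1} $\Rightarrow$ control of the moving cutoff, the lower bound \eqref{eq:beta_j'-k1} is not reached by the argument you describe.
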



\begin{proof} 
The estimates~\eqref{eq:g-bound-k1} and~\eqref{eq:d-bound-k1} follow as in the proofs of the corresponding estimates in Lemma~\ref{cor:modul}. We next prove~\eqref{eq:xi_j-lambda_j-k1}. From the definition of $\xi_j(t)$, 
\EQ{
\bigg|\frac{\xi_j}{\lambda_{j+1}} - \frac{\lam_j}{\lam_{j+1}}\bigg| &\lesssim \Big|\frac{ 1}{\log(\frac{\lam_{j+1}}{\lam_j}) } \lam_{j+1}^{-1} \La \chi_{L\sqrt{\lam_j \lam_{j+1}}} \Lam Q_{\U{\lam_j}} \mid g \Ra \Big| \\
&\quad + \Big| \frac{ 1}{\log(\frac{\lam_{j+1}}{\lam_j}) }\lam_{j+1}^{-1}\La \chi_{L\sqrt{\lam_j \lam_{j+1}}} \Lam Q_{\U{\lam_j}} \mid \sum_{i<j} ( Q_{\lam_i} - \pi) \Ra \Big|
}
For the first term on the right we have, 
\EQ{
\Big|\frac{ 1}{\log(\frac{\lam_{j+1}}{\lam_j}) } \lam_{j+1}^{-1} \La \chi_{L\sqrt{\lam_j \lam_{j+1}}} \Lam Q_{\U{\lam_j}} \mid g \Ra \Big| 
& \lesssim_L \frac{ 1}{\log(\frac{\lam_{j+1}}{\lam_j}) } \|g \|_{H} (\lam_{j}/ \lam_{j+1})^{\frac{1}{2}}  
}
 Next, for any $i <j$ we have, 
\EQ{
\lam_{j+1}^{-1}|\La \chi_{L\sqrt{\lam_j \lam_{j+1}}} \Lam Q_{\U{\lam_j}} \mid ( Q_{\lam_i} - \pi) \Ra | &\lesssim_L   \frac{\lam_j}{\lam_{j+1}}  \int_0^{L (\lam_{j+1}/ \lam_j)^{\frac{1}{2}}} \Lam Q(r) \abs{ Q_{\lam_i/ \lam_j}(r) - \pi } \, r \, \ud r\\
&\lesssim_L  \frac{\lam_j}{\lam_{j+1}}\frac{\lam_i}{\lam_j} \log( \lam_{j+1}/ \lam_j)
}
and hence, 
\EQ{
 \Big| \frac{ 1}{\log(\frac{\lam_{j+1}}{\lam_j}) }\lam_{j+1}^{-1}\La \chi_{L\sqrt{\lam_j \lam_{j+1}}} \Lam Q_{\U{\lam_j}} \mid \sum_{i<j} ( Q_{\lam_i} - \pi) \Ra \Big|  \lesssim_L \frac{\lam_j}{\lam_{j+1}} \sum_{i<j} \frac{\lam_{i}}{\lam_j}
}
and ~\eqref{eq:xi_j-lambda_j-k1} follows. 

Next using~\eqref{eq:g-bound-k1} and~\eqref{eq:lam'} for each $j$, we have
\EQ{ \label{eq:lam'-k1} 
\abs{ \lam_j'}  \lesssim \max_{ i  \in \calA}  (\lambda_i(t) / \lambda_{i+1}(t))^\frac 12. 
}
We show that in fact $\xi_j'$ satisfies the improved estimate~\eqref{eq:xi_j'-k1}. We compute, 
\EQ{ \label{eq:xi'-exp-k1} 
\xi_j' &=  \lam_j'  - \frac{ \iota_j}{2(\log(\frac{\lam_{j+1}}{\lam_j}))^2 }( \frac{\lam_{j}'}{\lam_j} - \frac{\lam_{j+1}'}{\lam_{j+1}})\La \chi_{L\sqrt{\lam_j \lam_{j+1}}} \Lam Q_{\U{\lam_j}} \mid g + \sum_{i<j} \iota_i( Q_{\lam_i} - \pi)\Ra\\
& \quad + \frac{ \iota_j}{4\log(\frac{\lam_{j+1}}{\lam_j}) }\big( \frac{\lam_j'}{\lam_j} + \frac{\lam_{j+1}'}{\lam_{j+1}}\big)\La \Lam \chi_{L\sqrt{\lam_j \lam_{j+1}}} \Lam Q_{\U{\lam_j}} \mid g + \sum_{i<j} \iota_i( Q_{\lam_i} - \pi)\Ra \\
&\quad +  \frac{ \iota_j}{2\log(\frac{\lam_{j+1}}{\lam_j}) } \frac{\lam_j'}{\lam_j} \La \chi_{L\sqrt{\lam_j \lam_{j+1}}} \ULam \Lam Q_{\U{\lam_j}} \mid g + \sum_{i<j} \iota_i( Q_{\lam_i} - \pi) \Ra \\
&\quad  -  \frac{ \iota_j}{2\log(\frac{\lam_{j+1}}{\lam_j}) }\La \chi_{L\sqrt{\lam_j \lam_{j+1}}} \Lam Q_{\U{\lam_j}} \mid \p_t g \Ra\\
&\quad +  \sum_{i<j} \frac{ \iota_i \iota_j}{2\log(\frac{\lam_{j+1}}{\lam_j}) } \lam_i' \La \chi_{L\sqrt{\lam_j \lam_{j+1}}} \Lam Q_{\U{\lam_j}} \mid \Lam Q_{\U{\lam_i}} \Ra
}
The second, third, and fourth terms on the right above contribute acceptable errors. Indeed, 
\EQ{
\Big|  \frac{ \iota_j}{2(\log(\frac{\lam_{j+1}}{\lam_j}))^2 }( \frac{\lam_{j}'}{\lam_j} - \frac{\lam_{j+1}'}{\lam_{j+1}})\La \chi_{L\sqrt{\lam_j \lam_{j+1}}} \Lam Q_{\U{\lam_j}} \mid g + \sum_{i<j} \iota_i( Q_{\lam_i} - \pi)\Ra \Big| &\lesssim \frac{ \max_{ i  \in \calA}  (\lambda_i(t) / \lambda_{i+1}(t))^\frac 12}{c_1(\log(\frac{\lam_{j+1}}{\lam_j}))^2} \\
\Big|  \frac{ \iota_j}{4\log(\frac{\lam_{j+1}}{\lam_j}) }\big( \frac{\lam_j'}{\lam_j} + \frac{\lam_{j+1}'}{\lam_{j+1}}\big)\La \Lam \chi_{L\sqrt{\lam_j \lam_{j+1}}} \Lam Q_{\U{\lam_j}} \mid g + \sum_{i<j} \iota_i( Q_{\lam_i} - \pi)\Ra  \Big| & \lesssim
\frac{ \max_{ i  \in \calA}  (\lambda_i(t) / \lambda_{i+1}(t))^\frac 12}{c_1 \log(\frac{\lam_{j+1}}{\lam_j})} \\
\Big|\frac{ \iota_j}{2\log(\frac{\lam_{j+1}}{\lam_j}) } \frac{\lam_j'}{\lam_j} \La \chi_{L\sqrt{\lam_j \lam_{j+1}}} \ULam \Lam Q_{\U{\lam_j}} \mid g + \sum_{i<j} \iota_i( Q_{\lam_i} - \pi)\Ra \Big| &\lesssim \frac{ \max_{ i  \in \calA}  (\lambda_i(t) / \lambda_{i+1}(t))}{\log(\frac{\lam_{j+1}}{\lam_j})}
}
with the gain in the last line arising from the fact that $\ULam \Lam Q \in L^1$; see~\eqref{eq:Lam0LamQ}. 
The leading order comes from the second to last term in~\eqref{eq:xi'-exp-k1}.  Using~\eqref{eq:g-eq} gives 
\EQ{
 -  \frac{ \iota_j}{2\log(\frac{\lam_{j+1}}{\lam_j}) }\La \chi_{L\sqrt{\lam_j \lam_{j+1}}} \Lam Q_{\U{\lam_j}} \mid \p_t g \Ra &= -    \frac{ \iota_j}{2\log(\frac{\lam_{j+1}}{\lam_j}) }\La \chi_{L\sqrt{\lam_j \lam_{j+1}}} \Lam Q_{\U{\lam_j}} \mid \dot g \Ra\\
&\quad -  \lam_j' \frac{ 1}{2\log(\frac{\lam_{j+1}}{\lam_j}) }\La \chi_{L\sqrt{\lam_j \lam_{j+1}}} \Lam Q_{\U{\lam_j}} \mid   \Lam Q_{\U{\lam_j}} \Ra\\
&\quad  -  \frac{ \iota_j}{2\log(\frac{\lam_{j+1}}{\lam_j}) }\La \chi_{L\sqrt{\lam_j \lam_{j+1}}} \Lam Q_{\U{\lam_j}} \mid \sum_{i \neq j} \iota_i \lam_{i}' \Lam Q_{\U{\lam_i}} \Ra\\
&\quad - \frac{ \iota_j}{2\log(\frac{\lam_{j+1}}{\lam_j}) }\La \chi_{L\sqrt{\lam_j \lam_{j+1}}} \Lam Q_{\U{\lam_j}}  \mid \phi( u, \nu)  \Ra. 
}
We estimate the contribution of each of the terms on the right above to~\eqref{eq:xi'-exp-k1}. The last term above vanishes due to the support properties of $\phi(u, \nu)$. Using~\eqref{eq:LamQL2}, ~\eqref{eq:lam'-k1} on the second term above, gives 
\EQ{
\Big|-\lam_j' \frac{ 1}{2\log(\frac{\lam_{j+1}}{\lam_j}) }\La \chi_{L\sqrt{\lam_j \lam_{j+1}}} \Lam Q_{\U{\lam_j}} \mid   \Lam Q_{\U{\lam_j}} \Ra + \lam_j'| \lesssim\frac{ \max_{ i  \in \calA}  (\lambda_i(t) / \lambda_{i+1}(t))^\frac 12}{\log(\frac{\lam_{j+1}}{\lam_j})}
}
which means this terms cancels the term $\lam'$ on the right-hand side of~\eqref{eq:xi'-exp-k1}  up to an acceptable error. Next we write, 
\EQ{
-  \frac{ \iota_j}{2\log(\frac{\lam_{j+1}}{\lam_j}) }\La \chi_{L\sqrt{\lam_j \lam_{j+1}}} \Lam Q_{\U{\lam_j}} \mid \sum_{i \neq j} \iota_i \lam_{i}' \Lam Q_{\U{\lam_i}} \Ra &=  -\sum_{i<j} \frac{ \iota_i \iota_j}{2\log(\frac{\lam_{j+1}}{\lam_j}) } \lam_i' \La \chi_{L\sqrt{\lam_j \lam_{j+1}}} \Lam Q_{\U{\lam_j}} \mid \Lam Q_{\U{\lam_i}} \Ra\\
&\quad  -  \sum_{i>j} \frac{ \iota_i \iota_j}{2\log(\frac{\lam_{j+1}}{\lam_j}) } \lam_i' \La \chi_{L\sqrt{\lam_j \lam_{j+1}}} \Lam Q_{\U{\lam_j}} \mid \Lam Q_{\U{\lam_i}} \Ra
}
The first term cancels the last term in~\eqref{eq:xi'-exp-k1}. For the second term we estimate, if $i>j$, 
\EQ{
|\La \chi_{L\sqrt{\lam_j \lam_{j+1}}} \Lam Q_{\U{\lam_j}} \mid \Lam Q_{\U{\lam_i}} \Ra|  \lesssim \lam_j/ \lam_{j+1}
}
and thus, using~\eqref{eq:lam'-k1} the second term in the previous equation contributes an acceptable error. Plugging all of these estimates back into~\eqref{eq:xi'-exp-k1} gives the estimate, 
\EQ{ \label{eq:xi'-lead} 
\Big| \xi_j' +  \frac{ \iota_j}{2\log(\frac{\lam_{j+1}}{\lam_j}) }\La \chi_{L\sqrt{\lam_j \lam_{j+1}}} \Lam Q_{\U{\lam_j}} \mid \dot g \Ra \Big| \lesssim \frac{ \max_{ i  \in \calA}  (\lambda_i(t) / \lambda_{i+1}(t))^\frac 12}{c_1\log(\frac{\lam_{j+1}}{\lam_j})} 
}
Using~\eqref{eq:g-bound-k1} and $\|\chi_{L\sqrt{\lam_j \lam_{j+1}}} \Lam Q_{\U{\lam_j}}  \|_{L^2} \lesssim (\log(\frac{\lam_{j+1}}{\lam_j}))^{\frac{1}{2}}$,   we deduce the estimate, 
\EQ{
\Big| \frac{ \iota_j}{2\log(\frac{\lam_{j+1}}{\lam_j}) }\La \chi_{L\sqrt{\lam_j \lam_{j+1}}} \Lam Q_{\U{\lam_j}} \mid \dot g \Ra\Big| \lesssim \frac{ \max_{ i  \in \calA}  (\lambda_i(t) / \lambda_{i+1}(t))^\frac 12}{(\log(\frac{\lam_{j+1}}{\lam_j}))^{\frac{1}{2}}} 
}
which completes the proof of~\eqref{eq:xi_j'-k1}. 

Next we compare $\be_j$  and $2\xi_j' \log( \lam_{j+1}/ \lam_j)$. Using~\eqref{eq:beta-def-k1} we have, 
\EQ{
\Big|  \ang{ \uln A( \lam_j(t)) g(t) \mid \dot g(t)} \Big| \lesssim \| \bs g \|_{\E}^2 \lesssim \max_{ i  \in \calA}  (\lambda_i / \lambda_{i+1}), 
}
We also note the estimate
\EQ{
\Big| \La ( \chi_{L\sqrt{\lam_j \lam_{j+1}}} - \chi_{L\sqrt{\xi_j \lam_{j+1}}}) \Lam Q_{\U{\lam_j}} \mid \dot g \Ra\Big|  \ll \max_{ i  \in \calA}  (\lambda_i / \lambda_{i+1})^\frac 12. 
}
which is a consequence of~\eqref{eq:xi_j-lambda_j-k1}. Using~\eqref{eq:xi'-lead} the estimate~\eqref{eq:xi_j'-beta_j-k1} follows. 

Finally, the proof of the estimate~\eqref{eq:beta_j'-k1} is nearly identical to the argument used to prove~\eqref{eq:beta_j'}, differing only in a  few places where the cut-off $\chi_{L \sqrt{\xi_j \lam_{j+1}}}$ is  involved. Arguing as in the proof of ~\eqref{eq:beta_j'} we arrive at the formula, 
\EQ{ \label{eq:beta'-exp-k1} 
\beta_j' &=  -  \frac{\iota_j }{\lam_j}  \ang{ \Lam Q_{\lam_j} \mid f_{\bfi}( m_n, \iota, \vec \lam) } +  \ang{ \uln A( \lam_j) g \mid \LL_0 g }   + \ang{ (A(\lam_j) - \uln A( \lam_j)) g \mid \ti  f_{\bfq}(m_n, \vec \iota,  \vec \lam, g)} \\
&\quad + \ang{ \chi_{L \sqrt{\xi_j \lam_{j+1}}}\Lam Q_{\U{\lam_j}} \mid ( \LL_{\calQ} - \LL_{\lam_j}) g} + \iota_j \frac{\lam_j' }{\lam_j}\ang{ \big( \frac{1}{\lam_j} \ULam - \U{A}( \lam_j) \big) \Lam Q_{\lam_j} \mid \dot g} \\
&\quad    - \ang{ {A}(\lam_j) \sum_{i =1}^K \iota_i Q_{\lam_i} \mid f_{\bfq}(m_n, \vec \iota,  \vec\lam, g)}   - \ang{ A( \lam_j) g \mid \ti  f_{\bfq}(m_n, \vec \iota,  \vec \lam, g)} \\
&\quad    + \iota_j \ang{ ({A}( \lam_j) -  \frac{1}{\lam_j}\chi_{L \sqrt{\xi_j \lam_{j+1}}}\Lam) Q_{\lam_j} \mid f_{\bfq}(m_n, \vec \iota,  \vec\lam, g)}  -    \frac{\lam_j'}{\lam_j} \ang{ \lam_j \p_{\lam_j} \uln A( \lam_j) g \mid \dot g} \\
&\quad + \sum_{ i \neq j} \iota_i \ang{ {A}(\lam_j) Q_{\lam_i} \mid  f_{\bfq}(m_n, \vec \iota,  \vec\lam, g)}  - \sum_{i \neq j} \iota_i \lam_{i}'  \ang{ \uln A( \lam_j) \Lam Q_{\U{\lam_i}} \mid \dot g}   - \ang{ \uln A( \lam_j) g \mid f_{\bfi}( m_n, \iota, \vec \lam) } \\
&\quad  - \iota_j \ang{ \chi_{L \sqrt{\xi_j \lam_{j+1}}}\Lam Q_{\U{\lam_j}} \mid  \dot \phi( u, \nu)} -  \ang{ \uln A( \lam_j) \phi( u, \nu) \mid \dot g} -  \ang{ \uln A( \lam_j) g \mid \dot  \phi( u, \nu)} \\
&\quad + \frac{\iota_j }{\lam_j}  \ang{ (1- \chi_{L \sqrt{\xi_j \lam_{j+1}}})\Lam Q_{\lam_j} \mid f_{\bfi}( m_n, \iota, \vec \lam) } -  \iota_j \frac{\lam_j' }{\lam_j}\ang{  (1- \chi_{L \sqrt{\xi_j \lam_{j+1}}})\ULam \Lam Q_{\U{\lam_j}} \mid \dot g}\\
&\quad + \ang{ \LL_{\lam_j} (\chi_{L \sqrt{\xi_j \lam_{j+1}}}\Lam Q_{\U{\lam_j}}) \mid  g}
 + \frac{ \iota_j}{2}\big( \frac{\xi_j'}{\xi_j} + \frac{\lam_{j+1}'}{\lam_{j+1}}\big)\La \Lam \chi_{L\sqrt{\xi_j \lam_{j+1}}} \Lam Q_{\U{\lam_j}} \mid \dot g \Ra 
}
All but the last four terms above are treated exactly as in the proof of~\eqref{eq:beta_j'}. For the fourth-to-last term a direct computation using the estimate~\eqref{eq:fi-point} gives, 
\EQ{
\Big|\frac{\iota_j }{\lam_j}  \ang{ (1- \chi_{L \sqrt{\xi_j \lam_{j+1}}})\Lam Q_{\lam_j} \mid f_{\bfi}( m_n, \iota, \vec \lam) }  \Big| \ll \frac{1}{\lam_j} \Big( \frac{\lam_j}{\lam_{j+1}} + \frac{\lam_{j-1}}{\lam_j} \Big). 
}
For the third-to-last term, we use that $\ULam \Lam Q \in L^2$, ~\eqref{eq:lam'-k1},  and~\eqref{eq:g-bound-k1} to deduce that, 
\EQ{
\Big| \iota_j \frac{\lam_j' }{\lam_j}\ang{  (1- \chi_{L \sqrt{\xi_j \lam_{j+1}}})\ULam \Lam Q_{\U{\lam_j}} \mid \dot g} \Big| \ll \frac{1}{\lam_j} \max_{ i  \in \calA}  (\lambda_i / \lambda_{i+1}),
}
The size of the constant $L>0$ becomes relevant only in the second-to-last term. Indeed, since $\LL \Lam Q = 0$, we have, 
\EQ{
\LL_{\lam_j} (\chi_{L \sqrt{\xi_j \lam_{j+1}}}\Lam Q_{\U{\lam_j}}) = \frac{1}{L^2 \xi_j \lam_{j+1}} (\De \chi)_{L \sqrt{\xi_j \lam_{j+1}}}\Lam Q_{\U{\lam_j}} + 2 \frac{1}{L \sqrt{ \xi_j \lam_{j+1}}} \chi'_{L \sqrt{\xi_j \lam_{j+1}}} \frac{1}{r} \Lam^2 Q_{ \U{\lam_j}}. 
}
And therefore, using~\eqref{eq:g-bound-k1} and~\eqref{eq:xi_j-lambda_j-k1} we obtain the estimate, 
\EQ{
\Big| \ang{ \LL_{\lam_j} (\chi_{L \sqrt{\xi_j \lam_{j+1}}}\Lam Q_{\U{\lam_j}}) \mid  g} \Big| \lesssim \frac{1}{L} \frac{1}{\lam_{j}} \max_{ i  \in \calA}  (\lambda_i / \lambda_{i+1})
}
for a uniform constant, independent of $L$. Taking $L>1$ large enough relative to $c_0$ makes this an acceptable error. Finally, for the last term we use the improved estimate~\eqref{eq:xi_j'-k1} for $\xi_j'$ and~\eqref{eq:xi_j-lambda_j-k1} to obtain, 
\EQ{
\abs{ \frac{\xi_j'}{\xi_j} + \frac{\lam_{j+1}'}{\lam_{j+1}}}  \lesssim \frac{1}{\lam_j} \Big( \abs{ \xi_j'} + \frac{\lam_j}{\lam_{j+1}} \abs{ \lam_{j+1}'} \Big) \ll \frac{1}{\lam_j} \max_{ i  \in \calA}  (\lambda_i / \lambda_{i+1})^{\frac{1}{2}}, 
}
and hence, 
\EQ{
\Big|  \frac{ \iota_j}{2}\big( \frac{\xi_j'}{\xi_j} + \frac{\lam_{j+1}'}{\lam_{j+1}}\big)\La \Lam \chi_{L\sqrt{\xi_j \lam_{j+1}}} \Lam Q_{\U{\lam_j}} \mid \dot g \Ra  \Big| \ll \frac{1}{\lam_j} \max_{ i  \in \calA}  (\lambda_i / \lambda_{i+1}). 
}
This completes the proof. 
\end{proof}
We note that Lemma~\ref{lem:virial-error} and its proof remain valid for $k = 1$.

\subsection{Demolition of the multi-bubble}
We define the weighted interaction energy in the same way as for $k \geq 2$,
see Definition~\ref{def:w-int}.
\begin{lem}
If $c_0$ in Proposition~\ref{prop:modul-k1} is taken sufficiently small,
then there exists a constant $C_1 > 0$ such that
\begin{equation}
\label{eq:U-bound-by-d-k1}
\bfd(t) < \eta_0\quad\text{implies}\quad C_1^{-1} \bfd(t)^2 \leq U(t) \leq C_1\bfd(t)^2.
\end{equation}
\end{lem}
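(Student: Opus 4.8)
The goal is to show that for $k=1$, if the constant $c_0$ in Proposition~\ref{prop:modul-k1} is chosen small enough, then whenever $\bfd(t) < \eta_0$ one has $C_1^{-1}\bfd(t)^2 \le U(t) \le C_1 \bfd(t)^2$, where $U(t) = \max_{i\in\calA}(2^{-i}\xi_i(t)/\lambda_{i+1}(t))^k = \max_{i\in\calA}(2^{-i}\xi_i(t)/\lambda_{i+1}(t))$ since $k=1$. The strategy is to compare $U(t)$ to the quantity $\widehat U(t) := \max_{i\in\calA}(\lambda_i(t)/\lambda_{i+1}(t))$, which by the estimate~\eqref{eq:d-bound-k1} in Proposition~\ref{prop:modul-k1} is comparable to $\bfd(t)^2$ up to a constant depending only on $k$ and $N$; once $U(t) \simeq \widehat U(t)$ is established the claim follows immediately.

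\textbf{Key steps.} First I would invoke~\eqref{eq:d-bound-k1} to get constants $C_0$ (depending only on $k,N$) with $C_0^{-1}\bfd(t) \le \widehat U(t)^{1/2} \le C_0 \bfd(t)$, i.e. $\widehat U(t) \simeq \bfd(t)^2$. Next, for each $i \in \calA$, I would use the refined comparison~\eqref{eq:xi_j-lambda_j-k1}, namely $\bigl|\xi_i(t)/\lambda_{i+1}(t) - \lambda_i(t)/\lambda_{i+1}(t)\bigr| \le c_0\bfd(t)^2$. Since the weights $2^{-i}$ for $i \in \{1,\dots,K-1\}$ are bounded above and below by constants depending only on $N$, this gives
\[
\Bigl|2^{-i}\tfrac{\xi_i(t)}{\lambda_{i+1}(t)} - 2^{-i}\tfrac{\lambda_i(t)}{\lambda_{i+1}(t)}\Bigr| \le c_0\bfd(t)^2 \le c_0 C_0^2\,\widehat U(t),
\]
uniformly in $i$. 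Taking the maximum over $i\in\calA$ and using that $\max$ is $1$-Lipschitz, I obtain $|U(t) - \widetilde U(t)| \le c_0 C_0^2 \widehat U(t)$, where $\widetilde U(t) := \max_{i\in\calA}(2^{-i}\lambda_i(t)/\lambda_{i+1}(t))$. Since the weights $2^{-i}$ are comparable to $1$, $\widetilde U(t) \simeq \widehat U(t)$ with constants depending only on $N$; combining with $\widehat U \simeq \bfd^2$ gives $\widetilde U(t) \simeq \bfd(t)^2$. Finally, choosing $c_0$ small enough that $c_0 C_0^2$ is, say, at most half the lower comparison constant between $\widetilde U$ and $\bfd^2$, the triangle inequality $|U - \widetilde U| \le c_0 C_0^2 \widehat U$ forces $U(t) \simeq \bfd(t)^2$ as well, with a constant $C_1$ depending only on $k$ and $N$.

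\textbf{Main obstacle.} There is no serious obstacle here; this is exactly the $k=1$ analogue of the bound~\eqref{eq:U-bound-by-d} stated for $k\ge 2$ just before Lemma~\ref{lem:cedf}, and the only point requiring care is the bookkeeping of which constants depend on what. Specifically, $C_0$ from~\eqref{eq:d-bound-k1} must be fixed \emph{before} $c_0$ is chosen, so that the smallness requirement on $c_0$ (hence on $\eta_0$, via Proposition~\ref{prop:modul-k1}) can be imposed consistently; since $C_0$ depends only on $k$ and $N$ this is legitimate. One also uses implicitly that $\calA \neq \emptyset$ on the relevant time intervals — otherwise $U$ and $\bfd$ are both essentially zero and there is nothing to prove — which is guaranteed by the contradiction hypothesis and the discussion following Lemma~\ref{lem:mod-1}. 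With these observations in place the proof is a two-line application of the triangle inequality.
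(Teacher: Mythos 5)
Your proof is correct and follows essentially the same route as the paper: both arguments combine \eqref{eq:d-bound-k1}, which gives $\max_{i\in\calA}\lambda_i/\lambda_{i+1}\simeq \bfd(t)^2$, with \eqref{eq:xi_j-lambda_j-k1}, absorbing the additive error $c_0\bfd(t)^2$ at the maximizing index by taking $c_0$ small (the paper works directly with the index $j_0$ realizing the maximum rather than via the Lipschitz property of $\max$, but this is only a cosmetic difference). The only thing to tidy is the constant bookkeeping at the end: since $|U-\widetilde U|\le c_0\bfd(t)^2$ while $\widetilde U\ge 2^{-N}C_0^{-2}\bfd(t)^2$, the smallness condition is $c_0\le \tfrac12\,2^{-N}C_0^{-2}$ (still depending only on $k$ and $N$), so the conclusion stands.
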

\begin{rem}
In the case $k \geq 2$ the corresponding estimate \eqref{eq:U-bound-by-d}
follows immediately from \eqref{eq:xi_j-lambda_j}.
For $k = 1$, since the bound \eqref{eq:xi_j-lambda_j-k1}
does not imply that $\xi_j(t) \simeq \lambda_j(t)$ for all $j \in \cA$.
\end{rem}
\begin{proof}
Let $t$ be such that $\bfd(t) < \eta_0$, and let $j_0 \in \cA$ be such that
\begin{equation}
\lambda_{j_0}(t) / \lambda_{j_0+1}(t) = \max_{i \in \cA}\lambda_i(t) / \lambda_{i+1}(t).
\end{equation}
Then we deduce from \eqref{eq:xi_j-lambda_j-k1} that
\begin{equation}
\xi_{j_0}(t) / \lambda_{j_0+1}(t) \gtrsim \lambda_{j_0}(t) / \lambda_{j_0+1}(t) \gtrsim \bfd(t)^2,
\end{equation}
which yields the required lower bound on $U(t)$.

The upper bound follows directly from \eqref{eq:xi_j-lambda_j-k1}
and the fact that $\lambda_j(t)/\lambda_{j+1}(t) \lesssim \bfd(t)^2$ for all $j \in \cA$.
\end{proof}

The analog of Lemma~\ref{lem:ejection} for $k = 1$ is formulated as follows.
\begin{lem}
\label{lem:ejection-k1}
If $\eta_0$ is small enough,
then there exists $C_0 \geq 0$ depending only on $k$ and $N$ such that the following is true.
If $t_0$ is a local minimum from the right of $U$ such that $U(t_0) < +\infty$
and $t_* \geq t_0$ is such that $U(t) < \infty$ for all $t \in [t_0, t_*]$, then
\begin{gather}
\label{eq:lambdaK-bd-k1}
\frac 34 \lambda_K(t_0) \leq \lambda_K(t_*) \leq\frac 43\lambda_K(t_0), \\
\label{eq:int-d-bd-k1}
\int_{t_0}^{t_*} \bfd(t)\ud t\leq C_0\bfd(t_*)^2\sqrt{{-}\log\bfd(t_*)}\lambda_K(t_0).
\end{gather}
An analogous statement is true if $t_*$ is a local minimum from the left.
\end{lem}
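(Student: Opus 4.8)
The plan is to mirror the proof of Lemma~\ref{lem:ejection} for $k\ge2$, with the only substantive change being that the rate of collapse is driven by the $k=1$ interaction law, which for alternating bubbles gives $\beta_{j}'(t) \gtrsim 1/\lambda_{j+1}(t)$ rather than $\xi_{j}(t)^{k-1}/\lambda_{j+1}(t)^k$. Concretely, I would reproduce Steps 1--2 of the proof of Lemma~\ref{lem:ejection} verbatim: starting from a local minimum from the right $t_0$ of $U$, build by induction the sequence of times $t_0\le t_1\le\dots\le t_{l_*}=t_*$ and the strictly decreasing sequence of indices $j_0>j_1>\dots>j_{l_*-1}$ in $\calA$, using the renormalized quantities $\wt\xi_j(t):=2^{-j}\xi_j(t)/\lambda_{j+1}(t)$ as before. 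The combinatorial bookkeeping (that $t_l>t_{l-1}$, that $\wt\xi_{j_l}$ dominates all $\wt\xi_j$ with $j<j_l$ on $[t_l,t_{l+1})$, etc.) is purely about continuity and the definitions, so it transfers with no change.

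The heart of the matter is the bootstrap in Step 3, where the differential inequality must be rederived from Proposition~\ref{prop:modul-k1} instead of Lemma~\ref{cor:modul}. On an interval $(t_l,t_{l+1})$ where $j=j_l\in\calA$ is the dominant alternating index, \eqref{eq:max-ratio-ass} is satisfied (with $c_1$ fixed in terms of $N$), so \eqref{eq:beta_j'-k1} gives $\lambda_{j_l}(t)\beta_{j_l}'(t)\ge (8-c_0)\,\lambda_{j_l}(t)/\lambda_{j_l+1}(t) + (\text{nonnegative or small}) - c_0\max_{i\in\calA}\lambda_i/\lambda_{i+1}$; after absorbing the neighbor and error terms exactly as in \eqref{eq:beta_j'-all-2}--\eqref{eq:beta_j'-all-3} one obtains $\beta_{j_l}'(t)\ge c_1/\lambda_{j_l+1}(t_l)$ (using \eqref{eq:lambdaj-const} that $\lambda_{j_l+1}$ is almost constant on the interval). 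One then introduces the auxiliary function $\phi(t):=\beta_{j_l}(t)+c_2\big(\xi_{j_l}(t)/\lambda_{j_l+1}(t_l)\big)^{1/2}$ (the exponent $k/2=1/2$), computes $\phi'$ via the chain rule together with the bound $|\xi_{j_l}'(t)|\lesssim (\xi_{j_l}(t)/\lambda_{j_l+1}(t_l))^{1/2}/\sqrt{-\log\bfd(t)}$ coming from \eqref{eq:xi_j'-k1} (here $\log(\lambda_{j_l+1}/\lambda_{j_l})\simeq -\log\bfd$), to conclude $\phi'(t)\gtrsim \xi_{j_l}(t)^{-1/2}\lambda_{j_l+1}(t_l)^{-1/2}$, which integrates, after multiplying by $\sqrt{\xi_{j_l}}$, to something of the form $\big(\lambda_{j_l+1}(t_l)\,\phi(t)^2\big)'\gtrsim \phi(t)$ — but with an extra $\sqrt{-\log\bfd}$ weight appearing because the derivative lower bound for $\xi_{j_l}$ carries that factor. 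This gives $\int_{t_l}^{t_{l+1}}\bfd(t)\,\ud t\lesssim \bfd(t_{l+1})^2\sqrt{-\log\bfd(t_{l+1})}\,\lambda_{j_l+1}(t_l)$, which is the $k=1$ version of \eqref{eq:collaps-induction}. The companion statements $\wt\xi_{j_l}(t)\ge\frac12\wt\xi_j(t)$ for $j>j_l$ and $\xi_{j_l}(t)\ge\frac78\xi_{j_l}(t_l)$ (equivalently \eqref{eq:maxquot-boot} and \eqref{eq:increase-boot}) are proved exactly as before using \eqref{eq:xi_j'-beta_j-k1} and \eqref{eq:xijl-init}, noting that $\lambda_{j_l}$ being almost constant follows from $|\lambda_{j_l}'(t)|\lesssim\bfd(t)$ (which is \eqref{eq:lam'} and is unchanged for $k=1$) together with the just-proved integral bound, since $\bfd(t)^2\sqrt{-\log\bfd(t)}\to0$.

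Finally, Step 4 sums \eqref{eq:collaps-induction} over $l$; the telescoping of $\lambda_{j_{l-1}+1}(t_{l-1})$ against $\lambda_{j_l}(t_l)\simeq\lambda_{j_{l-1}+1}(t_{l-1})$ (using that $j_l<j_{l-1}$ so $j_l+1\le j_{l-1}$, hence $\lambda_{j_l+1}\le\lambda_{j_{l-1}}$ up to constants, and that all these intermediate scales are trapped between $\lambda_K(t_0)$ and a fixed multiple of it by \eqref{eq:lambdaj-const}) gives $\int_{t_0}^{t_*}\bfd(t)\,\ud t\lesssim \bfd(t_*)^2\sqrt{-\log\bfd(t_*)}\,\lambda_K(t_0)$, which is \eqref{eq:int-d-bd-k1}; and \eqref{eq:lambdaK-bd-k1} follows from \eqref{eq:lambdaj-const} applied with $j=K$. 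The case where $t_*$ is a local minimum from the left is symmetric. The main obstacle I anticipate is tracking the $\sqrt{-\log\bfd}$ factor cleanly through the auxiliary-function computation: one must check that this slowly-varying weight does not interfere with the bootstrap (it does not, since it is $\le\sqrt{-\log\epsilon_n}$, which is subpolynomial and is dominated by any fixed power of $\bfd^{-1}$ in the places where smallness is needed), and that the final exponent on $\bfd(t_*)$ is genuinely $2$ with only the extra logarithm, matching what the subsequent virial argument (which must also be checked to tolerate this logarithmic loss, cf. the construction of $\rho$ in Lemma~\ref{lem:rho}) can absorb.
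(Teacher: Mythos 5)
Your Steps 1--2 and Step 4 do transfer essentially verbatim, and the lower bound $\beta_{j_l}'(t)\ge c_1/\lambda_{j_l+1}(t_l)$ obtained from \eqref{eq:beta_j'-k1} (legitimate because \eqref{eq:max-ratio-ass} holds on the relevant intervals by \eqref{eq:xi-really-bigger}) is the right starting point. The gap is in the bootstrap itself: you keep the $k\ge 2$ auxiliary function $\phi=\beta_{j_l}+c_2\big(\xi_{j_l}/\lambda_{j_l+1}(t_l)\big)^{1/2}$ and the $k\ge 2$ initialization \eqref{eq:xijl-init}, but for $k=1$ the refined modulation estimate \eqref{eq:xi_j'-beta_j-k1} says $\beta_j\simeq 2\xi_j'\log(\lambda_{j+1}/\lambda_j)$, so $\beta$ lives at a logarithmically weighted scale. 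At a time $t_l$ one can only guarantee $\beta_{j_l}(t_l)\gtrsim -c_0\sqrt{(\xi_{j_l}/\lambda_{j_l+1})\,\big({-}\log(\xi_{j_l}/\lambda_{j_l+1})\big)}\simeq -c_0\,\bfd\sqrt{-\log\bfd}$, which is far more negative than $c_2\sqrt{\xi_{j_l}/\lambda_{j_l+1}}\simeq c_2\,\bfd$ once the ratio is small; hence with the plain square-root correction the positivity step ($\wt\phi\ge 0$, whence $\bfd\lesssim\phi$) breaks down, and this is exactly the inequality on which the whole collapse estimate rests. Moreover the differential inequality you assert, $\phi'\gtrsim \xi_{j_l}^{-1/2}\lambda_{j_l+1}^{-1/2}$, does not follow from the $k=1$ estimates: \eqref{eq:beta_j'-k1} plus the chain-rule term (which is only $O\big(1/(\lambda_{j_l+1}\sqrt{-\log\bfd})\big)$ by \eqref{eq:xi_j'-k1}) gives $\phi'\gtrsim 1/\lambda_{j_l+1}(t_l)$, which is weaker. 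Finally, even if positivity were salvageable, your normalization yields only the boundary term $\lambda_{j_l+1}(t_l)\,\phi(t_{l+1})^2\lesssim \bfd(t_{l+1})^2\,({-}\log\bfd(t_{l+1}))\,\lambda_{j_l+1}(t_l)$, a full logarithm rather than the $\sqrt{-\log}$ claimed in \eqref{eq:int-d-bd-k1}.

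The paper's proof differs from yours precisely here: it builds the logarithm into the correction, setting $\Phi(x):=\sqrt{-x\log x}$ and $\phi(t):=\beta_{j_l}(t)+c_2\,\Phi\big(\xi_{j_l}(t)/\lambda_{j_l+1}(t_l)\big)$, and it replaces \eqref{eq:xijl-init} by its log-weighted analogue \eqref{eq:xijl-init-k1}, namely $\xi_{j_l}'(t_l)\sqrt{-\log\wt\xi_{j_l}(t_l)}\ge -c_0\max_{i\in\calA}\sqrt{\xi_i(t_l)/\lambda_{i+1}(t_l)}$, whose proof must be redone from \eqref{eq:xijl-init-0} using \eqref{eq:xi_j'-k1} and \eqref{eq:lam'-k1} -- it is not ``exactly as before'', and the same applies to the $k=1$ version of \eqref{eq:increase-boot}, which is run with the $\Phi$-weighted quantities. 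With this choice one gets $\wt\phi\ge 0$, hence $\bfd\lesssim \phi/\sqrt{-\log\phi}$, the correct rate $\phi'\gtrsim 1/\lambda_{j_l+1}(t_l)$ integrates as $\big(\lambda_{j_l+1}(t_l)\,\phi^2/\sqrt{-\log\phi}\big)'\gtrsim \phi/\sqrt{-\log\phi}$, and combining with $|\phi|\lesssim\Phi(\bfd^2)$ produces exactly $\int_{t_l}^{t_{l+1}}\bfd\,\ud t\lesssim \bfd(t_{l+1})^2\sqrt{-\log\bfd(t_{l+1})}\,\lambda_{j_l+1}(t_l)$. So the missing idea is the logarithmically weighted auxiliary function and initialization; without them the core step of your argument fails, and with them the remainder of your outline (summation over $l$, \eqref{eq:lambdaK-bd-k1} via the almost-constancy of the scales, and the symmetric case of a minimum from the left) goes through as you describe.
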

\begin{proof}
Steps 1 and 2 are similar as for $k \geq 2$.
Step 3 differs significantly, so let us indicate the necessary changes.
First, the modulation estimates \eqref{eq:xi_j'-k1}, \eqref{eq:xi_j'-beta_j-k1} and \eqref{eq:beta_j'-k1}
only hold under the assumption \eqref{eq:max-ratio-ass}.
However, note that in Step 3 this last assumption is satisfied on the time interval $(t_{l_0}, T_0)$, on which the modulation estimates are used, see \eqref{eq:xi-really-bigger}.

Instead of \eqref{eq:xijl-init}, we claim that
\begin{equation}
\label{eq:xijl-init-k1}
\xi_{j_l}'(t_l)\sqrt{{-}\log\wt\xi_{j_l}(t_l)} \simeq \xi_{j_l}'(t_l)\sqrt{{-}\log(\xi_{j_l}(t_l)/\lambda_{j_l+1}(t_l))} \geq -c_0 \max_{i \in \cA}\sqrt{\xi_i(t_l) / \lambda_{i+1}(t_l)},
\end{equation}
where $c_0$ can be made arbitrarily small upon taking $\eta_0$ small.
Indeed, recalling that $\wt\xi_j(t) = 2^{-j}\xi_j(t)/\lambda_{j+1}(t)$,
\eqref{eq:xijl-init-0} yields
\begin{equation}
\xi_{j_l}'(t_l) \gtrsim -\wt\xi_{j_l}(t_l)|\lambda_{j_l+1}'(t_l)|- \frac{\lambda_{j_l+1}(t_l)}{\lambda_{j_{l-1}+1}(t_l)}\big(|\xi_{j_l-1}'(t_l)| + \wt\xi_{j_{l-1}}(t_l)|\lambda_{j_{l-1}+1}'(t_l)|\big).
\end{equation}
Since $\wt \xi_{j_l}(t_l)$ is small when $\eta_0$ is small, \eqref{eq:lam'-k1} yields
\begin{equation}
\wt\xi_{j_l}(t_l)\sqrt{{-}\log\wt\xi_{j_l}(t_l)}|\lambda_{j_l+1}'(t_l)| \leq c_0\max_{i \in \cA}\sqrt{\xi_i(t_l) / \lambda_{i+1}(t_l)}.
\end{equation}
Since $\wt \xi_{j_l}(t_l) = \wt\xi_{j_{l-1}}(t_l)$, \eqref{eq:xi_j'-k1} yields
\begin{equation}
\sqrt{{-}\log\wt\xi_{j_l}(t_l)}|\xi_{j_{l-1}}'(t_l)| \lesssim \max_{i \in \cA}\sqrt{\xi_i(t_l) / \lambda_{i+1}(t_l)}.
\end{equation}
For the same reason, and using again \eqref{eq:lam'-k1},
\begin{equation}
\sqrt{{-}\log\wt\xi_{j_l}(t_l)}\wt\xi_{j_{l-1}}(t_l)|\lambda_{j_{l-1}+1}'(t_l)|
\lesssim \max_{i \in \cA}\sqrt{\xi_i(t_l) / \lambda_{i+1}(t_l)}.
\end{equation}
Since $j_l < j_{l-1}$, $\lambda_{j_l+1}(t_l)/\lambda_{j_{l-1}+1}(t_l)$ is small when $\eta_0$
is small, so we get \eqref{eq:xijl-init-k1}.

In \eqref{eq:collaps-induction}, \eqref{eq:collaps-boot-ass} and
\eqref{eq:collaps-boot}, we replace $\bfd(t)^{k/2}$ by $\bfd(t)^2\sqrt{{-}\bfd(t)}$.
Next, we introduce the auxiliary function $\Phi(x) := \sqrt{-x\log x}$ for $0 < x < 1$.
Note that
\begin{equation}
\begin{aligned}
\sqrt{x} &\sim \Phi(x) / \sqrt{{-}\log \Phi(x)}, \\
\Phi'(x) &= \frac{\sqrt{-\log x}}{2\sqrt x} + O(({-}x\log x)^{-1/2}) > 0.
\end{aligned}
\end{equation}
With $c_2 > 0$ to be determined, consider the auxiliary function
\begin{equation}
\phi(t) := \beta_{j_l}(t) + c_2\Phi\big(\xi_{j_l}(t)/\lambda_{j_l+1}(t_l)\big).
\end{equation}
The Chain Rule gives
\begin{equation}
\phi'(t) = \beta_{j_l}'(t) + c_2 \frac{\xi_{j_l}'(t)}{\lambda_{j_l+1}(t_l)}\Phi'\Big(\frac{\xi_{j_l}(t)}{\lambda_{j_l+1}(t_l)}\Big).
\end{equation}
By \eqref{eq:xi_j'-k1} and \eqref{eq:lambdaj-const}, we have $|\xi_{j_l}'(t)| \leq c_3 (\xi_{j_l}(t) / \lambda_{j_l +1}(t_l))^\frac 12\log({-}\xi_{j_l}(t) / \lambda_{j_l +1}(t_l))^{-1/2}$, with $c_3$ depending only on $k$ and $N$,
hence \eqref{eq:beta_j'-k1} implies
\begin{equation}
\label{eq:f'-lbound-k1}
\phi'(t) \geq \frac{c_3}{\lambda_{j_l+1}(t_l)},
\end{equation}
with $c_2, c_3$ depending only on $k$ and $N$.
If we consider $\wt \phi(t) := \beta_{j_l}(t) + \frac{c_2}{2}\Phi\big(\xi_{j_l}(t)/\lambda_{j_l+1}(t_l)\big)$
instead of $\phi$, then the computation above shows that $\wt \phi$ is increasing.
From \eqref{eq:xijl-init-k1}, we have $\wt \phi(t_l) \geq 0$, so $\wt \phi(t) \geq 0$ for all $t \in (t_l, t_{l+1})$,
implying
\begin{equation}
\label{eq:d-ubound-k1}
\bfd(t) \lesssim \sqrt{\xi_{j_l}(t)/\lambda_{j_l+1}(t_l)} \lesssim
\phi(t)/\sqrt{{-}\log \phi(t)}.
\end{equation}

The bound \eqref{eq:f'-lbound-k1} yields
\begin{equation}
\label{eq:fund-thm-for-f-k1}
\begin{aligned}
\big(\lambda_{j_l+1}(t_l)\phi(t)^{2}/\sqrt{{-}\log \phi(t)}\big)' \gtrsim \phi(t)/\sqrt{{-}\log \phi(t)}.
\end{aligned}
\end{equation}
We observe that $|\phi(t)| \lesssim \Phi(\bfd(t)^2)$, hence $\phi(t)^2/\sqrt{-\log \phi(t)} \lesssim \bfd(t)^2\sqrt{{-}\log\bfd(t)}$ and
\begin{equation}
\int_{t_l}^{t_{l+1}}\phi(t)/\sqrt{{-}\log \phi(t)}\ud t \lesssim
\lambda_{j_l+1}(t_l)\phi(t_{l+1})^{2}/\sqrt{-\log \phi(t_{l+1})} \lesssim \bfd(t_{l+1})^{2}\sqrt{{-}\log\bfd(t_{l+1})}\lambda_{j_l+1}(t_l).
\end{equation}
Thus, \eqref{eq:d-ubound-k1} yields \eqref{eq:collaps-boot}
(with $\bfd(t)^2\sqrt{{-}\log \bfd(t)}$ instead of
$\bfd(t)^{2/k}$) if $C_0$ is sufficiently large
(but depending on $k$ and $N$ only).

We now prove \eqref{eq:increase-boot}.
By \eqref{eq:xijl-init-k1} and \eqref{eq:xi_j'-beta_j-k1},
we have $\beta_{j_l}(t_l) \geq -c_0 \Phi(\xi_{j_l}(t_l)/\lambda_{j_l+1}(t_l))$,
where $c_0$ can be made as small as needed, and
\begin{equation}
\beta_{j_l}'(t) \geq c_1/\lambda_{j_l+1}(t_l).
\end{equation}
We deduce that $\xi_{j_l}'(t) \geq 0$ provided
\begin{equation}
t - t_l \geq \frac{2c_0}{c_1}\lambda_{j_l+1}(t_l)\Phi(\xi_{j_l}(t_l)/\lambda_{j_l+1}(t_l)) = \frac{2c_0}{c_1}\sqrt{\xi_{j_l}(t_l)\lambda_{j_l+1}(t_l)}\sqrt{{-}\log(\xi_{j_l}(t_l)/\lambda_{j_l+1}(t_l))}.
\end{equation}
But, if the opposite inequality is satisfied, the bound
$$|\xi_{j_l}'(t)| \lesssim \sqrt{\xi_{j_l}(t_l)/\lambda_{j_l+1}(t_l)}/\sqrt{{-}\log(\xi_{j_l}(t_l)/\lambda_{j_l+1}(t_l))}$$
yields \eqref{eq:increase-boot}, if $c_0$ is small enough.

The proof is then finished as for $k \geq 2$.
\end{proof} 
The remaining arguments of Section 5 apply without major changes.
In \eqref{eq:error-on-eLeR}, one should replace $\theta_n^{2/k}$
by $\theta_n^2\sqrt{{-}\log\theta_n}$.

\bibliographystyle{plain}
\bibliography{researchbib}

\bigskip
\centerline{\scshape Jacek Jendrej}
\smallskip
{\footnotesize
 \centerline{CNRS and LAGA, Universit\'e  Sorbonne Paris Nord}
\centerline{99 av Jean-Baptiste Cl\'ement, 93430 Villetaneuse, France}
\centerline{\email{jendrej@math.univ-paris13.fr}}
} 
\medskip 
\centerline{\scshape Andrew Lawrie}
\smallskip
{\footnotesize
 \centerline{Department of Mathematics, Massachusetts Institute of Technology}
\centerline{77 Massachusetts Ave, 2-267, Cambridge, MA 02139, U.S.A.}
\centerline{\email{alawrie@mit.edu}}
}

\end{document}